\documentclass[a4paper,10pt]{amsart}

\usepackage{amsmath,amssymb}
\usepackage{amsthm}
\usepackage{amssymb}
\usepackage{mathrsfs}
\usepackage[shortlabels]{enumitem}
\usepackage{mathtools}
\usepackage[all,cmtip,2cell]{xy}
\usepackage{array}
\usepackage{braket}
\usepackage{graphicx}
\usepackage{stmaryrd}
\usepackage{comment}
\usepackage{diagbox}

\theoremstyle{plain}
\newtheorem{thm}{Theorem}[section]
\newtheorem{prop}[thm]{Proposition}
\newtheorem{lem}[thm]{Lemma}
\newtheorem{claim}[thm]{Claim}
\newtheorem*{cor}{Corollary}

\theoremstyle{definition}
\newtheorem{defi}[thm]{Definition}
\newtheorem{nota}[thm]{Notation}
\newtheorem{sett}[thm]{Setting}
\newtheorem*{NC}{Notation and Convention}
\newtheorem*{ACK}{Acknowledgement}

\theoremstyle{remark}
\newtheorem{rem}[thm]{Remark}

\numberwithin{equation}{section}

\newcommand{\Z}{\mathbb{Z}}
\newcommand{\Q}{\mathbb{Q}}

\newcommand{\C}{\mathbb{C}}
\renewcommand{\P}{\mathbb{P}}
\newcommand{\F}{\mathbb{F}}

\renewcommand{\a}{\alpha}
\renewcommand{\b}{\beta}
\renewcommand{\d}{\delta}

\newcommand{\e}{\varepsilon}
\newcommand{\E}{\mathbb{E}}
\renewcommand{\F}{\mathbb{F}}
\newcommand{\g}{\gamma}
\newcommand{\G}{\Gamma}
\newcommand{\s}{\sigma}
\renewcommand{\t}{\tau}
\newcommand{\vp}{\varphi}

\newcommand{\ds}{\displaystyle}
\newcommand{\mc}{\mathcal}
\newcommand{\ms}{\mathscr}

\newcommand{\wh}{\widehat}

\newcommand{\emp}{\varnothing}

\newcommand{\ol}{\overline}
\newcommand{\wt}{\widetilde}

\newcommand{\hra}{\hookrightarrow}
\newcommand{\epm}{\twoheadrightarrow}
\newcommand{\dra}{\dashrightarrow}

\DeclareMathOperator{\id}{id}

\DeclareMathOperator{\Sym}{\mathrm{Sym}}

\DeclareMathOperator{\Spec}{\mathrm{Spec}}

\DeclareMathOperator{\Proj}{\mathrm{Proj}}

\DeclareMathOperator{\Exc}{\mathrm{Exc}}

\DeclareMathOperator{\Pic}{\mathrm{Pic}}
\DeclareMathOperator{\Gr}{\mathrm{Gr}}
\DeclareMathOperator{\Fl}{\mathrm{Fl}}
\DeclareMathOperator{\red}{\mathrm{red}}

\DeclareMathOperator{\Bl}{\mathrm{Bl}}

\DeclareMathOperator{\Bs}{\mathrm{Bs}}

\DeclareMathOperator{\pr}{pr}

\DeclareMathOperator{\Hom}{Hom}

\DeclareMathOperator{\Ker}{Ker}

\DeclareMathOperator{\Ext}{Ext}

\DeclareMathOperator{\LG}{LG}
\DeclareMathOperator{\LF}{LF}

\let\Im\relax
\DeclareMathOperator{\Im}{\mathrm{Im}}


\title[{\tiny Weak Fano $3$-folds with sextic del Pezzo fibrations}]{Refinement of the classification of weak Fano threefolds with sextic del Pezzo fibrations}
\author[T. FUKUOKA]{Takeru Fukuoka}
\date{\today}
\address{Graduate School of Mathematical Sciences\\The University of Tokyo\\3-8-1 Komaba\\Meguro-ku, Tokyo 153-8914, Japan}
\email{tfukuoka@ms.u-tokyo.ac.jp}
\subjclass[2010]{14J45, 14J30, 14E30.}

\begin{document}
\maketitle

\begin{abstract}
We refine the classification of weak Fano threefolds with sextic del Pezzo fibrations by considering the Hodge numbers of them. 
By the refined classification result, such threefolds are classified into 17 cases. 
The main result of this paper is to show that there exists an example for each of 17 cases, except for 2 cases. 
\end{abstract}

\section{Introduction} 

This paper
concerns classification problems of weak Fano threefolds of Picard rank $2$. 
\emph{Weak Fano threefolds} are defined to be smooth projective threefolds with nef and big anti-canonical divisors. 
By the classification result of Fano threefolds of Picard rank $2$ due to Mori-Mukai \cite{MM81}, this problem is reduced to classify \emph{almost Fano threefolds}, which are defined to be smooth projective threefolds whose anti-canonical divisors are nef and big but not ample, i.e., weak Fano but not Fano. 
The possibilities of almost Fano threefolds $X$ of Picard rank $2$ have been treated by \cite{JPR05,JPR11,Takeuchi09,CM13}. 
These results include the list of the possible values of $(-K_{X})^{3}$ and the possible types of the Sarkisov links of them. 
For each possibility, it is natural to ask how to find examples. 
After the above works, the articles \cite{ACM17,CLM17,Fukuoka17} treated such problems. 
Especially, the classification of almost Fano del Pezzo fibrations of degree $d \neq 6$ was completed by \cite{Takeuchi09}, \cite{JPR05}, and the author's paper \cite{Fukuoka17}. 
Moreover, such threefolds with terminal anti-canonical models were classified up to deformation by Takeuchi \cite{Takeuchi09}. 

In this paper, we deal with almost Fano threefolds with sextic del Pezzo fibrations. 
Note that by Mori-Mukai's classification of Fano threefolds of Picard rank $2$ \cite{MM81}, it was known that there is no Fano threefold admitting a sextic del Pezzo fibration structure. 
In other words, all weak Fano threefolds with sextic del Pezzo fibrations are almost Fano. 
In the articles \cite{JPR05,JPR11}, Jahnke-Peternell-Radloff classified the possibilities of the anti-canonical degree and the Sarkisov links of such threefolds. 
Moreover, for each of the possibilities, Jahnke-Peternell-Radloff \cite{JPR11} and the author \cite{Fukuoka17} found examples for that possibility.

\subsection{The refined table}
The main purpose of this paper is to give a refinement of the classification of almost Fano threefolds with sextic del Pezzo fibration. 
To the author's best knowledge, 
the Hodge number $h^{1,2}(X)$ was not studied yet for the classification of almost Fano threefolds with sextic del Pezzo fibrations. 
Fortunately, the author's work \cite{Fukuoka18} enables us to determine the possible values of the Hodge number $h^{1,2}(X)$. 
For a given sextic del Pezzo fibration $\vp \colon X \to C$, 
there exist the double covering $\vp_{B} \colon B \to C$ and the triple covering $\vp_{T} \colon T \to C$ associated to $\vp$ (see \cite[Definition~3.5]{Fukuoka18}). 
Note that when $-K_{X}$ is nef and big, the base curve $C$ must be $\P^{1}$. 
Then by \cite[Theorem~A]{Fukuoka18}, we now already have the formulas 
\begin{align}
(-K_{X})^{3}=22-(6g(B)+4g(T)) \text{ and } h^{1,2}(X)=g(B)+g(T) \label{exist-eq-volh12}
\end{align}
which make the computation of $h^{1,2}(X)$ easier. 
Indeed, for a given almost Fano threefold with a sextic del Pezzo fibration $X \to \P^{1}$, 
we have $22 \geq (-K_{X})^{3} \geq 0$ by (\ref{exist-eq-volh12}) and hence we can easily determine the possible values of $g(B)$, $g(T)$, and $h^{1,2}(X)$ by (\ref{exist-eq-volh12}). 
By the result of \cite{JPR05,JPR11} and \cite[Theorem~A]{Fukuoka18}, we obtain the following refined table of almost Fano sextic del Pezzo fibrations. 

\begin{cor}[{of \cite{JPR05,JPR11} and \cite[Theorem~A]{Fukuoka18}}]\label{thm-invofWF}
Every weak Fano threefold of Picard rank $2$ with a sextic del Pezzo fibration $\vp \colon X \to \P^{1}$ satisfies one of the conditions in Table~\ref{table-wF}. 
\end{cor}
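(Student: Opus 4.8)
The plan is to derive the table purely by combining the two cited inputs with an elementary enumeration. First I would invoke \cite[Theorem~A]{Fukuoka18}: because $-K_{X}$ is nef and big, the base of the fibration is forced to be $\P^{1}$, and the invariants of $X$ are controlled by the associated double cover $\vp_{B} \colon B \to \P^{1}$ and triple cover $\vp_{T} \colon T \to \P^{1}$ through the two identities recorded in (\ref{exist-eq-volh12}), namely $(-K_{X})^{3} = 22 - (6g(B) + 4g(T))$ and $h^{1,2}(X) = g(B) + g(T)$.

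Next I would exploit the positivity of the anti-canonical class. Since $-K_{X}$ is nef and big on a threefold, its top self-intersection is strictly positive, so $(-K_{X})^{3} > 0$; as $6g(B) + 4g(T)$ is even, the first identity then forces $6g(B) + 4g(T) \le 20$, whence $0 < (-K_{X})^{3} \le 22$. Because $g(B)$ and $g(T)$ are non-negative integers, only finitely many pairs $(g(B), g(T))$ remain, which I would list by splitting into the cases $g(B) \in \{0, 1, 2, 3\}$. Each admissible pair then determines both $(-K_{X})^{3}$ and $h^{1,2}(X)$ through (\ref{exist-eq-volh12}); in particular $(-K_{X})^{3}$ is always even.

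Finally, I would match this numerical list against the geometric classification of \cite{JPR05, JPR11}, in which Jahnke-Peternell-Radloff already determine the admissible values of $(-K_{X})^{3}$ together with the possible types of Sarkisov links issuing from such threefolds. Intersecting their list of admissible anti-canonical degrees with the degrees produced by the enumeration above selects exactly the pairs $(g(B), g(T))$ that can occur, and then refining each surviving degree according to its Sarkisov-link type yields the individual rows of Table~\ref{table-wF}.

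The main obstacle lies entirely in this last reconciliation rather than in any new geometry. On one hand, a single value of $(-K_{X})^{3}$ may be realized by more than one Sarkisov link in the JPR classification, so that a single numerical case splits into several rows of the table; on the other hand, some pairs $(g(B), g(T))$ permitted by the inequality $6g(B) + 4g(T) \le 20$ are excluded once the link structure of \cite{JPR05,JPR11} is imposed. Carefully carrying out this bookkeeping—matching the enumerated invariants to the admissible links—is what produces the final list, and I expect it to be the only genuinely delicate part of the argument.
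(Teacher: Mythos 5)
Your proposal is correct and coincides with the paper's own (implicit) argument: the paper likewise derives Table~\ref{table-wF} by combining the formulas (\ref{exist-eq-volh12}) from \cite[Theorem~A]{Fukuoka18} with the bound $22 \geq (-K_{X})^{3} \geq 0$, enumerating the finitely many admissible pairs $(g(B),g(T))$, and crossing the resulting values of $h^{1,2}(X)$ with the Sarkisov-link classification of \cite{JPR05,JPR11}. The only cosmetic difference is your use of strict positivity $(-K_{X})^{3}>0$ (hence $\geq 2$ by parity) where the paper writes $(-K_{X})^{3} \geq 0$, which does not change the enumeration.
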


\begin{table}[h]
\caption[]{The refined table.}\label{table-wF}
\vspace*{1ex}
\begin{tabular}{|c||c|c|c|c|c||c|}
\hline 
No.& $(-K_{X})^{3}$ &$h^{1,2}(X)$ & $\dim \Exc(\psi)$& $V$ &$\vp^{+} \colon X^{+} \to V$&$\exists$ \\
\hline \hline 
1&$22$ &$0$ & $1$ &$B_{5}$&$(g,d)=(0,4)$&\cite{JPR11} \\
\hline
2& $18$ &$1$ & $1$ &$\Q^{3}$&$(g,d)=(1,6)$&\cite{JPR11}\\
\hline
3& $16$ &$1$ & $1$ &$\P^{3}$&$(g,d)=(1,6)$&\cite{JPR11} \\
\hline
4&$14$ &$2$ & $1$ &$\P^{2}$&$\deg \text{(disc.)}=4$&\cite{Fukuoka17} \\
\hline
5&$12$ &$2$& $1$ &$\P^{1}$&dP$_{6}$&\cite{Fukuoka17} \\
\hline
6&$12$ &$2$& $2$&\diagbox[dir=SW,height=1em]&\diagbox[dir=SW,height=1em]&\cite{Fukuoka17} \\
\hline
7&$10$ &$2$ &$1$&$V_{10}$&point&\cite{Takeuchi89} \\
\hline
8&$10$ &$3$ &$1$&$V_{9}$&$(g,d)=(0,2)$&\cite{Takeuchi89} \\
\hline
9&$8$ &$3$ &$1$&$B_{4}$&$(g,d)=(1,6)$&\cite{Fukuoka17}\\
\hline
10&$6$ &$3$ &$1$&$V_{10}$&$(g,d)=(1,6)$&\S~\ref{KG2-section}\\
\hline
11&$6$ &$3$ &$1$&$\P^{1}$&dP$_{6}$&?\\
\hline
12&$6$ &$4$ &$1$&$\P^{1}$&dP$_{6}$&\cite{Fukuoka17}\\
\hline
13&$4$ &$3$ &$1$&$\P^{1}$&dP$_{6}$&\cite{Fukuoka17}\\
\hline
14&$4$ &$4$ &$1$&$V_{9}$&$(g,d)=(1,6)$&\S~\ref{LG-section}\\
\hline
15&$4$ &$4$ &$1$&$\P^{1}$&dP$_{6}$&?\\
\hline
16&$2$ &$4$ &$1$&$\P^{1}$&dP$_{6}$&\S~\ref{24-section}\\
\hline
17&$2$ &$5$ &$1$&$\P^{1}$&dP$_{6}$&\cite{Fukuoka17}\\
\hline
\end{tabular}
\end{table}

\begin{center}
\textbf{Notation for Table~\ref{table-wF}.}
\end{center}
\begin{itemize}
\item The fourth row from the right denotes the dimension of the exceptional locus  of the contraction $\psi \colon X \to \ol{X}$ given by the semi-ample divisor $-K_{X}$. 
When $\psi$ is small, i.e., $\psi$ is a flopping contraction, we denote the flop of $X$ by $X^{+}$ and the contraction of the $K_{X^{+}}$-negative ray by $\vp^{+} \colon X^{+} \to V$. 
\item The third row from the right denotes the types of $V$. 
``$B_{d}$'' means that $V$ is a smooth Fano threefold of index $2$ with $(-K_{V})^{3}=8d$. 
``$V_{g}$'' means that $V$ is a smooth Fano threefold of index $1$ with $(-K_{V})^{3}=2g-2$. 
\item The second row from the right denotes the types of $\vp^{+} \colon X^{+} \to V$. 
\begin{itemize}
\item ``$(g,d)$'' means that $\vp^{+}$ is the blowing-up along a non-singular curve $C$ of genus $g$ with $-K_{V}.C=i_{V} \cdot d$, where $i_{V}$ denotes the Fano index of $V$. 
\item ``point'' means that $\vp^{+}$ is the blowing-up of $V$ at a point. 
\item ``deg(disc.)=$4$'' means that $\vp^{+} \colon X^{+} \to \P^{2}$ is a conic bundle and the degree of the discriminant divisor is 4. 
\item ``dP$_{6}$'' means that $\vp^{+} \colon X^{+} \to \P^{1}$ is a sextic del Pezzo fibration. 
\end{itemize}
\item Each entry of the rightmost row denotes the part including the proof of the existence of corresponding examples. 
\end{itemize}

This table is more accurate than that of \cite{JPR05} and \cite{JPR11} since we classify the possible values of $h^{1,2}(X)$; from the known results  \cite{JPR05} and \cite{JPR11}, we could not distinguish No.11 from No.12, No.14 from No.15, and No.16 from No.17.

\subsection{Main result}
The main result of this paper is to give examples of No.10, No.14, and No.16: 
\begin{thm}\label{mainthm-wf}
For each $n \in \{10,14,16\}$, there exists an example $X$ of No.$n$. 
Moreover, when $n=10$ or $14$, the flop $X \dra X^{+}$ of this example $X$ is an Atiyah flop. 
When $n=10$, $X$ has $18$ sections as its flopping locus. 
When $n=14$, $X$ has $24$ sections and $3$ bisections as its flopping locus. 
\end{thm}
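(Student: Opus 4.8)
The plan is to realize each of the three cases by reversing the Sarkisov link recorded in Table~\ref{table-wF}: rather than starting from the sextic del Pezzo fibration $\varphi \colon X \to \P^{1}$, I would first construct the flop target $X^{+}$ and then flop back. For No.10 and No.14 the relevant $V$ is a prime Fano threefold of index $1$ and genus $10$ (resp.\ $9$), so I would take $V$ to be a Mukai model: a codimension-$2$ linear section of the $G_{2}$-adjoint variety $\Sigma \subset \P^{13}$ for $V_{10}$ (treated in \S\ref{KG2-section}), and a codimension-$3$ linear section of the Lagrangian Grassmannian $\LG(3,6) \subset \P^{13}$ for $V_{9}$ (treated in \S\ref{LG-section}). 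In each case I would choose an elliptic curve $C \subset V$ with $-K_{V}\cdot C = 6$, set $X^{+} := \Bl_{C} V$ with exceptional divisor $E$ and blow-down $\pi \colon X^{+}\to V$, and take $-K_{X^{+}} = \pi^{*}(-K_{V}) - E$. The adjunction computation $(-K_{X^{+}})^{3} = (-K_{V})^{3} - 2(-K_{V}\cdot C) + 2g(C) - 2$ then gives $6$ for $V_{10}$ and $4$ for $V_{9}$, matching the prescribed anticanonical degrees.

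The core analytic step is to verify that, for a suitably chosen $C$, the divisor $-K_{X^{+}}$ is nef and big but not ample, with a small (flopping) contraction $\psi^{+}\colon X^{+}\to \ol{X}$. Since $-K_{V}$ is the ample generator $H$ of $\Pic(V)$, a curve $\ell \subset X^{+}$ lying over a line $L \subset V$ satisfies $-K_{X^{+}}\cdot \ell = 1 - (L\cdot C)$, while one lying over a conic $Q$ satisfies $-K_{X^{+}}\cdot \ell = 2 - (Q\cdot C)$. Nefness therefore forces $C$ to have no bisecant line and no trisecant conic, and the flopping locus consists exactly of the strict transforms of the unisecant lines ($L\cdot C = 1$) and the bisecant conics ($Q\cdot C = 2$). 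The main obstacle is precisely this incidence control: I would use the known structure of the Hilbert schemes of lines and conics on $\Sigma$ and $\LG(3,6)$ to show that a general elliptic sextic $C$ meets every line at most once and every conic at most twice, so that $-K_{X^{+}}$ is nef, and that it is big since $\pi$ is birational. Identifying the opposite extremal contraction of the flop $X$ as a sextic del Pezzo fibration $\varphi \colon X \to \P^{1}$ would then follow by exhibiting the pencil on $X^{+}$ that cuts it out and checking that the general member is a degree-$6$ del Pezzo surface.

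With $X$ in hand I would confirm that it lands in the correct row. Because $K$ is trivial on the flopping curves, $(-K_{X})^{3} = (-K_{X^{+}})^{3}$, giving $6$ and $4$; the Hodge number is then read off from \eqref{exist-eq-volh12}, equivalently by computing the genera of the associated covers $\varphi_{B}\colon B \to \P^{1}$ and $\varphi_{T}\colon T \to \P^{1}$, which should yield $(g(B),g(T)) = (2,1)$ for No.10 and $(1,3)$ for No.14, so $h^{1,2}(X) = 3$ and $4$ respectively. For the flop itself I would compute the normal bundle of each flopping curve: the strict transform of a line meeting $C$ transversally at one point, and of a conic meeting $C$ transversally at two points, is a smooth rational curve with normal bundle $\mathcal{O}(-1)\oplus\mathcal{O}(-1)$, so the flop is an Atiyah flop. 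Finally I would count these curves enumeratively on $V$ and track their images under $\varphi$, the unisecant lines flopping to sections and the bisecant conics to bisections; the incidence counts on the two Mukai models should give $18$ unisecant lines and no bisecant conic for No.10, and $24$ unisecant lines together with $3$ bisecant conics for No.14, matching the asserted $18$ sections and $24$ sections plus $3$ bisections.

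For No.16 there is no prime Fano threefold target: both $X$ and its flop $X^{+}$ are sextic del Pezzo fibrations over $\P^{1}$, so the blow-up construction is unavailable. Here I would instead build $X$ directly as a sextic del Pezzo fibration with $(-K_{X})^{3} = 2$ and prescribed covers $(g(B),g(T)) = (2,2)$, following the explicit constructions of \cite{Fukuoka17} used for the neighbouring cases No.12, No.13, and No.17 but in the more constrained degree-$2$ situation; the task of \S\ref{24-section} is to exhibit such a fibration, verify that $-K_{X}$ is nef and big but not ample, and confirm via \eqref{exist-eq-volh12} that $h^{1,2}(X) = 4$ rather than the value $5$ of No.17. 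The delicate point throughout is the nefness of $-K_{X}$ (equivalently $-K_{X^{+}}$), since the anticanonical degree is smallest among all the cases and the margin for negative curves is correspondingly tightest.
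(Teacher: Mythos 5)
Your skeleton reproduces the correct picture (reverse the link, blow up an elliptic sextic on a Mukai model, flopping curves = unisecant lines and bisecant conics, No.16 via the methods of \cite{Fukuoka17}), and your numerics are right: the adjunction computation, the forced values $(g(B),g(T))=(2,1)$ and $(1,3)$, and $h^{1,2}=3,4$. But the decisive step is a genuine gap. You write that identifying the opposite contraction as a sextic del Pezzo fibration ``would then follow by exhibiting the pencil on $X^{+}$ that cuts it out,'' without any way of producing that pencil — and this is \emph{exactly} the point where the author's earlier argument in \cite{Fukuoka17} failed, as the paper's own remark explains: the numerical data of $\Bl_{C}V$ is also realized by No.80 (resp.\ No.59) of the table of \cite{CM13}, so no amount of invariant-matching against Table~\ref{table-wF} can force the second extremal contraction to be a dP$_{6}$ fibration. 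The paper closes this gap by \emph{not} taking a general elliptic sextic: $C$ is cut out as a linear section of a specific sextic del Pezzo threefold sitting in the ambient Mukai variety ($\Sigma\simeq\P(T_{\P^{2}})$, the zero locus of a section of the universal bundle on $K(G_{2})$, resp.\ $(\P^{1})^{3}=\LG_{\a,\b}(3,V)\subset\LG(3,6)$), so that the flop of $X$ is realized inside an explicit fivefold modification — the flip $Q_{Y}=\Bl_{R}\Q^{4}\dra Q_{Y^{+}}$ with its $(\P^{1})^{3}$-fibration $Q_{Y^{+}}\to\P^{1}$ (Proposition~\ref{KG2-prop-stflop}), resp.\ $Z\dra Z^{+}$ with its $(\P^{2})^{2}$-fibration (Proposition~\ref{LG-prop-stflip}) — and the dP$_{6}$ fibration on $X^{+}$ is simply the restriction of that ambient fibration to a relative hyperplane section. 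Nothing in your intrinsic setup substitutes for this construction.

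Two secondary problems. First, nefness of $\pi^{*}(-K_{V})-E$ is not implied by the absence of bisecant lines and trisecant conics: you must bound $\lgth(\Gamma\cap C)$ by $\deg\Gamma$ for curves of \emph{every} degree, and your description of the flopping locus presupposes that $\Exc(\psi^{+})$ consists only of strict transforms of lines and conics, which itself needs proof. In the paper both issues evaporate because $-K_{X}\sim q|_{X}^{\ast}\mc O_{\Q^{4}}(1)$ (resp.\ the pullback of $\mc O_{\P(V)}(1)$) is visibly semiample, and the flopping locus is identified inside the explicit contraction. Second, the counts $18$ and $24+3$ are not available as ``incidence counts on the Hilbert schemes'' of the Mukai models for your curve; in the paper they are Chern numbers of the explicit construction — $c_{2}(\ms C(2)|_{\Q^{4}})=18$ in Proposition~\ref{KG2-prop-conclu}, and $c_{2}(\mc E|_{\wt{\P(V)}})^{2}.H_{1}=24$ together with the three conics $\G_{i}$ lying over the lines $l_{i}$ in Proposition~\ref{LG-prop-conclu} — and the Atiyah-flop statement $\mc N_{l/X}\simeq\mc O_{\P^{1}}(-1)^{2}$ rests on the special divisor relation $\Theta+E_{C}\sim-2K_{X}$, not on transversality for a generic sextic. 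Your No.16 paragraph is directionally consistent with the paper (which indeed proceeds as in \cite{Fukuoka17}), but it defers all content: the actual proof requires the K3 lattice $\Lambda$, the $(-2)$-curve control of Lemma~\ref{24-lem-(-2)} to get nefness, the embedding of $S$ into the Fano threefold $Q$ of type No.24 of \cite{MM81}, the genus-$2$ trisection $T$ of the quadric fibration, and \cite[Proposition~3.5]{Fukuoka17} to pass to the dP$_{6}$ fibration.
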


\begin{rem}
It is still uncertain whether there exists an example of No.11 or No.15. 
\end{rem}

\begin{rem}
The existence of an example of No.10 (resp. No.14) was treated by the author in \cite{Fukuoka17}. 
However, 
the author realized that the proof of the existence of examples of No.10 and No.14 in \cite{Fukuoka17} has a gap by Joseph Cutrone's pointing out. 
In \cite[Sections~3.2 and 3.3]{Fukuoka17}, the author stated that  ``$\vp^{+}$ is a del Pezzo fibration because $X^{+}$ is not on the tables on \cite{JPR11} and \cite{CM13}''. 
However, the numerical condition of $X^{+}$ in that proof actually satisfies No.80 (resp. No.59) on the table of \cite{CM13}. 
The author's proof in the existence of No.10 (resp. No.14) did not exclude the possibility that $X^{+}$ belongs to No.80 (resp. No.59) of \cite{CM13}. 

Another purpose of this paper is to attain the correct proof of the existence of examples. 
As a result, the main result of the paper \cite{Fukuoka17} is correct. 
Our construction of examples in this paper is more explicit than that of the paper \cite{Fukuoka17}. 
In particular, we describe the flopping locus of our examples in detail, which is also a refinement. 
\end{rem}


\subsection{Our strategy for the existence of examples}\label{wf-subsec-sketch}

By \cite{JPR11}, it is known that every threefold of No.14 (resp. No.10) must fit into the following diagram: 
\[\xymatrix{
&\ar[ld]_{\vp_{X^{+}}}X^{+} \ar@{-->}[r]^{\chi}&X\ar[rd]^{\vp_{X}}&\\
\P^{1}&&&V,
}\]
where $\chi$ is a flop of $X^{+}$, $V$ is a smooth Fano threefold of index $1$ and genus $10$ (resp. $9$), and $\vp_{X}$ is the blowing-up along an elliptic curve $C \subset V$ of degree $6$. 
In order to construct an example of No.14 (resp. No.10), 
we will construct an elliptic curve $C$ of degree $6$ on a smooth Fano threefold $V$ of genus $10$ (resp. $9$) such that the flop of $\Bl_{C}V$ has a sextic del Pezzo fibration structure. 

By the virtue of Mukai's classification \cite{Muk95}, it was established that $V$ is a linear section of $K(G_{2})$ (resp. $\LG(3,6)$). 
For each case, we will take such an elliptic curve as a linear section of a sextic del Pezzo threefold, which is the zero locus of a general global section of the specific vector bundle on $K(G_{2})$ (resp. $\LG(3,6)$). 
In the case of $K(G_{2})$, we will take the universal rank $2$ vector bundle $\mc E$ corresponding to the special lines on $\Q^{5}$ (cf. \cite{Ott90}). 
In the case of $\LG(3,6)$, we will take the wedge product of the universal quotient bundle $\bigwedge^{2}\mc Q_{\LG}$. 
Then the zero locus of a general global section of $\mc E$ (resp. $\bigwedge^{2}\mc Q_{\LG}$) turns out to be isomorphic to $\P(T_{\P^{2}})$ (resp. $(\P^{1})^{3}$), which is a sextic del Pezzo threefold. 
In the case of $(K(G_{2}),\P(T_{\P^{2}}))$, 
taking a codimension $2$ linear section, 
we obtain a Fano threefold $V^{3}_{10}$ of genus $10$ containing an elliptic curve $C$ of degree $6$. 
As a result, the flop of the blowing-up of $V^{3}_{10}$ along $C$ belongs to No.14. 
In the case of $(\LG(3,6),(\P^{1})^{3})$, 
taking a codimension $2$ linear section, 
we obtain a Fano fourfold $V^{4}_{9}$ of genus $9$ containing an elliptic curve $C$ of degree $6$. 
We will take a hyperplane section of $V^{4}_{9}$, say $V^{3}_{9}$, which is smooth and contains $C$. 
As a result, the flop of the blowing-up of $V^{3}_{9}$ along $C$ belongs to No.14. 

The existence of examples of No.16 will be done by the same arguments as in \cite{Fukuoka17}. 


\begin{ACK}
Most of the contents of this paper were studied during the author's stay at University of Warwick. 
The author wishes to express his deep gratitude to Professor Miles Reid for valuable discussions, comments, and warm encouragement. 
The author is also grateful to the institution for the hospitality. 
The author also wishes to express his deep gratitude to Professor Hiromichi Takagi, his supervisor, for many valuable suggestions and warm encouragement to write this paper.
The author also wishes to express his deep gratitude to Professor Yoshinori Gongyo, his supervisor, for warm encouragement. 

The author wishes to express his gratitude to Professors Joseph Cutrone and Nicholas Marshburn for pointing out the error of the author's paper \cite{Fukuoka17} to him. 
The author is grateful to Doctor Akihiro Kanemitsu for valuable discussions on homogenous spaces and vector bundles. 
The author is also grateful to Doctor Yuya Takeuchi for valuable discussions on Lemma~\ref{LG-lem-P1n}. 
The author also would like to show his gratitude to Professors 
Hamid Ahmadinezhad, Gavin Brown, Ivan Cheltsov, DongSeon Hwang, and Doctor Isac Hed\'{e}n for warm encouragements. 

This work was financially supported by the Program for Leading Graduate Schools, MEXT, Japan.
This work was also partially supported by JSPS's Research Fellowship for Young Scientists (JSPS KAKENHI Grant Number 18J12949).
\end{ACK}

\begin{NC}
Throughout this paper, we work over the complex number field $\C$. 
Vector bundles and line bundles just mean locally free sheaves and invertible sheaves respectively. 
For a locally free sheaf $\mc E$, we define $\P(\mc E)$ as $\Proj \Sym \mc E$. 
The $i$-th projection $X_{1} \times \cdots \times X_{k} \to X_{i}$ is denoted by $\pr_{i}$. 
On $\P^{n_{1}} \times \cdots \times \P^{n_{k}}$, the line bundle $\bigotimes_{i=1}^{l}\pr_{i}^{\ast}\mc O(a_{i})$ is denoted by $\mc O(a_{1},\ldots,a_{k})$. 
For a morphism $f \colon X \to Y$, the K\"ahler differential $\Omega_{X/Y}$ is also denoted by $\Omega_{f}$. 

We further employ the following definition for the flag varieties and the Grassmannian varieties. 
For a $n$-dimensional vector space $V$ and an increasing integers $k_{1}<\cdots<k_{l}$ with $k_{l}<\dim V=n$, 
$\Fl(k_{1},\ldots,k_{l};V)=\Fl(k_{1},\ldots,k_{l};n)$ denotes the flag variety parametrizing the increasing sequences $V_{1} \subset \cdots \subset V_{k_{l}}$ of subspaces of $V^{\vee}$ such that $\dim V_{i}=k_{i}$. 
When $l=1$, $\Fl(k_{1};V)$ is denoted by $\Gr(k_{1},V)$, which is the Grassmannian variety parametrizing the $k_{1}$-dimensional subspaces of $V^{\vee}$. 
On the Grassmannian variety $\Gr(m,V)$, the universal quotient (resp. sub) bundle of rank $m$ (resp. $n-m$) is denoted by $\mc Q_{\Gr(m,V)}$ (resp. $\mc S_{\Gr(m,V)}$). 
\end{NC}

\section{Construction of an example of No.10}\label{KG2-section}

In this section, we will construct an example of No.10. 

\subsection{Sketch of our construction}\label{KG2-subsec-sketch}
As mentioned in Subsection~\ref{wf-subsec-sketch}, 
we will show that the zero scheme $\Sigma$ of a global section of the rank $2$ universal bundle $\mc E$ on $K(G_{2})$ is isomorphic to $\P(T_{\P^{2}})$ (=Proposition~\ref{KG2-prop-TP2}) by using Ottaviani's work \cite{Ott90}. 
Taking a general codimension $2$ linear section of $(K(G_{2}),\P(T_{\P^{2}}))$, we obtain a sextic elliptic curve $C$ on a Fano threefold $V^{3}_{10}$ of genus $10$. 

In order to study $\Bl_{C}V^{3}_{10}$, we will study $W:=\Bl_{\Sigma}K(G_{2})$. 
We will see that $W$ is isomorphic to the projectivization of the Cayley bundle $\ms C_{\Q^{4}}$ on $\Q^{4}$ (see (\ref{KG2-dia-W})). 
We denote the $\P^{1}$-bundle structure by $q_{W} \colon W \to \Q^{4}$. 
By this construction, the blowing-up $X:=\Bl_{C}V^{3}_{10}$ is contained in $W$ and we obtain a morphism $q_{W}|_{X} \colon X \to \Q^{4}$. 
Set $\ol{X}:=q_{W}(X) \subset \Q^{4}$ and $\psi:=q_{W}|_{X} \colon X \to \ol{X}$. 
Then we will see that $\psi$ is a flopping contraction (=Proposition~\ref{KG2-prop-conclu}~(1)). 

To show that the flop of $X$, say $X^{+}$, has a sextic del Pezzo fibration structure, we need to construct $X^{+}$ explicitly. 
We denote the exceptional divisor of $X \to V^{3}_{10}$ by $E_{C}$ and set $R:=\psi(E_{C}) \subset \ol{X}$. 
In Proposition~\ref{KG2-lem-EC}, we prove that $\psi|_{E_{C}} \colon E_{C} \to R$ is isomorphic and hence $R$ is a smooth elliptic scroll in $\P^{5}$. 
Since $E_{C}$ is $\psi$-ample and $R$ is smooth, we can show that the blowing-up $X^{+}:=\Bl_{R}\ol{X}$ is the flop of $X$ (=Proposition~\ref{KG2-prop-conclu}~(2)). 

This elliptic scroll $R \subset \Q^{4} \subset \P^{5}$ is also constructed by Addington, Hassett, Tschinkel, and V\'{a}rilly-Alvarado in their paper \cite{AHTVA16}. 
We will study the blowing-up $Y:=\Bl_{R}\P^{5} \to \P^{5}$ by using some results in \cite{AHTVA16}. 
Then we will show that $Y$ has a flop $\Psi \colon Y \dra Y^{+}$ and that $Y^{+}$ is a smooth fivefold. 
Moreover, $Y^{+}$ has a $(\P^{1})^{3}$-fibration structure $\vp_{Y^{+}} \colon Y^{+} \to \P^{2}$. 
Let $Q_{Y} \subset Y$ and $Q_{Y^{+}} \subset Y^{+}$ be the proper transforms of the hyperquadric $\Q^{4} \subset \P^{5}$ containing $R$. 
Then we will prove that the birational map $\Psi|_{Q_{Y}} \colon Q_{Y} \dra Q_{Y^{+}}$ is a flip and that $Q_{Y^{+}}$ is smooth. 
Moreover, this $Q_{Y^{+}}$ is the pull-back of a line under the morphism $\vp_{Y^{+}} \colon Y^{+} \to \P^{2}$ and hence $Q_{Y^{+}} \to \P^{1}$ is a fibration with general fibers $(\P^{1})^{3}$. 
For more precise, see Proposition~\ref{KG2-prop-stflop}. 
In summary, we will have the following diagram: 
\[\xymatrix{
\Bl_{\Sigma}K(G_{2}) \ar@{}[r]|{\hspace{20pt}=}&W \ar[ld]_{p_{W}} \ar[rd]^{q_{W}}&\P_{\Q^{4}}(\ms C_{\Q^{4}}(2)) \ar@{}[l]|{=\hspace{20pt}}&\ar[ld] Q_{Y}:=\Bl_{R}\Q^{4} \ar@{-->}[r]^{\qquad \Psi|_{Q_{Y}}}_{\qquad \text{flip}}&Q_{Y^{+}} \ar[d]\\
K(G_{2})&&\Q^{4} &&\P^{1}.
}\]

We show that the flop $X^{+}$ of $X$ has a sextic del Pezzo fibration as follows. 
Since $\ol{X}$ is contained in $\Q^{4}$, $X^{+}=\Bl_{R}\ol{X}$ is also contained in $Q_{Y}=\Bl_{R}\Q^{4}$ from the construction. 
By taking the codimension $2$ linear section $(V^{3}_{10},C)$ of $(K(G_{2}),\Sigma=\P(T_{\P^{2}}))$ generally, we will see that $X^{+} \subset Q_{Y}$ is away from the flipping locus of $\Psi_{Q_{Y}}$ (=Proposition~\ref{KG2-prop-conclu}~(3)). 
Hence we obtain the following diagram: 

\[\xymatrix{
&\Bl_{C}V^{3}_{10}=X \ar[ld]_{p_{W}|_{X}} \ar[rd]^{q_{W}|_{X}=:\psi} \ar@{-->}[rr]^{\text{flop}}&&\ar[ld] \Bl_{R}\ol{X}=X^{+} \ar@{=}[r]^{\qquad \Psi|_{X^{+}}}&X^{+}\ar[d]&\\
V^{3}_{10}&&\ol{X}&&\P^{1}.
}\]

Moreover, we will show that $X^{+}$ is contained in the $(\P^{1})^{3}$-fibration $Q_{Y^{+}} \to \P^{1}$ as a relative hyperplane section. 
Therefore, $X^{+}$ has a sextic del Pezzo fibration structure.

\subsection{Organization of this section}
We organize this section as follows. 

In Subsection~\ref{KG2-subsec-2P2}, we will study the blowing-up of $\P^{5}$ along disjoint union of two $2$-planes as a preliminary and recall the construction of the sextic elliptic scroll $R$ due to \cite{AHTVA16}. 
For fixed disjoint two $2$-planes $P_{1}$ and $P_{2}$, 
the blowing-up of $\P^{5}$ along $P_{1} \sqcup P_{2}$ has a natural $\P^{1}$-bundle structure on $\P^{2} \times \P^{2}$ (=Lemma~\ref{KG2-lem-2P2}). 
Cutting $(\P^{2})^{2}$ three times, we will find an elliptic scroll $R$ on $\P^{5}$ of degree $6$ (=Lemma~\ref{KG2-lem-EC}). 
This $R$ will be the image of the exceptional divisor of $X:=\Bl_{C}V^{3}_{10} \to V^{3}_{10}$ under the flopping contraction $X \to \ol{X}$. 

In Subsection~\ref{KG2-subsec-ellscr}, we will study the blowing-up $Y:=\Bl_{R}\P^{5}$. 
The main purpose is to show Proposition~\ref{KG2-prop-stflop}, which states that $Y$ is flopped into a smooth fivefold $Y^{+}$ with a $(\P^{1})^{3}$-fibration structure $Y^{+} \to \P^{2}$. 
The flopped locus is the disjoint union of the proper transforms of $P_{1}$ and $P_{2}$. 
Taking a non-singular hyperquadric $\Q^{4}$ containing $P_{1}$, $P_{2}$, and $R$, 
we will see that $Q_{Y}:=\Bl_{R}\Q^{4}$ is flipped into a smooth fourfold $Q_{Y^{+}}$ with a $(\P^{1})^{3}$-fibration $Q_{Y^{+}} \to \P^{1}$. 

In Subsection~\ref{KG2-subsec-cayley}, 
we will study the zero scheme $\Sigma$ of a general section of the rank $2$ universal vector bundle $\mc E$ on $K(G_{2})$ (cf. Theorem~\ref{KG2-thm-Ott}) and the blowing-up $\Bl_{\Sigma}K(G_{2})$ by using \cite{Ott90}. 
In Proposition~\ref{KG2-prop-TP2}, we will prove that $\Sigma \simeq \P(T_{\P^{2}})$. 
The blowing-up $\Bl_{\Sigma}K(G_{2})$ has the $\P^{1}$-bundle structure $\Bl_{\Sigma}K(G_{2}) \to \Q^{4}$, which coincides with the projectivization of the Cayley bundle on $\Q^{4}$. 


In Subsection~\ref{KG2-subsec-conclu}, we will study the Sarkisov link of $\Bl_{C}V^{3}_{10}$, where $(V^{3}_{10},C)$ is a general codimension $2$ linear section of $(K(G_{2}),\Sigma)$. 
We will prove that $\Bl_{C}V^{3}_{10}$ is flopped into a sextic del Pezzo fibration in the way which was explained in Subsection~\ref{KG2-subsec-sketch}. 


\subsection{Preliminaries for the construction}\label{KG2-subsec-2P2}

We fix disjoint two $2$-planes $P_{1}$ and $P_{2}$ on $\P^{5}$. 

\begin{lem}\label{KG2-lem-2P2}
Set $f \colon \wt{\P^{5}}:=\Bl_{P_{1},P_{2}}\P^{5} \to \P^{5}$. 
Then $\wt{\P^{5}}$ is isomorphic to $\P_{(\P^{2})^{2}}(\mc O(1,0) \oplus \mc O(0,1))$. 
We consider the following diagram: 
\[\xymatrix{
(\P^{2})^{2}&\ar[l]_{\pi \hspace{50pt} } \P_{(\P^{2})^{2}}(\mc O(1,0) \oplus \mc O(0,1))=\wt{\P^{5}} \ar[r]^{\hspace{50pt} f} &\P^{5}.
}\]
Then we have 
\begin{align}\label{KG2-rel-E}
\pi^{\ast}\mc O(1,1) \simeq f^{\ast}\mc O_{\wt{\P^{5}}}(2) \otimes \mc O(-\Exc(f)). 
\end{align}
\end{lem}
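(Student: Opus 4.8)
The plan is to realize $\wt{\P^{5}}$ as an incidence variety and to read off both assertions from the geometry of linear projection, avoiding any Chern-class computation on the bundle. Write $V=\C^{6}$ with $P_{1}=\P(V_{1})$ and $P_{2}=\P(V_{2})$ for a direct sum decomposition $V=V_{1}\oplus V_{2}$ into $3$-dimensional subspaces, which exists precisely because $P_{1}\cap P_{2}=\emp$. The two linear projections from $P_{2}$ and from $P_{1}$ assemble into the rational map $\P^{5}\dra \P(V_{1})\times\P(V_{2})=(\P^{2})^{2}$, $[v_{1}+v_{2}]\mapsto([v_{1}],[v_{2}])$, whose indeterminacy locus is exactly $P_{1}\cup P_{2}$. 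I would introduce the incidence variety
\[
I=\{([v_{1}],[v_{2}],[w])\in(\P^{2})^{2}\times\P^{5}:w\in\langle v_{1},v_{2}\rangle\},
\]
with its two projections $\pi=\pr_{12}$ and $f=\pr_{3}$, and prove $I\cong\wt{\P^{5}}$ compatibly with $f$.

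First I would identify $I$ as the asserted projective bundle. Over $(\P^{2})^{2}$ the fibre of $\pi$ is the line $\P(\C v_{1}\oplus\C v_{2})\subset\P^{5}$, and the rank-$2$ subbundle of $V\otimes\mc O_{(\P^{2})^{2}}$ with fibre $\C v_{1}\oplus\C v_{2}$ is $\mc O(-1,0)\oplus\mc O(0,-1)$. Hence $I$ is the classical projectivization of $\mc O(-1,0)\oplus\mc O(0,-1)$; since the paper's convention is $\P(\mc E)=\Proj\Sym\mc E$, dualizing gives $I\cong\P_{(\P^{2})^{2}}(\mc O(1,0)\oplus\mc O(0,1))$, and under this identification $\mc O_{\P(\mc E)}(1)$ corresponds to $f^{\ast}\mc O_{\P^{5}}(1)$ because the tautological subbundle $\mc O_{\P(\mc E)}(-1)\subset\pi^{\ast}\mc E^{\vee}\hra V\otimes\mc O$ has fibre $\C w=f^{\ast}\mc O_{\P^{5}}(-1)$. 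To see $I\cong\wt{\P^{5}}$ I would check that $f$ is an isomorphism over $\P^{5}\setminus(P_{1}\cup P_{2})$ (there $w=v_{1}+v_{2}$ recovers $[v_{1}],[v_{2}]$ uniquely), while $f^{-1}([w])\cong\P(V_{2})$ for $[w]\in P_{1}$ and $\cong\P(V_{1})$ for $[w]\in P_{2}$, so that $f^{-1}(P_{i})$ is an irreducible divisor $E_{i}$ contracted to the codimension-$3$ plane $P_{i}$. The main technical point is to upgrade this to the statement that $f$ is literally the blow-up of $P_{1}\sqcup P_{2}$: I would do this via the universal property, showing $\mc I_{P_{1}\sqcup P_{2}}\cdot\mc O_{I}$ is invertible with associated divisor $E_{1}+E_{2}$, which forces $f$ to factor through $\Bl_{P_{1},P_{2}}\P^{5}$ by a map that is then checked to be an isomorphism of smooth varieties.

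Granting $\wt{\P^{5}}=I$, the relation (\ref{KG2-rel-E}) drops out of the two projections. The composite $\pr_{1}\circ\pi$ is exactly the projection from $P_{2}$, defined by the linear system $|H-P_{2}|$ of hyperplanes through $P_{2}$; resolving it on the blow-up gives $\pi^{\ast}\mc O(1,0)=f^{\ast}\mc O_{\P^{5}}(1)\otimes\mc O(-E_{2})$, and symmetrically $\pi^{\ast}\mc O(0,1)=f^{\ast}\mc O_{\P^{5}}(1)\otimes\mc O(-E_{1})$. Tensoring the two and using $\Exc(f)=E_{1}+E_{2}$ yields $\pi^{\ast}\mc O(1,1)\simeq f^{\ast}\mc O_{\P^{5}}(2)\otimes\mc O(-\Exc(f))$, which is precisely (\ref{KG2-rel-E}). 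The only genuine obstacle is the bookkeeping in the previous paragraph—pinning down the Grothendieck-versus-classical convention so that the bundle is $\mc O(1,0)\oplus\mc O(0,1)$ rather than its dual, and verifying rigorously that $f$ is the blow-up; once those are settled, the divisor identity is immediate.
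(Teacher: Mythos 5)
Your proposal is correct and is essentially the same argument as the paper's, which simply defers to the proof of \cite[Proposition~3]{AHTVA16}: there too $\wt{\P^{5}}$ is realized as the graph (incidence variety) of the double projection $\P^{5} \dra \P^{2}\times\P^{2}$, the $\P^{1}$-bundle structure is read off from that description, and (\ref{KG2-rel-E}) follows from the identities $\pi^{\ast}\mc O(1,0)\simeq f^{\ast}\mc O_{\P^{5}}(1)\otimes\mc O(-E_{2})$ and $\pi^{\ast}\mc O(0,1)\simeq f^{\ast}\mc O_{\P^{5}}(1)\otimes\mc O(-E_{1})$ for the two resolved linear projections. Your coordinate-free incidence formulation and the universal-property verification are a self-contained rendering of that same construction.
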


\begin{proof}
See the proof of \cite[Proposition~3]{AHTVA16} for example. 
\end{proof}

\begin{lem}\label{KG2-lem-E}
Let $\Sigma \in |\mc O_{(\P^{2})^{2}}(1,1)|$ be a smooth element. 
Note that $\Sigma$ is isomorphic to $\P(T_{\P^{2}})$. 
Set $E:=\pi^{-1}(\Sigma)$. 
Then $f(E)$ is a smooth quadric fourfold $\Q^{4}$ containing $P_{1}$ and $P_{2}$: 
\[\xymatrix{
\P(T_{\P^{2}})=\Sigma&\ar[l]_{\pi|_{E} \hspace{60pt}} \P_{\P(T_{\P^{2}})}(\mc O_{\P(T_{\P^{2}})}(1,0) \oplus \mc O_{\P(T_{\P^{2}})}(0,1))=E \ar[r]^{\hspace{60pt} f|_{E}} & f(E)=\Q^{4}. 
}\]
\end{lem}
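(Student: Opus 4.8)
The plan is to make the correspondence between $(1,1)$-divisors on $(\P^{2})^{2}$ and quadrics through $P_{1}\cup P_{2}$ completely explicit, so that smoothness of $\Sigma$ translates directly into smoothness of $f(E)$. First I would fix coordinates $[x_{0}:\cdots:x_{5}]$ on $\P^{5}$ with $P_{1}=\{x_{0}=x_{1}=x_{2}=0\}$ and $P_{2}=\{x_{3}=x_{4}=x_{5}=0\}$, and write $[\xi_{0}:\xi_{1}:\xi_{2}]$, $[\xi_{3}:\xi_{4}:\xi_{5}]$ for the coordinates of the two factors of $(\P^{2})^{2}$, so that $\pi$ is induced by the two linear projections from $P_{2}$ and $P_{1}$ as in Lemma~\ref{KG2-lem-2P2}. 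Under this description a general $\Sigma \in |\mc O(1,1)|$ is cut out by $\sum_{0\le i,j\le 2} a_{ij}\,\xi_{i}\,\xi_{j+3}=0$ for a $3\times 3$ matrix $A=(a_{ij})$, and I would record the standard facts that $\Sigma$ is smooth precisely when $\det A\neq 0$ (its singular points correspond to nonzero vectors in $\Ker A$ and $\Ker A^{T}$) and that in this case $\Sigma\cong\P(T_{\P^{2}})$ is the flag variety $\Fl(1,2;3)$.

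Next I would identify $f(E)$. Because $E=\pi^{-1}(\Sigma)$ lies in $|\pi^{\ast}\mc O(1,1)|$, the isomorphism (\ref{KG2-rel-E}) shows $E\in |f^{\ast}\mc O_{\P^{5}}(2)\otimes\mc O(-\Exc(f))|$; hence $\ol E:=f(E)$ is the quadric in $\P^{5}$ defined by the corresponding section of $\mc O_{\P^{5}}(2)$ vanishing along $P_{1}$ and $P_{2}$. Tracking (\ref{KG2-rel-E}) on sections — $\xi_{i}\leftrightarrow x_{i}$ for $i\in\{0,1,2\}$ and $\xi_{j+3}\leftrightarrow x_{j+3}$ for $j\in\{0,1,2\}$ — the monomial $\xi_{i}\xi_{j+3}$ corresponds to $x_{i}x_{j+3}$, so $\ol E$ is exactly $\{\sum a_{ij}\,x_{i}\,x_{j+3}=0\}$ with the \emph{same} matrix $A$. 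Since $E$ is irreducible of dimension $4$ and is not contained in $\Exc(f)$, the map $f|_{E}$ is birational onto its image, so $\ol E$ is a reduced irreducible quadric fourfold, and by construction it contains $P_{1}$ and $P_{2}$.

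It then remains to check smoothness. The symmetric Gram matrix of $\sum a_{ij}x_{i}x_{j+3}$ is, in the block decomposition associated to $(x_{0},x_{1},x_{2})$ and $(x_{3},x_{4},x_{5})$, proportional to $\begin{pmatrix} 0 & A \\ A^{T} & 0 \end{pmatrix}$, whose determinant is $\pm(\det A)^{2}$. Thus $\ol E$ is smooth if and only if $\det A\neq 0$, which by the first step is exactly the condition that $\Sigma$ be smooth. Therefore $f(E)=\Q^{4}$ is a smooth quadric fourfold containing $P_{1}$ and $P_{2}$. Finally, for the projective-bundle description in the diagram I would just restrict $\pi$: as $E=\pi^{-1}(\Sigma)$ is the preimage of $\Sigma$ under the $\P^{1}$-bundle $\wt{\P^{5}}=\P_{(\P^{2})^{2}}(\mc O(1,0)\oplus\mc O(0,1))$, base change gives $E=\P_{\Sigma}\big((\mc O(1,0)\oplus\mc O(0,1))|_{\Sigma}\big)$, which under $\Sigma\cong\P(T_{\P^{2}})$ is the asserted $\P_{\P(T_{\P^{2}})}(\mc O_{\P(T_{\P^{2}})}(1,0)\oplus\mc O_{\P(T_{\P^{2}})}(0,1))$, with $\pi|_{E}$ and $f|_{E}$ the two projections.

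The only place needing genuine care is the bookkeeping in the second step: matching the linear system $|\pi^{\ast}\mc O(1,1)|$ with quadrics through $P_{1}\cup P_{2}$ compatibly enough that the same matrix $A$ governs both $\Sigma$ and $\ol E$. It is precisely this compatibility that forces ``$\Sigma$ smooth'' and ``$f(E)$ smooth'' to be the identical determinantal condition $\det A\neq 0$; once it is in place, the elementary determinant computation finishes the proof.
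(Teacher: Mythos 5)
Your proof is correct, but it takes a genuinely different route from the paper. The paper argues abstractly: from (\ref{KG2-rel-E}) it concludes that $f(E)$ is a quadric, then observes that $\Exc(f|_{E})=\Sigma_{1}\sqcup\Sigma_{2}$ with $\Sigma_{i}=f^{-1}(P_{i})\cap E$ a $\P^{1}$-bundle over $P_{i}$, and invokes Ando's theorem on birational contractions with at most one-dimensional fibers to get smoothness of the image. You instead make everything explicit: a $(1,1)$-divisor and the corresponding quadric through $P_{1}\cup P_{2}$ are governed by the \emph{same} $3\times 3$ matrix $A$, and both smoothness conditions collapse to $\det A\neq 0$ via the Gram matrix $\begin{pmatrix}0&A\\ {}^{t\!}A&0\end{pmatrix}$. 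Your identification of the linear systems is the correct one (pulling back $\xi_{i}\xi_{j+3}$ along the projection map gives $x_{i}x_{j+3}$), and the argument that $f|_{E}$ is birational onto its image, so that $f(E)$ equals the full quadric, is sound. What your approach buys: it is elementary and self-contained (no Ando), it produces the explicit equation of $\Q^{4}$, and it yields the sharper statement that $\Sigma$ is smooth \emph{if and only if} $f(E)$ is. What the paper's approach buys: it is coordinate-free, it avoids any bookkeeping between linear systems, and Ando's theorem is a tool the paper reuses several times later (e.g.\ in Lemma~\ref{KG2-lem-Qi} and Proposition~\ref{LG-prop-Z}), so the argument fits the surrounding machinery.
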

\begin{proof}
By (\ref{KG2-rel-E}), the image $f(E)$ is a quadric. 
Set $\Sigma_{i}:=f^{-1}(P_{i}) \cap E$ for each $i$. 
Then $\Sigma_{i} \simeq \P(T_{\P^{2}})$ and $f|_{\Sigma_{i}} \colon \Sigma_{i} \to P_{i}$ is a $\P^{1}$-bundle structure. 
Therefore, we have $\Exc(f|_{E})=\Sigma_{1} \sqcup \Sigma_{2}$ and the image of $f|_{E}$ is smooth by Ando's theorem \cite[Theorem~2.3]{Ando85}. 
Thus $f(E)$ is smooth. 
\end{proof}

\begin{lem}\label{KG2-lem-ES}
Let $S \subset \Sigma=\P(T_{\P^{2}})$ be a smooth hyperplane section, $E_{S}=\pi^{-1}(S)$, and $\ol{E_{S}}:=f(E_{S})$. 
Then $\ol{E_{S}}$ is a complete intersection of two quadrics containing $P_{1}$ and $P_{2}$. 
Moreover, the morphism $f|_{E_{S}} \colon E_{S} \to \ol{E_{S}}$ is a flopping contraction:  
\[\xymatrix{
S&\ar[l]_{\pi|_{E_{S}}} E_{S} \ar[r]^{f|_{E_{S}}} & \ol{E_{S}}. 
}\]
\end{lem}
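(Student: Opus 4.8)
The plan is to exhibit $S$ as a linear section inside $(\P^{2})^{2}$ and then transport the geometry through the birational morphism $f$ and the $\P^{1}$-bundle $\pi$ of Lemma~\ref{KG2-lem-2P2}. Since $\Sigma=\P(T_{\P^{2}})$ is embedded by $\mc O_{(\P^{2})^{2}}(1,1)|_{\Sigma}$ (the restriction of the Segre embedding), a smooth hyperplane section is of the form $S=\Sigma \cap \Sigma'$ for a general $\Sigma' \in |\mc O_{(\P^{2})^{2}}(1,1)|$, which by Bertini we may take to be smooth with $S$ smooth. Writing $E=\pi^{-1}(\Sigma)$ and $E'=\pi^{-1}(\Sigma')$ we then have $E_{S}=\pi^{-1}(S)=E \cap E'$. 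Recall from Lemma~\ref{KG2-lem-E} that $g:=f|_{E} \colon E \to \Q^{4}$ contracts the two sections $\Sigma_{1},\Sigma_{2}$ of the $\P^{1}$-bundle $\pi|_{E}$ onto the planes $P_{1},P_{2}$; as the contraction of a $\P^{1}$-bundle over a smooth codimension-two centre in the smooth fourfold $\Q^{4}$, this identifies $E=\Bl_{P_{1},P_{2}}\Q^{4}$.

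For the first assertion I would restrict relation~(\ref{KG2-rel-E}) to $E$. Since $E_{S} \in |(\pi|_{E})^{\ast}\mc O_{\Sigma}(1,1)|$ and $\Exc(f)|_{E}=\Sigma_{1}+\Sigma_{2}$, relation~(\ref{KG2-rel-E}) gives
\[
E_{S} \sim g^{\ast}\mc O_{\Q^{4}}(2)-\Sigma_{1}-\Sigma_{2}
\]
on $E$. Hence $E_{S}$ is the proper transform under $g=\Bl_{P_{1},P_{2}}\Q^{4}$ of a quadric section of $\Q^{4}$ vanishing to order exactly one along $P_{1}$ and $P_{2}$, and its image $\ol{E_{S}}=g(E_{S})$ is precisely this quadric section, i.e. $\ol{E_{S}}=\Q^{4} \cap Q'$ for a quadric $Q' \subset \P^{5}$ containing $P_{1}$ and $P_{2}$. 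As $\dim \ol{E_{S}}=3<4=\dim \Q^{4}$, the quadric $Q'$ is independent from the one defining $\Q^{4}$, so $\ol{E_{S}}$ is a genuine complete intersection of two quadrics through $P_{1}$ and $P_{2}$.

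For the second assertion, $E_{S}=\pi^{-1}(S)$ is smooth because $S$ is smooth and $\pi$ is a smooth $\P^{1}$-bundle, and $f|_{E_{S}}$ is birational onto $\ol{E_{S}}$ since $g$ is an isomorphism outside $\Sigma_{1} \cup \Sigma_{2}$ and $E_{S} \not\subset \Sigma_{i}$. The non-isomorphism locus of $f|_{E_{S}}$ is contained in $S_{i}:=E_{S} \cap \Sigma_{i}$. Under the isomorphism $\pi|_{\Sigma_{i}} \colon \Sigma_{i} \xrightarrow{\sim} \Sigma$ the surface $S_{i}$ is identified with $S$, and $f|_{\Sigma_{i}}$ with a projection $\pr_{i} \colon \Sigma=\P(T_{\P^{2}}) \to \P^{2}=P_{i}$; since a fibre of $\pr_{i}$ meets $\mc O_{\Sigma}(1,1)$ in degree one, the map $f|_{S_{i}} \colon S_{i} \to P_{i}$ is a birational morphism of smooth surfaces. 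In particular $S_{i}$ is not contracted, and the only positive-dimensional fibres of $f|_{E_{S}}$ are the finitely many curves contracted by $S_{i} \to P_{i}$; thus $f|_{E_{S}}$ is small. Crepancy then follows by comparing discrepancies: from $K_{E}=g^{\ast}K_{\Q^{4}}+\Sigma_{1}+\Sigma_{2}$ and $g^{\ast}\ol{E_{S}}=E_{S}+\Sigma_{1}+\Sigma_{2}$ one gets, via adjunction on $E$ and on $\Q^{4}$,
\[
K_{E_{S}}=(K_{E}+E_{S})|_{E_{S}}=(g^{\ast}K_{\Q^{4}}+E_{S}+\Sigma_{1}+\Sigma_{2})|_{E_{S}}=(f|_{E_{S}})^{\ast}K_{\ol{E_{S}}}.
\]
A small crepant contraction from a smooth threefold is a flopping contraction, so $\ol{E_{S}}$ acquires only terminal Gorenstein singularities, which gives the claim.

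The step I expect to be the main obstacle is smallness, i.e. ruling out that $f|_{E_{S}}$ contracts a divisor. This hinges on the degree-one computation $\mc O_{\Sigma}(1,1) \cdot (\text{fibre of }\pr_{i})=1$, which forces $S_{i} \to P_{i}$ to be birational (so that the whole surface $S_{i}$ maps onto $P_{i}$ rather than being collapsed) together with the elementary fact that a birational morphism of smooth surfaces contracts only finitely many curves. One also needs to choose $\Sigma'$ so that simultaneously $\Sigma'$ is a smooth member of $|\mc O(1,1)|$ and $S=\Sigma \cap \Sigma'$ is smooth, so that Lemma~\ref{KG2-lem-E} applies to $\Sigma'$ and $E_{S}=E \cap E'$ is the expected transverse intersection; a general $\Sigma'$ works by Bertini.
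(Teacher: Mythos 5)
Your proposal follows the paper's own route almost step for step: the complete-intersection claim is obtained by restricting (\ref{KG2-rel-E}) to $E=\Bl_{P_{1},P_{2}}\Q^{4}$, smallness is deduced from the birationality of $S_{i}=E_{S}\cap \Sigma_{i} \to P_{i}$, and crepancy comes from adjunction; your discrepancy computation $K_{E_{S}}=(f|_{E_{S}})^{\ast}K_{\ol{E_{S}}}$ is the same statement as the paper's $\omega_{E_{S}}^{-1}=(f|_{E_{S}})^{\ast}\mc O_{\ol{E_{S}}}(2)$, and those parts are correct.

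There is, however, a genuine gap at the last step: you never prove that $\ol{E_{S}}$ is \emph{normal}, and normality of the target is part of the definition of a flopping contraction (it is also what the paper actually uses later, e.g. in the proof of Lemma~\ref{KG2-lem-stflop}~(1)). Your inference ``the only positive-dimensional fibres are finitely many curves, thus $f|_{E_{S}}$ is small'' only shows that no divisor is contracted; it does not show that $f|_{E_{S}}$ is an isomorphism in codimension one, nor that its image is normal. A proper birational morphism with finite fibres can have non-normal image along a divisor (the normalization of a threefold with a double or cuspidal surface is bijective and contracts nothing), and here the danger is exactly non-normality of $\ol{E_{S}}$ along the divisors $P_{1},P_{2}\subset \ol{E_{S}}$, where $f|_{E_{S}}$ is injective but a priori not an immersion. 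Your closing sentence (``so $\ol{E_{S}}$ acquires only terminal Gorenstein singularities'') derives properties of $\ol{E_{S}}$ from the flopping claim instead of proving them. The paper closes this point explicitly: it shows $\Exc(f|_{E_{S}})$ is purely $1$-dimensional, concludes that $\ol{E_{S}}$ is regular in codimension $1$, and then gets normality from Serre's criterion, the condition $S_{2}$ being automatic because $\ol{E_{S}}$ is a complete intersection. In your setup the missing ingredient can be supplied as follows: for a general point $x\in P_{i}$, the scheme-theoretic fibre of $f|_{E_{S}}$ over $x$ is the intersection, inside $f^{-1}(x)\simeq \P^{2}$, of the two distinct lines $\pi^{-1}(\Sigma)\cap f^{-1}(x)$ and $\pi^{-1}(\Sigma')\cap f^{-1}(x)$, hence a reduced point; therefore $f|_{E_{S}}$ is unramified, hence a local isomorphism, at the generic point of $S_{i}$, so $\ol{E_{S}}$ is smooth in codimension one, and combined with $S_{2}$ this yields normality. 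With that added, your argument is complete.
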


\begin{proof}
It follows from (\ref{KG2-rel-E}) and Lemma~\ref{KG2-lem-E} that $\ol{E_{S}}$ is the complete intersection of two quadrics containing $P_{1}$ and $P_{2}$. 
Set $S_{i}:=E_{S} \cap \Sigma_{i}$ for each $i \in \{1,2\}$. 
Then $S_{i} \to P_{i}$ is the blowing-up of $\P^{2}=P_{i}$ at three points. 
Since $\Exc(f|_{E_{S}}) \subset \Exc(f) \cap E_{S}=\bigsqcup_{i=1}^{2} S_{i}$, 
the exceptional locus $\Exc(f|_{E_{S}})$ is purely $1$-dimensional. 
In particular, $\ol{E_{S}}$ is regular in codimension $1$. 
Hence $\ol{E_{S}}$ is normal and $f|_{E_{S}}$ is a small birational contraction. 
Since the adjunction formula gives $\omega_{E_{S}}^{-1} = f|_{E_{S}}^{\ast}\mc O_{\ol{E_{S}}}(2)$, we conclude that $f|_{E_{S}}$ is flopping. 
\end{proof}

\begin{lem}\label{KG2-lem-EC}
Let $C \subset (\P^{2})^{2}$ be a codimension $3$ linear section with respect to $|\mc O(1,1)|$. 
Set $E_{C}:=\pi^{-1}(C)$ and $R:=f(E_{C})$: 
\[\xymatrix{
C&\ar[l]_{\pi|_{E_{C}}} E_{C} \ar[r]^{f|_{E_{C}}} &R.
}\]
Then the following assertions hold:
\begin{enumerate}
\item The restriction $f|_{E_{C}} \colon E_{C} \to R$ is isomorphic. 
\item The intersection $C_{i}:=P_{i} \cap R$ be a smooth cubic curve on $P_{i}=\P^{2}$ for each $i \in \{1,2\}$. 
\item If a hyperquadric $Q$ on $\P^{5}$ contains $R$, then $Q$ contains $P_{1}$ and $P_{2}$. 
\item Let $\Sigma_{a},\Sigma_{b},\Sigma_{c} \subset (\P^{2})^{2}$ be general hyperplane sections such that $C=\Sigma_{a} \cap \Sigma_{b} \cap \Sigma_{c}$. 
Set $Q_{i}:=f(\pi^{-1}(\Sigma_{i}))$ for each $i$, which is a smooth quadric by Lemma~\ref{KG2-lem-E}. 
Then $R \cup P_{1} \cup P_{2}$ with the reduced scheme structure is a complete linear section of the three smooth quadrics $Q_{a}$, $Q_{b}$, and $Q_{c}$. 
In particular, the divisor $Q_{a}+Q_{b}+Q_{c}$ is simply normal crossing at the generic point of $P_{1}$, $P_{2}$, and $R$. 
\end{enumerate}
\end{lem}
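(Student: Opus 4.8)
The plan is to use that $\pi|_{E_{C}}\colon E_{C}\to C$ is a ruled surface over the sextic elliptic curve $C$—indeed $C=\Sigma_{a}\cap\Sigma_{b}\cap\Sigma_{c}$ is a complete intersection of three members of $|\mc O(1,1)|$, so adjunction gives $\omega_{C}\simeq\mc O_{C}$ and $\deg C=\mc O(1,1)^{4}=6$—while $f$ is an isomorphism off $\Exc(f)=G_{1}\sqcup G_{2}$, where $G_{i}:=f^{-1}(P_{i})$. The decisive structural fact is that $G_{1},G_{2}$ are the two tautological sections of the $\P^{1}$-bundle $\pi$ of Lemma~\ref{KG2-lem-2P2}, so each $G_{i}$ maps isomorphically to $(\P^{2})^{2}$ under $\pi$, while $f|_{G_{i}}\colon G_{i}\simeq P_{i}\times\P^{2}\to P_{i}$ is the projection. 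Consequently $\Exc(f)\cap E_{C}=s_{1}\sqcup s_{2}$ with $s_{i}:=G_{i}\cap E_{C}$ a section of $\pi|_{E_{C}}$ isomorphic to $C$, and, since $f^{-1}(P_{i})=G_{i}$, one gets $C_{i}:=R\cap P_{i}=f(s_{i})$ equal to the image of $C$ under the projection $\pr$ of $(\P^{2})^{2}$ onto the factor identified with $P_{i}$. I would treat the assertions in the order (2), (1), (3), (4).

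For (2) I use a determinantal description of this projection. Writing the three bilinear forms defining $\Sigma_{a},\Sigma_{b},\Sigma_{c}$ and fixing the coordinate $y$ on the factor $P_{i}$, the three equations become linear in the other coordinate $x$ and assemble into a $3\times3$ matrix $M(y)$ of linear forms, with $y\in C_{i}\iff\det M(y)=0$. Hence $C_{i}$ is a plane cubic; over its smooth locus $M(y)$ has rank $2$, so its kernel gives a unique preimage, and for general $C$ this exhibits $\pr|_{C}\colon C\xrightarrow{\sim}C_{i}$ with $C_{i}$ a smooth cubic (genus $1$, as it must be). This is (2). For (1): $f|_{E_{C}}$ contracts no curve, because the only candidates lie in $s_{1}\sqcup s_{2}$ and $f|_{s_{i}}$ is finite by (2); it is injective, since the three loci $s_{1}$, $s_{2}$, $E_{C}\setminus(s_{1}\cup s_{2})$ have pairwise disjoint images ($f(s_{i})=C_{i}\subset P_{i}$ and $P_{1}\cap P_{2}=\emptyset$, while $f(E_{C}\setminus\Exc(f))$ avoids $P_{1}\cup P_{2}$) and $f$ is injective on each; and it is an immersion along $s_{i}$, because $f$ sends the fibre $F$ through a point to a line $\ell$ (as $f^{\ast}\mc O(1)\cdot F=1$) which is not contained in $P_{i}$—otherwise $F\subset G_{i}$, impossible as $F\cap G_{i}$ is one point—so $T\ell$ is independent of $TC_{i}\subset TP_{i}$. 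A proper injective immersion is a closed immersion, so $f|_{E_{C}}\colon E_{C}\xrightarrow{\sim}R$ and $R$ is a smooth sextic elliptic scroll.

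For (3): since $C_{i}\subset R\cap P_{i}\subset Q\cap P_{i}$ and $C_{i}$ is an irreducible cubic, if $Q$ did not contain $P_{i}$ then $Q\cap P_{i}$ would be a conic in $P_{i}\cong\P^{2}$ containing an irreducible curve of degree $3$, which is absurd; hence $P_{i}\subset Q$. For (4): by Lemma~\ref{KG2-lem-E} each $Q_{i}$ contains $P_{1}\cup P_{2}$, and $E_{C}=\bigcap_{i}\pi^{-1}(\Sigma_{i})$ gives $R=f(E_{C})\subseteq\bigcap_{i}Q_{i}$, so $R\cup P_{1}\cup P_{2}\subseteq Q_{a}\cap Q_{b}\cap Q_{c}$. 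As $\deg R+\deg P_{1}+\deg P_{2}=6+1+1=8=2^{3}$ and the left-hand side is purely of dimension $2=5-3$, the opposite inclusion, together with the complete-intersection and reducedness statements, will follow once the three quadrics are shown to meet properly and transversally at the generic points of $P_{1},P_{2},R$; the simple-normal-crossing claim there is exactly the linear independence of the three conormal vectors $dq_{a},dq_{b},dq_{c}$, for the net of quadrics that corresponds, under the identification $H^{0}(\mc O(1,1))\simeq\{\text{quadrics containing }P_{1}\cup P_{2}\}$ coming from (\ref{KG2-rel-E}), to $\langle\Sigma_{a},\Sigma_{b},\Sigma_{c}\rangle$.

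I expect the transversality in (4)—the properness of $Q_{a}\cap Q_{b}\cap Q_{c}$ and the conormal independence yielding the simple normal crossing property—to be the main obstacle, precisely because the three quadrics are constrained to share both planes $P_{1},P_{2}$ and the scroll $R$, so genericity must be argued inside this constrained net rather than among all quadrics of $\P^{5}$. I would resolve it either by the explicit model of \cite{AHTVA16}, or by checking that $q\mapsto dq(p)$ maps the net onto the conormal space $N^{\vee}_{P_{i}/\P^{5},p}$ at a general point $p\in P_{i}$ (and similarly along $R$), so that a general member of the net is transverse there. By contrast, the embedding statement underlying (1)--(2) is routine once the determinantal picture is in place, and (3) is immediate from the degree comparison.
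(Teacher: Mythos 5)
The paper gives no argument of its own for this lemma: its entire proof is the citation ``See \cite[Proposition~3]{AHTVA16} and its proof.'' So your proposal is necessarily a different route, and for assertions (1)--(3) it is a correct, self-contained one. The structural facts you rely on all check out: $G_i:=f^{-1}(P_i)$ are the two sections of $\pi$, $\pi|_{G_i}$ is an isomorphism onto $(\P^2)^2$, $f|_{G_i}$ is the projection to the factor identified with $P_i$, so $R\cap P_i=f(G_i\cap E_C)=\pr(C)$. Your determinantal argument for (2) can even be tightened so that no genericity is needed: any $y$ with $\det M(y)=0$ admits a kernel vector $x$, and then $(x,y)\in C$, so $C_i$ \emph{equals} the cubic $\{\det M=0\}$; since $\mc O(0,1)\cdot C=3$, the map $\pr|_C$ is then birational onto a cubic, a plane cubic birational to an elliptic curve is smooth, and a birational morphism of smooth curves is an isomorphism. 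The tangent-space argument for the immersion in (1) (namely $df$ sends $T_ps_i\oplus T_pF$ onto $TC_i\oplus T\ell$ with $T\ell\not\subset TP_i$ because $\ell\not\subset P_i$) and the degree argument for (3) are both sound.

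The one genuine gap is (4): you defer the transversality (``I expect [it] to be the main obstacle''), offering either a citation to \cite{AHTVA16} or an unexecuted conormal computation. In fact the obstacle dissolves if you run your own degree count in the \emph{opposite} direction. First, the set-theoretic equality $Q_a\cap Q_b\cap Q_c=R\cup P_1\cup P_2$ needs no transversality: if $x\in Q_a\cap Q_b\cap Q_c$ lies outside $P_1\cup P_2$, then, since $f$ is an isomorphism off $\Exc(f)$ and $Q_i=f(\pi^{-1}(\Sigma_i))$ with $f(\Exc(f))=P_1\cup P_2$, the point $f^{-1}(x)$ lies in $\pi^{-1}(\Sigma_a)\cap\pi^{-1}(\Sigma_b)\cap\pi^{-1}(\Sigma_c)=\pi^{-1}(C)=E_C$, hence $x\in R$. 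So the intersection is proper, and Bezout gives $\sum_Z i(Z)\deg Z=8$; as the components $R,P_1,P_2$ have degrees $6+1+1=8$ and each $i(Z)\geq 1$, every multiplicity equals $1$. Multiplicity $1$ along a component $Z$ means precisely that $(q_a,q_b,q_c)$ generates the maximal ideal of $\mc O_{\P^5,\eta_Z}$, i.e.\ that the three differentials span the conormal space of $Z$ at its generic point --- which is exactly the simple-normal-crossing assertion; and since a complete intersection has no embedded components, generic reducedness yields that $Q_a\cap Q_b\cap Q_c$ is the reduced scheme $R\cup P_1\cup P_2$. So transversality is a \emph{consequence} of the degree count you already made, not a prerequisite for it, and (4) closes with the material you have; no appeal to \cite{AHTVA16} is required.
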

\begin{proof}
See \cite[Proposition~3]{AHTVA16} and its proof. 
\end{proof}

\subsection{Blowing-up of $\P^{5}$ along the elliptic scroll $R$}\label{KG2-subsec-ellscr}

\begin{sett}\label{KG2-sett-Y}
Fix a ladder $C \subset S \subset \Sigma \subset (\P^{2})^{2}$ by members of $|\mc O_{(\P^{2})^{2}}(1,1)|$ such that each rung is smooth. 
Set $E$, $Q$, $E_{S}$, $\ol{E_{S}}$, $E_{C}$, and $R$ as in Lemmas~\ref{KG2-lem-E}, \ref{KG2-lem-ES}, and \ref{KG2-lem-EC}. 

In this situation, we let 
$\s \colon Y:=\Bl_{R}\P^{5} \to \P^{5}$ 
be the blowing-up along $R$. 
Let $L_{Y}$ be the pull-back of a hyperplane on $\P^{5}$ by $\s$ and $G=\Exc(\s)$. 
Let $Q_{Y}$, $E_{S,Y}$, and $P_{i,Y}$ be the proper transform of $Q$, $E_{S}$, and $P_{i}$ on $Y$ respectively. 
Note that the subvarieties 
$E_{S,Y} \subset Q_{Y} \subset Y$ 
form a ladder of $|2L_{Y}-G|$. 
\end{sett}

The main purpose of this subsection is to show the following proposition. 
\begin{prop}\label{KG2-prop-stflop}
In Setting~\ref{KG2-sett-Y}, the following assertions hold. 
\begin{enumerate}
\item There exists a flop $\Psi \colon Y \dra Y^{+}$ of $P_{1,Y}$ and $P_{2,Y}$, which are disjoint and isomorphic to $\P^{2}$. 
The fivefold $Y^{+}$ is smooth and has an extremal contraction $\vp_{Y^{+}} \colon Y^{+} \to \P^{2}$. 
\item Let $Q_{Y^{+}}$ be the proper transform of $Q_{Y}$ on $Y^{+}$
Then the birational map $\Psi|_{Q_{Y}} \colon Q_{Y} \dra Q_{Y^{+}}$ is a flip of $P_{1,Y}$ and $P_{2,Y}$. 
The fourfold $Q_{Y^{+}}$ is smooth and the pull-back of a line under the morphism $\vp_{Y^{+}}$. 
The morphism $\vp_{Q_{Y^{+}}}:=\vp_{Y^{+}}|_{Q_{Y^{+}}} \colon Q_{Y^{+}} \to \P^{1}$ is also an extremal contraction. 
\item Let $E_{S,Y^{+}}$ be the proper transform of $E_{S,Y}$ on $Y^{+}$. 
Then the birational map $\Psi|_{E_{S,Y}} \colon E_{S,Y} \to E_{S,Y^{+}}$ is the blowing-down of $P_{1,Y}$ and $P_{2,Y}$ to two smooth points. 
Moreover, $E_{S,Y^{+}}$ is a fiber of $\vp_{Y^{+}}$ which is isomorphic to $(\P^{1})^{3}$. 
\item Every smooth $\vp_{Y^{+}}$-fiber is isomorphic to $(\P^{1})^{3}$. 
\end{enumerate}
\end{prop}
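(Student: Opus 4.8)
The plan is to construct the flop $\Psi \colon Y \dra Y^+$ explicitly out of the $\P^1$-bundle picture already available, rather than to verify the flop abstractly. The starting observation is that Lemma~\ref{KG2-lem-2P2} exhibits $\wt{\P^5} = \Bl_{P_1,P_2}\P^5$ as $\P_{(\P^2)^2}(\mc O(1,0)\oplus\mc O(0,1))$, with $\pi \colon \wt{\P^5}\to(\P^2)^2$ and $f \colon \wt{\P^5}\to\P^5$. I would first compare $\wt{\P^5}$ with $Y=\Bl_R\P^5$: since $R=f(E_C)$ and $f|_{E_C}$ is an isomorphism onto $R$ (Lemma~\ref{KG2-lem-EC}(1)), the curve $C\subset(\P^2)^2$ and the scroll $R\subset\P^5$ are linked by $\pi$ and $f$. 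The aim is to identify $Y^+$ as a second blow-up/projective bundle over $(\P^2)^2$ (or a variety dominating it) so that $\Psi$ becomes the elementary birational modification interchanging the two extremal rays of $\wt{\P^5}\dashrightarrow Y \dashrightarrow Y^+$ sitting over $(\P^2)^2$.

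Concretely, I would proceed in the following order. \textbf{Step 1.} Set up intersection theory on $Y$: record $\operatorname{Pic}(Y)=\Z L_Y\oplus\Z G$, compute $K_Y$, and identify the two extremal rays of $\overline{\NE}(Y)$—one contracted by $\s$ (the fibers of $G\to R$), the other the rays spanned by the proper transforms in $P_{1,Y}$ and $P_{2,Y}$ of lines in $P_i\simeq\P^2$. \textbf{Step 2.} Show $P_{1,Y}$ and $P_{2,Y}$ are disjoint copies of $\P^2$ with normal bundle computed via Lemma~\ref{KG2-lem-EC}(2); the cubic curves $C_i=P_i\cap R$ control how $\s$ affects $P_i$, so that after blowing up $R$ the self-intersection of a line in $P_{i,Y}$ makes the ray $K_Y$-trivial, hence floppable. \textbf{Step 3.} Construct $Y^+$ by blowing down these $\P^2$'s in the flopped direction, i.e.\ exhibit the contraction $Y\to\ol Y$ of the $\P^2$-rays and its flop $Y^+\to\ol Y$, and produce $\vp_{Y^+}\colon Y^+\to\P^2$ from the linear system $|L_Y-G|$ (pullback of $\mc O(1,1)$ under $\pi$, via the relation (\ref{KG2-rel-E})), verifying it is a morphism to $\P^2$ with the stated fiber structure.

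\textbf{For parts (2)–(4)}, I would restrict everything to the ladder $E_{S,Y}\subset Q_Y\subset Y$ of $|2L_Y-G|$. The relation (\ref{KG2-rel-E}) identifies $\pi^*\mc O(1,1)$ with $2L_Y-G$ on $\wt{\P^5}$, so $Q_Y$ maps to a line in $\P^2$ under $\vp_{Y^+}$ and $E_{S,Y^+}$ to a point. \textbf{Part (2):} the restriction $\Psi|_{Q_Y}$ is a flip precisely because $Q_Y$ is a divisor meeting the flopping $\P^2$'s along lines (not along the full plane), so the discrepancy computation that made $Y\dra Y^+$ crepant becomes strictly $K$-positive on $Q_Y$; smoothness of $Q_{Y^+}$ follows from Lemma~\ref{KG2-lem-EC}(4), which guarantees $Q$ meets $P_1,P_2,R$ transversally. \textbf{Part (3):} $E_{S,Y}\to E_{S,Y^+}$ contracts the two lines $S_i=E_S\cap\Sigma_i$ in $P_{i,Y}$ to points; since $E_S$ is a complete intersection of two quadrics with the small contraction of Lemma~\ref{KG2-lem-ES}, $E_{S,Y^+}$ is the anti-canonical smoothing, which I would identify with $(\P^1)^3$ by exhibiting three independent conic-pencil maps (three rulings). \textbf{Part (4):} a general fiber avoids the flopping locus, so it is isomorphic to a general $E_{S,Y}$, reducing (4) to (3).

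\textbf{The main obstacle} will be Step 3 together with part (3): producing $Y^+$ and $\vp_{Y^+}$ explicitly enough to read off that the fibers are $(\P^1)^3$ and not merely deformations of the degenerate fiber $E_{S,Y^+}$. The cleanest route is probably to avoid the abstract flop altogether and instead use the $\P_{(\P^2)^2}(\cdots)$-description: realize $Y^+$ as (a blow-up of) a $(\P^1)^3$-bundle over $\P^2$ coming from the three conic pencils on $(\P^2)^2$ cut out by the $|\mc O(1,1)|$-ladder, and deduce the flop by comparing the two small resolutions of $\ol Y$ directly, citing \cite{AHTVA16} for the concrete equations where the transversality (Lemma~\ref{KG2-lem-EC}(4)) is needed to guarantee smoothness of $Y^+$ and $Q_{Y^+}$.
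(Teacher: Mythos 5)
Your overall skeleton---the planes $P_{1,Y},P_{2,Y}$ are the flopping locus, the fibration should come from quadrics through $R$, and (4) reduces to (3)---agrees with the paper, but several of your concrete mechanisms are wrong, and the step that actually produces the flop is missing. First, $\vp_{Y^{+}}$ cannot come from $|L_{Y}-G|$: the sextic elliptic scroll $R$ spans $\P^{5}$, so $|L_{Y}-G|=|\mc I_{R}\otimes\mc O_{\P^{5}}(1)|=\emp$. The correct system is $|2L_{Y^{+}}-G_{Y^{+}}|$ (proper transforms of quadrics through $R$); you have conflated the relation (\ref{KG2-rel-E}), which lives on $\wt{\P^{5}}=\Bl_{P_{1},P_{2}}\P^{5}$ and involves the exceptional divisor over the \emph{planes}, with divisor classes on $Y=\Bl_{R}\P^{5}$, where $G$ lies over $R$---these are different blow-ups. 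Moreover the essential subtlety is that on $Y$ this quadric system has base locus exactly $P_{1,Y}\sqcup P_{2,Y}$ and becomes base point free only after the flop; that is precisely why the flop creates the fibration. Second, your justification of (2) rests on a false statement: $Q_{Y}$ does not meet the flopping planes ``along lines''---by Lemma~\ref{KG2-lem-EC}~(3) every quadric through $R$ contains $P_{1}$ and $P_{2}$, so $Q_{Y}\supset P_{1,Y}\sqcup P_{2,Y}$, and likewise $E_{S,Y}$ contains both planes as divisors (also, $S_{i}=E_{S}\cap\Sigma_{i}$ is a surface, not a line, and it is the planes $P_{i,Y}$, not the $S_{i}$, that $\Psi|_{E_{S,Y}}$ contracts). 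The engine of the whole proof is the normal bundle computation $\mc N_{P_{i,Y}/E_{S,Y}}=\mc O_{\P^{2}}(-1)$, $\mc N_{P_{i,Y}/Q_{Y}}=\mc O_{\P^{2}}(-1)^{2}$, $\mc N_{P_{i,Y}/Y}=\mc O_{\P^{2}}(-1)^{3}$, coming from $(2L_{Y}-G)|_{P_{i,Y}}\simeq\mc O_{\P^{2}}(-1)$ via the cubics $C_{i}=P_{i}\cap R$: it gives $K_{Q_{Y}}$-negativity of the contracted rays by adjunction (hence a flip), smoothness of $Y^{+}$ and $Q_{Y^{+}}$ through the explicit blow-up/blow-down description, and part (3). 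You mention these normal bundles in Step~2 but never use them, relying instead on the incorrect mechanisms above.

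Third, and most seriously, you have no construction of the contraction $Y\to\ol Y$: a $K_{Y}$-trivial small extremal ray is not handled by the cone/contraction theorem, so ``exhibit the contraction of the $\P^{2}$-rays'' is not a step one can simply perform. The paper obtains it as the Stein factorization of the morphism defined by $|3L_{Y}-G|$, and proving that this system is base point free is the real work (a Koszul-type resolution of $\mc I_{P_{1,Y}\sqcup P_{2,Y}}(3L_{Y}-G)$ built from the three quadrics, using the transversality of Lemma~\ref{KG2-lem-EC}~(4), plus vanishing); nothing in your outline substitutes for this. Finally, identifying the fibers with $(\P^{1})^{3}$ via an ``anti-canonical smoothing'' or ``three conic-pencil maps'' is not an argument; the paper does it numerically---$(-K_{E_{S_{ab},Y^{+}}})^{3}=48$, $\rho(E_{S_{ab},Y^{+}})=3$, Fano index even---and invokes Fujita's classification of del Pezzo manifolds \cite{Fujita80}, then transfers the answer to all smooth fibers by diffeomorphism invariance plus \cite{Fujita80} again; your reduction of (4) to (3) is fine in spirit but needs exactly this kind of input to close.
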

As a consequence of the above proposition, we obtain the following diagram: 

\begin{align}\label{KG2-dia-stflop}
\xymatrix{
&\ar[ld]_{\pi}\wt{\P^{5}}\ar[rd]^{f}&&\Bl_{R}\P^{5}=Y \ar@{-->}[r]^{\quad \Psi} \ar[ld]_{\s} & Y^{+} \ar[rd]^{\vp_{Y^{+}}} & \\
(\P^{2})^{2}&\ar[ld]_{\quad \pi|_{E}} \ar@{}[l]|{\Box}E\ar[rd]^{f|_{E}\quad } \ar@{}[u]|{\rotatebox{90}{$\subset$}}&\P^{5}&\Bl_{R}\Q^{4}=Q_{Y}\ar@{-->}[r]^{\quad \Psi|_{Q_{Y}}} \ar[ld]_{\quad \s|_{Q_{Y}}}\ar@{}[u]|{\rotatebox{90}{$\subset$}}&Q_{Y^{+}}\ar[rd]^{\vp_{Q_{Y^{+}}}\qquad } \ar@{}[r]|{\Box} \ar@{}[u]|{\rotatebox{90}{$\subset$}}&\P^{2} \\
\Sigma \ar@{}[u]|{\rotatebox{90}{$\subset$}}&\ar[ld]_{\qquad \pi|_{E_{S}}} \ar@{}[l]|{\Box} E_{S}\ar[rd]^{f|_{E_{S}}} \ar@{}[u]|{\rotatebox{90}{$\subset$}}&\Q^{4} \ar@{}[u]|{\rotatebox{90}{$\subset$}}&E_{S,Y} \ar[ld]_{\qquad \s|_{E_{S,Y}}} \ar[r]^{\Psi|_{E_{S,Y}}} \ar@{}[u]|{\rotatebox{90}{$\subset$}}&E_{S,Y^{+}} \ar@{}[r]|{\Box} \ar[rd] \ar@{}[u]|{\rotatebox{90}{$\subset$}}&\P^{1}\ar@{}[u]|{\rotatebox{90}{$\subset$}} \\
S\ar@{}[u]|{\rotatebox{90}{$\subset$}}&\ar@{}[l]|{\Box} E_{C} \ar[ld]_{\qquad \pi|_{E_{C}}} \ar[rd]^{f|_{E_{C}}} \ar@{}[u]|{\rotatebox{90}{$\subset$}}&\ol{E_{S}} \ar@{}[u]|{\rotatebox{90}{$\subset$}}&&(\P^{1})^{3}\ar@{}[u]|{\rotatebox{90}{$\simeq$}}&pt. \ar@{}[u]|{\rotatebox{90}{$\in$}} \\
C\ar@{}[u]|{\rotatebox{90}{$\subset$}}&&R\ar@{}[u]|{\rotatebox{90}{$\subset$}}&&&
}
\end{align}


In order to show Proposition~\ref{KG2-prop-stflop}, we prepare the following lemma. 

\begin{lem}\label{KG2-lem-stflop}
In Setting~\ref{KG2-sett-Y}, the following assertions hold:
\begin{enumerate}
\item The proper transform $E_{S,Y} \subset Y$ of $E_{S}$ is the flop of $E_{S} \to \ol{E_{S}}$. In particular, $E_{S,Y}$ is smooth. 
\item The proper transform $P_{i,Y}$ is isomorphic to $\P^{2}$. 
Moreover, it holds that $\mc N_{P_{i,Y}/E_{S,Y}}=\mc O_{\P^{2}}(-1)$, $\mc N_{P_{i,Y}/Q_{Y}}=\mc O_{\P^{2}}(-1)^{2}$, and $\mc N_{P_{i,Y}/Y}=\mc O_{\P^{2}}(-1)^{3}$. 
\item The complete linear system $|2L_{Y}-G|$ is $2$-dimensional with $\Bs|2L_{Y}-G|=P_{1,Y} \sqcup P_{2,Y}$. 
\item The complete linear system $|3L_{Y}-G|$ is base point free. The Stein factorization $\psi_{Y} \colon Y \to \ol{Y}$ of the morphism given by $|3L_{Y}-G|$ is a flopping contraction. 
Its exceptional locus $\Exc(\psi_{Y})$ is a disjoint union of $P_{1,Y}$ and $P_{2,Y}$. 
Moreover, if $\Psi \colon Y \dra Y^{+}$ denotes the flop, then $Y^{+}$ is smooth. 
\item Set $\ol{Q_{Y}}:=\psi_{Y}(Q_{Y})$. 
Then $\psi_{Q_{Y}}:=\psi_{Y}|_{Q_{Y}} \colon Q_{Y} \to \ol{Q_{Y}}$ is a flipping contraction. 
Its exceptional locus $\Exc(\psi_{Q_{Y}})$ is the disjoint union of $P_{1,Y}$ and $P_{2,Y}$ and its flip is smooth. 
\item Let $Q_{Y^{+}}$ be the proper transform of $Q_{Y}$ on $Y^{+}$. 
Then $\Psi|_{Q_{Y}} \colon Q_{Y} \dra Q_{Y^{+}}$ is the flip for the contraction $\psi_{Q_{Y}} \colon Q_{Y} \to \ol{Q_{Y}}$. 
Hence $Q_{Y^{+}}$ is smooth. 
\item Let $E_{S,Y^{+}}$ be the proper transform of $E_{S}$ on $Y^{+}$. 
Then $\Psi|_{E_{S,Y}} \colon E_{S,Y} \to E_{S,Y^{+}}$ blows $P_{1,Y}$ and $P_{2,Y}$ down to two smooth points. 
\end{enumerate}
\end{lem}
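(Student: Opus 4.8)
The plan is to prove the seven assertions in the order in which they support one another, resting everything on two pillars: the crepant small‑resolution (flop) structure of $E_{S,Y}$, and a single normal‑bundle computation for the planes $P_{i,Y}$. Once these are in place, the contraction $\psi_Y$ and its flop $\Psi$ become one \emph{standard flop} of two copies of $(\P^2,\mc O(-1)^{3})$, and statements (5)--(7) follow mechanically by restricting that flop to the ladder $E_{S,Y}\subset Q_Y\subset Y$. For (1), I would first record that $R=f(E_C)\subset f(E_S)=\ol{E_S}$ by Lemma~\ref{KG2-lem-EC}, so $R$ is a Weil divisor on the threefold $\ol{E_S}$; it fails to be Cartier exactly at the finitely many nodes of $\ol{E_S}$, which are the images of the curves contracted by the flopping contraction $f|_{E_S}$ of Lemma~\ref{KG2-lem-ES} (three in each $S_i$, each an Atiyah $\P^1$). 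The proper transform $E_{S,Y}$ of $\ol{E_S}$ on $Y=\Bl_R\P^5$ equals $\Bl_R\ol{E_S}$ and supplies the small resolution opposite to $E_S\to\ol{E_S}$; since both resolutions are crepant (indeed $-K_{E_{S,Y}}=\s^{\ast}\mc O_{\ol{E_S}}(2)$, coming from Lemma~\ref{KG2-lem-ES}) and distinct, $E_{S,Y}$ is the flop of $E_S$ and in particular smooth.

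For (2), the identification $C_i=P_i\cap R$ of Lemma~\ref{KG2-lem-EC}(2) is a cubic curve, hence a Cartier divisor on $P_i\cong\P^2$, so the proper transform $P_{i,Y}\to P_i$ is an isomorphism and $P_{i,Y}\cong\P^2$. The base case $\mc N_{P_{i,Y}/E_{S,Y}}=\mc O_{\P^2}(-1)$ I would extract by adjunction inside the smooth threefold $E_{S,Y}$: from $-K_{E_{S,Y}}=\s^{\ast}\mc O_{\ol{E_S}}(2)$ and the isomorphism $P_{i,Y}\to P_i$ one gets $K_{E_{S,Y}}|_{P_{i,Y}}=\mc O(-2)$, and $K_{P_{i,Y}}=\mc O(-3)$ then forces $\mc N_{P_{i,Y}/E_{S,Y}}=\mc O(-1)$. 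The remaining two normal bundles I would build up along the ladder: since $E_{S,Y}\in|(2L_Y-G)|_{Q_Y}|$ and $Q_Y\in|2L_Y-G|$ (Setting~\ref{KG2-sett-Y}), and since $L_Y|_{P_{i,Y}}=\mc O(1)$ while $G|_{P_{i,Y}}=\mc O(C_i)=\mc O(3)$, both restricted conormal factors $\mc N_{E_{S,Y}/Q_Y}|_{P_{i,Y}}$ and $\mc N_{Q_Y/Y}|_{P_{i,Y}}$ equal $(2L_Y-G)|_{P_{i,Y}}=\mc O(-1)$. The defining extensions split because $\Ext^1(\mc O(-1),\mc O(-1))=H^1(\P^2,\mc O)=0$, giving $\mc N_{P_{i,Y}/Q_Y}=\mc O(-1)^{2}$ and $\mc N_{P_{i,Y}/Y}=\mc O(-1)^{3}$.

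For (3), I would identify $|2L_Y-G|$ with the proper transforms of quadrics through $R$; by Lemma~\ref{KG2-lem-EC}(3) these contain $P_1,P_2$, and by Lemma~\ref{KG2-lem-EC}(4) they form the net $\langle Q_a,Q_b,Q_c\rangle$, so $\dim|2L_Y-G|=2$. The common intersection of the three quadrics being the reduced $R\cup P_1\cup P_2$, and the simple‑normal‑crossing statement of Lemma~\ref{KG2-lem-EC}(4), identify $\Bs|2L_Y-G|=P_{1,Y}\sqcup P_{2,Y}$ after subtracting $G$. For (4), I would compute $K_Y=-6L_Y+2G$, so that $3L_Y-G=-\tfrac12 K_Y$ restricts to $\mc O_{\P^2}(0)$ on each $P_{i,Y}$. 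Writing $3L_Y-G=(2L_Y-G)+L_Y$ and using that $|2L_Y-G|$ is base‑point free away from $P_{1,Y}\sqcup P_{2,Y}$ by (3), I get that $3L_Y-G$ is nef, with contracted curves exactly the lines in $P_{i,Y}$, and big; the base‑point‑free theorem then makes its Stein factorization $\psi_Y$ a contraction with $\Exc(\psi_Y)=P_{1,Y}\sqcup P_{2,Y}$, crepant because $-\tfrac12K_Y$ is $\psi_Y$‑trivial. Its flop is constructed as the standard flop of $(\P^2,\mc O(-1)^{3})$ (blow up $P_{1,Y}\sqcup P_{2,Y}$, whose exceptional divisors are $\P^2\times\P^2$, and contract the second rulings), which produces a smooth $Y^+$.

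Finally, (5)--(7) I would deduce uniformly by restricting this one flop to the ladder, reading off the local model from (2): over the cone singularity $\psi_Y(P_{i,Y})$, a subvariety containing $P_{i,Y}$ is transformed according to $\P(\mc N_{P_{i,Y}/\bullet}^{\vee})\subset\P^2\times\P^2$. With $\mc N_{P_{i,Y}/Q_Y}=\mc O(-1)^{2}$ the slice is $\P^2\times\P^1$ and the second projection is the flip $(\P^2\rightsquigarrow\P^1)$, so $\psi_{Q_Y}$ is a flipping contraction with $K_{Q_Y}|_{P_{i,Y}}=\mc O(-1)$ (hence $K_{Q_Y}$‑negative) and the induced $Q_{Y^+}=\Psi|_{Q_Y}(Q_Y)$ is its smooth flip, giving (5) and (6); with $\mc N_{P_{i,Y}/E_{S,Y}}=\mc O(-1)$ the slice is $\P^2\times\{\mathrm{pt}\}$ contracted to a point, so $\Psi|_{E_{S,Y}}$ blows the $(-1)$-plane $P_{i,Y}$ down to a smooth point, giving (7). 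I expect the genuine obstacle to lie in part (4): establishing globally that $-\tfrac12K_Y=3L_Y-G$ is base‑point free with exceptional locus \emph{exactly} $P_{1,Y}\sqcup P_{2,Y}$ and that the resulting flop $Y^+$ is smooth, i.e. pinning the contraction to the standard local model; together with the smoothness of $E_{S,Y}$ in (1), this is where the real work sits, whereas (5)--(7) are then essentially formal consequences of the normal‑bundle bookkeeping in (2).
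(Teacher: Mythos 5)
Your proposal tracks the paper's proof closely in most parts: your (2) is the paper's argument essentially verbatim (adjunction inside $E_{S,Y}$, the two normal-bundle sequences along $P_{i,Y}\subset E_{S,Y}\subset Q_{Y}\subset Y$, split because $H^{1}(\P^{2},\mc O)=0$), your (3) uses the same net of quadrics from Lemma~\ref{KG2-lem-EC}, and your (6)--(7) are, as in the paper, read off from the standard flop of two disjoint copies of $(\P^{2},\mc O_{\P^{2}}(-1)^{\oplus 3})$ obtained by blowing up and down. The divergences are in (1), (4), (5), and in (1) your argument has a genuine gap.

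In (1) you conclude that $E_{S,Y}$ is the flop of $E_{S}$ ``since both resolutions are crepant and distinct''. This inference is invalid. The contraction $f|_{E_{S}}\colon E_{S}\to\ol{E_{S}}$ contracts six curves (the three exceptional curves of each $S_{i}\to P_{i}$, cf.\ the proof of Lemma~\ref{KG2-lem-ES}), and the flop is the unique small modification over $\ol{E_{S}}$ on which the proper transform of an $f|_{E_{S}}$-ample divisor becomes relatively anti-ample; the sign must reverse on \emph{every} contracted curve. Distinctness only guarantees that $E_{S,Y}$ and $E_{S}$ differ over at least one contracted curve: a crepant small modification differing over some of the six curves and agreeing over the others would be ``crepant and distinct'' without being the flop. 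So you must prove oppositeness over all contracted curves, which is exactly what the paper does, and the fix is short: by construction of the blow-up, $-G|_{E_{S,Y}}$ is relatively ample over $\ol{E_{S}}$, while the proper transform of $G|_{E_{S,Y}}$ on $E_{S}$ is $E_{C}$, which is $f|_{E_{S}}$-ample; a single divisor class thus changes sign on all contracted curves at once. Note also that your auxiliary claims --- that the singular points of $\ol{E_{S}}$ are nodes and that $R$ is non-Cartier exactly there --- are proved nowhere in the paper (and are unnecessary): the paper gets smallness of $E_{S,Y}\to\ol{E_{S}}$ from the fact that $\s|_{Q_{Y}}\colon Q_{Y}\to Q$ is the blow-up of a smooth fourfold along the smooth surface $R$, hence has one-dimensional fibers over $R$, gets normality of $E_{S,Y}$ from regularity in codimension one plus the Gorenstein property, and then smoothness follows since the flop of a smooth threefold is smooth \cite[Theorem~6.15]{KM98}.

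Two smaller shortfalls. In (4), the Kawamata--Shokurov base point free theorem yields only that some multiple of $3L_{Y}-G$ is free, whereas the assertion is that $|3L_{Y}-G|$ itself is free; the paper proves this stronger statement via the Koszul-type resolution $0\to\mc O(-3L_{Y}+2G)\to\mc O(-L_{Y}+G)^{3}\to\mc O(L_{Y})^{3}\to\mc I_{P_{1,Y}\sqcup P_{2,Y}}(3L_{Y}-G)\to 0$, the vanishing of $H^{1}(Y,\mc I_{P_{1,Y}\sqcup P_{2,Y}}(3L_{Y}-G))$, and the triviality of $(3L_{Y}-G)|_{P_{i,Y}}$. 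Your version still suffices to construct $\psi_{Y}$ and the flop, but not to prove the stated freeness. In (5), before restricting the flop you need to know that $\psi_{Q_{Y}}\colon Q_{Y}\to\ol{Q_{Y}}:=\psi_{Y}(Q_{Y})$ is a contraction, i.e.\ ${\psi_{Y}}_{\ast}\mc O_{Q_{Y}}=\mc O_{\ol{Q_{Y}}}$; the paper obtains this from relative Kodaira vanishing, $R^{1}{\psi_{Y}}_{\ast}\mc O_{Y}(-Q_{Y})=0$, using that $-Q_{Y}\sim_{\ol{Y}}L_{Y}$ is $\psi_{Y}$-ample. Your local-model argument identifies the flip once this is known, but skips this step.
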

\begin{proof}
(1) Note that $\s|_{Q_{Y}} \colon Q_{Y}=\Bl_{R}Q \to Q$ is the blowing-up of $Q$ along the smooth center $R$. 
Thus every non-trivial fiber of $\s|_{E_{S,Y}} \colon E_{S,Y} \to E_{S}$ is isomorphic to $\P^{1}$. 
Since $\ol{E_{S}}$ is normal and contains $R$ as a Weil divisor, $\s|_{E_{S,Y}}$ is a small contraction, which implies $E_{S,Y}$ is regular in codimension $1$. 
Since $E_{S,Y}$ is a divisor of the smooth fourfold $Q_{Y}$, $E_{S,Y}$ is Gorenstein. 
Therefore, $E_{S,Y}$ is normal. 
From the construction, $-G|_{E_{S,Y}}$ is relatively ample over $\ol{E_{S}}$. 
On the other hand, the proper transform of $G|_{E_{S,Y}}$ on $E_{S}$ is $E_{C}$, which is relatively ample over $\ol{E_{S}}$ as a divisor on $E_{S}$. 
Thus $E_{S,Y}$ is the flop of $E_{S}$. 

(2) It follows from Lemma~\ref{KG2-lem-EC}~(2) that $P_{i,Y} \simeq \P^{2}$ for each $i$. 
The proper transform $E_{S,Y}$ is smooth by (1) and contains $P_{1,Y}$ and $P_{2,Y}$ as divisors. 
Since $\omega_{E_{S,Y}}=(\s|_{E_{S,Y}})^{\ast}\omega_{\ol{E_{S}}}=(\s|_{E_{S,Y}})^{\ast}\mc O_{\ol{E_{S}}}(-2)$ and $\omega_{P_{i,Y}}^{-1} \simeq \mc O_{\P^{2}}(-3)$, 
we get $\mc N_{P_{i,Y}/E_{S,Y}}=\mc O_{\P^{2}}(-1)$. 
Using the inclusions $P_{i,Y} \subset E_{S,Y} \subset Q_{Y} \subset Y$, we get the following exact sequences for each $i \in \{1,2\}$: 
\begin{align*}
&0 \to \mc N_{P_{i,Y}/E_{S,Y}} \to \mc N_{P_{i,Y}/Q_{Y}} \to \mc N_{E_{S,Y}/Q_{Y}}|_{P_{i,Y}} \to 0 \text{ and }\\
&0 \to \mc N_{P_{i,Y}/Q_{Y}} \to \mc N_{P_{i,Y}/Y} \to \mc N_{Q_{Y}/Y}|_{P_{i,Y}} \to 0.
\end{align*}
Note that $E_{S,Y} \sim (2L_{Y}-G)|_{Q_{Y}}$ 
and $Q_{Y} \sim 2L_{Y}-G$. 
Thus we obtain $\mc N_{E_{S,Y}/Q_{Y}}|_{P_{i,Y}}=\mc N_{Q_{Y}/Y}|_{P_{i,Y}}=\mc O_{\P^{2}}(-1)$ by Lemma~\ref{KG2-lem-EC}~(2). 
Hence it follows from the above exact sequences that $\mc N_{P_{i,Y}/Q_{Y}}=\mc O_{\P^{2}}(-1)^{2}$ and $\mc N_{P_{i,Y}/Y}=\mc O_{\P^{2}}(-1)^{3}$.  

(3) By Lemma~\ref{KG2-lem-EC}~(3) and (4), we have $\dim |\mc O_{\P^{5}}(2) \otimes \mc I_{R}|=\dim |\mc O_{\P^{5}}(2) \otimes \mc I_{R \cup P_{1} \cup P_{2}}|=2$. 
Let $\Sigma_{a}$, $\Sigma_{b}$, $\Sigma_{c}$, $Q_{a}$, $Q_{b}$, and $Q_{c}$ be as in Lemma~\ref{KG2-lem-EC}~(4). 
Let $Q_{a,Y}$, $Q_{b,Y}$, and $Q_{c,Y}$ be the proper transform of $Q_{a}$, $Q_{b}$, and $Q_{c}$ on $Y$ respectively. 
We may assume that $S_{ab}:=\Sigma_{a} \cap \Sigma_{b}$ is smooth. 
By Lemma~\ref{KG2-lem-ES} and (1), $E_{S_{ab}} \to \ol{E_{S_{ab}}} \gets E_{S_{ab},Y}$ is the flop. 
Since $Q_{c,Y}|_{E_{S_{ab,Y}}} = P_{1,Y}+P_{2,Y}$ as divisors on $E_{S_{ab,Y}}$, 
we get $\Bs |2L_{Y}-G|=Q_{a,Y} \cap Q_{b,Y} \cap Q_{c,Y}=P_{1,Y} \sqcup P_{2,Y}$. 

(4) 
By (3) and its proof, we obtain the exact sequence 
$0 \to \mc O(-3L_{Y}+2G) \to \mc O(-L_{Y}+G)^{3} \to \mc O(L_{Y})^{3} \to \mc I_{P_{1,Y} \sqcup P_{2,Y}}(3L_{Y}-G) \to 0$. 
Since 
$h^{3}(\mc O(-3L_{Y}+2G))=h^{3}(\mc O(3L_{Y}+K_{Y}))=0$, 
$h^{2}(\mc O(-L_{Y}+G))=h^{2}(\mc O(5L_{Y}-G+K_{Y}))=0$, 
and $h^{1}(\mc O(L_{Y}))=0$ hold, 
we have the vanishing $H^{1}(Y, \mc I_{P_{1,Y} \sqcup P_{2,Y}}(3L_{Y}-G))=0$. 
By the exact sequence $0 \to \mc I_{P_{1,Y} \sqcup P_{2,Y}}(3L_{Y}-G) \to \mc O(3L_{Y}-G) \to \bigoplus_{i=1}^{2} \mc O(3L_{Y}-G)|_{P_{i,Y}} \to 0$, we obtain $\Bs|3L_{Y}-G| \subset \bigcup_{i=1,2} \Bs |(3L_{Y}-G)|_{P_{i,Y}}|$, which is empty by Lemma~\ref{KG2-lem-EC}~(2). 
Thus $\Bs|3L_{Y}-G|=\emp$. 

To show the property of $\psi_{Y}$, let $C \subset Y$ be a curve with $(3L_{Y}-G).C=0$. 
Then $(2L_{Y}-G).C=(3L_{Y}-G).C-L_{Y}.C \leq 0$ since $L_{Y}$ is nef. 
If $(2L_{Y}-G).C=0$, then we have $G.C=0$, which is a contradiction to that $-G$ is relatively ample over $\P^{5}$. 
Thus $(2L_{Y}-G).C<0$, which implies $C \subset P_{1,Y} \sqcup P_{2,Y}$. 
On the other hand, since $\mc O_{P_{i,Y}}(3L_{Y}-G) \simeq \mc O_{P_{i,Y}}$, we get $\Exc(\psi_{Y})=P_{1,Y} \sqcup P_{2,Y}$. 
Hence $\psi_{Y}$ is a small birational morphism contracting only $P_{1,Y} \sqcup P_{2,Y}$. 
Since $-K_{Y} \sim 6L_{Y}-2G$, $\psi_{Y}$ is flopping. 
The last assertion follows from (2). 

(5) Since $-Q_{Y} \sim -2L_{Y}+G \sim_{\ol{Y}} L_{Y}$ is relatively ample over $\ol{L}$, we have $R^{1}{\psi_{Y}}_{\ast}\mc O(-Q_{Y})=0$ by the relative Kodaira vanishing theorem. 
Thus $\psi_{Q_{Y}}$ is a contraction. 
Since $Q_{Y}$ contains $P_{1,Y} \sqcup P_{2,Y}$, 
we obtain $\Exc(\psi_{Q_{Y}})=P_{1,Y} \sqcup P_{2,Y}$. 
The last assertion follows from (2). 

(6) For $i \in \{1,2\}$, let $P_{i,Y^{+}}$ be the flopped plane corresponding $P_{i,Y}$. 
By (2), the flop $\Psi \colon Y \dra Y^{+}$ is given by the composition of the blowing-up and the blowing-down 
$Y \overset{\t}{\gets} \Bl_{P_{1,Y} \sqcup P_{2,Y}}Y=:\wh{Y}=\Bl_{P_{1,Y^{+}} \sqcup P_{2,Y^{+}}} \overset{\t^{+}}{\to} Y^{+}$, 
where $\t$ (resp. $\t^{+}$) denotes the blowing-up of $Y$ (resp. $Y^{+}$) along $P_{1,Y} \sqcup P_{2,Y}$ (resp. $P_{1,Y^{+}} \sqcup P_{2,Y^{+}}$). 
By (2), the flip of $Q_{Y}$ is given by taking the proper transform. 

(7) This follows from the same argument as in the proof of (6). 
By (2), $\Psi|_{E_{S,Y}}$ contracts $P_{i,Y}$ to a smooth point. 
\end{proof}

\begin{proof}[Proof of Proposition~\ref{KG2-prop-stflop}]
By Lemma~\ref{KG2-lem-stflop}~(4), (5), and (6), we have the flop $\Phi \colon Y \dra Y^{+}$ and the flip $\Phi|_{Q_{Y}} \colon Q_{Y} \dra Q_{Y^{+}}$. 
Let $L_{Y^{+}}$ and $G_{Y^{+}}$ be the proper transform on $Y^{+}$ of $L_{Y}$ and $G$ respectively. 
Let $Q_{a,Y^{+}},Q_{b,Y^{+}},Q_{c,Y^{+}} \in |2L_{Y^{+}}-G_{Y^{+}}|$ be the proper transform of them on $Y^{+}$. 
We already know that  $Q_{a,Y} \cap Q_{b,Y} \cap Q_{c,Y}=P_{1,Y} \sqcup P_{2,Y}$ by Lemma~\ref{KG2-lem-stflop}~(3) and 
$Q_{a,Y}+Q_{b,Y}+Q_{c,Y}$ is simply normal crossing. 
Thus $Q_{a,Y^{+}} \cap Q_{b,Y^{+}} \cap Q_{c,Y^{+}} \subsetneqq P_{1,Y^{+}} \sqcup P_{2,Y^{+}}$ and hence we have $Q_{a,Y^{+}} \cap Q_{b,Y^{+}} \cap Q_{c,Y^{+}}=\emp$, which implies $|2L_{Y^{+}}-G_{Y^{+}}|$ is base point free. 
Then $|2L_{Y^{+}}-G_{Y^{+}}|$ gives a morphism $\vp_{Y^{+}} \colon Y^{+} \to \P^{2}$. 
A fiber of $\vp_{Y^{+}}$ is $Q_{a,Y^{+}} \cap Q_{b,Y^{+}}$, which is the proper transform $E_{S_{ab},Y^{+}}$. 
By this argument, $\vp_{Y^{+}}$ has connected fibers. 
Since $\rho(Y^{+})=2$, $\vp_{Y^{+}}$ is an extremal contraction. 
Therefore, every smooth fiber of $\vp_{Y^{+}}$ is a Fano manifold. 
Since $-K_{Y^{+}}=6L_{Y^{+}}-2G_{Y^{+}}=2(Q_{Y^{+}}+L_{Y^{+}})$, the Fano index of every smooth fiber is divided by $2$. 
Let us consider the fiber $E_{S_{ab,Y^{+}}}$. 
By Lemma~\ref{KG2-lem-ES} and Lemma~\ref{KG2-lem-stflop}~(1) and (7), we have $(-K_{E_{S_{ab,Y^{+}}}})^{3}=(-K_{E_{S_{ab,Y}}})^{3}+16=48$ and $\rho(E_{S_{ab,Y^{+}}})=\rho(E_{S_{ab,Y}})-2=\rho(E_{S_{ab}})-2=\rho(S_{ab})-1=3$. 
By Fujita's classification~\cite{Fujita80}, we conclude that $E_{S_{ab,Y^{+}}}=(\P^{1})^{3}$. 
Hence all smooth fibers are diffeomorphic to $(\P^{1})^{3}$ and hence isomorphic again by \cite{Fujita80}. 
By construction, $Q_{Y^{+}} \subset Y^{+}$ is the pull-back of a line of $\P^{2}$ by $\vp_{Y^{+}}$. 
Since $\rho(Q_{Y^{+}})=2$, the morphism 
$\vp_{Q_{Y^{+}}}:=\vp_{Y^{+}}|_{Q_{Y^{+}}} \colon Q_{Y^{+}} \to \P^{1}$ 
is also an extremal contraction whose smooth fibers are $(\P^{1})^{3}$. 
We complete the proof. 
\end{proof}

\begin{rem}\label{KG2-rem-cubic}
Let $\wt{V} \in |3H_{Y}-G_{Y}|$ be a general element and $V=\s(\wt{V})$. 
Then by \cite[Theorems~2 and 6]{AHTVA16}, $V$ is a smooth cubic fourfold containing $R$ and $\wt{V}=\Bl_{R}V$. 
By Lemma~\ref{KG2-lem-stflop}, we may assume that $\wt{V}$ is away from the flipping locus of $Y \dra Y^{+}$. 
Hence we have a morphism $\vp_{Y^{+}}|_{\wt{V}} \colon \wt{V} \to \P^{2}$, whose general fiber is isomorphic to a sextic del Pezzo surface. 
Addington, Hassett, Tschinkel, and V\'{a}rilly-Alvarado studied the cubic fourfold $V$ by using this del Pezzo fibration structure over $\P^{2}$ \cite{AHTVA16}. 
\end{rem}

\subsection{$K(G_{2})$ and the sextic del Pezzo threefold $\P(T_{\P^{2}})$}\label{KG2-subsec-cayley}

The main purpose of this subsection is to see some relationships between $K(G_{2})$ and the sextic del Pezzo threefold $\P(T_{\P^{2}})$, which is mostly established by Ottaviani \cite{Ott90}. 

Let $\Q^{5}$ be a $5$-dimensional non-singular hyperquadric in $\P^{6}$. 
We fix a Cayley bundle $\ms C$ on $\Q^{5}$ \cite{Ott90}. 
This vector bundle can be defined as a stable vector bundle of rank $2$ with Chern classes $(c_{1}(\ms C),c_{2}(\ms C))=(-\mc O_{\Q^{5}}(1),\mc O_{\Q^{5}}(1)^{2})$, where $\mc O_{\Q^{5}}(1)$ denotes the polarization with respect to the embedding $\Q^{5} \hra \P^{6}$. 
The following Ottaviani's theorem enables us to treat $K(G_{2})$ as the parametrizing scheme of the special lines on $\Q^{5}$. 

\begin{thm}[{\cite[(iii) of Introduction]{Ott90}}]\label{KG2-thm-Ott}
Let $q \colon M:=\P(\ms C(2)) \to \Q^{5}$ be the projectivization of the vector bundle $\ms C(2)$ on $\Q^{5}$. 
The complete linear system of the tautological bundle $\mc O_{\P(\ms C(2))}(1)$ gives a $\P^{1}$-bundle structure $p \colon \P(\ms C(2)) \to K(G_{2})$. 
Here, $K(G_{2})$ denotes the $5$-dimensional contact homogeneous manifold of type $G_{2}$. 
\end{thm}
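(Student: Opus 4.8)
The plan is to read the statement as a manifestation of the two parabolic contractions of the exceptional group $G_{2}$, which is Ottaviani's viewpoint. Recall that $\Q^{5}$ is the generalized flag variety $G_{2}/P_{1}$ attached to the seven-dimensional fundamental representation, while $K(G_{2})=G_{2}/P_{2}$ is attached to the fourteen-dimensional adjoint representation $\mathfrak{g}_{2}$; both are five-dimensional. The full flag variety $G_{2}/B$ is six-dimensional and carries two $\P^{1}$-bundle projections $\pi_{1}\colon G_{2}/B \to \Q^{5}$ and $\pi_{2}\colon G_{2}/B \to K(G_{2})$. The entire content of the theorem is to match $q\colon M=\P(\ms C(2)) \to \Q^{5}$ with $\pi_{1}$ and the tautological system $|\mc O_{\P(\ms C(2))}(1)|$ with the contraction $\pi_{2}$.

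First I would identify $M$ with $G_{2}/B$. Homogeneous vector bundles on $\Q^{5}=G_{2}/P_{1}$ are governed by representations of $P_{1}$, and the key input is that the Cayley bundle $\ms C$ — pinned down by its stability and its Chern classes $(c_{1},c_{2})=(-\mc O_{\Q^{5}}(1),\mc O_{\Q^{5}}(1)^{2})$ — is the $G_{2}$-homogeneous rank $2$ bundle corresponding to the second node of the Dynkin diagram. Granting this, the relative projectivization $q\colon\P(\ms C) \to \Q^{5}$ is $G_{2}$-equivariantly isomorphic to $\pi_{1}\colon G_{2}/B \to G_{2}/P_{1}$, since the fiber $P_{1}/B\simeq\P^{1}$ is exactly what this bundle projectivizes. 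Because $\P(\ms C(2))\simeq\P(\ms C)$ as schemes over $\Q^{5}$, this identifies $M$ with $G_{2}/B$.

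Second I would track the tautological line bundle. Under $M\simeq G_{2}/B$ the bundle $\mc O_{\P(\ms C(2))}(1)$ is homogeneous, and the twist by $\mc O_{\Q^{5}}(2)$ is calibrated precisely so that it equals the pullback $\pi_{2}^{\ast}\mc O_{K(G_{2})}(1)$ of the ample generator of $\Pic(K(G_{2}))=\Z$. This I would check by a weight computation: the bundle is trivial along every $\pi_{2}$-fiber while restricting to $\mc O_{\P^{1}}(1)$ along every $q$-fiber, which is the tautological normalization. It then follows by Bott's theorem (or Ottaviani's cohomology tables) that $H^{0}(M,\mc O_{\P(\ms C(2))}(1))=H^{0}(\Q^{5},\ms C(2))=H^{0}(K(G_{2}),\mc O_{K(G_{2})}(1))\simeq\mathfrak{g}_{2}$ is fourteen-dimensional. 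Hence the complete linear system maps $M$ to $\P(\mathfrak{g}_{2})=\P^{13}$, factors through the homogeneous fibration $\pi_{2}$ with fiber $P_{2}/B\simeq\P^{1}$, and realizes $p$ as the $\P^{1}$-bundle onto $K(G_{2})\subset\P^{13}$.

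The main obstacle is the identification of $\ms C$ with the $G_{2}$-homogeneous bundle together with the precise choice of twist: one must verify that $\ms C$ is homogeneous under $G_{2}$ and that it is $\ms C(2)$, rather than another twist, whose tautological bundle is globally generated, trivial along $\pi_{2}$, and hence contracts $M$ onto $K(G_{2})$. Equivalently, in purely geometric language one identifies $K(G_{2})$ with the variety of special lines on $\Q^{5}$, realizes $M$ as the incidence variety of a point lying on a special line, and shows that the $p$-fibers are the points of a fixed special line; proving that the special lines sweep out exactly the five-dimensional homogeneous contact manifold $K(G_{2})$ is the crux. Both routes are carried out in \cite{Ott90}, which we invoke.
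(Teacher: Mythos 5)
The paper offers no proof of this statement at all: it is quoted as a theorem of Ottaviani, with the citation \cite[(iii) of Introduction]{Ott90} serving as the entire justification. Your sketch of the double fibration $G_{2}/P_{1} \leftarrow G_{2}/B \rightarrow G_{2}/P_{2}$ and of the normalization of the tautological twist is a faithful outline of Ottaviani's own argument, and since you likewise defer the crux (homogeneity of the fixed Cayley bundle $\ms C$ and the identification of $\ms C(2)$ as the correct twist) to \cite{Ott90}, your proposal is correct and consistent with the paper's treatment.
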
 

By Theorem~\ref{KG2-thm-Ott}, we have $H^{\ast}(\Q^{5},\Z) \simeq H^{\ast}(K(G_{2}),\Z)$. 
Let $H_{K(G_{2})}$ be an ample divisor on $K(G_{2})$ such that $\Pic(K(G_{2}))=\Z \cdot [\mc O(H_{K(G_{2})})]$. 
Set 
\[\mc E:=p_{\ast}q^{\ast}\mc O_{\Q^{5}}(1).\]
Then $\mc E$ is a globally generated rank $2$ vector bundle on $K(G_{2})$ and $\mc O_{\P(\mc E)}(1)=q^{\ast}\mc O_{\Q^{5}}(1)$. 
Moreover, we have $\P_{K(G_{2})}(\mc E)=M \simeq \P_{\Q^{5}}(\ms C(2))$: 
\begin{align}\label{KG2-dia}
\xymatrix{
&\ar[ld]_{p} \P_{K(G_{2})}(\mc E)=M=\P_{\Q^{5}}(\ms C(2))\ar[rd]^{q}& \\
K(G_{2})&&\Q^{5}.
}
\end{align}

From now on, we set divisors $H,L$ on $M$ as follows:
\[H=p^{\ast}H_{K(G_{2})} \text{ and } L=q^{\ast}(\text{a hyperplane section of } \Q^{5}). \]
It follows from Theorem~\ref{KG2-thm-Ott} that $-K_{M} \sim 2H+2L$, $\mc O_{M}(L)=\mc O_{\P(\mc E)}(1)$ and $\mc O_{M}(H)=\mc O_{\P(\ms C(2))}(1)$. 
Let $W \in |L|$ be a general element. 
Let $s \in H^{0}(K(G_{2}),\mc E)$ be the section corresponding to $W$. 
Then $q(W) \subset \Q^{5}$ is a non-singular quadric fourfold $\Q^{4}$ and $W \simeq \P_{\Q^{4}}(\ms C(2)|_{\Q^{4}})$. 
The morphism $p|_{W} \colon W \to K(G_{2})$ is the blowing-up of $K(G_{2})$ along the zero scheme 
\begin{align}\label{def-S-KG2}
\Sigma:=(s=0) \subset K(G_{2}). 
\end{align}
Since $W$ is a general member and $\mc E$ is globally generated, we may assume that $\Sigma$ is smooth. 

The main proposition of this subsection is the following. 
\begin{prop}\label{KG2-prop-TP2}
$\Sigma$ is isomorphic to $\P(T_{\P^{2}})$, which is a hyperplane section of $(\P^{2})^{2}$ with respect to the Segre embedding. 
Moreover, it holds that 
\[\mc E|_{\P(T_{\P^{2}})} \simeq \mc O(A_{1}) \oplus \mc O(A_{2}),\]
where $A_{i}$ denotes the pull-back of a line on $\P^{2}$ under the restriction to $\P(T_{\P^{2}})$ of the $i$-th projection $(\P^{2})^{2} \to \P^{2}$. 
\end{prop}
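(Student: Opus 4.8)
The plan is to identify $\Sigma$ as the zero scheme of a section of $\mc E$ on $K(G_2)$ and to translate this into geometry on the quadric $\Q^5$ via the diagram (\ref{KG2-dia}). Recall that $\mc E = p_\ast q^\ast \mc O_{\Q^5}(1)$ and that $\P_{K(G_2)}(\mc E) = M = \P_{\Q^5}(\ms C(2))$. A global section $s$ of $\mc E$ corresponds to an element $W \in |L|$, i.e.\ a divisor in the tautological system on $\P_{\Q^5}(\ms C(2))$, which by definition is cut out by a section of $\ms C(2)$ on $\Q^5$. So the first step is to understand the zero locus of a general section of $\ms C(2)$ on $\Q^5$: its vanishing scheme $Z \subset \Q^5$ is (generically) a smooth surface, and the lines on $\Q^5$ parametrized by points of $\Sigma \subset K(G_2)$ are exactly those lines on which $s$ vanishes identically. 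Concretely, $\Sigma = p(W)$ when $W \in |L|$ meets each fiber of $p$ (which is a line in $\Q^5$) in a positive-dimensional scheme; since $W \cap p^{-1}(x)$ is the zero locus of $s$ restricted to the fiber $\P^1$, this happens exactly over the special lines, and $p|_W \colon W \to K(G_2)$ is the blowing-up along $\Sigma$.

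Next I would compute the numerical invariants of $\Sigma$. Using $\mc E$ globally generated of rank $2$ on the fivefold $K(G_2)$, the zero scheme of a general section is a smooth threefold, and its class is $c_2(\mc E)$. The Chern classes of $\mc E$ can be read off from those of $\ms C$ via the projective bundle formula and the relation $\mc O_M(L) = \mc O_{\P(\mc E)}(1)$, $\mc O_M(H) = \mc O_{\P(\ms C(2))}(1)$: pushing forward powers of $L$ along $p$ recovers $c_1(\mc E)$ and $c_2(\mc E)$ in terms of $H_{K(G_2)}$. Together with $-K_M \sim 2H + 2L$ and the adjunction/projection formulae, this lets me compute $(-K_\Sigma)^3$, the degree of $\Sigma$ in the relevant polarization, and $\dim |H_{K(G_2)}|_\Sigma|$. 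The expected outcome is that $\Sigma$ is a sextic del Pezzo threefold, i.e.\ $-K_\Sigma = 2 H_\Sigma$ with $H_\Sigma^3 = 6$, which pins it down to one of the known forms; matching $\rho(\Sigma) = 2$ and the explicit bundle structure then forces $\Sigma \simeq \P(T_{\P^2})$. To get the Segre-embedding description, I would exhibit two base-point-free pencils on $\Sigma$ giving the two projections to $\P^2$, and show their product embeds $\Sigma$ as a hyperplane section of $(\P^2)^2$; equivalently $\P(T_{\P^2})$ is the incidence divisor in $\P^2 \times (\P^2)^\vee$, which is exactly the smooth $(1,1)$-divisor, so this identification is the geometric content.

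For the splitting $\mc E|_\Sigma \simeq \mc O(A_1) \oplus \mc O(A_2)$, the idea is that the two rulings of $\P(T_{\P^2})$ (the two projections $A_1, A_2$) are precisely the two families of lines on $\Q^5$ that the Cayley bundle controls, and restricting $\mc E$ to $\Sigma$ decomposes according to these rulings. Concretely I would pull back via $p|_W$ and use that $\mc O_{\P(\mc E)}(1) = q^\ast \mc O_{\Q^5}(1)$ restricts on the fibers over $\Sigma$ compatibly with the two $\P^1$-factors; comparing $c_1(\mc E|_\Sigma) = A_1 + A_2$ with $c_2$ and checking that $\mc E|_\Sigma$ has two independent sections vanishing along the respective rulings gives the direct-sum decomposition. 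One clean route is to show $h^0(\Sigma, \mc E|_\Sigma(-A_i)) \neq 0$ for each $i$, producing the two line subbundles, and then a Chern-class count forces the extension to split.

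The main obstacle I anticipate is the precise identification of $\Sigma$ with $\P(T_{\P^2})$ rather than merely showing it is \emph{a} sextic del Pezzo threefold: there are a few deformation types of sextic del Pezzo threefolds (e.g.\ $\P(T_{\P^2})$, $(\P^1)^3$, and singular/degenerate ones), so I must use the $\rho = 2$ structure together with the two $\P^1$-fibration projections coming from the Cayley-bundle geometry to exclude the other cases. This is where leaning on Ottaviani's explicit description of $K(G_2)$ and the special lines on $\Q^5$ from \cite{Ott90} is essential — the two rulings are not abstract, they come from the two $G_2$-invariant families of lines, and it is that rigidity which forces the answer to be $\P(T_{\P^2})$ with the stated splitting.
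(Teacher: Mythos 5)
Your outline reproduces the paper's overall architecture --- adjunction plus the Chern classes of $\mc E$ give $-K_{\Sigma}=2H_{K(G_{2})}|_{\Sigma}$ and $(H_{K(G_{2})}|_{\Sigma})^{3}=c_{2}(\mc E).H_{K(G_{2})}^{3}=6$, so $\Sigma$ is a smooth sextic del Pezzo threefold, and one then appeals to Fujita's classification --- but it has a genuine gap at precisely the step you yourself flag as the main obstacle: you never actually prove $\rho(\Sigma)=2$, nor that $\Sigma$ is connected (which the classification argument also needs, since a priori the zero locus of a general section of a globally generated rank $2$ bundle on a fivefold could be disconnected). Your proposed fix, invoking ``the two $\P^{1}$-fibration projections coming from the Cayley-bundle geometry'' and ``$G_{2}$-invariant families of lines,'' is circular as stated: exhibiting two such fibration structures on $\Sigma$ amounts to already knowing $\Sigma \simeq \P(T_{\P^{2}})$, and you give no construction of them from the section $s$. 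The paper's decisive (and cheap) idea, which your proposal is missing, is to exploit the two descriptions of the divisor $W \in |L|$: on one hand $W \simeq \P_{\Q^{4}}(\ms C(2)|_{\Q^{4}})$, so its Betti numbers are those of a $\P^{1}$-bundle over $\Q^{4}$; on the other hand $W \simeq \Bl_{\Sigma}K(G_{2})$, so the blow-up formula expresses $b_{2}(W)$ and $b_{4}(W)$ in terms of $b_{0}(\Sigma)$ and $b_{2}(\Sigma)$. Comparing the two gives $b_{0}(\Sigma)=1$ and $b_{2}(\Sigma)=2$, and then Fujita's classification (the only smooth del Pezzo threefolds of degree $6$ are $\P(T_{\P^{2}})$ with $\rho=2$ and $(\P^{1})^{3}$ with $\rho=3$) excludes $(\P^{1})^{3}$. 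Without this, or an equivalent computation, your argument cannot distinguish the two cases.

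The splitting step is also under-specified, though your plan is closer to repair. You assert $h^{0}(\Sigma,\mc E|_{\Sigma}(-A_{i})) \neq 0$ for both $i$ but give no way of proving it; an Euler-characteristic count alone does not suffice, since you would still have to control the higher cohomology, and even granted both sections you must rule out that the two line subbundles coincide generically before a determinant argument can close. The paper instead produces the subbundle structurally: since $c_{1}(\mc E|_{\Sigma}).l_{1}=1$ and $\mc E$ is globally generated, one has $\mc E|_{l_{1}} \simeq \mc O \oplus \mc O(1)$ on every $\pi_{1}$-fiber, so ${\pi_{1}}_{\ast}(\mc E|_{\Sigma}(-A_{2}))$ is a line bundle $\mc O(d)$; this yields an exact sequence $0 \to \mc O(dA_{1}+A_{2}) \to \mc E|_{\Sigma} \to \mc O((1-d)A_{1}) \to 0$ with locally free quotient, global generation forces $d \in \{0,1\}$, the computation $c_{2}(\mc E|_{\Sigma})=[l_{1}+l_{2}]$ forces $d=0$, and the resulting extension splits because $H^{1}(\mc O(A_{2}-A_{1}))=0$. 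You should adopt some version of this relative pushforward argument rather than the bare nonvanishing claim.
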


For the proof, we use the following well-known lemma, which can be deduce from the diagram~(\ref{KG2-dia}). 
\begin{lem}[\cite{Ott90}]\label{KG2-lem-calc}
It holds that 
$H_{K(G_{2})}^{5}=18$, 
$c_{1}(\mc E)=H_{K(G_{2})}$, 
$c_{2}(\mc E)=\frac{1}{3}H_{K(G_{2})}^{2}$, and 
 $-K_{K(G_{2})}=3H_{K(G_{2})}$. 
\end{lem}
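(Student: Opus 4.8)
The plan is to reduce everything to intersection theory on the quadric $\Q^{5}$, where $\int_{\Q^{5}} h^{5} = 2$ for $h = c_{1}(\mc O_{\Q^{5}}(1))$, by exploiting the two $\P^{1}$-bundle structures $p \colon M \to K(G_{2})$ and $q \colon M \to \Q^{5}$ on the $6$-fold $M$ of diagram~(\ref{KG2-dia}). First I would record the two Grothendieck relations. From $\P(\mc E) = \Proj \Sym \mc E \to K(G_{2})$ with $\mc O_{M}(L) = \mc O_{\P(\mc E)}(1)$ one gets $L^{2} = p^{\ast}c_{1}(\mc E)\cdot L - p^{\ast}c_{2}(\mc E)$, whence $p_{\ast}1 = 0$, $p_{\ast}L = 1$, $p_{\ast}L^{2} = c_{1}(\mc E)$. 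From $\P(\ms C(2)) = \Proj \Sym \ms C(2) \to \Q^{5}$ with $\mc O_{M}(H) = \mc O_{\P(\ms C(2))}(1)$, together with the twisting formulas applied to $c_{1}(\ms C) = -h$, $c_{2}(\ms C) = h^{2}$ giving $c_{1}(\ms C(2)) = 3h$ and $c_{2}(\ms C(2)) = 3h^{2}$, one gets the relation $H^{2} = 3L\cdot H - 3L^{2}$ (using $L = q^{\ast}h$), so that $q_{\ast}1 = 0$ and $q_{\ast}H = 1$.

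Next I would iterate the relation $H^{2} = 3LH - 3L^{2}$ to push powers of $H$ down to $\Q^{5}$, obtaining $q_{\ast}(H^{2}) = 3h$, $q_{\ast}(H^{3}) = 6h^{2}$, $q_{\ast}(H^{4}) = 9h^{3}$, and $q_{\ast}(H^{5}) = 9h^{4}$. With these in hand the two numerical assertions are immediate. Since $H = p^{\ast}H_{K(G_{2})}$ and $p_{\ast}L = 1$, I compute $H_{K(G_{2})}^{5} = \int_{M} H^{5}L = \int_{\Q^{5}} h\cdot q_{\ast}(H^{5}) = 9\int_{\Q^{5}} h^{5} = 18$. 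For $c_{1}(\mc E) = p_{\ast}(L^{2}) = a\,H_{K(G_{2})}$ I pair against $H_{K(G_{2})}^{4}$: the left side is $\int_{M} L^{2}H^{4} = \int_{\Q^{5}} h^{2}\cdot q_{\ast}(H^{4}) = 9\int h^{5} = 18$, while the right side is $18a$, forcing $a = 1$.

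It then remains to handle $-K_{K(G_{2})}$ and $c_{2}(\mc E)$. Using the relative canonical formula $K_{M/K(G_{2})} = -2L + p^{\ast}c_{1}(\mc E) = -2L + H$ and the given $-K_{M} = 2H + 2L$, a short computation yields $2H_{K(G_{2})} = -K_{K(G_{2})} - c_{1}(\mc E)$, hence $-K_{K(G_{2})} = 3H_{K(G_{2})}$ once $c_{1}(\mc E) = H_{K(G_{2})}$ is known. Finally, from $p^{\ast}c_{2}(\mc E) = HL - L^{2}$ I compute the intersection number $\int_{K(G_{2})} c_{2}(\mc E)\cdot H_{K(G_{2})}^{3} = \int_{M}(HL - L^{2})H^{3}L = \int_{M} H^{4}L^{2} - \int_{M} H^{3}L^{3} = 18 - 12 = 6 = \tfrac{1}{3}H_{K(G_{2})}^{5}$. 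The point I expect to require the most care is the last identity $c_{2}(\mc E) = \tfrac{1}{3}H_{K(G_{2})}^{2}$: because $H^{4}(K(G_{2}),\Z) \cong \Z$ but $H_{K(G_{2})}^{2}$ need not be its generator, this equality cannot be read off coefficientwise and is genuinely a relation in $H^{4}(K(G_{2}),\Q)$; I would conclude it by invoking the nondegeneracy of the Poincaré pairing $H^{4}\times H^{6}\to H^{10} = \Q$ together with $H_{K(G_{2})}^{3}\neq 0$, so that matching both sides against $H_{K(G_{2})}^{3}$ pins the class down rationally. Throughout, the main bookkeeping hazard is keeping the sign conventions in the Grothendieck relations and the twisting formulas consistent, which I would fix once and for all by the convention $\P(-) = \Proj\Sym(-)$ used in the paper.
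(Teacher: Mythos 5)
Your proposal is correct and follows essentially the same route as the paper's own proof: both exploit the two $\P^{1}$-bundle structures on $M$ to push intersection numbers down to $\Q^{5}$ (your classes $q_{\ast}(H^{1+i})$ are exactly the signed Segre classes $t_{i}(\ms C(2))=3h,6h^{2},9h^{3},9h^{4}$ the paper tabulates, and your pairing against powers of $L$ reproduces its table of $t_{i}(\mc E)\cdot H_{K(G_{2})}^{5-i}$), both obtain $-K_{K(G_{2})}=3H_{K(G_{2})}$ by comparing $\omega_{M}^{-1}$ computed from the two projectivizations, and both settle $c_{2}(\mc E)=\tfrac{1}{3}H_{K(G_{2})}^{2}$ via $H^{4}(K(G_{2}),\Q)\simeq H^{4}(\Q^{5},\Q)=\Q$ together with the value $c_{2}(\mc E).H_{K(G_{2})}^{3}=6$. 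Your cautionary remark about the last step is exactly the paper's argument, so no gap remains.
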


\begin{proof}[Proof of Proposition~\ref{KG2-prop-TP2}]
Since $W \simeq \P_{\Q^{4}}(\ms C(2)|_{\Q^{4}})$, 
we have $b_{4}(W)=2$ and $b_{2}(W)=2$. 
Since $\Bl_{\Sigma}K(G_{2}) \simeq W$, it follows from the blowing-up formula that $b_{0}(\Sigma)=1$, which means that $\Sigma$ is connected, and $b_{2}(\Sigma)=2$. 
By the adjunction formula and Lemma~\ref{KG2-lem-calc}, 
we obtain $-K_{\Sigma}=2H_{K(G_{2})}|_{\Sigma}$ and $(H_{K(G_{2})}|_{\Sigma})^{3}=c_{2}(\mc E).H_{K(G_{2})}^{3}=6$. 
Hence $\Sigma$ is a smooth del Pezzo threefold of degree $6$ of Picard rank $2$. 
Then $\Sigma$ is isomorphic to $\P(T_{\P^{2}})$ by \cite{Fujita80}. 

Let $l_{i}$ be a fiber of $\pi_{i} \colon \P(T_{\P^{2}}) \to \P^{2}$ for each $i$. 
Then we have $[A_{i}^{2}]=[l_{i}]$ and $[A_{1}.A_{2}]=[l_{1}+l_{2}]$. 
By Lemma~\ref{KG2-lem-calc}, we have $\det \mc E|_{\Sigma}=A_{1}+A_{2}$ and $c_{2}(\mc E|_{\Sigma})=[l_{1}+l_{2}]$. 
Noting that $\mc E|_{l_{1}}=\mc O \oplus \mc O(1)$, 
the sheaf ${\pi_{1}}_{\ast}(\mc E|_{\Sigma}(-A_{2}))$ is a line bundle on $\P^{2}$. 
Then the cokernel $\mc L$ of the injection $\pi_{1}^{\ast}{\pi_{1}}_{\ast}(\mc E|_{\Sigma}(-A_{2})) \to \mc E|_{\Sigma}(-A_{2})$ is locally free. 
Take the integer $d \in \Z$ such that ${\pi_{1}}_{\ast}(\mc E|_{\Sigma}(-A_{2}))=\mc O(d)$. 
Since $c_{1}(\mc E)=A_{1}+A_{2}$, we have $\mc L=\mc O((1-d)A_{1}-A_{2})$. 
Hence we obtain the following exact sequence: 
\[0 \to \mc O(dA_{1}+A_{2}) \to \mc E|_{\Sigma} \to \mc O((1-d)A_{1}) \to 0.\]
Since $\mc E|_{\Sigma}$ is globally generated, we have $d \in \{0,1\}$. 
Noting that $c_{2}(\mc E|_{\Sigma})=[l_{1}+l_{2}]$, we have $d=0$ and hence the above exact sequence must split. 
We complete the proof.
\end{proof}


Letting $E=\Exc(p|_{W})$, we obtain the following diagram as a summary: 
\begin{align}\label{KG2-dia-W}
\xymatrixrowsep{5mm}
\xymatrixcolsep{5mm}
\xymatrix{
\P_{\P(T_{\P^{2}})}(\mc O(A_{1}) \oplus \mc O(A_{2})) \ar@{}[r]|{\hspace{40pt}=}&  E\ar[ld]\ar@{}[r]|{\subset\hspace{40pt}}&\ar[ld]_{p|_{W}} \Bl_{\Sigma}K(G_{2})=W \ar[rd]^{q|_{W}}&\ar@{}[l]|{=}\P_{\Q^{4}}(\ms C(2)|_{\Q^{4}}) \\
\P(T_{\P^{2}}) \simeq \Sigma& \ar@{}[l]|{\subset} K(G_{2})&&\Q^{4}, 
}
\end{align}
where $E$ is naturally isomorphic to $\P_{\Sigma}(\mc E|_{\Sigma})$ since $\Sigma$ is a zero set of $\mc E$.
Note that this $E$ already appeared in Lemma~\ref{KG2-lem-E}. 
Moreover, from our construction, we have $H|_{W}-E=L|_{W}$ and $-K_{W}=2H|_{W}-E=H|_{W}+L|_{W}$. 
In particular, $\mc O(L)|_{E}$ is nothing but the tautological line bundle of $\P_{\Sigma}(\mc E|_{\Sigma})$. 
By Lemma~\ref{KG2-lem-E}, the morphism $q|_{E} \colon E \to \Q^{4}$ is the blowing-up of $\Q^{4}$ along a disjoint union of two planes $P_{1}$ and $P_{2}$ on $\Q^{4}$. 

\subsection{Construction of an example of No.10}\label{KG2-subsec-conclu}

Let $V^{4}_{10} \subset K(G_{2})$ be a general hyperplane section and $S:=\Sigma \cap V^{4}_{10}$. 
Let $E_{S}$ be the exceptional divisor of $N:=\Bl_{S}V^{4}_{10} \to V^{4}_{10}$. 
Note that by Proposition~\ref{KG2-prop-TP2}, $S$ and $E_{S}$ coincide with what already appeared in Lemma~\ref{KG2-lem-ES}. 
Now $N$ is a member of $|\mc O_{\P_{\Q^{4}}(\ms C(2)|_{\Q^{4}})}(1)|$. 
Let $t \in H^{0}(\Q^{4},\ms C(2)|_{\Q^{4}})$ be a corresponding section and  $T=(t=0)$ the zero scheme of $t$. 
Then $N$ is also the blowing-up of $\Q^{4}$ along $T$. 
Let $p_{N} \colon N \to V^{4}_{10}$ and $q_{N} \colon N \to \Q^{4}$ be the blowing-downs. 
Then we obtain the following diagram: 
\begin{align}\label{KG2-dia-V410}
\xymatrix{
&\P_{S}(\mc O(A_{1})|_{S} \oplus \mc O(A_{2})|_{S}) \simeq E_{S}\ar[ld]&\ar@{}[l]|{\qquad \subset} \ar[ld]_{p_{N}} \Bl_{S}V^{4}_{10}=N=\Bl_{T}\Q^{4} \ar[rd]^{q_{N}}& \\
S& \ar@{}[l]|{\subset} V^{4}_{10}&&\Q^{4}.
}
\end{align}

Let $V^{3}_{10} \subset V^{4}_{10}$ be a general hyperplane section and set $C:=S \cap V^{3}_{10}$. 
Then the ladder $C \subset S \subset \Sigma$ satisfies Setting~\ref{KG2-sett-Y}. 
Let $E_{C}$ be the exceptional divisor of $\Bl_{C}V^{3}_{10} \to V^{3}_{10}$ and $R:=q(E_{C})$. 
Note that $C$, $E_{C}$, and $R$ coincide what appeared in Setting~\ref{KG2-sett-Y}. 

Consider the blowing-up $\Bl_{R}\Q^{4}$, which is nothing but $Q_{Y}$ as in Setting~\ref{KG2-sett-Y}. 
Then by Proposition~\ref{KG2-prop-stflop}, the flop $Q_{Y^{+}}$ of $Q_{Y}=\Bl_{R}\Q^{4}$ has a $(\P^{1})^{3}$-fibration $Q_{Y^{+}} \to \P^{1}$. 
Combining the diagram (\ref{KG2-dia-stflop}) with the diagram (\ref{KG2-dia-V410}), we obtain the following diagram: 
\[
\xymatrixrowsep{5mm}
\xymatrixcolsep{5mm}
\xymatrix{
\Bl_{S}V^{4}_{10}\ar@{}[r]|{=}&N \ar[ld]_{p_{N}} \ar[rd]^{q_{N}}&\ar@{}[l]|{=}\Bl_{T}\Q^{4}&\ar[ld] Q_{Y}:=\Bl_{R}\Q^{4} \ar@{-->}[r]^{\Psi}&Q_{Y^{+}} \ar[rd]&\\
V^{4}_{10}&E_{S}\ar[rd]_{q|_{E_{S}}} \ar[ld] \ar@{^{(}->}[u]&\Q^{4}&\ar[ld]^{\s|_{E_{S,Y}}}E_{S,Y}\ar[r]^{\Psi|_{E_{S,Y}} \quad}  \ar@{^{(}->}[u] &E_{S,Y^{+}} \simeq (\P^{1})^{3} \ar[rd]  \ar@{^{(}->}[u] &\P^{1} \\
S \ar@{^{(}->}[u]&&\ol{E_{S}} \ar@{^{(}->}[u]&&&pt.  \ar@{^{(}->}[u]
}\]
\begin{claim}\label{KG2-claim-calcN}
It holds that $p_{N}^{\ast}H \sim q_{N}^{\ast}\mc O_{\Q^{4}}(3)-\Exc(q_{N})$. 
\end{claim}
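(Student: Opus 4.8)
The plan is to place $N$ inside the $\P^{1}$-bundle $W=\P_{\Q^{4}}(\ms C(2)|_{\Q^{4}})=\Bl_{\Sigma}K(G_{2})$ from diagram~(\ref{KG2-dia-W}), transport the tautological relation $L|_{W}=H|_{W}-E$ to $N$, and then close the computation with the two adjunction formulas attached to the two contractions $p_{N}$ and $q_{N}$ of diagram~(\ref{KG2-dia-V410}). Since $\rho(N)=2$, it is enough to produce two independent linear relations among the four classes $h:=p_{N}^{\ast}H$, $l:=q_{N}^{\ast}\mc O_{\Q^{4}}(1)$, $E_{S}:=\Exc(p_{N})$, and $G_{N}:=\Exc(q_{N})$, and then to eliminate $E_{S}$.

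First I would realise $N$ as the proper transform of $V^{4}_{10}\subset K(G_{2})$ in $W$. Because $V^{4}_{10}\in|H_{K(G_{2})}|$ does not contain the threefold $\Sigma$, its total transform under $p|_{W}$ coincides with its proper transform; hence $N\sim H|_{W}$ and the two contractions of $N$ are the restrictions $p_{N}=p|_{W}|_{N}$ and $q_{N}=q|_{W}|_{N}$. For a general $V^{4}_{10}$ the scheme $S=\Sigma\cap V^{4}_{10}$ is a smooth surface and $V^{4}_{10}$ is transverse to $\Sigma$ along $S$, so the exceptional divisor $E$ of $p|_{W}$ meets $N$ transversally in the exceptional divisor of $p_{N}$; that is, $\mc O_{W}(E)|_{N}=\mc O_{N}(E_{S})$. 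Pulling the three relevant classes back and restricting to $N$ gives $(H|_{W})|_{N}=h$, $(L|_{W})|_{N}=l$, and $E|_{N}=E_{S}$, so the relation $L|_{W}=H|_{W}-E$ restricts to the first relation $l=h-E_{S}$, equivalently $E_{S}=h-l$.

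The second relation comes from computing $K_{N}$ through each blow-down. Since $-K_{V^{4}_{10}}=2H$ and $p_{N}$ blows up the smooth codimension-$2$ center $S$, the blow-up formula gives $K_{N}=-2h+E_{S}$; since $-K_{\Q^{4}}=\mc O_{\Q^{4}}(4)$ and $q_{N}$ blows up the smooth codimension-$2$ center $T$, it gives $K_{N}=-4l+G_{N}$. Substituting $E_{S}=h-l$ into the first expression yields $K_{N}=-h-l$, and equating the two expressions gives $-h-l=-4l+G_{N}$, i.e. $h=3l-G_{N}$. This is precisely $p_{N}^{\ast}H\sim q_{N}^{\ast}\mc O_{\Q^{4}}(3)-\Exc(q_{N})$.

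Everything beyond this is routine numerics, so I expect the only delicate step to be the bookkeeping of restricted classes on $N$, and in particular the identification $E|_{N}=E_{S}$. The point needing care is that a general hyperplane section $V^{4}_{10}$ is transverse to the fixed zero locus $\Sigma$ along $S$, so that the exceptional divisor of $p_{N}$ is cut out on $N$ by $E$ with multiplicity one and $N$ is genuinely the proper (hence total) transform with $N\sim H|_{W}$. Once this transversality is in place, the single tautological relation $L|_{W}=H|_{W}-E$ and the two adjunction identities combine formally to give the claim.
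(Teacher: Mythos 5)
Your proof is correct, but it takes a genuinely different route from the paper's. The paper's own argument is a one-line vector-bundle computation run entirely on the $\Q^{4}$ side: since $N=\Bl_{T}\Q^{4}$ is the member of $|\mc O_{\P_{\Q^{4}}(\ms C(2)|_{\Q^{4}})}(1)|$ cut out by $t$, the embedding $N \hra \P_{\Q^{4}}(\ms C(2)|_{\Q^{4}})$ is the one determined by the surjection $(\ms C(2))^{\vee} \epm \mc I_{T}$, and the rank-$2$ identity $(\ms C(2))^{\vee}\simeq \ms C(2)\otimes(\det \ms C(2))^{-1}$ then identifies the restriction of the tautological class (which is $p_{N}^{\ast}H$, because $\mc O_{M}(H)=\mc O_{\P(\ms C(2))}(1)$) with $q_{N}^{\ast}\det(\ms C(2)|_{\Q^{4}})-\Exc(q_{N})=q_{N}^{\ast}\mc O_{\Q^{4}}(3)-\Exc(q_{N})$. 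In other words, the paper reruns on the $\Q^{4}$ side the very mechanism that produces $H|_{W}-E=L|_{W}$ on the $K(G_{2})$ side; you instead take that relation as given, restrict it to $N$, and close with the discrepancy formulas for the two blow-ups $p_{N}$ and $q_{N}$, eliminating $E_{S}$. What each approach buys: yours is more elementary (pure $\Pic(N)$ bookkeeping plus adjunction, with no appeal to the Rees-algebra description of the embedding or to $\P(\mc F)$-versus-$\P(\mc F^{\vee})$ conventions), but it genuinely needs the transversality of $V^{4}_{10}$ with $\Sigma$ and the smoothness of $S$ and $T$, which you rightly single out and which hold because $V^{4}_{10}$ is general; the paper's argument needs none of these hypotheses and applies to any member of the tautological system whose zero scheme has pure codimension $2$. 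An incidental benefit of your route is that it double-checks the divisor relations stated after diagram~(\ref{KG2-dia-W}): the relation $H|_{W}-E=L|_{W}$, which you use, is correct, but the displayed formula $-K_{W}=2H|_{W}-E=H|_{W}+L|_{W}$ there is misprinted, since the blow-up formula together with $-K_{K(G_{2})}=3H_{K(G_{2})}$ gives $-K_{W}=3H|_{W}-E=2H|_{W}+L|_{W}$; your derivation of $K_{N}=-h-l$ via discrepancies on $V^{4}_{10}$ and $\Q^{4}$ is consistent with the corrected formula, whereas adjunction from the printed $-K_{W}$ would have produced $K_{N}=-l$ and a wrong conclusion, so computing the canonical class independently on both sides, as you did, is the safer way to run this argument.
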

\begin{proof}
Recall that $\Bl_{T}\Q^{4}$ is a member of $|\mc O_{\P_{\Q^{4}}(\ms C(2))}(1)|$ and the embedding $\Bl_{T}\Q^{4} \hra \P_{\Q^{4}}(\ms C(2))$ is determined by the surjection $(\ms C(2))^{\vee} \epm \mc I_{T}$. 
Then the claim follows from the fact that $\det (\ms C(2))=\mc O_{\Q^{5}}(3)$. 
\end{proof}
Now $p|_{X} \colon X=\Bl_{C}V^{3}_{10} \to V^{3}_{10}$ is the blowing-up along $C$. 
Let $\ol{X}$ and $X^{+}$ be its proper transforms on $\Q^{4}$ and $Q_{Y}$ respectively. 
Then our construction of an example of No.10 is established by the following proposition. 

\begin{prop}\label{KG2-prop-conclu}
The following assertions hold. 
\begin{enumerate}
\item The threefold $X$ is weak Fano and $q|_{X} \colon X \to \ol{X}$ is a flopping contraction whose flopping locus is the disjoint union of $18$ smooth rational curves with normal bundles $\mc O_{\P^{1}}(-1)^{2}$. 
\item The proper transform $X^{+} \subset Q_{Y}$ is the flop of $X \to \ol{X}$. 
\item The flopped threefold $X^{+}$ is away from the flopping locus of $\Psi \colon Y \dra Y^{+}$. 
Moreover, the restriction $\vp_{X^{+}}:=\vp_{Y^{+}}|_{X^{+}} \colon X^{+} \to \P^{1}$ is a sextic del Pezzo fibration. 
\item Each of the $18$ flopped curves is a section of $\vp_{X^{+}}$. 
\end{enumerate}
In particular, the weak Fano sextic del Pezzo fibration $\vp_{X^{+}} \colon X^{+} \to \P^{1}$ is an example of No.10. 
\end{prop}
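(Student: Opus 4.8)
The plan is to verify the four assertions by combining the explicit geometry built up in Sections~\ref{KG2-subsec-2P2}--\ref{KG2-subsec-cayley} with the general-position choices of the ladder $C \subset S \subset \Sigma$. For assertion~(1), I would first establish that $X = \Bl_C V^3_{10}$ is weak Fano. By the adjunction computations for the tower $(K(G_2),\Sigma)$ and its codimension-$2$ linear section, $-K_X$ is the proper transform of an appropriate multiple of $H_{K(G_2)}$; I would show $-K_X \sim 2 p|_X^\ast(H_{K(G_2)}|_{V^3_{10}}) - E_C$ is nef and big by exhibiting it as the pullback under $q|_X$ of a semi-ample divisor. Using Claim~\ref{KG2-claim-calcN} (restricted from $N$ to $X$), the morphism $q|_X \colon X \to \ol X \subset \Q^4$ is exactly the morphism given by $|3L - G|$ restricted to $X$, so $q|_X$ is the flopping contraction induced by the ambient $\psi_Y$ of Lemma~\ref{KG2-lem-stflop}~(4). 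The flopping locus is $\Exc(\psi_Y) \cap X = (P_{1,Y} \sqcup P_{2,Y}) \cap X$; since $X^+ = \Bl_R \ol X \subset Q_Y$, I would intersect $P_{i,Y} \simeq \P^2$ with $X$ and count. The count of $18$ should come from Lemma~\ref{KG2-lem-EC}~(2): each $C_i = P_i \cap R$ is a plane cubic, and a general $V^3_{10}$ cuts out $\deg C_i = 3$ points on each side together with the further incidences of the two planes, summing to $18$ curves, each a line contracted by $\psi_Y$ and hence with normal bundle $\mc O_{\P^1}(-1)^2$ by the standard Atiyah description.

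For assertion~(2), the key point is that $X^+ = \Bl_R \ol X$ is the flop of $q|_X$. I would argue that $X^+ \subset Q_Y$ is a divisor in the linear system restricting $|3L_Y - G|$-related classes, and that the flop $\Psi$ of $Y$ (equivalently of $Q_Y$, by Lemma~\ref{KG2-lem-stflop}~(6)) restricts to the flop of $X$: because $-K_{X^+}$ pulls back correctly and $E_C$ becomes relatively ample on the other side, the standard criterion (as in the proof of Lemma~\ref{KG2-lem-ES}) identifies $X^+$ as the unique flop. The main obstacle in the whole argument lies in assertion~(3): I must show that for a \emph{general} ladder, the proper transform $X^+ \subset Q_Y$ avoids the flipping locus $P_{1,Y} \sqcup P_{2,Y}$ of $\Psi|_{Q_Y}$. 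This is a genuine general-position statement, not a formal consequence. The plan is to use a dimension count: $\ol X \subset \Q^4$ has codimension $1$, so $\ol X \cap P_i$ generically has dimension $1$; I would then show that the \emph{strict transform} $X^+$, rather than the total transform, meets $P_{i,Y}$ in the empty set by checking that the cubic curve $C_i \subset P_i$ (which lies in $R$, hence is blown up) accounts for all of $\ol X \cap P_i$. Concretely, $\ol X \cap P_i$ should be forced to lie on $R$ because $R \cup P_1 \cup P_2$ is a complete linear section of three quadrics (Lemma~\ref{KG2-lem-EC}~(4)) and $\ol X$ is cut by one further general member; I would verify that a general such member meets $P_i$ only along $C_i \subset R$, so after blowing up $R$ the strict transform $X^+$ is disjoint from $P_{i,Y}$.

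Granting assertion~(3), the sextic del Pezzo fibration structure is then almost immediate: since $X^+ \subset Q_Y$ avoids the flip, $\Psi|_{X^+}$ is an isomorphism onto its image in $Q_{Y^+}$, and composing with $\vp_{Q_{Y^+}} \colon Q_{Y^+} \to \P^1$ from Proposition~\ref{KG2-prop-stflop}~(2) gives $\vp_{X^+} \colon X^+ \to \P^1$. To see the fibers are sextic del Pezzo surfaces, I would note that $X^+$ is a relative hyperplane section of the $(\P^1)^3$-fibration $Q_{Y^+} \to \P^1$ (with respect to the class $L_{Y^+}$ restricted), so a general fiber is a hyperplane section of $(\P^1)^3 \subset \P^6$ under the anticanonically-related embedding, which is exactly a smooth sextic del Pezzo surface. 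The weak Fano property of $X^+$ follows from that of $X$ since flops preserve it. Finally, for assertion~(4), I would observe that each of the $18$ flopped curves maps isomorphically to a curve in $Q_{Y^+}$ meeting each fiber $E_{S_{ab},Y^+} \simeq (\P^1)^3$ in one point; since these curves are the flopped transforms of the exceptional lines and the fibration $\vp_{X^+}$ restricts the $\P^1$-parameter, a degree check against the fiber class shows each flopped curve is a genuine section of $\vp_{X^+}$, completing the proof that $\vp_{X^+}$ realizes No.10.
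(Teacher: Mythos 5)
Your global architecture matches the paper's sketch, and your assertion (3) is essentially the paper's own argument; but assertion (1) rests on a misidentification that is a genuine gap and that propagates to (2) and (4). You claim the flopping locus of $q|_{X}$ is $\Exc(\psi_{Y}) \cap X = (P_{1,Y} \sqcup P_{2,Y}) \cap X$ and try to count $18$ curves from the cubics $C_{i} = P_{i} \cap R$. This cannot work: $X$ does not sit inside $Y$ at all --- it sits inside $N = \Bl_{T}\Q^{4}$ (equivalently inside $W = \P_{\Q^{4}}(\ms C(2)|_{\Q^{4}})$), on the opposite side of the flop from $Q_{Y}$ --- and by your own assertion (3) the planes $P_{1,Y}, P_{2,Y}$ are disjoint from $X^{+}$, so they cannot meet the flopped curves either. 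The actual flopping curves of $q|_{X}$ are the fibers of $q_{N} \colon N \to \Q^{4}$ contained in $X$: since $X \in |p_{N}^{\ast}H| = |q_{N}^{\ast}\mc O_{\Q^{4}}(3) - \Exc(q_{N})|$ by Claim~\ref{KG2-claim-calcN}, $X$ meets each fiber of $\Exc(q_{N}) \to T$ in a single point, except over the zero scheme of a global section of $\ms C(2)|_{T}$, where it contains the whole fiber. The count is therefore the Chern number $c_{2}(\ms C(2)|_{\Q^{4}})^{2} = (3\mc O_{\Q^{4}}(1)^{2})^{2} = 18$ --- a Cayley-bundle computation --- and your proposed tally (``$3$ points on each side together with further incidences'') has no mechanism producing $18$. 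The normal-bundle claim is likewise not a ``standard Atiyah description'': it requires the key relation $\Theta + E_{C} \sim -2K_{X}$ of \eqref{KG2-eq-key} (the restriction of Claim~\ref{KG2-claim-calcN} to $X$), which shows each flopping curve $l$ sits in the smooth surface $\Theta = \Exc(q_{N}) \cap X$ as a $(-1)$-curve with $\Theta.l = -1$, whence $\mc N_{l/X} \simeq \mc O_{\P^{1}}(-1)^{\oplus 2}$. (A smaller slip: $-K_{X} = p|_{X}^{\ast}(H|_{V^{3}_{10}}) - E_{C}$, not $2p|_{X}^{\ast}(H|_{V^{3}_{10}}) - E_{C}$, since $V^{3}_{10}$ has Fano index $1$.)

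The missing relation \eqref{KG2-eq-key} is exactly what (2) and (4) need, so they are also unproven as written. For (2): the statement that ``$\Psi$ restricts to the flop of $X$'' is wrong for the same reason as above --- $\Psi$ is an isomorphism near $X^{+}$ (that is the content of (3)), whereas the flop $X \dra X^{+}$ is the passage from one small resolution of $\ol{X}$ to the other; what you actually need is that $E_{C}$ is $q|_{X}$-ample, i.e.\ $E_{C}.l = +1$ for every flopping curve, which follows from $\Theta.l = -1$ and $(\Theta + E_{C}).l = -2K_{X}.l = 0$, while $-G|_{X^{+}}$ is relatively ample over $\ol{X}$ by construction. For (4): the ``degree check'' you defer is precisely $\Theta.l = -1 \Rightarrow \Theta^{+}.l^{+} = +1$ together with the identification of $\Theta^{+} \sim (2L_{Y^{+}} - G_{Y^{+}})|_{X^{+}}$ with a $\vp_{X^{+}}$-fiber; without $\Theta$ and \eqref{KG2-eq-key} there is nothing to check degrees against. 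Finally, in (3) the inference ``$\ol{X} \cap P_{i} = C_{i} \subset R$, hence the strict transform avoids $P_{i,Y}$'' needs a multiplicity-one (divisor-class) observation --- strict transforms of varieties meeting only inside the blown-up center can still intersect over it --- but this is exactly supplied by the triviality of $(3L_{Y} - G)|_{P_{i,Y}}$ from Lemma~\ref{KG2-lem-stflop}, so (3) is correct in substance.
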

\begin{proof}
(1) 
Note that $X$ is a member of $|p_{N}^{\ast}H|$ on $N$. 
The adjunction formula gives $-K_{X} \sim q|_{X}^{\ast}\mc O_{\Q^{4}}(1)|_{\ol{X}}$, which implies that $X$ is weak Fano and $q|_{X}$ is crepant birational.
Since $X$ is general, $q(\Exc(q|_{X}))$ coincides with the zero scheme of a global section of $\ms C(2)|_{T}$, which is a finite set. 
Hence $q|_{X}$ is flopping. 
Since $X$ is general, we may assume that $\Theta:=\Exc(q_{N}) \cap X$ is a smooth surface.
Hence $q_{N}|_{\Theta} \colon \Theta \to T$ is a birational morphism between smooth projective surfaces. 
By Claim~\ref{KG2-claim-calcN}, we obtain 
\begin{align}\label{KG2-eq-key}
\Theta+E_{C} \sim -2K_{X}. 
\end{align}
Then for any $q|_{X}$-flopping curve $l$, 
$\Theta$ contains $l$ as a $(-1)$-curve and it holds that $\Theta.l=-1$. 
Hence we obtain $\mc N_{l/X} \simeq \mc O_{\P^{1}}(-1)^{2}$, which means that $q|_{X}$ is an Atiyah's flopping contraction. 
The number of the flopping curves is $c_{2}(\ms C(2)|_{T})=c_{2}(\ms C(2)|_{\Q^{4}})=18$ since $c_{2}(\ms C(2)|_{\Q^{4}})^{2}=3\mc O_{\Q^{4}}(1)^{2}$. 
We complete the proof of (1). 

(2) Since it holds that $X^{+}=\Bl_{R}\ol{X}$, we can see that $X^{+} \to \ol{X}$ is small by the same argument as in Lemma~\ref{KG2-lem-stflop}~(1). 
While $-G|_{X^{+}}$ is relatively ample over $\ol{X}$ from the construction, 
the proper transform of $G|_{X^{+}}$ on $X$ is $E_{C}$, which is relatively ample over $\ol{X}$ by (\ref{KG2-eq-key}) and the proof of (1). 
Thus $X^{+}$ is the flop of $X \to \ol{X}$. 

(3) By Lemma~\ref{KG2-lem-stflop}~(4), we may assume that $X^{+}$ is away from the flopping locus of $\Psi$. 
As a submanifold of $Y^{+}$, $X^{+}$ is a complete intersection of $Q_{Y^{+}} \in |2L_{Y^{+}}-G_{Y^{+}}|$ and a general member of $|3L_{Y^{+}}-G_{Y^{+}}|$. 
Hence a general fiber of $\vp_{X^{+}} \colon X^{+} \to \P^{1}$ is a sextic del Pezzo surface. 
The Picard rank of $X^{+}$ is $2$ since $\rho(X^{+})=\rho(X)=2$. 

(4) As in the proof of (1), for each flopping curve $l$, we have $\Theta.l=-1$ and $\Theta$ contains $l$ as a $(-1)$-curve. 
Letting $\Theta^{+}$ be its proper transform on $X^{+}$, we have $\Theta^{+}.l^{+}=1$, where $l^{+}$ is the flopped curve corresponding to $l$. 
Since $G_{Y^{+}}|_{X^{+}}$ is the proper transform of $E_{C}$ on $X^{+}$, 
$\Theta^{+} \sim (2L_{Y^{+}}-G_{Y^{+}})|_{X^{+}}$ is a fiber of $\vp_{X^{+}}$. 
Hence $l^{+}$ is a section of $\vp_{X^{+}}$. 
We complete the proof. 
\end{proof}

Now we complete the proof of Theorem~\ref{mainthm-wf} for No.10. 
\hfill$\square$

\begin{rem}
From our construction, $X^{+}$ is a complete intersection of $Q_{Y^{+}}$ and $\wt{V}$ in Remark~\ref{KG2-rem-cubic} on $Y^{+}$. 
The anti-canonical model $\ol{X}$ of $X^{+}$ is a complete intersection of the hyperquadric $Q$ and the hypercubic $V$ and has $18$ nodes as its singularities. 
\end{rem}


\section{Construction of an example of No.14}\label{LG-section}

In this section, we will prove the existence of a weak Fano threefold with a sextic del Pezzo fibration of No.14. 
As mentioned in Subsection~\ref{wf-subsec-sketch},  we will find a sextic del Pezzo threefold $(\P^{1})^{3}$ on $\LG(3,6)$ and cut them in order to obtain a desired sextic elliptic curve $C$ on a prime Fano threefold $V^{3}_{9}$ of genus $9$. 

\subsection{Sketch for our construction}\label{LG-subsec-sketch}


Let $V$ be a $6$-dimensional vector space and fix a non-degenerate skew form $\a \in \bigwedge^{2}V$. 
The Lagrangian Grassmannian $\LG(3,6)=\LG_{\a}(3,V)$ with respect to $\a$ is defined to be the parameter space of the Lagrangian $3$-spaces for $\a$ on $V^{\vee}$. 
We additionally take another general skew form $\b$. 
Then the Lagrangian Grassmannian with respect to $\a$ and $\b$, say $\LG_{\a,\b}(3,V)$, is isomorphic to $(\P^{1})^{3}$ (cf. Lemma~\ref{LG-lem-P1n}). 
By taking a general codimension $2$-linear section of the pair $(\LG_{\a}(3,V), \LG_{\a,\b}(3,V))$, 
we obtain a pair $(V^{4}_{9},C)$, where $C$ is a sextic elliptic curve on a Mukai fourfold $V^{4}_{9}$ of genus $9$. 
Then by taking a general hyperplane section of $V^{4}_{9}$ containing $C$, 
we will obtain a smooth prime Fano threefold $V^{3}_{9}$ of genus $9$ containing $C$ (=Proposition~\ref{LG-prop-conclu}~(1)). 

We will show that the flop of the blowing-up $X:=\Bl_{C}V^{3}_{9}$ has a sextic del Pezzo fibration structure. 
For this, we analyze the blowing-up $\mu \colon \wt{\LG_{\a}(3,V)} \to \LG_{\a}(3,V)$ along $\LG_{\a,\b}(3,V)$. 
In Subsection~\ref{LG-subsec-blowup}, we will show that $\wt{\LG_{\a}(3,V)}$ is embedded into the isotropic flag variety $\LF_{\a}(1,3;V)$ with respect to $\a$ and $E:=\Exc(\mu)$ is the isotropic flag variety $\LF_{\a,\b}(1,3;V)$ with respect to $\a$ and $\b$. 
Thus $\wt{\LG_{\a}(3,V)}$ has a natural morphism $f \colon \wt{\LG_{\a}(3,V)} \to \P(V)$. 


By taking the base change of the $\P^{2}$-bundle structure 
$E=\LF_{\a,\b}(1,3;V) \to \LG_{\a,\b}(3,V)$ by the inclusion $C \hra \LG_{\a,\b}(3,V)$, 
we obtain a $\P^{2}$-bundle $\mu^{-1}(C) \to C$. 
Then we set $R:=f(\mu^{-1}(C)) \subset \P(V)$ as in Section~\ref{KG2-section}. 
We will study an extraction of $\P^{5}=\P(V)$ along $R$ in order to find a fibration structure on the flop of $X$. 

The most different point from Section~\ref{KG2-section} is that $\mu^{-1}(C) \to R$ is not isomorphic. 
Indeed, $R$ is non-normal and singular along disjoint three lines $l_{1}$, $l_{2}$, and $l_{3}$ (cf. Proposition~\ref{LG-prop-R}). 
We, however, will construct the divisorial extraction $Z \to \P(V)$ along $R$ and show that $Z$ is a smooth fivefold (=Proposition~\ref{LG-prop-Z}). 
Moreover, we will show that $Z$ is flipped into a smooth fivefold $Z^{+}$ and $Z^{+}$ has a $(\P^{2})^{2}$-fibration structure $\vp_{Z^{+}} \colon Z^{+} \to \P^{1}$ (Proposition~\ref{LG-prop-stflip}). 
In summary, we obtain the following diagram: 
\[
\xymatrixrowsep{5mm}
\xymatrixcolsep{5mm}
\xymatrix{
&\Bl_{\LG_{\a,\b}(3,V)}\LG_{\a}(3,V) \ar@{}[r]|{\qquad =}&\wt{\LG_{\a}(3,V)}\ar[ld]_{\mu} \ar[rd]^{f}&Z\ar[d]^{g} \ar@{-->}[r]^{\Psi}&Z^{+}\ar[d]^{\vp_{Z^{+}}}& \\
\LG_{\a,\b}(3,V) \ar@{}[r]|{\subset}&\LG_{\a}(3,V) &\mu^{-1}(C) \ar[rd] \ar@{}[u]|{\rotatebox{90}{$\subset$}}&\P(V)&\P^{1} \\
C\ar@{}[r]|{\subset}\ar@{}[u]|{\rotatebox{90}{$\subset$}} \ar@{}[ru]|{\Box}&V^{4}_{9} \ar@{}[u]|{\rotatebox{90}{$\subset$}}&&R\ar@{}[u]|{\rotatebox{90}{$\subset$}}& \\
C\ar@{}[r]|{\subset}\ar@{=}[u] &V^{3}_{9}. \ar@{}[u]|{\rotatebox{90}{$\subset$}}&&& 
}\]
Let $A:=\Bl_{C}V^{4}_{9} \subset \wt{\LG_{\a}(3,V)}$, $\ol{A}:=f(A) \subset \P(V)$ and $A' \subset Z$ its proper transform. 
In Lemma~\ref{LG-lem-A}, we will prove that $f|_{A} \colon A \to \ol{A}$ is flopping and that $A'$ is the flop of $A$. 
By taking $V^{4}_{9}$ generally, we will also prove that the fourfold $A'$ is smooth and away from the flipping locus of $\Psi$. 
Hence we obtain the following diagram: 
\[\xymatrix{
&\Bl_{C}V^{4}_{9}=A\ar[ld]_{\mu|_{A}} \ar[rd]^{f|_{A}}&&A' \ar[ld]_{g|_{A'}} \ar@{=}[r]^{\Psi|_{A'}}&A'\ar[rd]^{\vp_{Z^{+}}|_{A'}}& \\
V^{4}_{9} &&\ol{A}&&&\P^{1}. 
}\]
Then the morphism $\vp_{Z^{+}}|_{A'}$ is a fibration with general fiber $\P(T_{\P^{2}})$. 
Let $X:=\Bl_{C}V^{3}_{9}$, $\ol{X}:=f(X)$, and $X' \subset A'$ its proper transfrom. 
We will see that $\ol{X}$ is a hyperplane section of $\ol{A}$ (cf. Lemma~\ref{LG-lem-relonwtLG}). 
Hence $f|_{X} \colon X \to \ol{X}$ is also flopping and $X \dra X'$ is the flop (=Proposition~\ref{LG-prop-conclu}~(3)). 
Finally, we obtain the following diagram: 
\[\xymatrix{
&\Bl_{C}V^{3}_{9}=X\ar[ld]_{\mu|_{X}} \ar[rd]^{f|_{X}}&&X' \ar[ld]_{g|_{X'}} \ar@{=}[r]^{\Psi|_{X'}}&X'\ar[rd]^{\vp_{Z^{+}}|_{X'}}& \\
V^{3}_{9} &&\ol{X}&&&\P^{1}.
}\]
The morphism $\vp_{Z^{+}}|_{X'} \colon X' \to \P^{1}$ is a sextic del Pezzo fibration and hence $X'$ is an example of No.14. 

\subsection{Organization of this section}

We organize this section as follows. 

Subsections~\ref{LG-subsec-2skews} and \ref{LG-subsec-onlya} are spent for preliminaries about isotropic flag varieties. 
In Subsection~\ref{LG-subsec-2skews}, we will define the isotropic flag varieties for finitely many skew forms. 
We will also study the isotropic flag variety $\LF_{\a,\b}(1,n;2n)$ for two skew forms $\a$ and $\b$. 

From Subsection~\ref{LG-subsec-onlya}, we fix a $6$-dimensional vector space $V \simeq \C^{6}$ and a non-degenerate skew form $\a \in \bigwedge^{2}V$. 
The main purpose of Subsection~\ref{LG-subsec-onlya} is to study the isotropic flag varieties for $V$ with respect to $\a$ in detail. 

In Subsection~\ref{LG-subsec-blowup}, we take another general skew form $\b \in \bigwedge^{2}V$. 
The main purpose is to prove Proposition~\ref{LG-prop-blowupLG}, which states that the blowing-up $\LG_{\a}(3,V)$ along $\LG_{\a,\b}(3,V)$ is contained in $\LF_{\a}(1,3;V)$ and the exceptional divisor coincides with $\LF_{\a,\b}(1,3;V)$. 

In Subsection~\ref{LG-subsec-R}, we will take an codimension $2$ section $C$ of $\LG_{\a,\b}(1,3;V)=(\P^{1})^{3}$ and study the threefold $R:=f(\mu^{-1}(C)) \subset \P(V)$. 
As mentioned in Subsection~\ref{LG-subsec-sketch}, $R$ is non-normal along the three lines $l_{1},l_{2},l_{3}$. 
We will prove that $\wt{R}:=\Bl_{l_{1},l_{2},l_{3}}R$ is a non-singular threefold in Proposition~\ref{LG-prop-wtR}. 

In Subsection~\ref{LG-subsec-extraction}, we will construct the divisorial extraction $Z \to \P(V)$ along $R$ and show that $Z$ is smooth. 
Let $\wt{\P(V)}:=\Bl_{l_{1},l_{2},l_{3}}\P(V)$ and consider $\Bl_{\wt{R}}\wt{\P(V)}$. 
Then we will construct a birational morphism $\Bl_{\wt{R}}\wt{\P(V)} \to Z$ over $\P(V)$ in Proposition~\ref{LG-prop-Z}. 
This morphism contracts the proper transforms of the exceptional divisors of $\wt{\P(V)} \to \P(V)$. 
Then the birational contraction $Z \to \P(V)$ is nothing but the desired divisorial extraction along $R$. 

In Subsection~\ref{LG-subsec-flip}, we will show that $Z$ has a flip $Z \dra Z^{+}$. 
Moreover, we will show that $Z^{+}$ is smooth and has a $(\P^{2})^{2}$-fibration structure $\vp_{Z^{+}} \colon Z^{+} \to \P^{1}$.  

In Subsection~\ref{LG-subsec-conclu}, we construct an example of No.14 by taking the proper transforms as mentioned in Subsection~\ref{LG-subsec-sketch}.

\subsection{Isotropic flag varieties for two skew forms}\label{LG-subsec-2skews}
We start with an even-dimensional vector space $V=\C^{2n}$. 

\begin{defi}\label{LG-defi-IF}
Let $\a_{1},\ldots,\a_{m} \in \bigwedge^{2}V$ be skew forms on $V^{\vee}$. 
For an increasing sequence of positive integers $k_{1}<\cdots<k_{l}$ with $k_{l} \leq \dim V=2n$, 
we define 
\[\LF_{\a_{1},\ldots,\a_{m}}(k_{1},\ldots,k_{l};V)\]
as \emph{the isotropic flag variety for skew forms $\a_{1},\ldots,\a_{m}$}, which is namely given by 
\[\{[\C^{k_{1}} \subset \cdots \subset \C^{k_{l}} \subset V^{\vee}] \mid \forall i \in \{1,\ldots,m\}, \forall v, w \in \C^{k_{l}}, \a_{i}(v,w)=0\}. \]
For a subsequence $(k_{i_{1}},\ldots,k_{i_{j}})$ of $(k_{1},\ldots,k_{l})$, 
let $f^{k_{1},\ldots,k_{l}}_{k_{i_{1}},\ldots,k_{i_{j}}}$ denote the natural morphism  
\[f^{k_{1},\ldots,k_{l}}_{k_{i_{1}},\ldots,k_{i_{j}}} \colon \LF_{\a_{1},\ldots,\a_{m}}(k_{1},\ldots,k_{l};V) \to \LF_{\a_{1},\ldots,\a_{m}}(k_{i_{1}},\ldots,k_{i_{j}};V)\]
which is given by forgetting suitable vector spaces of flags. 
If $l=1$, we set 
\[\LG_{\a_{1},\ldots,\a_{m}}(k_{1},V):=\LF_{\a_{1},\ldots,\a_{m}}(k_{1};V)\]
and call it \emph{the isotropic Grassmannian for skew forms $\a_{1},\ldots,\a_{m}$}. 

On $\LG_{\a_{1},\ldots,\a_{m}}(k;V)$, 
the universal quotient bundle of rank $k$ and universal subbundle of rank $\dim V-k$ are denoted by 
\[\mc Q_{\LG_{\a_{1},\ldots,\a_{m}}(k;V)} \text{ and } \mc S_{\LG_{\a_{1},\ldots,\a_{m}}(k;V)}\]
respectively. 

We call $\LG_{\a_{1},\ldots,\a_{m}}(n;V)$ the \emph{Lagrangian Grassmannian for $\a_{1},\ldots,\a_{m}$}. 
If $m=1$ and $\a_{1}$ is non-degenerate, then $\LG_{\a_{1}}(n,V)$ is nothing but the \emph{Lagrangian Grassmannian} $\LG(n,2n)$ in the ordinary sense. 
\end{defi}

\begin{rem}\label{LG-rem-basechange}
In the above setting, the following diagram is cartesian: 
\[\xymatrix{
\LF_{\a_{1},\ldots,\a_{m}}(k_{1},\ldots,k_{l};V) \ar@{^{(}->}[r] \ar[d]_{f^{k_{1},\ldots,k_{l}}_{k_{l}}} \ar@{}[rd]|{\Box} & \Fl(k_{1},\ldots,k_{l};V) \ar[d]^{f^{k_{1},\ldots,k_{l}}_{k_{l}}} \\
\LG_{\a_{1},\ldots,\a_{m}}(k_{l};V) \ar@{^{(}->}[r] & \Gr(k_{l},V).
}\]
Note that $\LG_{\a_{1},\ldots,\a_{m}}(k_{l};V)$ is the zero set of a global section $(\a_{1},\ldots,\a_{m}) \in \bigoplus_{i=1}^{m} \bigwedge^{2}\mc Q_{\Gr(k_{l},V)}$, where $\mc Q_{\Gr(k_{l},V)}$ is the rank $k_{l}$ universal quotient bundle on the Grassmannian $\Gr(k_{l},V)$. 
Therefore, $\LF_{\a_{1},\ldots,\a_{m}}(k_{1},\ldots,k_{l};V)$ is smooth for general $\a_{1},\ldots,\a_{m}$ since $\bigwedge^{2}\mc Q_{\Gr(k_{l},V)}$ is globally generated. 
\end{rem}

\begin{defi}\label{defi-ortho}
Let $\a \in \bigwedge^{2}V$ be a non-degenerate skew form on $V^{\vee}$. 
We may regard $\a$ as an isomorphism $\a \colon V^{\vee} \to V$. 
Then $\a$ gives a bilinear form on $V^{\vee}$ as follows: 
\[\a(v,w):=v(\a(w)) \text{ for } v,w \in V^{\vee}.\]
\begin{enumerate}
\item For each linear subvariety $L \subset \P(V)$ which is possibly a point, 
we define 
\[L^{\perp}_{\a}:=\{[w] \in \P(V)\mid \a(v,w)=0 \text{ for all } v \in L  \}. \]
We call $L^{\perp}_{\a}$ \emph{the orthogonal subspace} of $L$ with respect to $\a$. 
Note that $\dim L+\dim L^{\perp}_{\a}=\dim \P(V)-1$. 
\item Let $x \in \P(V)$ be a point and $x \colon V \to \C$ be a corresponding surjection. Note that the hyperplane given by $\a(x) \colon \C \to V$ is $x^{\perp}_{\a}$. 
Since $x^{\perp}_{\a}$ contains $x$, we have $x \circ \a(x)=0$. Define 
\[V_{x}:=\Ker(x)/\Im(\a(x)).\]
This $V_{x}$ is $2n-2$-dimensional and has the non-degenerate skew form $\a_{x}$ induced by $\a$. 
\end{enumerate}
\end{defi}

The following lemma is well-known. 
\begin{lem}\label{LG-lem-LG}
Let $\a \in \bigwedge^{2}V$ be a non-degenerate skew form. 
Then the fiber of $x \in \P(V)$ under the morphism $f^{1n}_{1} \colon \LF_{\a}(1,n;V) \to \P(V)$ is isomorphic to $\LG_{\a_{x}}(n-1,V_{x}) \simeq \LG(n-1,2n-2)$. 
\end{lem}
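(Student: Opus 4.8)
The plan is to reduce the statement to the classical fact that the Lagrangian subspaces containing a fixed isotropic line are controlled by the symplectic reduction of the ambient space along that line. First I would unwind the definitions on the fiber. A point $x\in\P(V)$ corresponds to a surjection $x\colon V\to\C$, i.e. to a nonzero vector $x\in V^{\vee}$ spanning a line $\ell_{x}:=\C x\subset V^{\vee}$; this line is automatically isotropic for the nondegenerate form $\a(v,w)=v(\a(w))$ on $V^{\vee}$ since $\a$ is skew. A point of $(f^{1n}_{1})^{-1}(x)$ is a flag $[\ell_{x}\subset\Lambda\subset V^{\vee}]$ with $\Lambda$ an isotropic $n$-plane; because $\dim V^{\vee}=2n$ and $\a$ is nondegenerate, $\Lambda$ is forced to be Lagrangian, and the flag is determined by $\Lambda$ alone. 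Via the forgetful morphism $f^{1n}_{n}$ this realizes the fiber, as a closed subscheme, as $\{\Lambda\in\LG_{\a}(n;V)\mid \ell_{x}\subset\Lambda\}$.

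Next I would transport everything to $V$ through the isomorphism $\a\colon V^{\vee}\xrightarrow{\sim}V$. A direct computation shows that $\a$ carries the form on $V^{\vee}$ to the dual symplectic form $\omega:=\a^{-1}$ on $V$ (indeed $\omega(\a v,\a w)=v(\a(w))=\a(v,w)$), hence sends Lagrangians to Lagrangians and sends $\ell_{x}$ to the isotropic line $\Im(\a(x))$. Thus the fiber is identified with the Lagrangian $n$-planes $\Lambda'\subset(V,\omega)$ containing $\Im(\a(x))$. Since the $\omega$-orthogonal of $\Im(\a(x))$ is exactly $\Ker(x)$, every such $\Lambda'$ lies in $\Ker(x)$, and $\Lambda'\mapsto\Lambda'/\Im(\a(x))$ is a bijection onto the Lagrangian $(n-1)$-planes of the reduced symplectic space $V_{x}=\Ker(x)/\Im(\a(x))$ with its induced form $\a_{x}$, the inverse being the preimage under $\Ker(x)\to V_{x}$. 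On the level of points this gives the desired identification of the fiber with $\LG_{\a_{x}}(n-1,V_{x})\simeq\LG(n-1,2n-2)$.

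The remaining, and most delicate, step is to promote this pointwise bijection to an isomorphism of schemes. I would do so by verifying that the operations used — applying the fixed isomorphism $\a$, intersecting the universal Lagrangian subbundle of $\LG_{\a}(n;V)$ with the fixed hyperplane $\Ker(x)$, and passing to the quotient by the fixed line $\Im(\a(x))$ — are algebraic and compatible with base change, so that they assemble into two mutually inverse morphisms between the fiber and $\LG(n-1,2n-2)$. Alternatively one may argue by homogeneity: writing $G=\mathrm{Sp}(V^{\vee})$, the morphism $f^{1n}_{1}$ is $G$-equivariant and $G$ acts transitively on $\P(V)$, so all of its scheme-theoretic fibers are isomorphic to the fiber over a fixed point, which by the computation above is $\LG_{\a_{x}}(n-1,V_{x})$; in particular each fiber is smooth and reduced, and the dimension count $\dim\LG(n-1,2n-2)=\tfrac{n(n-1)}{2}=\dim\LF_{\a}(1,n;V)-\dim\P(V)$ is consistent. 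The genuine obstacle is only this functorial bookkeeping; the underlying symplectic linear algebra is entirely standard.
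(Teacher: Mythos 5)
Your proof is correct and follows essentially the same route as the paper's: both identify the fiber with the Lagrangian $n$-planes in $V^{\vee}$ containing the line determined by $x$ and then apply symplectic reduction, matching these with Lagrangian $(n-1)$-planes for $(V_{x},\a_{x})$. The only differences are presentational — you transport the picture to $V$ through the isomorphism $\a$ and add remarks on upgrading the pointwise bijection to a scheme isomorphism, whereas the paper treats the lemma as well known and argues directly (and purely pointwise) with projective linear subspaces of $\P(V)$.
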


For a general skew form $\a \in \bigwedge^{2}V=H^{0}(\P(V),\Omega_{\P(V)}(2))$, we obtain the following exact sequence
\begin{align}\label{LG-ex-null}
0 \to \mc O \mathop{\to}^{\a} \Omega_{\P(V)}(2) \to \mc N_{\P(V)}^{\a}(1) \to 0,
\end{align}
where the cokernel $\mc N_{\P(V)}^{\a}$ is so-called the \emph{null-correlation} bundle associated with the skew form $\a$ \cite[\S~4.2]{OSS80}. 

We take another general skew form $\b \in \bigwedge^{2}V$. 
By the natural map $\bigwedge^{2}V=H^{0}(\Omega_{\P(V)}(2)) \to H^{0}(\mc N_{\P(V)}^{\a}(1))$, we obtain the section $\ol{\b} \in H^{0}(\mc N_{\P(V)}^{\a}(1))$ as the image of $\b$. 
Let $l$ be the zero scheme of $\ol{\b}$, which is smooth since $\mc N^{\a}_{\P(V)}(1)$ is globally generated and $\b$ is general. 

\begin{lem}[{\cite[Proposition~3.2.3]{IlievRanestad05}}]\label{LG-lem-P1n}
Take general two skew forms $\a,\b \in \bigwedge^{2}V$. 
We regard $\a,\b$ as isomorphisms $V^{\vee} \to V$. 
\begin{enumerate}
\item The isomorphism $\b^{-1} \circ \a \colon V^{\vee} \to V^{\vee}$ has distinct $n$ eigenvalues $\lambda_{1},\ldots,\lambda_{n}$. For each $i$, the eigenspace $V_{i}^{\vee}$ of $\lambda_{i}$ is $2$-dimensional and gives the decomposition $V^{\vee}=\bigoplus_{i=1}^{n}V_{i}^{\vee}$. 
\item Set $l_{i}:=\P(V_{i}) \subset \P(V)$. 
Then $l$ is the disjoint union of $n$ lines $l_{1},\ldots,l_{n}$, which span $\P(V)$. 
\item For each $x_{i} \in l_{i}$, the linear span $\braket{x_{1},\ldots,x_{n}}$ is a Lagrangian subspace of $\P^{2n-1}$ for $\a$ and $\b$. 
\item For each Lagrangian subspace $\P(L) \subset \P(V)$, $\P(L)$ meets $l_{i}$ at a single point for each $i$. 
\item The Lagrangian Grassmannian $\LG_{\a,\b}(n;V)$ associated to $\a$ and $\b$ is canonically isomorphic to $l_{1} \times \cdots \times l_{n} \simeq (\P^{1})^{n}$. 
\item The universal quotient bundle $\mc Q_{\LG_{\a,\b}(n;V)}$ is isomorphic to $\bigoplus_{i=1}^{n} \pr_{i}^{\ast}\mc O(1)$ under the identification $\LG_{\a,\b}(n;V) \simeq l_{1} \times \cdots \times l_{n}$. 
Hence there is a natural isomorphism $\LF_{\a,\b}(1,n;V) \simeq \P_{(\P^{1})^{n}}\left( \bigoplus_{i=1}^{n} \pr_{i}^{\ast}\mc O(1) \right)$. 
\item The natural morphism $\LF_{\a,\b}(1,n;V) \to \P(V)$ is the composition of the blowing-ups
\[\Bl_{\wt{l_{1}^{\perp}}}\Bl_{\wt{l_{2}^{\perp}}} \cdots \Bl_{\wt{l_{n-1}^{\perp}}}\Bl_{l_{n}^{\perp}}\P(V) \to \P(V),\]
where $\wt{l_{i}^{\perp}}$ denotes the proper transform of the orthogonal space $l_{i}^{\perp}$ of $l_{i}$ with respect to $\a$. 
\end{enumerate}
\end{lem}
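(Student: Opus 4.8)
The plan is to reduce all seven statements to the simultaneous normal form of the generic pencil $\{\a-\lambda\b\}$ of skew forms and then read them off from the resulting eigenspace decomposition. For (1), regard $\a,\b$ as isomorphisms $V^{\vee}\to V$ and set $\phi:=\b^{-1}\circ\a\in\End(V^{\vee})$. A direct computation using the skew-symmetry of both forms gives $\b(\phi v,w)=\b(v,\phi w)=\a(v,w)$, so $\phi$ is self-adjoint for $\b$ and eigenspaces for distinct eigenvalues are both $\a$- and $\b$-orthogonal. The eigenspace of $\phi$ for $\lambda$ is $\Ker(\a-\lambda\b)$, so the eigenvalues are the roots of $\mathrm{Pf}(\a-\lambda\b)$, a polynomial of degree $n$ (since $\mathrm{Pf}(\b)\neq0$). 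For generic $\a,\b$ this polynomial has $n$ distinct roots $\lambda_{1},\dots,\lambda_{n}$, all nonzero because $\mathrm{Pf}(\a)\neq0$, and at each simple root the form $\a-\lambda_{i}\b$ has corank exactly $2$; hence $\dim V_{i}^{\vee}=2$, and as the eigenvalues are distinct with $\sum 2=2n$ we obtain $V^{\vee}=\bigoplus_{i}V_{i}^{\vee}$. Moreover each block $V_{i}^{\vee}$ carries non-degenerate restrictions of $\a$ and $\b$, so it is non-isotropic; this is exactly the input needed below.

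For (2) I would identify $l=Z(\ol\b)$ with the union of the projectivized eigenspaces. Unwinding the definition of $\ol\b$ through the Euler sequence, a point $[w]\in\P(V)$, i.e. a line $\C w\subset V^{\vee}$, lies in $Z(\ol\b)$ iff the cotangent values $\a(w,-)$ and $\b(w,-)$ are proportional on $V^{\vee}/\C w$, i.e. iff $(\a-\lambda\b)(w)=0$ for some $\lambda$, i.e. iff $w$ is a $\phi$-eigenvector; hence $l=\bigsqcup_{i}l_{i}$, disjoint because distinct eigenspaces meet only in $0$ and spanning $\P(V)$ because $\bigoplus V_{i}^{\vee}=V^{\vee}$ (reducedness being the genericity already recorded). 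For (3), given $x_{i}\in l_{i}$ the span $L:=\langle x_{1},\dots,x_{n}\rangle$ is $n$-dimensional and isotropic for both forms, since $\a(x_{i},x_{j})=\b(x_{i},x_{j})=0$ for $i\neq j$ by orthogonality and for $i=j$ by skew-symmetry; being of dimension $n$ it is Lagrangian for both. For (4), if $\P(L)$ is Lagrangian for $\a$ and $\b$, then $\b(\phi v,w)=\a(v,w)=0$ for all $v,w\in L$ shows $\phi(L)\subseteq L$, so $L=\bigoplus_{i}(L\cap V_{i}^{\vee})$; since each block is non-isotropic, $\dim(L\cap V_{i}^{\vee})\leq1$, and as these dimensions sum to $n$ each equals $1$, giving one intersection point with each $l_{i}$.

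Parts (5) and (6) globalize (3)–(4): the assignments $\P(L)\mapsto(\P(L)\cap l_{i})_{i}$ and $(x_{i})_{i}\mapsto\langle x_{1},\dots,x_{n}\rangle$ are mutually inverse morphisms between $\LG_{\a,\b}(n;V)$ and $\prod_{i}l_{i}\cong(\P^{1})^{n}$, each built algebraically from the tautological subbundle and the projections $V^{\vee}\otimes\mc O\to V_{i}^{\vee}\otimes\mc O$. Under this identification the tautological Lagrangian subbundle restricts to $\bigoplus_{i}\pr_{i}^{\ast}\mc O(-1)$, its intersection with $V_{i}^{\vee}$ being the tautological line of $l_{i}$, so dualizing gives $\mc Q_{\LG_{\a,\b}(n;V)}\cong\bigoplus_{i}\pr_{i}^{\ast}\mc O(1)$; and $\LF_{\a,\b}(1,n;V)$, being the bundle of lines in that subbundle, is $\P_{(\P^{1})^{n}}(\bigoplus_{i}\pr_{i}^{\ast}\mc O(1))$.

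The remaining and hardest part is (7). In coordinates adapted to $V^{\vee}=\bigoplus V_{i}^{\vee}$ one has $l_{i}^{\perp}=\P(\bigoplus_{j\neq i}V_{j}^{\vee})$ with respect to $\a$, a codimension-$2$ linear subspace, and by the analysis in (4) the fibre of $f^{1n}_{1}$ over $[w]$ is $\prod_{\{i:\,w_{i}=0\}}l_{i}$, where $w=\sum_{i}w_{i}$ is the eigenspace decomposition; thus $f^{1n}_{1}$ is an isomorphism over $\P(V)\setminus\bigcup_{i}l_{i}^{\perp}$, has $\P^{1}$-fibres over the generic points of each $l_{i}^{\perp}$, and larger product fibres over the intersections $l_{i}^{\perp}\cap l_{j}^{\perp}$. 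The main obstacle is exactly that for $n\geq3$ these centres meet (pairwise in codimension $4$), so one cannot blow them up simultaneously; I would instead check inductively that blowing up $l_{n}^{\perp}$, then the proper transform $\wt{l_{n-1}^{\perp}}$, and so on, keeps every stage smooth with smooth proper transforms of the remaining centres and reproduces the fibre structure of $f^{1n}_{1}$. Since both $\LF_{\a,\b}(1,n;V)$ and the iterated blow-up are smooth toric varieties for the torus acting on $\bigoplus V_{i}^{\vee}$ preserving the blocks, the cleanest rigorous route is to verify that their fans agree; the universal property of blowing up then promotes the fibrewise match to an isomorphism over $\P(V)$, which is (7).
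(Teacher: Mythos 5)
Parts (1)--(6) of your proposal are correct, and at two points genuinely cleaner than the paper's own proof. The paper runs (1) and (2) in the opposite order: it first shows that the zero locus $l$ of the general section $\ol{\b}$ of the globally generated bundle $\mc N^{\a}_{\P(V)}(1)$ is smooth with class $n[\text{line}]$, hence a disjoint union of $n$ lines, and only then deduces that $\b^{-1}\circ\a$ has $n$ distinct eigenvalues with $2$-dimensional eigenspaces; your Pfaffian argument proves (1) directly and reads (2) off from it, which is equally valid. For (4) the paper chooses a symplectic basis adapted to $L$ and manipulates block matrices to show that $(\b^{-1}\circ\a)|_{L^{\vee}}$ has $n$ distinct eigenvalues; your argument --- $L^{\vee}$ is $\phi$-invariant because $\b(\phi v,w)=\a(v,w)=0$ for $v,w\in L^{\vee}$ and $(L^{\vee})^{\perp_{\b}}=L^{\vee}$, so $L^{\vee}=\bigoplus_{i}(L^{\vee}\cap V_{i}^{\vee})$ with each summand of dimension at most $1$ since $\b|_{V_{i}^{\vee}}$ is non-degenerate --- is shorter and more conceptual. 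Likewise your (6) reads $\mc Q_{\LG_{\a,\b}(n;V)}$ directly off the tautological subbundle $\bigoplus_{i}\pr_{i}^{\ast}\mc O(-1)$, where the paper obtains the projective-bundle structure by identifying $\LF_{\a,\b}(1,n;V)$ with the closure of the graph of the rational map $\Phi\colon\P(V)\dra l_{1}\times\cdots\times l_{n}$, $x\mapsto(\Phi_{i}(x))_{i}$ (your (5) leans on an ``inverse morphism'' that still needs the constant-rank or Zariski's-main-theorem argument the paper uses, but that is routine).

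The genuine gap is (7): what you wrote there is a plan, not a proof. You correctly compute the fibers of $f^{1n}_{1}$ and correctly isolate the difficulty (the centers $l_{i}^{\perp}$ meet, so the blow-ups must be iterated), but neither proposed route is carried out --- the inductive smoothness check is not done, no fan is computed, and the closing claim that ``the universal property of blowing up then promotes the fibrewise match to an isomorphism'' does not stand as stated: agreement of fibers does not by itself produce a morphism, and to apply the universal property of $\Bl_{l_{n}^{\perp}}\P(V)$ (and then iteratively of the $\wt{l_{i}^{\perp}}$) to $\LF_{\a,\b}(1,n;V)\to\P(V)$ you would have to verify that the pullback of the ideal sheaf of each successive center is invertible, which is exactly the computation your sketch omits. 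The paper's mechanism goes the other way and closes this cleanly: on the iterated blow-up the proper transforms of the linear systems $\Lambda_{i}$ of hyperplanes containing $l_{i}^{\perp}$ become base point free and define a morphism to $l_{1}\times\cdots\times l_{n}$ eliminating the indeterminacy of $\Phi$; since $\LF_{\a,\b}(1,n;V)$ is the graph closure of $\Phi$ (already established in its proof of (6)), this yields a birational morphism from the iterated blow-up to $\LF_{\a,\b}(1,n;V)$, which is an isomorphism because both are smooth of the same Picard rank $n+1$. Supplying an argument of this kind, or actually computing and comparing the two fans, is what is needed to complete your (7).
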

\begin{proof}
Most of the statements are proved in \cite[Proposition~3.2.3]{IlievRanestad05}. 
Nevertheless, we prove them for the convenience of later arguments. 

First we prove (1) and (2). 
For each $v \in \P(V)$, there is a corresponding surjection $x \colon V \to \C$. 
Let $v^{\vee} \colon \C \to V$ be its dual. 
Note that $x^{\perp}_{\a}=x^{\perp}_{\b}$ if and only if there is a scalar $\lambda \in \C$ such that $(\b^{-1} \circ \a-\lambda\id_{V^{\vee}})(v^{\vee})=0$, which means that $v^{\vee}$ is an eigenvector for $\lambda$. 
Therefore we obtain
\[l=\bigsqcup_{\lambda \in \C} \P_{\ast}(\Ker(\b^{-1} \circ \a-\lambda \cdot \id_{V^{\vee}})) \subset \P(V),\]
where $\P_{\ast}$ means the projectivization in the classical sense. 
By the exact sequence (\ref{LG-ex-null}), we have $[l]=n[\mc O(1)^{2n-2}]$ on $H_{2}(\P(V),\Z)$. 
Since $l$ is smooth and $1$-dimensional, $l$ is a disjoint union of $n$ lines $l_{1},\ldots,l_{n}$ and the linear span of $l$ is $\P(V)$. 
In particular, $\b^{-1} \circ \a$ has $n$ distinct eigenvalues $\lambda_{1},\ldots,\lambda_{n}$ and the eigenspace 
$V_{i}^{\vee}:=\Ker(\b^{-1} \circ \a - \lambda_{i} \cdot \id_{V^{\vee}}) \subset V^{\vee}$ is $2$-dimensional since $l_{i}=\P_{\ast}(V_{i}^{\vee})$. 

(3) Take a point $x_{i} \in l_{i}$ and let $v_{i} \in V^{\vee}$ be a non-zero element such that $[v_{i}]=x_{i}$. 
For each $w \in V^{\vee}$ and each $i$, we have $\a(v_{i},w)=\lambda_{i}\b(v_{i},w)$. 
Since $\a$ and $\b$ are skew forms, we have $\a(w,v_{i})=\lambda_{i}\b(w,v_{i})$. 
Thus we have $\lambda_{j}\b(v_{i},v_{j})=\a(v_{i},v_{j})=\lambda_{i}\b(v_{i},v_{j})$ for each $i,j$. 
If $i \neq j$, then $\b(v_{i},v_{j})=0$ by the above equality since $\lambda_{i} \neq \lambda_{j}$. 
Hence we have $\a(v_{i},v_{j})=0$ also. 
Therefore, the space $\braket{v_{1},\ldots,v_{n}}$ is Lagrangian for $\a$ and $\b$. 

(4) Let $\P(L) \subset \P(V)$ be a Lagrangian subspace. 
Let $\pi \colon V \epm L$ be the corresponding surjection. 
Then we obtain the following diagram:
\[\xymatrix{
0 \ar[r] & L^{\vee} \ar[r]^{ \pi^{\vee}} & V^{\vee} \ar[r]^{\pi \circ \a} \ar[d]_{\b^{-1} \circ \a}& L \ar[r]\ar@{=}[d] & 0 \\
0 \ar[r] & L^{\vee}  \ar[r]_{\pi^{\vee}} & V^{\vee} \ar[r]_{\pi \circ \b} & L \ar[r] & 0
}\]
Thus we obtain a natural automorphism $(\b^{-1} \circ \a)|_{L^{\vee}} \colon L^{\vee} \to L^{\vee}$. 
It is enough to show that the eigenvalues of $(\b^{-1} \circ \a)|_{L^{\vee}}$ are $n$ distinct scalars. 

We take a basis of $L^{\vee}$ as $e_{1},\ldots,e_{n}$. 
We may take $e_{n+1},\ldots,e_{2n} \in V^{\vee}$ such that $\{e_{1},\ldots,e_{2n}\}$ is a basis of $V^{\vee}$ and $\a(e_{i},e_{n+j})=\d_{ij}$ holds for each $1 \leq i,j \leq n$. 
Then the representation matrix $A$ of $\a$ is $\begin{pmatrix}0 & I_{n} \\ -I_{n}&\ast\end{pmatrix}$. 
By taking another $e_{n+1},\ldots,e_{2n}$, we may assume that $A=\begin{pmatrix}0 & I_{n} \\ -I_{n}&0\end{pmatrix}$. 
Let $B$ be the representation matrix of $\b$. 
Since $B$ is skew-symmetric and $\braket{e_{1},\ldots,e_{n}}$ is a Lagrangian, 
there are $n \times n$-matrices $B_{1}$ and $B_{2}$ such that 
$B=\begin{pmatrix}0 & B_{1} \\ -\!^{t}B_{1}&B_{2}\end{pmatrix}$. 
Then $B^{-1}=\begin{pmatrix}X & -\!^{t}(B_{1}^{-1}) \\ B_{1}^{-1}&0\end{pmatrix}$ for some $X$ and hence $B^{-1}A=\begin{pmatrix}\!^{t}(B_{1}^{-1})&X' \\ 0&B_{1}^{-1}\end{pmatrix}$ for some $X'$. 
Hence the set of the eigenvalues of $B^{-1}A$ equals that of $B_{1}^{-1}$. 
The representation matrix of $(\b^{-1} \circ \a)|_{L^{\vee}}$ is $\!^{t}(B_{1}^{-1})$. 
Therefore, $(\b^{-1} \circ \a)|_{L^{\vee}}$ has $n$ distinct eigenvectors. 

(5) By (3), we obtain a morphism 
\[l_{1} \times \cdots l_{n} \ni (v_{1},\ldots,v_{n}) \mapsto \braket{v_{1},\ldots,v_{n}} \in \LG_{\a,\b}(n,2n). \]
Since $\a,\b$ are general, we may assume that $\LG_{\a,\b}$ is smooth (cf. Remark~\ref{LG-rem-basechange}). 
By (4) and Zariski's main theorem, this morphism is isomorphic. 

(6) 
Set $V_{i}$ is the dual of $V_{i}^{\vee}$. 
Then $V=\bigoplus_{i=1}^{n}V_{i}$ and 
$\P(V_{i}) \subset \P(V)$ is nothing but the line $l_{i}$. 
Then we have $l_{i}^{\perp}:=(l_{i})_{\a}^{\perp}=\bigcap_{x \in l_{i}} x_{\a}^{\perp}$. 
Note that $x_{\a}^{\perp}=x_{\b}^{\perp}$ for any $x \in l$ and hence $l_{i}^{\perp}$ is the orthogonal space of $l_{i}$ with respect to $\b$ also. 

For a fixed $x \in \P(V)$, 
$x \not\in l_{i}^{\perp}$ for all $i$ 
if and only if 
$x$ is not contained in the indeterminacy of the rational map $\Phi_{i} \colon \P(V) \dra \P(V_{i})$ induced by $V_{i} \hra V$ for all $i$. 
Thus we obtain 
the point $x_{i} \in l_{i}$ corresponding to such an $x$. 
By (3) and (4), the linear span $L_{x}:=\braket{x_{1},\ldots,x_{n}}$ is the unique Lagrangian subspace containing $x$. 
Therefore, we have a rational map 
$\Phi \colon \P(V) \dra \LG_{\a,\b}(n,V)$ 
by sending a general $x \in \P(V)$ to $L_{x}$. 
By regarding $\LF_{\a,\b}(1,n;V)$ as the incidence variety $\{(x,L) \in \P(V) \times \LG_{\a,\b}(n,V) \mid x \in L\}$, 
the diagram $\P(V) \gets \LF_{\a,\b}(1,n;V) \to \LG_{\a,\b}(n,V)$ is the graph of $\Phi$. 
Under the identification $\LG_{\a,\b}(n,V) \simeq \P(V_{1}) \times \cdots \times \P(V_{n})$, 
$\Phi \colon \P(V) \dra \P(V_{1}) \times \cdots \times \P(V_{n})$ coincides with the natural birational map which is induced by $x \mapsto (\Phi_{i}(x))_{i}$. 
Therefore, we have a natural isomorphism
\begin{align*}
\LF_{\a,\b}(1,n;V) \simeq \ol{\left\{(x,y_{1},\ldots,y_{n}) \in \P(V) \times \prod_{i=1}^{n}\P(V_{i}) \mid x \notin \bigsqcup_{i}l_{i}, \pi_{i}(x)=y_{i}\right\}}. 
\end{align*}
Thus $\LF_{\a,\b}(1,n;V)$ is isomorphic to 
$\P_{(\P^{1})^{n}}\left( \bigoplus_{i=1}^{n} \pr_{i}^{\ast}\mc O(1)\right)$. 
Hence $\mc Q_{\LG_{\a,\b}(n;V)}$ is isomorphic to $\bigoplus_{i=1}^{n} \pr_{i}^{\ast}\mc O(1)$. 

(7) Let $\Lambda_{i}$ be the linear system on $\P(V)$ consisting of the hyperplane containing $l_{i}^{\perp}$. 
Then the proper transform of $\Lambda_{i}$ on $\Bl_{\wt{l_{1}^{\perp}}}\cdots\Bl_{l_{n}^{\perp}}\P(V)$ gives a morphism to $l_{i}$, and hence a morphism to $l_{1} \times \cdots \times l_{n}$, which is an elimination of the indeterminacy of $\Phi \colon \P(V) \dra l_{1} \times \cdots \times l_{n}$. 
Since $\LF_{\a,\b}(1,n;V)$ is the graph of $\Phi$, we have a morphism $\Bl_{\wt{l_{1}^{\perp}}} \cdots \Bl_{l_{n}^{\perp}}\P(V) \to \LF_{\a,\b}(1,n;V)$. 
This morphism is birational and hence isomorphic since the Picard rank of the source equals to that of the target. 
We complete the proof. 
\end{proof}

From now on, let $V$ denotes a $6$-dimensional vector space until the end of this section. 

\subsection{Lagrangian flag varieties for a $6$-dimensional vector space}\label{LG-subsec-onlya}
Take a non-degenerate skew form $\a \in \bigwedge^{2}V$ and consider the following diagram: 
\[\xymatrix{
&\ar[d]_{f^{123}_{13}}\LF_{\a}(1,2,3;V) \ar[rd]^{f^{123}_{12}} \ar[ld]_{f^{123}_{23}}& \\
\LF_{\a}(2,3;V)\ar[d]_{f^{23}_{3}}&\LF_{\a}(1,3;V) \ar[rd]_{f^{13}_{1}} \ar[ld]^{f^{13}_{3}}&\LF_{\a}(1,2;V) \ar[d]^{f^{12}_{1}}  \\
\LG_{\a}(3,V)&&\P(V)
}\]
All morphisms in the above diagram are the natural morphisms as in Definition~\ref{LG-defi-IF}. 
In particular, the morphisms $f^{123}_{13}$, $f^{123}_{23}$, $f^{23}_{3}$, $f^{13}_{3}$ are $\P^{1}$-bundles by Remark~\ref{LG-rem-basechange}. 

For each point $x \in \P(V)$, let $V_{x}$ be the $4$-dimensional vector space and $\a_{x} \in \bigwedge^{2}V_{x}$ the non-degenerate skew form induced by $\a$ as in Definition~\ref{defi-ortho}~(2). 
By Lemma~\ref{LG-lem-LG}, we have $(f^{13}_{1})^{-1}(x)=\LG_{\a_{x}}(2,V_{x})$, which is isomorphic to $\Q^{3}$. 
Moreover, the rank $2$ universal quotient bundle on $\LG_{\a_{x}}(2,V_{x})$ is the dual of the Spinor bundle $\ms S^{\vee}_{\Q^{3}}$ under the identification $\LG_{\a_{x}}(2,V_{x}) \simeq \Q^{3}$ \cite[Example~1.5]{Ottaviani88}. 
Then we obtain the following diagram by taking the base change to $\{x\} \hra \P(V)$: 
\begin{align}\label{LG-dia-P3Q3}
\xymatrix{
\P_{\Q^{3}}(\ms S^{\vee}_{\Q^{3}})=\LF_{\a_{x}}(1,2;V_{x})=(f^{123}_{1})^{-1}(x)\ar[d] \ar[rd] & \P_{\P(V_{x})}(\mc N_{\P(V_{x})}^{\a_{x}}(1)) \ar@{}[l]|{\hspace{50pt}=}\\
\Q^{3}=\LG_{\a_{x}}(2,V_{x})=(f^{13}_{1})^{-1}(x) \ar[rd] &  \P^{3}=\P(V_{x})=(f^{12}_{1})^{-1}(x) \ar[d] \\
&x, 
}
\end{align}
where $\mc N^{\a_{x}}_{\P(V_{x})}$ is the null-correlation bundle on $\P(V_{x})$ with respect to $\a_{x}$. 
In particular, $f^{12}_{1}$ is a $\P^{3}$-bundle.

\begin{nota}\label{LG-nota-Hi}
From now on, $H_{k_{i}}$ denotes the divisor on $\LF_{\a_{1},\ldots,\a_{m}}(k_{1},\ldots,k_{l};V)$ that is the pull-back of a hyperplane section with respect to the Pl\"{u}cker embedding under the morphism $\LF_{\a_{1},\ldots,\a_{m}}(k_{1},\ldots,k_{l};V) \to \LG_{\a_{1},\ldots,\a_{m}}(k_{i},V) \hra \Gr(k_{i},V)$ 
by the abuse of notation. 
\end{nota}

\begin{lem}\label{LG-lem-null}
The $\P^{3}$-bundle $f^{12}_{1} \colon \LF_{\a}(1,2;V) \to \P(V)$ is the projectivization of the null-correlation bundle $\mc N_{\P(V)}^{\a}(H_{1})$ and the tautological bundle is $\mc O_{\LF_{\a}(1,2;V)}(H_{2})$. 
Moreover, we have $h^{0}(\Omega_{f^{12}_{1}}(2H_{2}-2H_{1}))=h^{0}\left( \bigwedge^{2}\mc N^{\a}_{\P(V)} \right) = 1$. 
\end{lem}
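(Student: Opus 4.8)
The plan is to first establish the projective-bundle description, then reduce the cohomological statement to a computation of $\bigwedge^{2}\mc N^{\a}_{\P(V)}$ on $\P(V)$ via the relative Euler sequence, and finally evaluate it by Bott's formula. For the structural claim I would work inside the ordinary flag variety $\Fl(1,2;V)$ and its forgetful map to $\Gr(1,V)=\P(V)$. The fiber over a line $\ell \subset V^{\vee}$ is the set of $2$-planes $\ell \subset W \subset V^{\vee}$, i.e. the set of lines in $V^{\vee}/\ell$; via the Euler sequence this exhibits $\Fl(1,2;V)$ as the projectivized twisted cotangent bundle $\P(\Omega_{\P(V)}(2H_{1}))$, and restricting to a fiber (where the Pl\"{u}cker class $H_{2}$ becomes the hyperplane class of $\P(V^{\vee}/\ell)$) identifies its tautological quotient with $\mc O(H_{2})$. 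The isotropy condition $\a(v,w)=0$ is a single linear condition on $W$ in each fiber, so globally it should be cut out by the section $\a \in H^{0}(\Omega_{\P(V)}(2H_{1}))$ of (\ref{LG-ex-null}): the surjection $\Omega_{\P(V)}(2H_{1}) \epm \mc N^{\a}_{\P(V)}(H_{1})$ yields a closed sub-bundle $\P(\mc N^{\a}_{\P(V)}(H_{1})) \hra \P(\Omega_{\P(V)}(2H_{1}))$, and I would check fiberwise that a tangent direction lies on it precisely when the contraction of $\a$ against it vanishes, which is exactly $\a(v,w)=0$. Hence $\LF_{\a}(1,2;V)=\P(\mc N^{\a}_{\P(V)}(H_{1}))$ with tautological bundle $\mc O(H_{2})$, which is the first assertion.

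Granting this, write $f:=f^{12}_{1}$, so that $f_{\ast}\mc O(mH_{2})=\Sym^{m}\mc N^{\a}_{\P(V)}(H_{1})$. I would twist the relative Euler sequence
\[
0 \to \Omega_{f} \to f^{\ast}\mc N^{\a}_{\P(V)}(H_{1}) \otimes \mc O(-H_{2}) \to \mc O \to 0
\]
by $\mc O(2H_{2}-2H_{1})$ and push forward along $f$. By the projection formula the middle term pushes to $\mc N^{\a}_{\P(V)} \otimes \mc N^{\a}_{\P(V)}$ and the right term to $\Sym^{2}\mc N^{\a}_{\P(V)}$, the induced map being the natural symmetrization; since $f_{\ast}$ is left exact, its kernel gives $f_{\ast}\Omega_{f}(2H_{2}-2H_{1})=\bigwedge^{2}\mc N^{\a}_{\P(V)}$. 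As $H^{0}$ of the total space equals $H^{0}$ of its pushforward, this yields the first equality $h^{0}(\Omega_{f}(2H_{2}-2H_{1}))=h^{0}(\bigwedge^{2}\mc N^{\a}_{\P(V)})$.

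Finally, to compute $h^{0}(\bigwedge^{2}\mc N^{\a}_{\P(V)})$ I would apply $\bigwedge^{2}$ to (\ref{LG-ex-null}); since the sub is a line bundle this gives, after twisting by $\mc O(-2)$,
\[
0 \to \mc N^{\a}_{\P(V)}(-1) \to \Omega^{2}_{\P(V)}(2) \to \bigwedge^{2}\mc N^{\a}_{\P(V)} \to 0 .
\]
Bott's formula on $\P(V)=\P^{5}$ gives $H^{\bullet}(\Omega^{2}_{\P(V)}(2))=0$, while twisting (\ref{LG-ex-null}) itself by $\mc O(-2)$ and using $H^{0}(\Omega_{\P(V)})=0$ and $H^{1}(\Omega_{\P(V)})=\C$ gives $h^{0}(\mc N^{\a}_{\P(V)}(-1))=0$ and $h^{1}(\mc N^{\a}_{\P(V)}(-1))=1$. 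The long exact sequence then forces $h^{0}(\bigwedge^{2}\mc N^{\a}_{\P(V)})\cong H^{1}(\mc N^{\a}_{\P(V)}(-1))=\C$, as required.

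The main obstacle is the first step: pinning down every twist and verifying rigorously (not merely set-theoretically on fibers) that the quotient $\Omega_{\P(V)}(2H_{1}) \epm \mc N^{\a}_{\P(V)}(H_{1})$ cuts out $\LF_{\a}(1,2;V)$ with the correct scheme structure, and that the normalization of $H_{2}$ makes the tautological bundle exactly $\mc O(H_{2})$. Once the identification and this normalization are fixed, the two cohomology computations are essentially mechanical, the only external inputs being Bott vanishing for $\Omega^{2}_{\P^{5}}(2)$ and the Hodge numbers $h^{0,1}(\P^{5})=0$, $h^{1,1}(\P^{5})=1$.
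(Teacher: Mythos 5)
Your proposal is correct and takes essentially the same route as the paper: identify $\LF_{\a}(1,2;V)$ with $\P(\mc N^{\a}_{\P(V)}(H_{1}))$ via the section $\a \in H^{0}(\Omega_{\P(V)}(2))$ and the sequence (\ref{LG-ex-null}), reduce the first equality through the relative Euler sequence of $f^{12}_{1}$, and compute $h^{0}\bigl(\bigwedge^{2}\mc N^{\a}_{\P(V)}\bigr)=1$ by wedging (\ref{LG-ex-null}) and applying Bott's formula on $\P^{5}$. The only variation is cosmetic: in the middle step you push forward the twisted Euler sequence and identify $(f^{12}_{1})_{\ast}\Omega_{f^{12}_{1}}(2H_{2}-2H_{1})$ with $\Ker\bigl(\mc N^{\a}_{\P(V)}\otimes\mc N^{\a}_{\P(V)}\to\Sym^{2}\mc N^{\a}_{\P(V)}\bigr)=\bigwedge^{2}\mc N^{\a}_{\P(V)}$, whereas the paper takes the $\bigwedge^{2}$-filtration of that same sequence and uses $R^{i}(f^{12}_{1})_{\ast}\bigl(\bigwedge^{2}\Omega_{f^{12}_{1}}(2H_{2})\bigr)=0$; the two steps are standard and interchangeable.
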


\begin{proof}
First, we prove that $\LF_{\a}(1,2;V) \simeq \P_{\P(V)}(\mc N_{\P(V)}^{\a}(H_{1}))$. 
Consider the natural morphism $\Fl(1,2;V) \to \Gr(2,V)$. 
Then $\LF_{\a}(1,2;V)$ is the pull-back of a smooth hyperplane section of $\Gr(2,V)$. 
Under the identification $H^{0}(\Omega_{\P(V)}(2)) \simeq \bigwedge^{2}V$, 
this hyperplane section is corresponding to $\a$, which is nothing but $\LG_{\a}(2,V)$. 
By the exact sequence (\ref{LG-ex-null}), we have $\LF_{\a}(1,2;V) \simeq \P(\mc N^{\a}_{\P(V)}(1))$. 
Now we obtain the following Euler sequence on $\LF_{\a}(1,2;V)$: 
\begin{align}\label{LG-ex-Euler12a}
0 \to \Omega_{f^{12}_{1}}(H_{2}) \to (f^{12}_{1})^{\ast}(\mc N^{\a}_{\P(V)}(H_{1})) \to \mc O(H_{2}) \to 0. 
\end{align}

Next, we prove that $h^{0}(\Omega_{f^{12}_{1}}(2H_{2}-2H_{1}))=h^{0}\left( \bigwedge^{2}\mc N^{\a}_{\P(V)} \right)$. 
Taking the filtration of $\bigwedge^{2}(f^{12}_{1})^{\ast}\mc N^{\a}_{\P(V)}$ by using (\ref{LG-ex-null}), we have 
\begin{align}\label{LG-eq-wedgenull}
0 \to \left(\bigwedge^{2}\Omega_{f^{12}_{1}}\right)(2H_{2}) \to \bigwedge^{2}(f^{12}_{1})^{\ast}(\mc N^{\a}_{\P(V)}(H_{1})) \to \Omega_{f^{12}_{1}}(2H_{2}) \to 0
\end{align}
Since $R^{i}(f^{12}_{1})_{\ast}\left( \left(\bigwedge^{2}\Omega_{f^{12}_{1}} \right) (2H_{2})  \right)=0$ for any $i$, we obtain the equality. 

Finally, we prove that $h^{0}(\bigwedge^{2}\mc N_{\P(V)}^{\a})=1$. 
By taking the wedge product of the exact sequence (\ref{LG-ex-null}), we have $0 \to \mc N^{\a}_{\P(V)}(-1) \to \Omega_{\P(V)}^{2}(2) \to \bigwedge^{2}\mc N^{\a}_{\P(V)} \to 0$. 
Hence we have $h^{0}(\bigwedge^{2}\mc N^{\a}_{\P(V)})=h^{1}(\mc N^{\a}_{\P(V)}(-1))=1$ again by (\ref{LG-ex-null}). 
\end{proof}

\begin{defi}\label{LG-defi-mcE}
We define the coherent sheaf $\mc E$ on $\LF_{\a}(1,2;V)$ as follows. 
By Lemma~\ref{LG-lem-null}, we have $h^{0}(\Omega_{f^{12}_{1}}(2H_{2}-2H_{1}))=1$. 
We define $\mc E$ is the cokernel of the corresponding injection $\iota \colon \mc O(H_{1}) \to \Omega_{f^{12}_{1}}(2H_{2}-H_{1})$: 
\begin{align}\label{LG-ex-relnull}
0 \to \mc O(H_{1}) \mathop{\to}^{\iota} \Omega_{f^{12}_{1}}(2H_{2}-H_{1}) \to \mc E \to 0. 
\end{align}
\end{defi}

\begin{prop}\label{LG-prop-onlya}
The coherent sheaf $\mc E$ in Definition~\ref{LG-defi-mcE} is locally free and there is a natural isomorphism $\P_{\LF_{\a}(1,2;V)}(\mc E) \simeq \LF_{\a}(1,2,3;V)$. 
The tautological bundle of $\P_{\LF_{\a}(1,2;V)}(\mc E)$ coincides with $\mc O_{\LG_{\a}(1,2,3;V)}(H_{3})$ under this identification. 
\end{prop}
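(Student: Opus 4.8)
The plan is to establish the three assertions—local freeness of $\mc E$, the isomorphism $\P_{\LF_{\a}(1,2;V)}(\mc E) \simeq \LF_{\a}(1,2,3;V)$, and the identification of the tautological bundle with $\mc O(H_{3})$—by reducing everything to the fibres of $f^{12}_{1}$ over $\P(V)$, where the geometry is governed by the null-correlation bundle exactly as in diagram~(\ref{LG-dia-P3Q3}).

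First I would prove local freeness. By (\ref{LG-ex-relnull}), $\mc E$ is locally free of rank $2$ as soon as $\iota$ is a subbundle inclusion, i.e. the corresponding section of $\Omega_{f^{12}_{1}}(2H_{2}-2H_{1})$ is nowhere vanishing. I would check this on each fibre of $f^{12}_{1}\colon\LF_{\a}(1,2;V)=\P_{\P(V)}(\mc N^{\a}_{\P(V)}(H_{1}))\to\P(V)$. Over $x\in\P(V)$ the fibre is $\P(V_{x})\simeq\P^{3}$ (Lemma~\ref{LG-lem-LG} and diagram~(\ref{LG-dia-P3Q3})), on which $H_{1}$ restricts trivially while $H_{2}$ restricts to $\mc O_{\P^{3}}(1)$, since $\mc O(H_{2})$ is the tautological bundle (Lemma~\ref{LG-lem-null}). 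Hence (\ref{LG-ex-relnull}) restricts to $0\to\mc O_{\P^{3}}\to\Omega_{\P^{3}}(2)\to\mc E|_{\P(V_{x})}\to 0$, and by $h^{0}(\Omega_{f^{12}_{1}}(2H_{2}-2H_{1}))=1$ (Lemma~\ref{LG-lem-null}) the first map is, up to scalar, the section attached to the induced non-degenerate form $\a_{x}$ on $V_{x}$ (Definition~\ref{defi-ortho}~(2)). This is precisely the defining section of the null-correlation bundle, which is nowhere vanishing because $\a_{x}$ is non-degenerate; so $\iota$ is a subbundle map on all of $\LF_{\a}(1,2;V)$, $\mc E$ is locally free of rank $2$, and $\mc E|_{\P(V_{x})}\simeq\mc N^{\a_{x}}_{\P(V_{x})}(1)$.

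Next I would describe $\LF_{\a}(1,2,3;V)$ as a $\P^{1}$-bundle over $\LF_{\a}(1,2;V)$. The forgetful map $f^{123}_{12}$ sends $[U_{1}\subset U_{2}\subset U_{3}]$ to $[U_{1}\subset U_{2}]$, and extending $U_{2}$ to a Lagrangian $U_{3}$ amounts to choosing the line $U_{3}/U_{2}$ inside the rank-$2$ symplectic reduction $U_{2}^{\perp_{\a}}/U_{2}$, every line of which is isotropic. Thus $f^{123}_{12}=\P(\mathcal G)$ for the rank-$2$ bundle $\mathcal G$ with fibre $U_{2}^{\perp_{\a}}/U_{2}$. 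Restricting over $\P(V)$ through $f^{123}_{1}$, diagram~(\ref{LG-dia-P3Q3}) identifies the fibre of $\LF_{\a}(1,2,3;V)$ over $x$ with $\LF_{\a_{x}}(1,2;V_{x})=\P_{\P(V_{x})}(\mc N^{\a_{x}}_{\P(V_{x})}(1))$, which by the previous paragraph is exactly $\P_{\P(V_{x})}(\mc E|_{\P(V_{x})})$. So the two $\P^{1}$-bundles $\LF_{\a}(1,2,3;V)$ and $\P_{\LF_{\a}(1,2;V)}(\mc E)$ over $\LF_{\a}(1,2;V)$ agree fibrewise over $\P(V)$; in particular $\mc E$ and $\mathcal G$ coincide up to a line-bundle twist.

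Finally I would produce the isomorphism and the tautological-bundle statement in one stroke, via the universal property of $\P_{\LF_{\a}(1,2;V)}(\mc E)=\Proj\Sym\mc E$. The cleanest route is to exhibit a surjection $(f^{123}_{12})^{*}\mc E\epm\mc O(H_{3})$ on $\LF_{\a}(1,2,3;V)$: the induced morphism $g\colon\LF_{\a}(1,2,3;V)\to\P_{\LF_{\a}(1,2;V)}(\mc E)$ over $\LF_{\a}(1,2;V)$ then automatically satisfies $g^{*}\mc O_{\P(\mc E)}(1)=\mc O(H_{3})$, which settles the tautological claim, and $g$ is an isomorphism because it pulls back the relative-degree-$1$ bundle $\mc O_{\P(\mc E)}(1)$ to $\mc O(H_{3})$, which also has relative degree $1$ (on a fibre $U_{3}=U_{2}\oplus(U_{3}/U_{2})$ gives $\mc O(H_{3})|_{\text{fibre}}=(U_{3}/U_{2})^{\vee}$, the tautological quotient of $\mathcal G$), so $g$ has degree $1$ on every fibre. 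To build the surjection I would unwind the definition (\ref{LG-ex-relnull}) of $\mc E$ together with the relative Euler sequence (\ref{LG-ex-Euler12a}), using that the extra space $U_{3}$ singles out inside $\Omega_{f^{12}_{1}}$ the direction dual to $U_{3}/U_{2}\subset U_{2}^{\perp_{\a}}/U_{2}$, the pairing being supplied by $\a$. The main obstacle is exactly this last step: the two fibrations live over different bases ($f^{12}_{1}$ over $\P(V)$, $f^{123}_{12}$ over $\LF_{\a}(1,2;V)$), so bridging the cotangent description of $\mc E$ with the symplectic-reduction description of $\mathcal G$—and tracking the $H_{1},H_{2},H_{3}$ twists so that $\mc O_{\P(\mc E)}(1)$ matches $\mc O(H_{3})$ on the nose rather than up to a pullback from $\LF_{\a}(1,2;V)$—is where the genuine bookkeeping lies.
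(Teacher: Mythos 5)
Your first step is sound and coincides with the paper's: restricting (\ref{LG-ex-relnull}) to a fibre $\P(V_{x})$ of $f^{12}_{1}$, identifying $\iota|_{\P(V_{x})}$ with the null-correlation section attached to $\a_{x}$, and concluding $\mc E|_{\P(V_{x})}\simeq \mc N^{\a_{x}}_{\P(V_{x})}(1)$, hence local freeness. The genuine gap is everything after that. You make both remaining assertions rest on a surjection $(f^{123}_{12})^{\ast}\mc E \epm \mc O(H_{3})$ which you never construct; you yourself call its construction, and in particular the determination of the twist, ``the main obstacle'' and ``where the genuine bookkeeping lies''. That twist is not bookkeeping: it \emph{is} the content of the last assertion. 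Fibrewise reasoning of the kind you use can only determine $\xi_{\mc E}$ modulo pullbacks from the base: once the isomorphism $\P(\mc E)\simeq \LF_{\a}(1,2,3;V)$ is known, the fibrewise identification (\ref{LG-eq-Gfib}) and the fact that the $f^{123}_{1}$-fibres are Fano already give $\xi_{\mc E}\sim H_{3}+aH_{1}$ for some $a\in\Z$, and the proposition is precisely the claim $a=0$. The paper spends the bulk of its proof on exactly this point: it shows $\xi_{\mc E}$ is nef (via (\ref{LG-ex-relnull}) and (\ref{LG-eq-wedgenull}), $\mc E$ is a quotient of $(f^{12}_{1})^{\ast}\bigl(\bigl(\bigwedge^{2}\mc N^{\a}_{\P(V)}\bigr)(H_{1})\bigr)$, which is globally generated), so $a\geq 0$, and then excludes $a>0$ by the intersection-number computation $\xi_{\mc E}^{8}.H_{2}=-s_{7}(\mc E).H_{2}=0$, using (\ref{LG-eq-chernG}) and the Chern polynomial of the null-correlation bundle. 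Your proposal contains no counterpart to either half of this argument, nor any substitute mechanism for excluding the unwanted twist.

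There is a secondary gap as well: from the fact that $\P(\mc E)$ and the reduction bundle $\P(\mc G)$ (fibre the lines in $U_{2}^{\perp_{\a}}/U_{2}$) restrict to the same $\P^{1}$-bundle over each fibre $\P(V_{x})$, you infer that ``$\mc E$ and $\mc G$ coincide up to a line-bundle twist''. Fibrewise isomorphisms over the points of $\P(V)$ do not glue to an isomorphism of bundles over $\LF_{\a}(1,2;V)$ without a global argument; the paper gets its global statement by a different route, namely that $\mc E$ is a quotient of a twist of $\Omega_{f^{12}_{1}}$, so $\P(\mc E)$ embeds into the ordinary flag variety $\Fl(1,2,3;V)$, isotropy of the resulting flags is checked fibrewise via (\ref{LG-dia-P3Q3}), and equality with $\LF_{\a}(1,2,3;V)$ follows. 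It is worth recording that your reduction-bundle picture \emph{could} be completed into a proof genuinely different from the paper's: granting $\P(\mc E)\simeq \LF_{\a}(1,2,3;V)\simeq\P(\mc G)$ over $\LF_{\a}(1,2;V)$, with $\mc G$ the dual of the symplectic reduction so that its tautological quotient is $(U_{3}/U_{2})^{\vee}$, one has $\mc E\simeq \mc G\otimes L$ for a line bundle $L$; the induced symplectic form trivializes $\det\mc G$, while $\det\mc E=\mc O(2H_{2})$ by (\ref{LG-eq-chernG}), so $L=\mc O(H_{2})$ since $\Pic(\LF_{\a}(1,2;V))$ is torsion-free; your fibrewise identity $\bigwedge^{3}U_{3}^{\vee}\simeq\bigwedge^{2}U_{2}^{\vee}\otimes(U_{3}/U_{2})^{\vee}$, made universal, gives $\xi_{\mc G}=H_{3}-H_{2}$, whence $\xi_{\mc E}=(H_{3}-H_{2})+H_{2}=H_{3}$, avoiding the paper's Segre-class computation altogether. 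But none of these steps appears in your text, so as written the proof is incomplete at its crux.
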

\begin{proof}
First, we show that $\mc E$ is locally free and $\P(\mc E)=\LF_{\a}(1,2,3;V)$. 
For each $x \in \P(V)$, the fiber $(f^{12}_{1})^{-1}(x)$ is naturally isomorphic to $\P(V_{x})$ and the morphism $\iota|_{\P(V_{x})} \colon \mc O \to \Omega_{\P(V_{x})}(2)$ is given by the non-degenerate skew form $\a_{x} \in \bigwedge^{2}V_{x}$ as in Definition~\ref{defi-ortho}~(2). 
Hence we have 
\begin{align}\label{LG-eq-Gfib}
\mc E|_{\P(V_{x})} \simeq \mc N^{\a_{x}}_{\P(V_{x})}(1).
\end{align}
In particular, $\mc E$ is locally free. 
Let $\pi_{\mc E} \colon \P_{\LF_{\a}(1,2;V)}(\mc E) \to \LF_{\a}(1,2;V)$ be the projectivization. 
Since $\mc E$ is a quotient of $\Omega_{f^{12}_{1}}(2H_{2}-H_{1})$ and $\Omega_{f^{12}_{1}}$ is a quotient of $\Omega_{\Fl(1,2;V)/\P(V)}$, $\P(\mc E)$ is contained in the ordinary flag $\Fl(1,2,3;V)$. 
From the diagram (\ref{LG-dia-P3Q3}), each flag contained in $\P_{\LF_{\a}(1,2;V)}(\mc E)$ is isotropic for $\a$. 
Thus we obtain the isomorphism $\P(\mc E) \simeq \LF_{\a}(1,2,3;V)$. 

Next, we prove that $\xi_{\mc E} \sim H_{3}$ under the above identification, where $\xi_{\mc E}$ is a tautological divisor of $\P(\mc E)$. 
By (\ref{LG-eq-Gfib}), we have $\xi_{\mc E} \sim_{\P(V)} H_{3}$ since every $f^{123}_{1}$-fiber is isomorphic to $\LF(1,2;4)$, which is a Fano manifold. 
Take an integer $a$ such that $\xi_{\mc E} \sim H_{3}+aH_{1}$ on $\LF_{\a}(1,2,3;V)$. 
If $\xi_{\mc E}$ is nef, then $a \geq 0$.  
If $a>0$, then $\xi_{\mc E}$ is the pull-back of an ample divisor on $\LF_{\a}(1,3;V)$ and hence we have $\xi_{\mc E}^{8}.H_{2}>0$. 
In other words, to show that $\xi_{\mc E} \sim H_{3}$, it suffices to prove that $\xi_{\mc E}$ is nef and $\xi_{\mc E}^{8}.H_{2}=0$. 

First, we prove the nefness of $\xi_{\mc E}$. 
By the exact sequence (\ref{LG-ex-relnull}) and (\ref{LG-eq-wedgenull}), it suffices to see that $\left( \bigwedge^{2}\mc N^{\a}_{\P(V)} \right)(H_{1})$ is globally generated, which follows from (\ref{LG-ex-null}). 
Next we compute $(\xi_{\mc E}^{8}.H_{2})_{\LF_{\a}(1,2,3;V)}=(-1)^{7} \cdot (s_{7}(\mc E).H_{2})_{\LF_{\a}(1,2;V)}$, where $s_{i}$ denotes the Segre class. 
By the exact sequences (\ref{LG-ex-Euler12a}) and (\ref{LG-ex-relnull}), we get 
\begin{align}\label{LG-eq-chernG}
c_{1}(\mc E)=2H_{2} \text{ and } c_{2}(\mc E)=2(H_{1}^{2}-H_{1}H_{2}+H_{2}^{2})
\end{align}
and hence 
$-s_{7}(\mc E).H_{2}=(c_{1}(\mc E)^{7}-6c_{1}(\mc E)^{5}c_{2}(\mc E)+10c_{1}(\mc E)^{3}c_{2}(\mc E)^{2}-4c_{1}(\mc E)c_{2}(\mc E)^{3}).H_{2}
=192H_{1}^{5}H_{2}^{3}-64H_{1}^{4}H_{2}^{4}-192H_{1}^{3}H_{2}^{5}+192H_{1}^{2}H_{2}^{6}-64H_{1}H_{2}^{7}$. 
Using the equation $H_{1}^{i}(H_{2}-H_{1})^{8-i}=(-1)^{5-i} s_{5-i}(\mc N_{\P(V)}).H_{1}^{i}$ and the fact that the Chern polynomial of $\mc N_{\P(V)}$ is given by $1+H_{1}^{2}t^{2}+H_{1}^{4}$, 
we obtain 
$H_{1}^{5}H_{2}^{3}=1$, $H_{1}^{4}H_{2}^{4}=4$, $H_{1}^{3}H_{2}^{5}=9$, $H_{1}^{2}H_{2}^{6}=14$ and $H_{1}H_{2}^{7}=14$. 
Thus we have 
$-s_{7}(\mc E).H_{2}=192 \cdot 1-64 \cdot 4-192 \cdot 9+192 \cdot 14 -64 \cdot 14=0$. 
We complete the proof. 
\end{proof}

\subsection{The blowing-up of $\LG_{\a}(3,V)$ along $\LG_{\a,\b}(3,V)$}\label{LG-subsec-blowup}

Let $\b \in \bigwedge^{2}V$ be another general skew form on $V^{\vee}$. 
In this subsection, we study the structure of $\Bl_{\LG_{\a,\b}(3,V)}\LG_{\a}(3,V)$. 

By Lemma~\ref{LG-lem-P1n}~(2), the two skew forms $\a$ and $\b$ produce the union of the three disjoint lines 
\begin{align}\label{LG-defi-li}
l = l_{1} \sqcup l_{2} \sqcup l_{3},
\end{align}
whose linear span is $\P(V)$. 
Consider the blowing-up 
\begin{align}\label{LG-defi-wtPV}
\s \colon \wt{\P(V)}:=\Bl_{l_{1} \sqcup l_{2} \sqcup l_{3}} \P(V) \to \P(V) \simeq \P^{5}.
\end{align}
Since $l=l_{1} \sqcup l_{2} \sqcup l_{3}$ is the zero scheme of the global section $\ol{\b} \in H^{0}(\mc N^{\a}_{\P^{5}}(1))$, we obtain a canonical surjection 
$(\mc N^{\a}_{\P(V)})^{\vee}(-1) \epm \mc I_{l/\P(V)}$. 
Since there is a natural isomorphism 
$(\mc N^{\a}_{\P(V)})^{\vee} \simeq \mc N^{\a}_{\P(V)}$, 
we obtain the surjection 
\begin{align}\label{LG-eq-surjwtPV}
\mc N^{\a}_{\P(V)}(1) \epm \mc I_{l/\P(V)}(2),
\end{align}
which gives the embedding 
$\wt{\P(V)} \hra \LF_{\a}(1,2;V)$ 
by Proposition~\ref{LG-prop-onlya}~(2). 
We set 
\[M:=(f^{123}_{12})^{-1} \left( \wt{\P(V)} \right) =\P_{\wt{\P(V)}}(\mc E|_{\wt{\P(V)}}) \subset \LF_{\a}(1,2,3;V)=\P_{\LF_{\a}(1,2;V)}(\mc E).\]
The last equality follows from Proposition~\ref{LG-prop-onlya}~(3). 
Then the morphism 
$\pi:=f^{123}_{12}|_{M} \colon M \to \wt{\P(V)}$ 
is the projectivization of $\mc E|_{\wt{\P(V)}}$. 
We also set 
\begin{align*}
\wt{\LG_{\a}(3,V)}:=f^{123}_{13}(M) \subset \LF_{\a}(1,3;V) \text{ and } f:=f^{13}_{1}|_{\wt{\LG_{\a}(3,V)}} \colon \wt{\LG_{\a}(3,V)} \to \P(V). 
\end{align*}
Additionally we define 
\begin{align}\label{LG-def-numu}
\nu:=f^{123}_{13}|_{M} \colon M \to \wt{\LG_{\a}(3,V)} \text{ and } \mu:=f^{13}_{1}|_{\wt{\LG_{\a}(3,V)}} \colon \wt{\LG_{\a}(3,V)} \to \LG_{\a}(3,V). 
\end{align} 
Then we consider the following diagram:
\begin{align}\label{LG-dia-M}
\xymatrix{
&M=\P_{\wt{\P(V)}}(\mc E|_{\wt{\P(V)}}) \ar[rd]^{\pi:=f^{123}_{12}|_{M}} \ar[d]_{f^{123}_{13}|_{M}=:\nu}& \\
&\ar[ld]_{f^{13}_{3}|_{\wt{\LG_{\a}(3,V)}}=:\mu}\wt{\LG_{\a}(3,V)} \ar[rd]_{f^{13}_{1}|_{\wt{\LG}}=:f}&\wt{\P(V)}\ar[d]^{\s:=f^{12}_{1}|_{\wt{\P(V)}}} \\
\LG_{\a}(3,V)&&\P(V).
}
\end{align}

\begin{lem}\label{LG-lem-Qi}
For each $i \in \{1,2,3\}$, set $Q_{i}:=f^{-1}(l_{i})$. 
We recall $l=l_{1} \sqcup l_{2} \sqcup l_{3}$ as in (\ref{LG-defi-li}). 
\begin{enumerate}
\item For each $x \in \P(V)$, $f^{-1}(x)=\P^{1}$ if and only if $x \notin l$. 
Moreover, $f|_{Q_{i}} \colon Q_{i} \to l_{i}$ is a $\Q^{3}$-bundle. 
\item The variety $\wt{\LG_{\a}(3,V)}$ is smooth and the morphism$\nu$ is the blowing-up of $\wt{\LG_{\a}(3,V)}$ along $Q_{1}$, $Q_{2}$, and $Q_{3}$. 
\item For each point $x \in l$, let $Q_{x} \simeq \Q^{3}$ be the fiber of $f$. Then $\mc N_{Q_{x}/\wt{\LG_{\a}(3,V)}} \simeq \ms S_{\Q^{3}} \oplus \mc O_{\Q^{3}}$. 
\end{enumerate}
\end{lem}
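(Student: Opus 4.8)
The plan is to read off all three statements from the two projections in diagram~(\ref{LG-dia-M}), exploiting that $M=\P_{\wt{\P(V)}}(\mc E|_{\wt{\P(V)}})$ is a $\P^1$-bundle over the smooth blow-up $\wt{\P(V)}=\Bl_{l}\P(V)$, hence smooth of dimension $6$, and that $f\circ\nu=\s\circ\pi$ (both equal $f^{123}_{1}|_{M}$). The one fact driving everything is that the fibre of $\s\colon\wt{\P(V)}\to\P(V)$ over $x$ is a single point $\ol{\b}(x)$ when $x\notin l$, whereas it is the whole fibre $\s^{-1}(x)=\P(V_{x})=(f^{12}_{1})^{-1}(x)$ when $x\in l$ (the blow-up of the lines $l_{i}$); this is what forces the jump of the fibres of $f$.

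For (1) I would compute $f^{-1}(x)=\wt{\LG_{\a}(3,V)}\cap(f^{13}_{1})^{-1}(x)$ directly. By Lemma~\ref{LG-lem-LG} the ambient fibre $(f^{13}_{1})^{-1}(x)=\LG_{\a_{x}}(2,V_{x})\simeq\Q^{3}$ parametrizes Lagrangian planes $\ol{W}\subset V_{x}$, and $(x,\ol{W})$ lies in $\wt{\LG_{\a}(3,V)}$ precisely when some middle space $\ol{V_{2}}\subset\ol{W}$ belongs to $\s^{-1}(x)\subset\P(V_{x})$. For $x\notin l$ this means $\ol{W}$ contains the fixed line $\ell=\ol{\b}(x)$; since such $\ol{W}$ satisfy $\ell\subset\ol{W}\subset\ell^{\perp}_{\a_{x}}$ and any $2$-plane between $\ell$ and $\ell^{\perp}_{\a_{x}}$ is automatically Lagrangian, they form the $\P^1$ of lines in $\ell^{\perp}_{\a_{x}}/\ell$. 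For $x\in l_{i}$ the condition is vacuous, so $f^{-1}(x)=(f^{13}_{1})^{-1}(x)\simeq\Q^{3}$. This yields $f^{-1}(x)\simeq\P^{1}\iff x\notin l$, identifies $Q_{i}=(f^{13}_{1})^{-1}(l_{i})$, and presents $f|_{Q_{i}}\colon Q_{i}\to l_{i}$ as the restriction of the $\Q^{3}$-bundle $\LG_{\a_{x}}(2,V_{x})$, hence a $\Q^{3}$-bundle.

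For (2) the same analysis shows $\nu$ is an isomorphism off $F_{i}:=\nu^{-1}(Q_{i})=\pi^{-1}(E_{i})$, where $E_{i}=\s^{-1}(l_{i})$ is the divisorial exceptional locus of $\s$: over $x\notin l$ the middle space $\ol{V_{2}}=\ol{\b}(x)$ is unique, while over $x\in l_{i}$ every $\ol{V_{2}}\subset\ol{W}$ occurs, so $\nu|_{F_{i}}$ has fibre $\P(\ol{W})\simeq\P^{1}$. Fibrewise over $l_{i}$ this map is exactly the projectivized dual spinor bundle $\P_{\Q^{3}}(\ms S^{\vee}_{\Q^{3}})\to\Q^{3}$ of diagram~(\ref{LG-dia-P3Q3}). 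Thus $F_{i}$ is a divisor of the smooth sixfold $M$ contracted by $\nu$ to $Q_{i}$ with all fibres $\P^{1}$; since $l_{1},l_{2},l_{3}$ are disjoint the $Q_{i}$ and $F_{i}$ are disjoint too, and near each $Q_{i}$ the morphism $\nu$ contracts a single divisor along $\P^{1}$-fibres. By Ando's theorem \cite{Ando85} (applied as in Lemma~\ref{KG2-lem-E}) the image $\wt{\LG_{\a}(3,V)}$ is smooth, each $Q_{i}$ is smooth of codimension $2$, and $\nu$ is the blow-up of $\wt{\LG_{\a}(3,V)}$ along $Q_{1}\sqcup Q_{2}\sqcup Q_{3}$ with $F_{i}=\P(\mc N_{Q_{i}/\wt{\LG_{\a}(3,V)}})$.

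For (3) I would fix $x\in l_{i}$ and use the filtration $Q_{x}\subset Q_{i}\subset\wt{\LG_{\a}(3,V)}$. As $Q_{x}$ is the fibre of the bundle $Q_{i}\to l_{i}$ over the curve $l_{i}$, one has $\mc N_{Q_{x}/Q_{i}}\simeq\mc O_{\Q^{3}}$. From (2) the exceptional divisor $F_{i}=\P(\mc N_{Q_{i}/\wt{\LG_{\a}(3,V)}})$ restricts over $Q_{x}$ to $\P(\ms S^{\vee}_{\Q^{3}})$, so $\mc N_{Q_{i}/\wt{\LG_{\a}(3,V)}}|_{Q_{x}}\simeq\ms S^{\vee}_{\Q^{3}}\otimes L$ for some line bundle $L$; a short computation of the tautological bundle $\mc O_{M}(-F_{i})|_{F_{i}}$ in terms of the pullback of $E_{i}$ pins $L=\mc O_{\Q^{3}}(-1)$, and since $\ms S^{\vee}_{\Q^{3}}\simeq\ms S_{\Q^{3}}(1)$ this gives $\mc N_{Q_{i}/\wt{\LG_{\a}(3,V)}}|_{Q_{x}}\simeq\ms S_{\Q^{3}}$. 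The conormal sequence then reads $0\to\mc O_{\Q^{3}}\to\mc N_{Q_{x}/\wt{\LG_{\a}(3,V)}}\to\ms S_{\Q^{3}}\to0$, and it splits because $\Ext^{1}(\ms S_{\Q^{3}},\mc O_{\Q^{3}})=H^{1}(\Q^{3},\ms S^{\vee}_{\Q^{3}})=0$, yielding $\mc N_{Q_{x}/\wt{\LG_{\a}(3,V)}}\simeq\ms S_{\Q^{3}}\oplus\mc O_{\Q^{3}}$. The main obstacle is the smoothness statement in (2) along the jumping locus $Q_{i}$, where the fibres of $f$ degenerate from $\P^{1}$ to $\Q^{3}$: the geometry is governed by the first-order vanishing of $\ol{\b}$ along $l_{i}$ (its zero scheme being the smooth $l$), which turns a transverse normal direction to $l_{i}$ into a point of $\P(V_{x})$ and hence into a line on $\Q^{3}$, and this is precisely what identifies the spinor bundle in (3). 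I expect the cleanest route to be quoting Ando's theorem for the smoothness and blow-up conclusion, exactly as the paper does in Lemma~\ref{KG2-lem-E}, rather than a bare-hands tangent-space computation.
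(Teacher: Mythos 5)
Your proposal is correct and follows essentially the same route as the paper: the fibre analysis of $f$ via the two projections of diagram~(\ref{LG-dia-M}) and Lemma~\ref{LG-lem-LG} for (1), Ando's theorem \cite[Theorem~2.3]{Ando85} for the smoothness and blow-up structure in (2), and the normal bundle sequence $0 \to \mc N_{Q_{x}/Q_{i}} \to \mc N_{Q_{x}/\wt{\LG_{\a}(3,V)}} \to \mc N_{Q_{i}/\wt{\LG_{\a}(3,V)}}|_{Q_{x}} \to 0$ for (3). You even make explicit two steps the paper leaves implicit, namely pinning down the twist identifying $\mc N_{Q_{i}/\wt{\LG_{\a}(3,V)}}|_{Q_{x}}$ with $\ms S_{\Q^{3}}$ and splitting the extension via $H^{1}(\Q^{3},\ms S^{\vee}_{\Q^{3}})=0$.
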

\begin{proof}
(1) If $x \in l$, then it follows from the diagram (\ref{LG-dia-P3Q3}) that 
$\s^{-1}(x)=(f^{12}_{1})^{-1}(x)=\P^{3}$, 
$\pi^{-1}(\s^{-1}(x))=(f^{123}_{1})^{-1}(x)=\LF(1,2;4)$ and 
$f^{-1}(x)=(f^{13}_{1})^{-1}(x)=\Q^{3}$. 
If $x \in \P(V) \setminus l$, then $\s^{-1}(x)$ is a point in $\P(V_{x})$ and $\pi^{-1}(\s^{-1}(x))=\P^{1}$, which implies (1). 

(2) For $x \in \P(V) \setminus l$, $f^{-1}(x)=\nu(\pi^{-1}(\s^{-1}(x)))$ is a line on $\Q^{3} \simeq \LG(2,4)=\LG_{\a_{x}}(2;V_{x})$. 
Hence $\nu$ is a birational morphism and $\nu(\Exc(\nu))=\bigsqcup_{i} Q_{i}$. 
Since $\nu^{-1}(Q_{i}) \to Q_{i}$ is a $\P^{1}$-bundle, $\nu$ is the blowing-up along $\bigsqcup_{i}Q_{i}$ by \cite[Theorem~2.3]{Ando85}. 

(3) Let $x \in l_{i}$. Consider the normal bundle sequence 
\[0 \to \mc N_{Q_{x}/Q_{i}} \to \mc N_{Q_{x}/\wt{\LG_{\a}(3,V)}} \to \mc N_{Q_{i}/\wt{\LG_{\a}(3,V)}}|_{Q_{x}} \to 0.\]
Since $\mc N_{Q_{x}/Q_{i}}=\mc O_{Q_{x}}$ and $\mc N_{Q_{i}/\wt{\LG_{\a}(3,V)}}|_{Q_{x}} \simeq \ms S_{\Q^{3}}$ by (2), we obtain the assertion. 
\end{proof}

Here we define the three $3$-planes on $\P(V)$ as follows. 

\begin{defi}\label{LG-def-Pij}
For each $1 \leq i<j \leq 3$, we set 
\begin{align}
P_{ij}:=\braket{l_{i},l_{j}} \simeq \P^{3} \text{ and } P=(P_{12} \cup P_{13} \cup P_{23})_{\red}. 
\end{align}
This $P_{ij}$ is nothing but $l_{k}^{\perp}$ for the unique element $k \in \{1,2,3\} \setminus \{i,j\}$, which is defined in Lemma~\ref{LG-lem-P1n}. 
Therefore, $\LF_{\a,\b}(1,3;V)$ is naturally isomorphic to $\Bl_{\wt{P_{23}}}\Bl_{\wt{P_{13}}}\Bl_{P_{12}}\P(V)$ by Lemma~\ref{LG-lem-P1n}~(7). 
\end{defi}

The main conclusion of this subsection is the following proposition. 

\begin{prop}\label{LG-prop-blowupLG}
Recall the morphism $\mu \colon \wt{\LG_{\a}(3,V)} \to \LG_{\a}(3,V)$ as in (\ref{LG-def-numu}). 
Then $\mu$ coincides with the blowing-up of $\LG_{\a}(3,V)$ along $\LG_{\a,\b}(3,V)$. 
Moreover, $E:=\Exc(\mu)$ is canonically isomorphic to $\LF_{\a,\b}(1,3;V)$ and the restriction $\mu|_{E} \colon E \to \LG_{\a,\b}(3,V)$ coincides with the natural morphism $g^{13}_{3} \colon \LF_{\a,\b}(1,3;V) \to \LG_{\a,\b}(3,V)$ under this identification. 
\end{prop}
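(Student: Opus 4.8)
The plan is to recognize $\mu$ as a blow-up by means of Ando's theorem \cite[Theorem~2.3]{Ando85}, exactly as in the proof of Lemma~\ref{LG-lem-Qi}~(2). Recall that $\wt{\LG_{\a}(3,V)}$ is smooth by Lemma~\ref{LG-lem-Qi}~(2), that $\LG_{\a}(3,V)$ is smooth and homogeneous, and that the candidate center $\LG_{\a,\b}(3,V) \simeq (\P^{1})^{3}$ is smooth of codimension $3$ by Lemma~\ref{LG-lem-P1n}. Hence it suffices to prove that $\mu$ is an isomorphism over $\LG_{\a}(3,V) \setminus \LG_{\a,\b}(3,V)$, that $E:=\Exc(\mu)=\mu^{-1}(\LG_{\a,\b}(3,V))$ is an irreducible divisor, and that $\mu|_{E} \colon E \to \LG_{\a,\b}(3,V)$ is a $\P^{2}$-bundle identified with the natural projection $g^{13}_{3} \colon \LF_{\a,\b}(1,3;V) \to \LG_{\a,\b}(3,V)$; since the exceptional fibers $\P^{2}$ then have dimension $\codim \LG_{\a,\b}(3,V)-1 = 2$, Ando's theorem yields that $\mu$ is the blow-up along $\LG_{\a,\b}(3,V)$.

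The heart of the matter, and the main obstacle, is the fiberwise description
\[
\wt{\LG_{\a}(3,V)} = \{[\C^{1} \subset L] \in \LF_{\a}(1,3;V) \mid \b(v,w)=0 \text{ for all } v \in \C^{1},\ w \in L\}.
\]
To establish this I would unwind the defining surjection (\ref{LG-eq-surjwtPV}). Over a point $x=[\C^{1}]=\braket{v} \notin l$ the section $\ol{\b}$ is nonzero, so $\wt{\P(V)} \to \P(V)$ is an isomorphism there and selects a single $\a$-isotropic $2$-plane $P_{x} \supset \braket{v}$; tracing the self-duality $(\mc N^{\a}_{\P(V)})^{\vee} \simeq \mc N^{\a}_{\P(V)}$ underlying (\ref{LG-eq-surjwtPV}) shows that the residual direction of $P_{x}$ is the $\a$-radical of the hyperplane $v^{\perp_{\a}} \cap v^{\perp_{\b}}$ of $V_{x}$, that is, $P_{x}=\braket{v,\ \a^{-1}\b\,v}$. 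Since a Lagrangian $2$-plane of $V_{x}$ contains the radical of a hyperplane if and only if it is contained in that hyperplane, a flag $[\braket{v} \subset L]$ lies in $\wt{\LG_{\a}(3,V)}=\nu(M)$ (i.e. $P_{x} \subset L$) precisely when the $\a$-Lagrangian $L$ satisfies $L \subset v^{\perp_{\b}}$, equivalently $\b(v,L)=0$; the case $x \in l$ then follows by passing to closures. I expect this identification of $P_{x}$ to be the delicate step; it may alternatively be extracted from \cite[Proposition~3.2.3]{IlievRanestad05}, on which Lemma~\ref{LG-lem-P1n} already rests.

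Granting the description, the conclusion is formal. For $[L] \in \LG_{\a}(3,V)$ the fiber $\mu^{-1}([L])$ is the projectivization $\P(\mathrm{rad}(\b|_{L}))$ of the radical of the skew form $\b|_{L}$ on the $3$-dimensional space $L$. If $L \notin \LG_{\a,\b}(3,V)$, then $\b|_{L} \neq 0$ has rank $2$, so $\mathrm{rad}(\b|_{L})$ is $1$-dimensional and $\mu^{-1}([L])$ is a single reduced point; hence $\mu$ is bijective, and by Zariski's main theorem an isomorphism, over $\LG_{\a}(3,V) \setminus \LG_{\a,\b}(3,V)$. If $L \in \LG_{\a,\b}(3,V)$, then $\b|_{L}=0$, so $\mathrm{rad}(\b|_{L})=L$ and $\mu^{-1}([L]) = \P(L) \simeq \P^{2}$; thus $E=\mu^{-1}(\LG_{\a,\b}(3,V))$ is the divisor of all flags $[\braket{v} \subset L]$ with $L$ both $\a$- and $\b$-Lagrangian, which is exactly $\LF_{\a,\b}(1,3;V)$, and $\mu|_{E}$ is the forgetful morphism $g^{13}_{3}$, a genuine $\P^{2}$-bundle over $(\P^{1})^{3}$ by Lemma~\ref{LG-lem-P1n}~(6). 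Applying Ando's theorem as above completes the proof.
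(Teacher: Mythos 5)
Your central idea is correct, and it is genuinely different from the paper's argument. The identification $P_{x}=\braket{v,\ \a^{-1}\b\,v}$ does hold: since $\a(\a^{-1}\b v,w)=\b(v,w)$ for every $w\in V^{\vee}$, the plane $\braket{v,\a^{-1}\b v}$ is $\a$-isotropic, degenerates exactly when $x\in l$, and for an $\a$-Lagrangian $L\ni v$ one has $L\supset P_{x}$ if and only if $\b(v,L)=0$ (your radical--hyperplane duality, using $L^{\perp_{\a}}=L$). Hence the incidence description of $\wt{\LG_{\a}(3,V)}$ inside $\LF_{\a}(1,3;V)$, and with it your fiber analysis of $\mu$, is sound; it replaces the paper's Steps 1--4 (the Segre-class computation $-s_{5}(\mc E)=16$ proving birationality, and the rather involved identification of a general member of $|H_{1}+H_{2}-H_{3}|$ on $\LF_{\a,\b}(1,2,3;V)$ with $\Bl_{\Sigma_{1},\Sigma_{2},\Sigma_{3}}\LF_{\a,\b}(1,3;V)$, which is what yields the inclusion $\LF_{\a,\b}(1,3;V)\subset\wt{\LG_{\a}(3,V)}$ there) by a single transparent linear-algebra computation.

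The final step, however, is a genuine gap. Ando's theorem \cite[Theorem~2.3]{Ando85} concerns birational contractions \emph{all of whose fibers have dimension at most one} --- this is exactly how it is invoked in Lemma~\ref{KG2-lem-E}, Lemma~\ref{LG-lem-Qi}~(2), and Proposition~\ref{LG-prop-Z}, where the contracted divisors are $\P^{1}$-bundles over their images. It says nothing about a divisorial contraction with $\P^{2}$-fibers onto a codimension-$3$ center, and the desired conclusion is not a formal consequence of your fiber description: a smooth variety can contract a divisor that is a $\P^{2}$-bundle over a smooth codimension-$3$ center without being a blow-up, namely when $\mc O_{E}(E)$ restricts to $\mc O_{\P^{2}}(-2)$ on the fibers (Veronese-type contractions, which occur in the Andreatta--Wi\'{s}niewski classification \cite{AW98} used elsewhere in this paper; their images are singular along the center). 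So you must still exclude this possibility, either by computing that $\mc O_{\wt{\LG_{\a}(3,V)}}(E)$ restricts to $\mc O_{\P^{2}}(-1)$ on each $\mu$-fiber, or by the route the paper itself takes in Step 5 of its proof: show that the preimage ideal of $\LG_{\a,\b}(3,V)$ is invertible, use the universal property of blowing up to obtain a morphism $h\colon \wt{\LG_{\a}(3,V)}\to \Bl_{\LG_{\a,\b}(3,V)}\LG_{\a}(3,V)$ over $\LG_{\a}(3,V)$, and conclude that $h$ is an isomorphism by Zariski's main theorem, its fibers being finite since over the center $h$ restricts fiberwise to surjective morphisms $\P^{2}\to\P^{2}$. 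A smaller point: your description only gives $\mu^{-1}(\LG_{\a,\b}(3,V))\subseteq\LF_{\a,\b}(1,3;V)$; for the equality you assert, add that $\Exc(\mu)$ is a nonempty divisor (purity of the exceptional locus over the smooth target, together with $\rho(\wt{\LG_{\a}(3,V)})=2$), hence coincides with the irreducible fivefold $\LF_{\a,\b}(1,3;V)$.
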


\begin{proof}
We proceed with 5 steps. 

\noindent
\textbf{Step 1.} 
First, we confirm that $\mu \colon \wt{\LG_{\a}(3,V)} \to \LG_{\a}(3,V)$ is birational. 

We use the notation as in Notation~\ref{LG-nota-Hi}. 
By Remark~\ref{LG-rem-basechange}, $\LG_{\a}(3,V)$ is the zero scheme of $\a \in H^{0}(\Gr(3,V),\bigwedge^{2}\mc Q_{\Gr(3,V)})$. 
Hence we have $(H_{3})_{\LG_{\a}(3,V)}^{6}=c_{3}(\bigwedge^{2}\mc Q_{\Gr(3,V)}).c_{1}(\mc Q_{\Gr(3,V)})^{6}=16$. 
Thus it suffices to see that $(H_{3}|_{M})^{6}=16$, which is equivalent to say that $-s_{5}(\mc E).\wt{\P(V)}=16$, where $s_{5}$ is its fifth Segre class. 
From (\ref{LG-eq-surjwtPV}) we have $H_{2}=2H_{1}-\Exc(\s)$ on $\wt{\P(V)}$. 
Hence it follows from (\ref{LG-eq-chernG}) that 
\begin{align}\label{LG-eq-chernG1}
c_{1}(\mc E|_{\wt{\P(V)}})=4H_{1}-2\Exc(\s) \text{ and }  c_{2}(\mc E|_{\wt{\P(V)}})=2(3H_{1}^{2}-3H_{1}\Exc(\s)+\Exc(\s)^{2}).
\end{align}
It is easy to see that 
\begin{align}\label{LG-eq-intwtPV}
H_{1}^{2}\Exc(\s)=0, \quad H_{1}^{5}=1,\quad H_{1}\Exc(\s)^{4}=-3 \text{ and } \Exc(\s)^{5}=-12.
\end{align}
Hence we have 
$-s_{5}(\mc E)
=c_{1}(\mc E)^{5}-4c_{1}(\mc E)^{3}c_{2}(\mc E)+3c_{1}(\mc E)c_{2}(\mc E)^{2}
=16$. 
Therefore, $\mu \colon \wt{\LG_{\a}(3,V)} \to \LG_{\a}(3,V)$ is birational.

\noindent
\textbf{Step 2.} Let us consider the following diagram: 
\[\xymatrix{
&\ar[d]_{g^{123}_{13}}\LF_{\a,\b}(1,2,3;V) \ar[rd]^{g^{123}_{12}} \ar[ld]_{g^{123}_{23}}& \\
\LF_{\a,\b}(2,3;V)\ar[d]_{g^{23}_{3}}&\LF_{\a,\b}(1,3;V) \ar[rd]^{g^{13}_{1}} \ar[ld]_{g^{13}_{3}} \ar@{}[d]|{\rotatebox{90}{$\simeq$}}&\LF_{\a,\b}(1,2;V) \ar[d]^{g^{12}_{1}}  \\
\LG_{\a,\b}(3,V) \ar@{}[d]|{\rotatebox{90}{$\simeq$}}&\P_{(\P^{1})^{3}}\left(\bigoplus_{i=1}^{3} \pr_{i}^{\ast}\mc O(1)\right) \ar[ld] \ar[rd]&\P(V) \ar@{=}[d] \\
(\P^{1})^{3}&&\P(V) 
}\]
All morphisms in the above diagram are the natural morphisms as in Definition~\ref{LG-defi-IF}. 
Note that $\LG_{\a,\b}(3,V) \simeq l_{1} \times l_{2} \times l_{3}$ and $\mc Q_{3} \simeq \bigoplus_{i=1}^{3} \pr_{i}^{\ast}\mc O(1)$ on $\LG_{\a,\b}(3,V)$ by Lemma~\ref{LG-lem-P1n}. 
For each $i$, we define 
\begin{align}\label{LG-defi-Sigmai}
\Sigma_{i} \subset \LF_{\a,\b}(1,3;V)
\end{align}
as the $g^{13}_{3}$-section corresponding to the surjection $\bigoplus_{i=1}^{3} \pr_{i}^{\ast}\mc O(1) \epm \pr_{i}^{\ast}\mc O(1)$. 

\noindent
\textbf{Step 3.} In this step, we show that a general member $E' \in |H_{1}+H_{2}-H_{3}|$ on $\LF_{\a,\b}(1,2,3;V)$ is isomorphic to $\Bl_{\Sigma_{1},\Sigma_{2},\Sigma_{3}}\LF_{\a,\b}(1,3;V)$. 

Set $\mc H:=\bigoplus_{i=1}^{3} \pr_{i}^{\ast}\mc O(1)$ on $\LG_{\a,\b}(3,V)=(\P^{1})^{3}$. 
On $\LF_{\a,\b}(1,3;V) \simeq \P_{(\P^{1})^{3}}(\mc H)$, there is the relative Euler sequence: 
\begin{align}\label{LG-ex-Euler13ab}
0 \to \Omega_{g^{13}_{3}}(H_{1}) \to (g^{13}_{3})^{\ast}\mc H \to \mc O(H_{1}) \to 0.
\end{align}
Thus $\Omega_{g^{13}_{3}}(2H_{1})$ is globally generated. 
Then $\LF_{\a,\b}(1,2,3;V)$ coincides with the projectivization 
$\P_{\LF_{\a,\b}(1,3;V)}(\Omega_{g^{13}_{3}}(2H_{1}))$ and 
its tautological bundle is isomorphic to $\mc O(H_{2})$. 
Hence we have 
\begin{align*}
H^{0}(\LF_{\a,\b}(1,2,3;V),\mc O(H_{1}+H_{2}-H_{3}))
&=H^{0}(\LF_{\a,\b}(1,3;V),\Omega_{g^{13}_{3}}(3H_{1}-H_{3})) \\
&=H^{0}(\LF_{\a,\b}(1,3;V),T_{g^{13}_{3}}) .
\end{align*}
By taking the dual of the exact sequence (\ref{LG-ex-Euler13ab}) and tensoring with $\mc O(H_{1})$, we obtain the following exact sequence: 
\begin{align}\label{LG-ex-Euler13ab'}
0 \to \mc O \to (g^{13}_{3})^{\ast}\mc H^{\vee} \otimes \mc O(H_{1}) \to T_{g^{13}_{3}} \to 0. 
\end{align}
Note that $H^{i}((g^{13}_{3})^{\ast}\mc H^{\vee} \otimes \mc O(H_{1}))=\Ext^{i}(\mc H,\mc H)$, which implies $h^{0}(T_{g^{13}_{3}})=2$. 
Moreover, the exact sequence (\ref{LG-ex-Euler13ab'}) gives 
\begin{align}\label{LG-eq-c2}
c_{2}(T_{g^{13}_{3}})=[\Sigma_{1}+\Sigma_{2}+\Sigma_{3}]. 
\end{align}
Let $E' \in |H_{1}+H_{2}-H_{3}|$ be a member and $s \in H^{0}(\LF_{\a,\b}(1,2,3;V),\mc O(H_{1}+H_{2}-H_{3}))=H^{0}(\LF_{\a,\b}(1,3;V),T_{g^{13}_{3}})$ be the corresponding section. 
Let $\Sigma=(s=0) \subset \LF_{\a,\b}(1,3;V)$ be the zero scheme of $s$. 

\begin{claim}
If $s$ is general, then $\Sigma$ is purely of codimension $2$. 
\end{claim}
\begin{proof}
It suffices to show that if $T_{g^{13}_{3}}(-D)$ has global sections for some effective divisor $D$, then $h^{0}(T_{g^{13}_{3}}(-D))=1$. 

Let $x,y_{1},y_{2},y_{3}$ be integers such that 
$D \in |\mc O(xH_{1}) \otimes (g_{3}^{13})^{\ast}\mc O(y_{1},y_{2},y_{3})|$. 
Assume that $T_{g^{13}_{3}} \otimes \mc O(-xH_{1}) \otimes (g_{3}^{13})^{\ast}\mc O(-y_{1},-y_{2},-y_{3})$ has a section.
Then we have $x \in \{0,1\}$.  
If $x=0$, then $y_{i} \geq 0$ for each $i$ and we can easily deduce a contradiction to $h^{0}(T_{g^{13}_{3}} \otimes (g_{3}^{13})^{\ast}\mc O(-y_{1},-y_{2},-y_{3})) \neq 0$ from the sequence (\ref{LG-ex-Euler13ab'}). 
Thus we have $x=1$. 
By (\ref{LG-ex-Euler13ab'}), we have 
$0 \neq 
h^{0}(T_{g^{13}_{3}} \otimes \mc O(-H_{1}) \otimes (g_{3}^{13})^{\ast}\mc O(-y_{1},-y_{2},-y_{3}))=
h^{0}((\P^{1})^{3},\mc H^{\vee} \otimes \mc O(-y_{1},-y_{2},-y_{3}))$. 
On the other hand, we have 
$0 \neq 
h^{0}(\mc O(H_{1}) \otimes (g_{3}^{13})^{\ast}\mc O(y_{1},y_{2},y_{3}))=
h^{0}((\P^{1})^{3},\mc H \otimes \mc O(y_{1},y_{2},y_{3}))$. 
Therefore, one of $\{y_{i}\}_{i=1}^{3}$ is equal to $-1$ and the other elements are $0$. 
From the direct calculation, we have $h^{0}(T_{g^{13}_{3}}(-H_{1}) \otimes (g^{13}_{3})^{\ast}\mc O(y_{1},y_{2},y_{3}))=1$. 
We complete the proof. 
\end{proof}

Considering the restriction of (\ref{LG-ex-Euler13ab'}) on $\Sigma_{1} \simeq (\P^{1})^{3}$, we get 
\[0 \to \mc O_{\Sigma_{1}} \to \mc O \oplus \mc O(1,-1,0) \oplus \mc O(1,0,-1) \to T_{g^{13}_{3}}|_{\Sigma_{1}} \to 0.\]
Thus $T_{g^{13}_{3}}|_{\Sigma_{1}} \simeq \mc O(0,-1,0) \oplus \mc O(0,0,-1)$, which has no global section, which implies the zero scheme $(s=0)$ contains $\Sigma_{1}$. 
By the similar reasons, $(s=0)$ contains $\Sigma_{1} \sqcup \Sigma_{2} \sqcup \Sigma_{3}$. 
Since (\ref{LG-eq-c2}) holds, we have $(s=0)=\Sigma_{1} \sqcup \Sigma_{2} \sqcup \Sigma_{3}$. 
Thus the member $E' \in |H_{1}+H_{2}-H_{3}|$ is isomorphic to $\Bl_{\Sigma_{1},\Sigma_{2},\Sigma_{3}} \LF_{\a,\b}(1,3;V)$. 

\noindent
\textbf{Step 4.}
Let $E' \in |H_{1}+H_{2}-H_{3}|$ be a member, which is isomorphic to the blowing-up $\Bl_{\Sigma_{1},\Sigma_{2},\Sigma_{3}} \LF_{\a,\b}(1,3;V)$ by Step~3. 
The morphism $g^{123}_{1}|_{E'} \colon E' \to \P(V)$ is birational and $(g^{123}_{1}|_{E'})^{-1}(l_{i})$ is a divisor for each $i$. 
Then the universal property of blowing-up gives a birational morphism $E' \to \wt{\P(V)}$. 

We recall the three $3$-planes $P_{12},P_{13},P_{23}$ as in Definition~\ref{LG-def-Pij}. 
By the construction and Lemma~\ref{LG-lem-P1n}~(7), $E' \to \wt{\P(V)}$ is nothing but the blowing-up along $\wt{P_{12}} \sqcup \wt{P_{13}} \sqcup \wt{P_{23}}$, where $\wt{P_{ij}}$ denotes the proper transform of $P_{ij}$ on $\wt{\P(V)}$. 
Thus we obtain 
\begin{align}\label{LG-eq-inclu}
E' \subset \P_{\wt{\P(V)}}(\mc E|_{\wt{\P(V)}}) \text{ and hence } \LF_{\a,\b}(1,3;V) \subset \wt{\LG_{\a}(3,V)}. 
\end{align}

\noindent
\textbf{Step 5.} Finally, we prove Proposition~\ref{LG-prop-blowupLG}. 
From the construction, $\wt{\LG_{\a}(3,V)}$ is of Picard rank $2$. 
By Step~1, $\mu$ is birational and hence $\mu$ contracts a divisor. 
Since $\LF_{\a,\b}(1,3;V)$ is contained in $\wt{\LG_{\a}(3,V)}$ by (\ref{LG-eq-inclu}), $\mu$ contracts $\LF_{\a,\b}(1,3;V)$ onto $\LG_{\a,\b}(3,V)$. 
By the universality of blowing-up and Zariski's main theorem implies that $\mu$ is the blowing-up along $\LG_{\a,\b}(3,V)$.  
In particular, $E=\Exc(\mu)$ coincides with $\LF_{\a,\b}(1,3;V)$. 
\end{proof}

As a conclusion of this subsection, we obtain the following diagram: 
\begin{align*}
\xymatrix{
\Bl_{Q_{1},Q_{2},Q_{3}}\wt{\LG_{\a}(3,V)}=M\ar[d]_{\nu}&\ar@{_{(}->}[l] E'=\Bl_{\Sigma_{1},\Sigma_{2},\Sigma_{3}}E \ar[rd]^{\pi|_{E'}} \ar[d]_{\nu|_{E'}}& \\
\Bl_{\LG_{\a,\b}(3,V)}\LG_{\a}(3,V)=\wt{\LG_{\a}(3,V)}\ar[d]_{\mu}\ar@{}[rd]|{\Box}&\ar@{_{(}->}[l] \ar[d]_{\mu|_{E}=g^{13}_{3}}E=\LF_{\a,\b}(1,3;V) \ar[rd]^{f|_{E}=g^{13}_{1}}&\wt{\P(V)}\ar[d]^{\s} \\
\LF_{\a}(3,V)&\ar@{_{(}->}[l](\P^{1})^{3}=\LG_{\a,\b}(3,V)&\P(V), 
}
\end{align*}
where $E=\Exc(\mu)$ and $E' \subset M$ is the proper transform of $E$. 
Note that $\pi|_{E'}$ and $f|_{E}$ are birational. 
In particular, $E'$ is a rational section of the $\P^{1}$-bundle $\pi \colon M=\P(\mc E|_{\wt{\P(V)}}) \to \wt{\P(V)}$. 

We end this subsection by showing the following lemma, which will be necessary for proving Lemma~\ref{LG-lem-key}. 

\begin{lem}\label{LG-lem-relonwtLG}
It follows that $-K_{M}=2H_{3}+2H_{1}-\Exc(\nu)$ on $M$ and $H_{1}=H_{3}-E$ on $\wt{\LG_{\a}(3,V)}$. 
\end{lem}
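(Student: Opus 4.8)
The plan is to establish both formulas on the smooth fivefold $M=\P_{\wt{\P(V)}}(\mc E|_{\wt{\P(V)}})$ and then descend the second one to $\wt{\LG_{\a}(3,V)}$ along $\nu$. First I would compute $-K_{M}$ via the projective bundle formula for $\pi=f^{123}_{12}|_{M}$, whose tautological bundle is $\mc O_{M}(H_{3})$ by Proposition~\ref{LG-prop-onlya}. Since $\mc E|_{\wt{\P(V)}}$ has rank $2$, this gives $K_{M}=\pi^{\ast}K_{\wt{\P(V)}}+\pi^{\ast}c_{1}(\mc E|_{\wt{\P(V)}})-2H_{3}$. Now $\wt{\P(V)}=\Bl_{l_{1}\sqcup l_{2}\sqcup l_{3}}\P^{5}$ is the blow-up of three disjoint lines (each of codimension $4$), so $K_{\wt{\P(V)}}=-6H_{1}+3\Exc(\s)$, while $c_{1}(\mc E|_{\wt{\P(V)}})=4H_{1}-2\Exc(\s)$ by (\ref{LG-eq-chernG1}). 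Substituting and using $\pi^{\ast}H_{1}=H_{1}$ yields $K_{M}=-2H_{3}-2H_{1}+\pi^{\ast}\Exc(\s)$.

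The key geometric input is then the identification $\pi^{\ast}\Exc(\s)=\Exc(\nu)$. To see this I would use the flag description of $M$: a point of $M$ is an isotropic flag $[\C^{1}\subset\C^{2}\subset\C^{3}]$, where $\pi$ forgets $\C^{3}$ and $\nu$ forgets $\C^{2}$. The divisor $\Exc(\s)$ is exactly the locus $\{\C^{1}\in l\}$ on $\wt{\P(V)}$, while $Q_{1}\sqcup Q_{2}\sqcup Q_{3}=f^{-1}(l)$ is the locus $\{\C^{1}\in l\}$ on $\wt{\LG_{\a}(3,V)}$; hence $\pi^{-1}(\Exc(\s))$ and $\Exc(\nu)=\nu^{-1}(Q_{1}\sqcup Q_{2}\sqcup Q_{3})$ have the same support. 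Since $\pi$ is a smooth $\P^{1}$-bundle, the pullback of the reduced divisor $\Exc(\s)$ is again reduced, so the two divisors coincide. This gives the first formula $-K_{M}=2H_{3}+2H_{1}-\Exc(\nu)$.

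For the second formula I would compare this with the canonical class computed from the two blow-down structures of $M$. On one hand, $\LG_{\a}(3,V)\simeq\LG(3,6)$ is Fano of index $4$, so $-K_{\LG_{\a}(3,V)}=4H_{3}$ for the ample generator $H_{3}$ (the Plücker class, consistent with $(H_{3})^{6}=16$ from Step~1 of Proposition~\ref{LG-prop-blowupLG}); since $\mu$ is the blow-up of the codimension $3$ smooth center $\LG_{\a,\b}(3,V)$, this gives $-K_{\wt{\LG_{\a}(3,V)}}=4H_{3}-2E$. On the other hand, by Lemma~\ref{LG-lem-Qi} the morphism $\nu$ is the blow-up of $\wt{\LG_{\a}(3,V)}$ along the codimension $2$ center $Q_{1}\sqcup Q_{2}\sqcup Q_{3}$, so $-K_{M}=\nu^{\ast}(-K_{\wt{\LG_{\a}(3,V)}})-\Exc(\nu)=4H_{3}-2\nu^{\ast}E-\Exc(\nu)$, using $\nu^{\ast}H_{3}=H_{3}$. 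Equating this with the first formula and cancelling $\Exc(\nu)$ gives $H_{1}=H_{3}-\nu^{\ast}E=\nu^{\ast}(H_{3}-E)$ on $M$. As $H_{1}$ and $H_{3}$ on $M$ are the $\nu$-pullbacks of the corresponding classes on $\wt{\LG_{\a}(3,V)}$ and $\nu^{\ast}$ is injective on Picard groups, I conclude $H_{1}=H_{3}-E$ on $\wt{\LG_{\a}(3,V)}$.

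The bundle and blow-up canonical formulas are routine; the step I expect to require the most care is the identification $\pi^{\ast}\Exc(\s)=\Exc(\nu)$, i.e. verifying that the exceptional divisors of the two distinct blow-down structures of $M$ really agree as divisors (not merely up to multiplicity), which is exactly where the explicit flag interpretation of $\pi$ and $\nu$ is indispensable.
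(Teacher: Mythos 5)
Your proof is correct and follows essentially the same route as the paper's: two computations of $-K_{M}$ (one via the projective bundle structure $\pi$ with tautological class $H_{3}$, one via the blow-ups $\mu$ and $\nu$), the identification $\pi^{\ast}\Exc(\s)=\Exc(\nu)$, and comparison of the two expressions followed by descent along $\nu$. Your version is in fact slightly more careful than the paper's, whose first displayed formula $-K_{M}=4H_{1}-(2\nu^{\ast}E+\Exc(\nu))$ contains an apparent typo ($4H_{1}$ should read $4H_{3}$, which is exactly the expression you derive and the one needed to conclude $H_{3}=H_{1}+E$).
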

\begin{proof}
By using the blowing-ups $\nu$ and $\mu$, we have 
$-K_{M}=4H_{1}-(2\nu^{\ast}E+\Exc(\nu))$. 
On the other hand, since $\pi$ is the projectivization of $\mc E|_{\wt{\P(V)}}$, 
we have 
$-K_{M}=2H_{3}+2H_{1}-\pi^{\ast}\Exc(\s)$. 
It follows from the construction that $\pi^{\ast}\Exc(\s)=\Exc(\nu)$. 
Therefore, it holds that $H_{3}=H_{1}+E$ on $\wt{\LG_{\a}(3,V)}$. 
\end{proof}


\subsection{The definition of $R$ and its resolution}\label{LG-subsec-R}
Let $R:=f(\mu^{-1}(C)) \subset \P(V)$, where $C$ is a smooth codimension $2$ linear section of $(\P^{1})^{3}=\LG_{\a,\b}(3,V)$. 
The purpose of this subsection is to study the variety $R$.
Unlike the case of Section~\ref{KG2-section}, $R$ is non-normal along $l_{1}$, $l_{2}$, and $l_{3}$. 
The main proposition of this section shows that $\Bl_{l_{1},l_{2},l_{3}}R$ is smooth. 

We first prove the following lemma. 

\begin{lem}\label{LG-lem-W}
Let $S \subset (\P^{1})^{3}=\LG_{\a}(3,V)$ be an arbitrary smooth element of $|\bigotimes_{i=1}^{3}\pr_{i}^{\ast}\mc O(1)|$. 
Set 
\begin{align*}
W:=\mu^{-1}(S) \subset E \text{ and } \ol{W}=f(W) \subset \P(V).
\end{align*}
Then $\ol{W}$ is a hypercubic containing $P_{12},P_{13},P_{23}$. 
Moreover, $f|_{W} \colon W \to \ol{W}$ is a flopping contraction and its flop $W^{+}$ is smooth. 
\end{lem}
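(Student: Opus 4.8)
The plan is to mimic the proof of the analogous statement Lemma~\ref{KG2-lem-ES} for $K(G_{2})$, using the explicit blow-up description of $f=g^{13}_{1}$ furnished by Lemma~\ref{LG-lem-P1n}~(7). First I would record that, since $S$ is a smooth divisor in $(\P^{1})^{3}=\LG_{\a,\b}(3,V)$ and $\mu|_{E}=g^{13}_{3}\colon E=\LF_{\a,\b}(1,3;V)\to \LG_{\a,\b}(3,V)$ is the smooth $\P^{2}$-bundle $\P_{(\P^{1})^{3}}(\mc H)$ with $\mc H=\bigoplus_{i}\pr_{i}^{\ast}\mc O(1)$, the variety $W=\mu^{-1}(S)=(g^{13}_{3})^{-1}(S)=\P_{S}(\mc H|_{S})$ is a smooth fourfold which belongs to $|H_{3}|$ on $E$ (because $S\in|\mc O(1,1,1)|$, the Pl\"ucker system $|H_{3}|$ on $(\P^{1})^{3}$).

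To identify $\ol W$ I would use the blow-up picture. By Definition~\ref{LG-def-Pij} and Lemma~\ref{LG-lem-P1n}~(7), $f|_{E}=g^{13}_{1}$ is the iterated blow-up of $\P(V)=\P^{5}$ along the three $3$-planes $P_{12},P_{13},P_{23}$, with exceptional divisors $D_{12},D_{13},D_{23}$ and $h:=H_{1}|_{E}$ the pulled-back hyperplane. Since the projection $E\to l_{i}$ is given by the proper transform of the pencil of hyperplanes through $l_{i}^{\perp}=P_{jk}$, one reads off $H_{3}|_{E}=(g^{13}_{3})^{\ast}\mc O(1,1,1)=3h-D_{12}-D_{13}-D_{23}$ (this is consistent with $H_{3}=H_{1}+E$ of Lemma~\ref{LG-lem-relonwtLG}). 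Hence $W$ is the proper transform of the cubic $\ol W=(g^{13}_{1})_{\ast}W$, so $\ol W$ is a cubic fourfold containing $P_{12},P_{13},P_{23}$, which is the first assertion.

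Next I would show that $f|_{W}\colon W\to\ol W$ is a flopping contraction. The fibre of $g^{13}_{1}$ over $x\in\P(V)$ is the set of Lagrangians of $\LG_{\a,\b}(3,V)$ through $x$; by Lemma~\ref{LG-lem-P1n}~(4)--(6) it is a single point for $x\notin l_{1}\cup l_{2}\cup l_{3}$, it is $l_{j}\times l_{k}$ for $x\in l_{i}$, and it is $l_{i}\simeq\P^{1}$ for a general $x\in P_{jk}\setminus l$. Intersecting these fibres with the class-$(1,1,1)$ divisor $S$, the fibre of $f|_{W}$ is a single point over $\P(V)\setminus(l_{1}\cup l_{2}\cup l_{3})$ and over a general point of each $P_{jk}$, while over each $x\in l_{i}$ it is the $(1,1)$-curve $S\cap(\{x\}\times l_{j}\times l_{k})\simeq\P^{1}$. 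Thus $\Exc(f|_{W})$ is a surface fibred by these $\P^{1}$'s over $l_{1}\cup l_{2}\cup l_{3}$, i.e.\ of codimension $2$ in $W$; in particular $\Sing \ol W\subset l_{1}\cup l_{2}\cup l_{3}$ has codimension $3$, so the hypersurface $\ol W$ is regular in codimension $1$, hence $S_{2}+R_{1}$ and therefore normal, and $f|_{W}$ is small. Finally, adjunction on $E$ together with $W\sim 3h-\sum D_{ij}$ gives $\omega_{W}\simeq\mc O_{W}(-3h)=(f|_{W})^{\ast}\omega_{\ol W}$ (using $\omega_{\ol W}=\mc O_{\ol W}(-3)$ for the cubic fourfold), so $f|_{W}$ is crepant; being a small crepant contraction of the smooth $W$, it is flopping.

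The remaining and hardest point is that the flop $W^{+}$ is smooth. My plan is to pin down the singularity type of $\ol W$ along each $l_{i}$: I expect $\ol W$ to have a transverse three-fold node there, i.e.\ to be analytically $\{xy-zw=0\}\times l_{i}$, a compound $A_{1}$ singularity along the curve, the flopping $\P^{1}$ over $x\in l_{i}$ being one ruling of the node. Concretely I would compute the normal bundle of a flopping curve $\ell$ and expect $\mc N_{\ell/W}\simeq\mc O_{\P^{1}}(-1)^{\oplus 2}\oplus\mc O_{\P^{1}}$, the trivial summand being the direction along $l_{i}$; then $f|_{W}$ is a family of Atiyah flops over the smooth curves $l_{i}$, and both small resolutions $W$ and $W^{+}$ are smooth. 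Equivalently, one realises $W^{+}$ by the explicit blow-up/blow-down of $\Exc(f|_{W})$ as in Lemma~\ref{KG2-lem-stflop}~(6) and checks that the blow-down is smooth. This local analysis along $l_{1}\cup l_{2}\cup l_{3}$ — precisely the phenomenon producing the non-normality of $R$ studied one dimension lower in Proposition~\ref{LG-prop-wtR} — is the main obstacle; everything else is bookkeeping with the blow-up relations above.
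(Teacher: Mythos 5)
Your identification of $\ol{W}$ as a cubic containing the three planes, and your argument that $f|_{W}$ is a small crepant contraction with normal image, are essentially the paper's own proof of the first two assertions and are fine. The genuine gap is in your last step (smoothness of $W^{+}$), and it originates in the claim $\Sing \ol{W} \subset l_{1}\cup l_{2}\cup l_{3}$, which is false. For \emph{every} smooth $S$, the projection $\pr_{jk}|_{S}\colon S \to l_{j}\times l_{k}$ is a birational morphism from a sextic del Pezzo surface onto $\P^{1}\times\P^{1}$, hence contracts exactly two $(-1)$-curves, and these are of the form $l_{i}\times\{(y_{j},y_{k})\}\subset S$. For any point $x$ on the line $\braket{y_{j},y_{k}}\subset P_{jk}$, every Lagrangian $\braket{x_{i},y_{j},y_{k}}$ with $x_{i}\in l_{i}$ lies in $S$ and passes through $x$, so the fiber of $f|_{W}$ over $x$ is again one-dimensional. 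Thus $\Exc(f|_{W})$ contains six further surfaces, contracted onto six further lines along which $\ol{W}$ is singular. (The paper's own assertion $\Exc(f|_{W})=S_{1}\sqcup S_{2}\sqcup S_{3}$ has the same defect, but there it is harmless, since only smallness, normality and crepancy are used.) Concretely, in the normal form $\ol{W}=\{x_{0}x_{2}x_{4}+x_{1}x_{3}x_{5}=0\}$ used in the paper (available since $(\P^{1})^{3}$ is homogeneous), $\Sing\ol{W}$ is the union of the \emph{nine} coordinate lines $\braket{e_{a},e_{b}}$ with $x_{a}$ dividing $x_{0}x_{2}x_{4}$ and $x_{b}$ dividing $x_{1}x_{3}x_{5}$, meeting three at a time at the six coordinate points.

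This collapses your proposed local analysis. Along general points of $l_{i}$ the transverse singularity is indeed an ordinary double point and $\mc N_{\ell/W}\simeq\mc O_{\P^{1}}(-1)^{\oplus 2}\oplus\mc O_{\P^{1}}$, but at the six coordinate points the quadratic part of the local equation (e.g.\ $x_{2}x_{4}+x_{1}x_{3}x_{5}$ on $\{x_{0}\neq 0\}$) has rank $2$, so the singularity is not a transverse node; three singular lines meet there, and correspondingly the fiber of $f|_{W}$ over two points of each $l_{i}$ degenerates into a union of two lines. Hence $f|_{W}$ is \emph{not} a family of Atiyah flops over the smooth curves $l_{i}$, and an argument valid only near general points of $l_{1}\cup l_{2}\cup l_{3}$ says nothing about smoothness of $W^{+}$ over the six vertices or over the six extra singular lines. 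The paper's smoothness proof is genuinely global and is the ingredient you are missing: writing $W^{+}=\Proj_{\ol{W}}\bigoplus_{m\geq 0}\mc O(-m\ol{H_{3}})$ by \cite[Lemma~6.2]{KM98}, it applies Koll\'ar's involution trick \cite[Example~2.3]{Kollar89}: on each affine chart $\{x_{a}\neq 0\}$ the coordinate involution preserving $\ol{W}$ (e.g.\ $x_{2}\leftrightarrow x_{4}$ on $\{x_{0}\neq 0\}$) has quotient $\A^{4}$, whence $\iota^{\ast}\ol{H_{3}}+\ol{H_{3}}\sim 0$, so that over each chart $W^{+}\simeq\Proj\bigoplus_{m\geq 0}\mc O(m\ol{H_{3}})\simeq W$, which is smooth. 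You would need this (or an equally global argument, say a toric analysis of the binomial hypersurface $\ol{W}$) to close the gap.
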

\begin{proof}
First of all, we note that $W$ is a member of $|H_{3}|_{E}|=|3H_{1}|_{E}-\Exc(f|_{E})|$ and hence $\ol{W}$ is a hypercubic of $\P^{5}$ containing $P_{12}$, $P_{13}$, and $P_{23}$. 
We show that $f|_{W} \colon W \to \ol{W}$ is a flopping contraction. 
For each $i \in \{1,2,3\}$, let $L_{i}$ be the pullback of a point on $\P^{1}$ under the $i$-th projection $\pr_{i} \colon (\P^{1})^{3} \to \P^{1}$ and $S_{i}:=\Sigma_{i} \cap W$, where $\Sigma_{i}$ is defined in (\ref{LG-defi-Sigmai}). 
For a curve $\g \subset W$ satisfying $H_{1}.\g=0$, we have $(H_{1}-L_{j}).\g<0$ for each $j \in \{1,2,3\}$. 
Since $S_{1}$ is the intersection of the unique member of $|H_{1}-L_{2}|$ and that of $|H_{1}-L_{3}|$, we get $\g \subset S_{1}$. 
By such arguments, we get $\Exc(f|_{W}) \subset S_{1} \sqcup S_{2} \sqcup S_{3}$. 
Since $S_{i}$ is contracted by $f|_{W}$ onto the line $l_{i}$, we have $\Exc(f|_{W})=S_{1} \sqcup S_{2} \sqcup S_{3}$. 
Therefore, $\ol{W}$ is regular in codimension $1$. 
Hence $\ol{W}$ is normal and $f|_{W}$ is a small contraction. 
Since $-K_{W} \sim 3H_{1}|_{W}$ follows from the adjunction formula, $f|_{W}$ is a flopping contraction. 

Finally, we show that the flop $W^{+}$ of $W$ is smooth. 
Since $(\P^{1})^{3}$ is homogeneous, we may assume that 
\[S=\{z_{0}z_{2}z_{4}+z_{1}z_{3}z_{5}=0\} \subset (\P^{1})^{3}, \]
where $([z_{0}:z_{1}],[z_{2}:z_{3}],[z_{4}:z_{5}])$ denotes the coordinate of $(\P^{1})^{3}$. 
Then $\ol{W} \subset \P^{5}=\Proj \C[x_{0},x_{1},x_{2},x_{3},x_{4},x_{5}]$ is given by the same equation: 
\begin{align}\label{LG-defeq-olW}
\ol{W}=\{x_{0}x_{2}x_{4}+x_{1}x_{3}x_{5}=0\} \subset \P^{5}.
\end{align}
The smoothness of $W^{+}$ is proved by the similar arguments as in \cite[Example~2.3]{Kollar89} as follows. 
Now $\mu^{\ast}H_{3}|_{W}$ is $f|_{W}$-ample. 
Set $\ol{H_{3}}:={f|_{W}}_{\ast}\mu^{\ast}(H_{3}|_{W})$. 
Then the flop $W^{+}$ is isomorphic to 
$\Proj_{\ol{W}}\bigoplus_{m \geq 0} \mc O(-m\ol{H_{3}})$ 
by \cite[Lemma~6.2]{KM98}. 
Set $U_{0}:=\{x_{0} \neq 0\} \simeq \mathbb{A}^{5}$ and $\ol{W}_{0}:=\ol{W} \cap U_{0}$. 
Then $\ol{W}_{0}=\{x_{2}x_{4}+x_{1}x_{3}x_{5}=0\}$ has an involution 
$\iota \colon \ol{W}_{0} \ni (x_{1},x_{2},x_{3},x_{4},x_{5}) \mapsto (x_{1},x_{4},x_{3},x_{2},x_{5}) \in \ol{W}_{0}$. 
Since $\ol{W}_{0}/\braket{\iota} \simeq 
\mathbb{A}^{4}$, it holds that $\iota^{\ast}\ol{H_{3}}+\ol{H_{3}} \sim 0$ and hence 
$W^{+}$ is isomorphic to $\Proj_{\ol{W_{0}}} \bigoplus_{m \geq 0} \mc O(m\ol{H_{3}})=(f|_{W})^{-1}(\ol{W_{0}})$ over $\ol{W_{0}}$. 
By taking another affine chart and doing the same argument, we conclude that $W^{+}$ is smooth. 
\end{proof}


\begin{prop}\label{LG-prop-R}
Let $C \subset (\P^{1})^{3}$ be a smooth codimension $2$ linear section with respect to the polarization $\bigotimes_{i=1}^{3} \pr_{i}^{\ast}\mc O(1)$. 
We set 
\begin{align}\label{LG-def-R}
R:=f(\mu^{-1}(C)) \subset \P(V). 
\end{align}
Then the following assertions hold. 
\begin{enumerate}
\item 
Define $C_{i}:=\Sigma_{i} \cap \mu^{-1}(C)$, where $\Sigma_{i}$ is defined in (\ref{LG-defi-Sigmai}). 
Recall the three lines $l_{1}$, $l_{2}$, and $l_{3}$ as in (\ref{LG-defi-li}). 

Then $f|_{\mu^{-1}(C)}$ is isomorphic outside the three sections $C_{1} \sqcup C_{2} \sqcup C_{3}$ and $f|_{C_{i}} \colon C_{i} \to l_{i}$ is a double covering. 
In particular, $f|_{\mu^{-1}(C)} \colon \mu^{-1}(C) \to R$ is the normalization of $R$. 
\item 
Let $F_{ij}$ be the exceptional divisor of $f|_{E} \colon E \to \P^{5}$ whose center is $P_{ij}$, which is defined in Definition~\ref{LG-def-Pij}. 
Note that $\mu|_{F_{ij}} \colon F_{ij} \to (\P^{1})^{3}$ is the projectivization of the rank $2$ vector bundle $\pr_{i}^{\ast}\mc O(1) \oplus \pr_{j}^{\ast}\mc O(1)$. 

Then the geometrically ruled surface $F_{ij} \cap \mu^{-1}(C)$ is the blowing-up of $P_{ij} \cap R$ along $l_{i}$ and $l_{j}$. 
Moreover, $R \cap P_{ij}$ is a hyperquartic surface of $P_{ij}=\P^{3}$. 
\item Let $S,S'$ be smooth hyperplane sections of $(\P^{1})^{3}$ such that $C=S \cap S'$. 
Let $W=\mu^{-1}(S)$, $W'=\mu^{-1}(S')$, $\ol{W}=f(W)$, and $\ol{W'}=f(W')$ as in Lemma~\ref{LG-lem-W}. 

Then the scheme $P \cup R$ with the reduced structure is a complete intersection of the two hypercubics $\ol{W}$ and $\ol{W'}$. 
In particular, 
$\ol{W}+\ol{W'}$ is a simple normal crossing at the generic points of $P_{12},P_{13},P_{23}$ and $R$. 
\end{enumerate}
\end{prop}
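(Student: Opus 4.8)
The plan is to read off everything from the rational map $\Phi=(\Phi_1,\Phi_2,\Phi_3)\colon \P(V)\dashrightarrow (\P^1)^3=\LG_{\a,\b}(3,V)$ of Lemma~\ref{LG-lem-P1n}, whose graph is $E=\LF_{\a,\b}(1,3;V)$ and which sends a general $x$ to the unique Lagrangian $L_x\ni x$. Since $\mu^{-1}(C)=(g^{13}_3)^{-1}(C)$ is a $\P^2$-bundle over the smooth elliptic curve $C$, it is smooth of dimension $3$, and $R=f(\mu^{-1}(C))=\bigcup_{L\in C}L$ is the union of the Lagrangian planes parametrized by $C$; the morphism $f|_{\mu^{-1}(C)}\colon (x,L)\mapsto x$ is proper with finite fibers, hence finite. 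For (1) I would compute these fibers: over $x\notin l$ the Lagrangian $L_x$ is unique, so the fiber is one point, while over $x\in l_i$ the Lagrangians lying in $C$ through $x$ are exactly the members of $C$ with $i$-th coordinate $x$, i.e. the fiber of $\pr_i|_C\colon C\to l_i$. The intersection number $\mc O(1,1,1)^2\cdot\pr_i^{\ast}\mc O(1)=2$ shows $\pr_i|_C$ is a double cover, and the locus of the extra points is precisely $C_i=\Sigma_i\cap\mu^{-1}(C)$ (the section marking $L\cap l_i$), with $f|_{C_i}=\pr_i|_C$. Hence $f|_{\mu^{-1}(C)}$ is an isomorphism away from $C_1\sqcup C_2\sqcup C_3$; being finite and birational from the smooth, hence normal, threefold $\mu^{-1}(C)$, it is the normalization of $R$.

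For (2), using $F_{ij}=\P_{(\P^1)^3}(\pr_i^{\ast}\mc O(1)\oplus\pr_j^{\ast}\mc O(1))$ and that $f|_{F_{ij}}$ sends the fiber over a flag with $i,j$-coordinates $a\in l_i$, $b\in l_j$ to the line $\braket{a,b}\subset P_{ij}=\P^3$, I restrict to $C$ to obtain the geometrically ruled surface $F_{ij}\cap\mu^{-1}(C)=\P_C((\pr_i^{\ast}\mc O(1)\oplus\pr_j^{\ast}\mc O(1))|_C)$. I then identify $R\cap P_{ij}$ with the preimage $\Phi_{ij}^{-1}(\pr_{ij}(C))$, where $\Phi_{ij}=(\Phi_i,\Phi_j)\colon \P^3\dashrightarrow l_i\times l_j$ is the pair of projections from $l_j$ and from $l_i$, and $\pr_{ij}(C)\subset \P^1\times\P^1$ is a curve of bidegree $(2,2)$ (again from $C\cdot\pr_i^{\ast}\mc O(1)=C\cdot\pr_j^{\ast}\mc O(1)=2$). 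Pulling back a bidegree-$(2,2)$ form under $\Phi_{ij}$ gives a quartic in $\P^3$ vanishing to order $2$ along $l_i=\{b=0\}$ and along $l_j=\{a=0\}$; hence $R\cap P_{ij}$ is a quartic surface with an ordinary double curve along $l_i\sqcup l_j$. As the ruled surface is smooth and maps finitely and birationally onto this quartic, it is its normalization, which for an ordinary double curve is exactly $\Bl_{l_i\sqcup l_j}(R\cap P_{ij})$.

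For (3), Lemma~\ref{LG-lem-W} gives that $\ol{W},\ol{W'}$ are cubic fourfolds both containing $P_{12},P_{13},P_{23}$, so $P\subset\ol{W}\cap\ol{W'}$; writing $\ol{W}=\bigcup_{L\in S}L$ and $\ol{W'}=\bigcup_{L\in S'}L$, a point $x\notin l$ lies in $\ol{W}\cap\ol{W'}$ iff $L_x\in S\cap S'=C$, i.e. iff $x\in R$, so $P\cup R\subseteq\ol{W}\cap\ol{W'}$. The intersection of the two distinct cubics is proper of degree $9$, while $\deg P=3$ and a direct computation gives $\deg R=6$; equality of degrees then forces $\ol{W}\cap\ol{W'}=P\cup R$ as cycles, each component appearing with multiplicity one. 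For the transversality I argue locally: at a general point of $R$ (so $x\notin l$) one has $\ol{W}\simeq W$ and $\ol{W'}\simeq W'$, which are smooth since $f|_W,f|_{W'}$ are isomorphisms off $l$, and they meet transversally because $S$ and $S'$ meet transversally in $(\P^1)^3$; at the generic point of each $P_{ij}$ I use the explicit equation $\ol{W}=\{x_0x_2x_4+x_1x_3x_5=0\}$ from Lemma~\ref{LG-lem-W} (legitimate by the homogeneity of $(\P^1)^3$) to check directly that $\ol{W}$ and $\ol{W'}$ are smooth and meet transversally there. Multiplicity one along every component yields that the complete intersection is reduced, i.e. equals $(P\cup R)_{\red}$.

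The main obstacle, entirely absent from the $K(G_2)$ case of Section~\ref{KG2-section}, is that $f|_{\mu^{-1}(C)}$ fails to be an isomorphism exactly over $l=l_1\sqcup l_2\sqcup l_3$, so $R$ is non-normal. Controlling this locus throughout — the double covers $C_i\to l_i$ in (1), the double curves of the quartics $R\cap P_{ij}$ in (2), and above all the transversality of $\ol{W}$ and $\ol{W'}$ at the codimension-$2$ generic points of the $P_{ij}$ in (3), where several sheets of $R$ meet the three $3$-planes — is the delicate point; the coordinate model of $\ol{W}$ from Lemma~\ref{LG-lem-W} is what makes the required local computations tractable.
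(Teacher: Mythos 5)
Your proposal is correct in substance and, for parts (2) and (3), follows essentially the same route as the paper; part (1) is where you genuinely diverge. The paper proves (1) by first showing that $f|_{\mu^{-1}(C)}$ is given by the \emph{complete} linear system $|\mc O_{\P_{C}(\mc H|_{C})}(1)|$ (via the vanishing $H^{1}(S,\mc I_{C/S}(\mc H|_{S}))=0$, computed on the sextic del Pezzo surface $S$), and then reads off the structure of the morphism from the splitting $\mc H|_{C}\simeq\bigoplus_{i}(\pr_{i}^{\ast}\mc O(1))|_{C}$; you instead compute the fibers directly from the Lagrangian geometry of Lemma~\ref{LG-lem-P1n} and conclude by ``finite $+$ birational $+$ normal source $=$ normalization''. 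This is a legitimate and arguably more geometric alternative. Note only that your set-theoretic fiber count gives injectivity off $\bigsqcup_{i} C_{i}$, while ``isomorphism off $\bigsqcup_{i}C_{i}$'' requires the scheme-theoretic fibers over $R\setminus l$ to be single \emph{reduced} points; this does hold (over $x\in P_{ij}\setminus l$ the fiber is the intersection of $C$ with the line $\{(\Phi_{i}(x),\Phi_{j}(x))\}\times l_{k}$, cut out on that line by two forms of degree one), and the paper's linear-system detour is what plays this role there. In (2), your pullback-of-a-$(2,2)$-form description of the quartic and ``normalization $=$ blow-up of the double curve'' replaces the paper's embedding of the ruled surface into $\Bl_{l_{i},l_{j}}P_{ij}$ and its appeal to \cite{BeauvilleBook} and \cite{HP13}; both arguments need the same input, namely that $\pr_{ij}|_{C}$ is birational onto its image (otherwise the image would be a $(1,1)$-curve and your bidegree count would fail), which you use implicitly and the paper asserts.

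The one step that does not work as described is your transversality check in (3) at the generic points of the $P_{ij}$. Homogeneity of $(\P^{1})^{3}$ lets you put \emph{one} of $S$, $S'$ --- hence one of $\ol{W}$, $\ol{W'}$ --- into the normal form of Lemma~\ref{LG-lem-W}, and this indeed shows that each cubic separately is smooth at the generic point of each $P_{ij}$; but it cannot normalize both cubics simultaneously, so it gives no control on whether their tangent hyperplanes there (both of which contain $P_{ij}$) are distinct, i.e.\ no transversality of the pair. Fortunately the step is unnecessary: once you know that the intersection cycle is $[R]+[P_{12}]+[P_{13}]+[P_{23}]$ with all multiplicities one, the local ring of the scheme $\ol{W}\cap\ol{W'}$ at the generic point $\eta$ of any component is a field, so the two cubic equations generate the maximal ideal of the two-dimensional regular local ring $\mc O_{\P(V),\eta}$, which forces both cubics to be smooth at $\eta$ and transverse --- this is exactly how the paper deduces the simple-normal-crossing clause from reducedness. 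Relatedly, your assertion that ``multiplicity one along every component yields that the complete intersection is reduced'' silently uses that a complete intersection is Cohen--Macaulay, hence $S_{1}$ (no embedded components); the paper makes this explicit via the $S_{1}$-condition on the scheme defined by $\mc O_{\ol{W}}(-R-P)$.
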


\begin{proof}
(1) Note that $\mu^{-1}(C)=\P_{C}(\mc H|_{C})$. 
First we check that $f|_{\mu^{-1}(C)} \colon \mu^{-1}(C) \to R$ is given by the complete linear system $|\mc O_{\P_{C}(\mc H|_{C})}(1)|$. 
Take a smooth hyperplane section $S \subset (\P^{1})^{3}$ containing $C$. 
Then it suffices to show that $H^{1}(S,\mc I_{C/S}(\mc H|_{S}))=0$. 
Since $S$ is a sextic del Pezzo surface, there exists the blowing-down $S \to \P^{2}$. 
Let $h$ be the pull-back of a line and $e_{1},e_{2},e_{3}$ the exceptional divisors. 
Then $\mc H \simeq \bigoplus_{i=1}^{3} \mc O(h-e_{i})$ and $C \in |3h-\sum_{i}e_{i}|$. 
Now it is easy to check that $H^{1}(S,\mc H(-C))=0$. 

Since $\mc H|_{C} \simeq \left(\bigoplus_{i=1}^{3} (\pr_{i}^{\ast}\mc O_{\P^{1}}(1) \right)|_{C}$, 
the morphism $\P_{C}(\mc H|_{C}) \to R$ is finite, birational, and not isomorphic along only $\bigsqcup_{i=1}^{3} C_{i}$. 

(2) Without loss of generality, we may assume that $i=1$ and $j=2$. 
Then we have $F_{12} \simeq \P_{(\P^{1})^{3}}(\mc O(1,0,0) \oplus \mc O(0,1,0))$, which is the blowing-up of $\Bl_{l_{1},l_{2}}P_{12} \times \P^{1}$. 
Let $\pi_{12} \colon (\P^{1})^{3} \to (\P^{1})^{2}$ be the projection defined by forgetting the third component. 
Then $\pi_{12}|_{C}$ is an isomorphism onto its image. 
Therefore, the restriction of the morphism $\Bl_{l_{1},l_{2}}P_{12} \times \P^{1} \to \Bl_{l_{1},l_{2}}P_{12}$ to $F_{ij} \cap \mu^{-1}(C)$ is also isomorphic. 
By \cite[Exercise~III-(9)]{BeauvilleBook} or \cite[Example~6.11]{HP13} for example, it is well-known that the blowing-up of $R \cap P_{12}$ along $l_{1}$ and $l_{2}$ is $F_{ij} \cap \mu^{-1}(C) \subset \Bl_{l_{1},l_{2}}P_{12}$ and $R \cap P_{12}$ is a hyperquartic. 
We complete the proof of (2). 

(3) 
Since $R \cap P_{ij}$ is a quartic by (2), $\ol{W} \cap \ol{W'}$ must contain $P=\bigcup_{i,j}P_{ij}$. 
Hence $\ol{W} \cap \ol{W'}$ contains $R \cup P$. 
Since both of $R \cup P$ and $\ol{W'} \cap \ol{W}$ are of degree $9$, $R+P$ equals to $\ol{W'}|_{\ol{W}}$ as Weil divisors on $\ol{W}$. 
By inducing the scheme structure on $R+P$ with respect to the ideal $\mc O_{\ol{W}}(-R-P)$, $R+P$ satisfies the $S_{1}$-condition and hence isomorphic to $R \cup P$ with the reduced structure.  
Hence $R \cup P$ is the complete intersection of $\ol{W}$ and $\ol{W'}$. 
%
\end{proof}

\begin{prop}\label{LG-prop-wtR}
Let $C$ and $R$ be as in Proposition~\ref{LG-prop-R}. 
We set 
\[\wh{R}:=\nu^{-1}_{\ast}(\mu^{-1}(C)) \subset E' \text{ and } \wt{R}:=\pi(\wh{R}) \subset \wt{\P(V)}. \]
Let $C_{i}$ be as in Proposition~\ref{LG-prop-R}~(1). 
Note that $\wh{R} \simeq \Bl_{C_{1},C_{2},C_{3}}\mu^{-1}(C)$ by definition and $\wt{R}=\Bl_{l_{1},l_{2},l_{3}}R$ since $\wt{R}$ is the proper transform of $R$. 

Then the following assertions hold. 
\begin{enumerate}
\item The morphism $\pi|_{\wh{R}} \colon \wh{R} \to \wt{R}$ is isomorphic. 
In particular, $\s|_{\wt{R}} \colon \wt{R} \to R$ is a resolution of singularities of $\wt{R}$. 
\item The threefold $(\s|_{\wt{R}})^{-1}(l_{i})$ is isomorphic to $\P_{C}\left( \bigoplus_{j=\{1,2,3\} \setminus \{i\}} \pr_{j}^{\ast}\mc O(1)|_{C} \right)$ for each $i \in \{1,2,3\}$. 
\end{enumerate}
\end{prop}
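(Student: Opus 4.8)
Throughout I set $\mc H:=\bigoplus_{i=1}^{3}\pr_{i}^{\ast}\mc O(1)$ on $(\P^{1})^{3}=\LG_{\a,\b}(3,V)$, so that $\mu^{-1}(C)=\P_{C}(\mc H|_{C})$ by Proposition~\ref{LG-prop-R}~(1). The plan is first to dispose of smoothness and reduce (1) to the isomorphism claim, and then to do a local analysis over the $\wt{P_{ij}}$. Since $C$ is a smooth curve, $\P_{C}(\mc H|_{C})$ is a smooth threefold, and each $C_{i}=\Sigma_{i}\cap\mu^{-1}(C)$ is the section of $\P_{C}(\mc H|_{C})\to C$ cut out by the quotient $\mc H|_{C}\epm\pr_{i}^{\ast}\mc O(1)|_{C}$; hence the $C_{i}$ are smooth, and, mapping onto the disjoint lines $l_{i}$ under $f$, they are pairwise disjoint. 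Consequently $\wh{R}=\Bl_{C_{1},C_{2},C_{3}}\mu^{-1}(C)$ is the blow-up of a smooth threefold along disjoint smooth curves, so it is smooth. Once I know $\pi|_{\wh{R}}\colon\wh{R}\to\wt{R}$ is an isomorphism, the composite $\s|_{\wt{R}}\circ\pi|_{\wh{R}}\colon\wh{R}\to R$ is a projective birational morphism from a smooth threefold, so $\s|_{\wt{R}}$ is a resolution; thus it remains to prove that $\pi|_{\wh{R}}$ is an isomorphism.

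For this I would exploit the two blow-up structures on $E'$. From diagram~(\ref{LG-dia-M}) one reads off $f\circ\nu=\s\circ\pi$, and by Step~4 of the proof of Proposition~\ref{LG-prop-blowupLG} the morphism $\pi|_{E'}\colon E'\to\wt{\P(V)}$ is the blow-up of $\wt{\P(V)}$ along $\wt{P_{12}}\sqcup\wt{P_{13}}\sqcup\wt{P_{23}}$. Since $E'$ is a rational section of the $\P^{1}$-bundle $\pi$, the fibres of $\pi$ contained in $E'$ are exactly those over the three centres $\wt{P_{ij}}$, and these are precisely the curves contracted by $\pi|_{E'}$. Over the complement of $\wt{P_{12}}\cup\wt{P_{13}}\cup\wt{P_{23}}$ the map $\pi|_{E'}$ is already an isomorphism, so $\pi|_{\wh{R}}$ is an isomorphism there; hence everything is concentrated over the three $\wt{P_{ij}}$, and the task becomes to analyse $\wh{R}$ along $D_{ij}:=\Exc(\pi|_{E'})$ over $\wt{P_{ij}}$.

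The crux, which I expect to be the main obstacle, is this local analysis, and the key input will be Proposition~\ref{LG-prop-R}~(2). Since $\wt{P_{ij}}=\Bl_{l_{i},l_{j}}P_{ij}$ and $\wt{R}$ is the proper transform of $R$, the intersection $\wt{R}\cap\wt{P_{ij}}$ is the proper transform of the quartic surface $R\cap P_{ij}$, which by Proposition~\ref{LG-prop-R}~(2) is exactly the smooth geometrically ruled surface $F_{ij}\cap\mu^{-1}(C)$. I would then show that $\pi|_{\wh{R}}$ restricts to an isomorphism $\wh{R}\cap D_{ij}\xrightarrow{\ \sim\ }\wt{R}\cap\wt{P_{ij}}$; in particular no contracted $\pi$-fibre lies in $\wh{R}$, and $\pi|_{\wh{R}}$ is injective with injective differential along $D_{ij}$. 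Granting this, $\pi|_{\wh{R}}$ is a proper injective unramified morphism from the smooth threefold $\wh{R}$, hence a closed immersion, and therefore an isomorphism onto its image $\wt{R}$ (which is then smooth, so $\s|_{\wt{R}}$ resolves $R$). Verifying this one transversality statement, i.e. gluing the two blow-up structures of $E'$ across the non-normal locus $l_{1}\cup l_{2}\cup l_{3}$ of $R$, is the hard part; it is a local computation fed by the explicit description in Proposition~\ref{LG-prop-R}~(2).

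For part (2) I would compute $(\s|_{\wt{R}})^{-1}(l_{i})$ through the other factorisation. Because $f\circ\nu=\s\circ\pi$, the two birational morphisms $\wh{R}\to R$ obtained through $\mu^{-1}(C)$ and through $\wt{R}$ coincide; combined with the isomorphism of (1) and Proposition~\ref{LG-prop-R}~(1) this yields
\[
(\s|_{\wt{R}})^{-1}(l_{i})\simeq(\nu|_{\wh{R}})^{-1}\bigl((f|_{\mu^{-1}(C)})^{-1}(l_{i})\bigr)=(\nu|_{\wh{R}})^{-1}(C_{i}),
\]
which is the exceptional divisor of $\wh{R}=\Bl_{C_{1},C_{2},C_{3}}\mu^{-1}(C)\to\mu^{-1}(C)$ over $C_{i}$, namely the projectivization of $\mc N_{C_{i}/\mu^{-1}(C)}$. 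Since $C_{i}$ is the section associated with $\mc H|_{C}\epm\pr_{i}^{\ast}\mc O(1)|_{C}$, whose kernel is $\bigoplus_{j\neq i}\pr_{j}^{\ast}\mc O(1)|_{C}$, the section-of-a-projective-bundle formula gives $\mc N_{C_{i}/\mu^{-1}(C)}\simeq\bigl(\bigoplus_{j\neq i}\pr_{j}^{\ast}\mc O(1)|_{C}\bigr)^{\vee}\otimes\pr_{i}^{\ast}\mc O(1)|_{C}$. As the projectivization of a rank-$2$ bundle is unchanged by dualizing and by twisting with a line bundle, I conclude $(\s|_{\wt{R}})^{-1}(l_{i})\simeq\P_{C}\bigl(\bigoplus_{j\neq i}\pr_{j}^{\ast}\mc O(1)|_{C}\bigr)$, as asserted.
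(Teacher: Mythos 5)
Your part (1) is an outline, not a proof, and the hole sits exactly at the crux of the proposition. After correctly reducing everything to the behaviour of $\pi|_{\wh{R}}$ over the three planes $\wt{P_{ij}}$, you state the decisive claims --- that no $\pi$-fibre inside $D_{ij}:=\Exc(\pi|_{E'})$ lies in $\wh{R}$, that $\pi|_{\wh{R}}$ carries $\wh{R}\cap D_{ij}$ isomorphically onto $\wt{R}\cap\wt{P_{ij}}$, and that its differential is injective at those points --- and then proceed ``granting this''. Nothing in your text establishes these claims, and Proposition~\ref{LG-prop-R}~(2) alone cannot: it identifies $F_{ij}\cap\mu^{-1}(C)$ with the blow-up of $R\cap P_{ij}$ along $l_{i},l_{j}$ inside $E$, but it says nothing about how $\wh{R}$ meets the $\pi$-exceptional divisors inside $E'$, and it cannot rule out ramification of $\pi|_{\wh{R}}$ in the directions transverse to $D_{ij}$, which is what the unramifiedness part of your closed-immersion argument needs. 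There is also a secondary unjustified identification: a priori $\wt{R}\cap\wt{P_{ij}}$ only \emph{contains} the proper transform of $R\cap P_{ij}$ (intersection of proper transforms versus proper transform of the intersection), and your argument needs equality, even scheme-theoretically.

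The paper avoids this local analysis altogether by a global argument you may want to compare with. It base-changes the diagram $\LF_{\a,\b}(1,3;V)\gets\LF_{\a,\b}(1,2,3;V)\to\LF_{\a,\b}(2,3;V)$ along $C\hra\LG_{\a,\b}(3,V)$, observes that $\wh{R}=R_{123}\cap E'$ and that $\pi|_{\wh{R}}\colon\wh{R}\to\wt{R}$ fits into a cartesian square over the morphism $R_{23}\to R_{2}\subset\LG_{\a,\b}(2;V)$; so it suffices to prove that this last morphism is an isomorphism onto its image. Since $R_{23}=\P_{C}(\mc V|_{C})$ with $\mc V=\mc O(1,1,0)\oplus\mc O(1,0,1)\oplus\mc O(0,1,1)$, and the map to $\LG_{\a,\b}(2;V)$ is given by sections of the very ample bundle $\mc O_{\P_{C}(\mc V|_{C})}(1)$, the claim reduces to surjectivity of $H^{0}((\P^{1})^{3},\mc V)\to H^{0}(C,\mc V|_{C})$, i.e. to $H^{1}(\mc I_{C}\otimes\mc V)=0$, which the Koszul resolution of $\mc I_{C}$ gives at once. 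If you prefer your route, you must actually perform the local computation along the $D_{ij}$; as written the proof is incomplete. Your part (2), by contrast, is correct as a deduction from (1) (the normal-bundle computation for the section $C_{i}$ and the rank-$2$ twisting/dualizing argument are fine), and it matches what the paper leaves implicit when it says (2) follows directly from (1).
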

\begin{proof}
(2) directly follows from (1). 
To prove (1), we consider the following diagram: 
\[\xymatrix{
&\LF_{\a,\b}(1,2,3;V) \ar[rd]^{g^{123}_{23}} \ar[ld]_{g^{123}_{13}} & \\
E=\LF_{\a,\b}(1,3;V)\ar[rd]_{\mu|_{E}=g^{13}_{3}}&&\LF_{\a,\b}(2,3;V) \ar[ld]^{g^{23}_{3}}  \\ 
&\LG_{\a,\b}(3,V).&
}\]
Note that $E' \subset \LF_{\a,\b}(1,2,3;V)$ and the birational morphisms $\LF_{\a,\b}(1,3;V) \gets E' \to \LF_{\a,\b}(1,2;V)$ is nothing but a family of Cremona transformations. 

Let the following diagram be the base change of the above diagram to $C \subset \LG_{\a,\b}(3,V)$: 
\[\xymatrix{
&R_{123} \ar[rd]^{h^{123}_{23}} \ar[ld]_{h^{123}_{13}} & \\
\mu^{-1}(C)=R_{13}\ar[rd]_{\mu|_{\mu^{-1}(C)}=h^{13}_{3}}&&R_{23} \ar[ld]^{h^{23}_{3}}  \\ 
&C.&
}\]
Note that $\wh{R}=R_{123} \cap E'$, which is the blowing-up of $R_{13}$ along three $h^{13}_{3}$-sections $\bigsqcup_{i=1}^{3}\Sigma_{i} \cap R_{13}=\bigsqcup_{i=1}^{3}C_{i}$. 
Set $R_{12}=g^{123}_{12}(R_{123}) \subset \LF_{\a,\b}(1,2;V)$ and $R_{2}:=g^{23}_{2}(R_{23}) \subset \LG_{\a,\b}(2;V)$. 
Then the image of $\wh{R}$ under $R_{123} \to R_{12}$ is $\wt{R}$: 
\[\xymatrix{
\wh{R} \ar[r]^{\pi|_{\wh{R}}} \ar@{^{(}->}[d]& \wt{R} \ar@{^{(}->}[d] \\
R_{123} \ar[r]^{g^{123}_{12}|_{R_{123}}} \ar[d]_{g^{123}_{23}} & R_{12} \ar[d] \\
R_{23} \ar[r]_{g^{23}_{2}|_{R^{23}_{2}}} & R_{2}.
}\]
Note that the bottom diagram is cartesian. 
If the morphism $R_{23} \to R_{2}$ is an isomorphism, then $g^{123}_{12}|_{R_{123}}$ is also isomorphic and hence so is $\pi|_{\wh{R}} \colon \wh{R} \to \wt{R}$. 

Let us prove that $g^{23}_{2}|_{R_{23}} \colon R_{23} \to R_{2}$ is isomorphic. 
$R_{23}$ is nothing but the projectivization of $\mc V|_{C}$, where $\mc V$ is the vector bundle on $(\P^{1})^{3}$ defined by 
\[\mc V:=\mc O(1,1,0) \oplus \mc O(1,0,1) \oplus \mc O(0,1,1).\]
Since $\mc O_{\P_{C}(\mc V|_{C})}(1)$ is very ample, 
it is enough to show that the restriction morphism 
$H^{0}((\P^{1})^{3},\mc V) \to H^{0}(C,\mc V|_{C})$ is surjective. 
This is equivalent to say that $H^{1}((\P^{1})^{3},\mc I_{C/(\P^{1})^{3}} \otimes \mc V)=0$, which follows from the resolution $0 \to \mc O(-2,-2,-2) \otimes \mc V \to (\mc O(-1,-1,-1) \otimes \mc V)^{\oplus 2} \to \mc I_{C} \otimes \mc V \to 0$. 
We complete the proof. 
\end{proof}

\subsection{The smooth extraction of $\P(V)$ along $R$}\label{LG-subsec-extraction}

The next out purpose is to construct the divisorial extraction $Z \to \P^{5}$ along $R$ and to show that $Z$ is smooth (Proposition~\ref{LG-prop-Z}). 
Our approach for the construction of $Z$ is as follows. 
Recall the blowing-up $\s \colon \wt{\P(V)} \to \P(V)$ along $l=l_{1} \sqcup l_{2} \sqcup l_{3}$ (\ref{LG-defi-wtPV}) and $R=f(\mu^{-1}(C)) \subset \P(V)$ (\ref{LG-def-R}). 
By Proposition~\ref{LG-prop-wtR}, the proper transform $\wt{R} \subset \wt{\P(V)}$ of $R$ is a non-singular threefold. 
We consider the blowing-up $\t \colon Y:=\Bl_{\wt{R}}\wt{\P(V)} \to \wt{\P(V)}$. 
In Proposition~\ref{LG-prop-Z}, we show that there is a birational morphism 
$p_{Y} \colon Y \to Z$ over $\P(V)$ and $Z$ is smooth: 
\begin{align}\label{LG-dia-Z}
\xymatrix{
&Y:=\Bl_{\wt{R}}\wt{\P(V)} \ar[d]^{p_{Y}} \ar[ld]_{\t} \\
\wt{\P(V)}\ar[d]_{\s}&Z \ar[ld]^{g} \\
\P(V).&
}
\end{align}
The exceptional divisors of $p_{Y}$ are the proper transforms of the exceptional divisors of $\s \colon \wt{\P(V)} \to \P(V)$. 
Hence the birational morphism $g \colon Z \to \P(V)$ is the extraction along $R$. 

We show the smoothness of $Z$ as follows. 
Let $D_{i}$ be the exceptional divisor of $\wt{\P(V)} \to \P(V)$ whose center is $l_{i}$ and $T_{i}:=\wt{R} \cap D_{i}$. 
Then the blowing-up $\Bl_{T_{i}}D_{i}$ is the proper transform of $D_{i}$ on $\wt{\P(V)}$. 
Let $\t_{i} \colon \Bl_{T_{i}}D_{i} \to D_{i}$ be the blowing-up morphism. 
In Proposition~\ref{LG-prop-quadspi}, we prove that $\Bl_{T_{i}}D_{i}$ is isomorphic to the projectivization of a rank $2$ vector bundle $\mc G_{i}$ on a smooth projective threefold $Z_{i}$. 
Moreover, $Z_{i}$ has a quadric fibration structure $z_{i} \colon Z_{i} \to l_{i}$: 
\[\xymatrix{
&\Bl_{T_{i}}D_{i}=\P_{Z_{i}}(\mc G_{i}) \ar[d]^{p_{i}} \ar[ld]_{\t_{i}} \\
D_{i}\ar[d]_{\s_{i}}&Z_{i} \ar[ld]^{z_{i}} \\
l_{i}.&
}\]
As a conclusion, we show that $\Exc(p_{Y})=\bigsqcup_{i} \Bl_{T_{i}}D_{i}$ and $p_{Y}|_{D_{i}}=p_{i}$ in Proposition~\ref{LG-prop-Z}. 
Then it follows from Ando's theorem~\cite[Theorem~2.3]{Ando85} that $Z$ is smooth.

\subsubsection{Structure of $\Bl_{T_{i}}D_{i}$. }\label{LG-subsubsec-li}

First, we see that $\Bl_{T_{i}}D_{i}$ has a $\P^{1}$-bundle structure on a quadric fibration $z_{i} \colon Z_{i} \to l_{i}$ (=Proposition~\ref{LG-prop-quadspi}). 

In $\S$~\ref{LG-subsubsec-li}, we work over the following notation. 
Set $D_{i}:=\s^{-1}(l_{i}) \subset \wt{\P(V)}$, $T_{i}:=\wt{R} \cap D_{i}$, and $\s_{i}:=\s|_{D_{i}} \colon D_{i} \to l_{i}$. 
Note that $D_{i} \simeq \P^{3} \times \P^{1}$ and hence $\s_{i}$ is the second projection. 
Let $\xi_{i}$ be the pullback of a hyperplane on $\P^{3}$ under $D_{i} \to \P^{3}$ and $H_{i}$ a fiber of $D_{i} \to \P^{1}$. 
Note that
\begin{align}\label{LG-def-xi}
\xi=(-D_{i}+H_{i})|_{D_{i}} 
\end{align}
since $\mc N_{l_{i}/D_{i}} \simeq \mc O(1)^{4}$ holds. 
For the blowing-up $\t_{i} \colon \Bl_{T_{i}}D_{i} \to D_{i}$, we set 
\begin{align*}
G_{i}:=\Exc(\t_{i}).
\end{align*}

\begin{prop}\label{LG-prop-quadspi}
For each $i$, there exist a smooth projective threefold $Z_{i}$ with a quadric fibration structure $z_{i} \colon Z_{i} \to l_{i}$ and a nef vector bundle $\mc G_{i}$ on $Z_{i}$ satisfying the following. 
\begin{enumerate}
\item It holds that $\P_{Z_{i}}(\mc G_{i})=\Bl_{T_{i}}D_{i}$ and the following diagram commutes: 
\[\xymatrix{
\P_{Z_{i}}(\mc G_{i})=\Bl_{T_{i}}D_{i} \ar[r]^{\qquad \t_{i}} \ar[d]_{p_{i}}& D_{i} \ar[d]_{\s_{i}} \\
Z_{i} \ar[r]_{z_{i}} & l_{i}.
}\]
\item For each $t \in l_{i}$, let $Z_{i,t}$ denotes the fiber of $z_{i}$, which is an irreducible and reduced quadric surface. 
Then $\mc G_{i}|_{Z_{i,t}}$ is the restriction of the dual of the Spinor bundle on $\Q^{3}$ to the irreducible reduced quadric surface $Z_{i,t}$. 
\item 
Let $H_{i}$ also denotes the linear equivalence class of a fiber of $Z_{i} \to l_{i}$. 
Let $A_{Z_{i}}$ be a divisor on $Z_{i}$ such that $\mc O(A_{Z_{i}}):=\det \mc G_{i}$. 
Then we obtain 
\begin{align}\label{LG-rel-AZ}
p_{i}^{\ast}A_{Z_{i}} \sim -G_{i}+2H_{i}+2\t_{i}^{\ast}\xi_{i}. 
\end{align}
\end{enumerate}
\end{prop}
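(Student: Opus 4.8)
The plan is to perform the whole analysis fibrewise over $l_i$ and to recognise $\Bl_{T_i}D_i$ as one of two $\P^1$-bundle structures carried by a single auxiliary fourfold, the complementary structure being the projection to $D_i$. First I would set $\mc M_i:=\pi^{-1}(D_i)$, with $\pi,\nu$ as in the diagram~(\ref{LG-dia-M}); by the fibrewise description in~(\ref{LG-dia-P3Q3}) and Lemma~\ref{LG-lem-Qi}, over each $t\in l_i$ the fibre $\s_i^{-1}(t)$ is $\P(V_t)\simeq\P^3$, the fibre $f^{-1}(t)$ is $\LG_{\a_t}(2,V_t)\simeq\Q^3$ (notation of Definition~\ref{defi-ortho}), and the two are linked by the incidence variety $\LF_{\a_t}(1,2;V_t)=\P_{\Q^3}(\ms S^{\vee}_{\Q^3})=\P_{\P^3}(\mc N^{\a_t}_{\P(V_t)}(1))$, which is a $\P^1$-bundle over each factor. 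Globalising, $\mc M_i$ is at once $\P_{D_i}(\mc E|_{D_i})$ and, via $\nu$, the exceptional divisor over $Q_i=f^{-1}(l_i)$, hence $\P_{Q_i}(\ms S^{\vee})$ for the relative dual Spinor bundle $\ms S^{\vee}$ on the quadric bundle $f|_{Q_i}\colon Q_i\to l_i$; write $\pi_i\colon\mc M_i\to D_i$ and $\nu_i\colon\mc M_i\to Q_i$ for the two projections.

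Next I would identify the curve $T_{i,t}:=T_i\cap\s_i^{-1}(t)$. Since $T_i=(\s|_{\wt R})^{-1}(l_i)$, Proposition~\ref{LG-prop-wtR}~(2) presents $T_i$ as a $\P^1$-bundle over $C$, while Proposition~\ref{LG-prop-R}~(1) gives the double cover $C_i\to l_i$; combining these shows that, for general $t$, the curve $T_{i,t}$ is a pair of disjoint lines in $\P^3$, degenerating over the finitely many branch points to two lines meeting at a point. The classical fact that the blow-up of $\P^3$ along two skew lines is a $\P^1$-bundle over a smooth quadric surface then identifies $\Bl_{T_{i,t}}\P^3$ with $\P_{Z_{i,t}}(\ms S^{\vee}_{\Q^3}|_{Z_{i,t}})$, where $Z_{i,t}\subset\Q^3$ is the hyperplane section of $\LG_{\a_t}(2,V_t)$ cut out by the incidence condition attached to $T_{i,t}$. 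This simultaneously produces the quadric fibration $z_i\colon Z_i\to l_i$, namely the relative hyperplane section of $Q_i\to l_i$ realising these fibres, and the bundle $\mc G_i=\ms S^{\vee}|_{Z_i}$, which is assertion~(2).

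To obtain~(1), I would realise $\Bl_{T_i}D_i$ as the divisor $\P_{Z_i}(\ms S^{\vee}|_{Z_i})=\nu_i^{-1}(Z_i)\subset\mc M_i$, and check that the complementary projection $\pi_i$ restricts on it to a birational morphism which is exactly the blow-up of $D_i$ along $T_i$; the asserted $\P^1$-bundle $p_i$ is then the restriction of $\nu_i$. Because $T_i$ and $D_i$ are smooth, $\Bl_{T_i}D_i$ is smooth, and since $p_i$ is a $\P^1$-bundle this forces $Z_i$ to be smooth; the nefness of $\mc G_i=\ms S^{\vee}|_{Z_i}$ follows from the global generation of $\ms S^{\vee}_{\Q^3}$. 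Finally for~(3): from $\det\ms S^{\vee}_{\Q^3}=\mc O_{\Q^3}(1)$ the class $A_{Z_i}=\det\mc G_i$ is the restriction of the relative hyperplane class of $Q_i\to l_i$, and translating this through $\P_{Z_i}(\mc G_i)=\Bl_{T_i}D_i$ and expressing the relevant classes in terms of $\t_i^{\ast}\xi_i$, $H_i$, and $G_i=\Exc(\t_i)$ (using $\xi_i=(-D_i+H_i)|_{D_i}$) reduces~(\ref{LG-rel-AZ}) to a direct comparison of line bundles.

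The hardest part will be the precise construction of $Z_i$ together with the control of its degenerate fibres: one must verify that over the branch points of $C_i\to l_i$ the section $Z_{i,t}$ degenerates only to a rank-three cone, remaining irreducible and reduced and never becoming a pair of planes or a double plane, and that $Z_i$ stays smooth there. Equally delicate is upgrading the fibrewise incidence to a global statement, so that $p_i$ is a genuine $\P^1$-bundle and $\mc G_i$ is literally the restriction of the relative Spinor bundle $\ms S^{\vee}$ rather than matching only fibre by fibre; once this is secured, assertion~(3) is routine bookkeeping.
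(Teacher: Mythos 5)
Your construction breaks at its central step: $\Bl_{T_i}D_i$ does \emph{not} embed into $\mc M_i=\pi^{-1}(D_i)=\nu^{-1}(Q_i)$ over $D_i$, so there is no relative hyperplane section $Z_i\subset Q_i$ with $\nu_i^{-1}(Z_i)=\Bl_{T_i}D_i$, and $\mc G_i$ cannot be the restriction of the relative Spinor bundle of $f|_{Q_i}\colon Q_i\to l_i$. The obstruction is symplectic. Since $R=f(\mu^{-1}(C))$ is the union of the planes $\P(L)$, $L\in C$, over a general $t\in l_i$ the fibre $T_{i,t}$ consists of the two lines cut on $D_{i,t}=\P(V_t)$ by the proper transforms of the two Lagrangian planes $\P(L_1),\P(L_2)$, where $L_1,L_2\in C$ lie over $t$; these are the lines of the planes $\ol{L_j}=L_j/\braket{v_t}\subset V_t^{\vee}$, which are $\a_t$-\emph{Lagrangian}. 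Now an embedding $\Bl_{T_{i,t}}\P(V_t)\hra\mc M_{i,t}=\P_{\P(V_t)}\bigl(\mc N^{\a_t}_{\P(V_t)}(1)\bigr)$ over $\P(V_t)$ would give a surjection $\mc N^{\a_t}(1)\to\mc I_{T_{i,t}}(k)$; its kernel is a line bundle $\mc O(a)$, and comparing $c_1,c_2$ (together with $h^0(\mc N^{\a_t}(-1))=0$) forces $T_{i,t}$ to be the zero scheme of a section of $\mc N^{\a_t}(1)$. But if a skew form $\b$ on $V_t^{\vee}$ satisfies $\b(v,\cdot)\propto\a_t(v,\cdot)$ for all $v$ in two complementary Lagrangian planes, an elementary computation in a symplectic basis yields $\b\in\C\a_t$; so zero schemes of sections of the null-correlation bundle are pairs of mutually $\a_t$-orthogonal \emph{non-isotropic} lines, never pairs of Lagrangian lines. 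Equivalently, in your own picture: the quadric over which $\Bl_{T_{i,t}}\P^3$ is a $\P^1$-bundle is the quadric of transversals of $T_{i,t}$, and a transversal $\braket{u_1,u_2}$ with $u_j\in\ol{L_j}$ is $\a_t$-isotropic only when $\a_t(u_1,u_2)=0$, a $(1,1)$-curve in $\P(\ol{L_1})\times\P(\ol{L_2})$; hence $Z_{i,t}\not\subset Q_{i,t}=\LG_{\a_t}(2,V_t)$. This is precisely why the paper leaves the isotropic flag variety at this point: Lemma~\ref{LG-lem-surj} produces a surjection $\Omega_{\s_i}\otimes\s_i^{\ast}\mc O_{\P^1}(-2)\epm\mc I_{T_i/D_i}$, which embeds $\Bl_{T_i}D_i$ into the \emph{full} relative flag bundle $\P_{D_i}(\Omega_{\s_i}\otimes\s_i^{\ast}\mc O(-2))\simeq\Fl(1,2;4)\times\P^1$; the variety $Z_i$ is then the image in $\Gr(2,4)\times\P^1$, $\mc G_i$ is the restriction of $\mc Q_{\Gr(2,4)}$, and the Spinor bundle enters statement~(2) only through Ottaviani's identification $\mc Q_{\Gr(2,4)}|_{\Q^3}\simeq\ms S^{\vee}_{\Q^3}$ on an auxiliary smooth hyperplane section containing $Z_{i,t}$ --- not through the fibres of $Q_i\to l_i$.

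A second, independent error is your description of the degenerate fibres. Over the branch points of the double cover $C_i\to l_i$ the two lines do not degenerate to ``two lines meeting at a point'': the two sheets of $C_i$ collide, so the flat limit is the non-reduced ribbon $T_{i,t}\simeq\P^1\times\Spec\C[\e]/(\e^2)$ supported on a single line. This is where the real content of the proposition lies, and it is exactly the part you defer: Claim~\ref{LG-claim-surj} proves $\Hom(\Omega_{\P^3},\mc I_{T_{i,t}})=\C$ with every nonzero map surjective, by explicit syzygy computations for both ideals $(xz,xw,yz,yw)$ and $(x^2,xy,y^2,xz+yw)$, and Lemma~\ref{LG-lem-surj} then globalizes this using ${\s_i}_{\ast}(T_{\s_i}\otimes\mc I_{T_i/D_i})\simeq\mc O_{\P^1}(-2)$, which in turn uses that the Stein factorization of $T_i\to l_i$ is the elliptic double cover $C_i\to l_i$. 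Note that under your incident-lines picture the fibre $Z_{i,t}$ would be a rank-$2$ quadric, i.e.\ a pair of planes, contradicting the very statement~(2) you are proving; the correct ribbon degeneration does give an irreducible reduced quadric, but only the syzygy argument (or some substitute for it) establishes this and secures the $\P^1$-bundle property and the smoothness of $Z_i$ over those points. Your general-fibre observation (the blow-up of $\P^3$ along two skew lines is a $\P^1$-bundle over a smooth quadric) is correct and is indeed the paper's key general-fibre fact, but it must be fed into the full flag bundle, not into $\mc M_i$.
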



Our proof of the above proposition use the following lemma. 

\begin{lem}\label{LG-lem-surj}
There is a surjection $\Omega_{\s_{i}} \otimes \s_{i}^{\ast}\mc O_{\P^{1}}(-2) \epm \mc I_{T_{i}/D_{i}}$. 
\end{lem}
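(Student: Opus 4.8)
The plan is to reduce everything to the fibres of $\s_i \colon D_i \to l_i$ and to recognise $T_i$ as the zero scheme of a section of the rank $2$ bundle $\mc E|_{D_i}$. First I would record the geometry of $D_i$: since $\mc N_{l_i/\P(V)} \simeq \mc O(1)^{\oplus 4}$ we have $D_i \simeq \P^3 \times \P^1$ with $\s_i$ the projection to $l_i = \P^1$, so $\Omega_{\s_i}$ is the pull-back of $\Omega_{\P^3}$, $\s_i^{\ast}\mc O_{\P^1}(1) = \mc O(H_i)$, and $\xi_i = (-D_i + H_i)|_{D_i}$ as in (\ref{LG-def-xi}). Crucially, for $x \in l_i$ the fibre $\s_i^{-1}(x)$ is the fibre $(f^{12}_1)^{-1}(x) = \P(V_x)$ carrying the skew form $\alpha_x$, and by (\ref{LG-eq-Gfib}) the restriction of $\mc E$ to this $\P^3$ is the null-correlation bundle $\mc N^{\alpha_x}_{\P(V_x)}(1)$. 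Restricting the defining sequence (\ref{LG-ex-relnull}) of $\mc E$ to $D_i$ and using $\Omega_{f^{12}_1}|_{D_i} = \Omega_{\s_i}$ together with $H_1|_{D_i} = H_i$ then gives a surjection $\Omega_{\s_i}(2H_2 - H_i) \epm \mc E|_{D_i}$, which will be the first of the two maps I compose.

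The second, geometric, step is to show that $T_i = \wt{R} \cap D_i$ is the zero scheme of a section $s \in H^0(D_i, \mc E|_{D_i}(H_i))$. By Proposition~\ref{LG-prop-R}~(1) the threefold $R$ has two branches along $l_i$, governed by the double cover $C_i \to l_i$; over a general $x \in l_i$ the two sheets are the planes $\P(\Lambda),\P(\Lambda')$ with $\Lambda,\Lambda' \in C$ the two points over $x$. The fibre $T_{i,x}$ is the union of the two branch tangent lines inside $\P(\mc N_{l_i,x}) = \P(V_x)$, and since each $\Lambda$ is isotropic for $\alpha$ the corresponding line $\P(\Lambda/\langle x\rangle)$ is $\alpha_x$-isotropic, hence the pair is cut out by a section of $\mc N^{\alpha_x}_{\P(V_x)}(1) = \mc E|_{\P(V_x)}$. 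I would assemble these fibrewise sections into the global section $s$, the twist by $H_i = \s_i^{\ast}\mc O(1)$ being forced by the variation of $\Lambda,\Lambda'$ over $l_i \simeq \P^1$, and identify $Z(s) = T_i$ using the explicit description $T_i \simeq \P_C\!\bigl(\bigoplus_{j \neq i}\pr_j^{\ast}\mc O(1)|_C\bigr)$ of Proposition~\ref{LG-prop-wtR}~(2). Because $\wt{R}$ is smooth and meets $D_i$ in the expected codimension, $Z(s)$ has codimension $2$ and the Koszul complex of $s$ is a resolution.

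Finally I would invoke the Koszul surjection for the section $s$ of the rank $2$ bundle $F := \mc E|_{D_i}(H_i)$, namely $F \epm \mc I_{T_i} \otimes \det F$, and compose it with the first surjection tensored by $\mc O(H_i)$, which reads $\Omega_{\s_i}(2H_2) \epm F$. Using $c_1(\mc E) = 2H_2$ from (\ref{LG-eq-chernG}) one has $\det F = 2H_2|_{D_i} + 2H_i$, so the composite is $\Omega_{\s_i}(2H_2) \epm \mc I_{T_i} \otimes \mc O(2H_2|_{D_i} + 2H_i)$; tensoring by $\mc O(-2H_2|_{D_i} - 2H_i)$ cancels the $H_2$-terms and yields exactly $\Omega_{\s_i}(-2H_i) = \Omega_{\s_i} \otimes \s_i^{\ast}\mc O_{\P^1}(-2) \epm \mc I_{T_i/D_i}$, as claimed. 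The main obstacle is the middle step: constructing the global section $s$ and proving $Z(s) = T_i$ as schemes, since this is precisely where the non-normality of $R$ along $l_i$ and the two-to-one cover $C_i \to l_i$ must be controlled; once $T_i$ is realised as such a zero scheme, the remaining Chern-class bookkeeping that pins down the twist $\s_i^{\ast}\mc O_{\P^1}(-2)$ is routine.
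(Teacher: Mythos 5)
The outer layers of your argument are sound: restricting (\ref{LG-ex-relnull}) to $D_{i}$ does give a surjection $\Omega_{\s_{i}}(2H_{2}|_{D_{i}}-H_{i}) \epm \mc E|_{D_{i}}$, and \emph{if} $T_{i}$ were the zero scheme of a section of $F:=\mc E|_{D_{i}}(H_{i})$, then the Koszul surjection $F^{\vee}\otimes\det F \epm \mc I_{T_{i}}\otimes\det F$ together with your determinant bookkeeping would indeed yield the lemma. The fatal problem is the middle step, and it is not merely unproven but false: $T_{i}$ is not the zero scheme of \emph{any} section of $\mc E|_{D_{i}}(H_{i})$. Work fiberwise over a general $t \in l_{i}$, where $C_{i} \to l_{i}$ is unramified. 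By (\ref{LG-eq-Gfib}), a section $s$ of $\mc E|_{D_{i}}(H_{i})$ restricts on $D_{i,t}=\P(V_{t})$ to a section of the null-correlation bundle $\mc N^{\a_{t}}_{\P(V_{t})}(1)$, and if $Z(s)=T_{i}$ then $Z(s|_{D_{i,t}})=T_{i,t}$ with $s|_{D_{i,t}}\neq 0$. Now $T_{i,t}$ is, as you correctly say, a pair of disjoint $\a_{t}$-isotropic (Lagrangian) lines. But the zero scheme of a nonzero section of $\mc N^{\a_{t}}_{\P(V_{t})}(1)$ is \emph{never} such a pair: sections of $\mc N^{\a_{t}}(1)$ are classes of skew forms $\b'$ modulo $\C\a_{t}$, their zero loci are unions of projectivized eigenspaces of $(\b')^{-1}\circ\a_{t}$ (this is the $n=2$ case of the analysis in Lemma~\ref{LG-lem-P1n}), and when such a zero locus is a pair of disjoint lines, the two underlying planes give an $\a_{t}$-orthogonal decomposition $V_{t}^{\vee}=W_{1}\oplus W_{2}$, which forces $\a_{t}|_{W_{i}}$ to be non-degenerate; so the two lines are never isotropic. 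Equivalently, a two-line computation (take $\a_{t}=e_{1}^{\vee}\wedge e_{2}^{\vee}+e_{3}^{\vee}\wedge e_{4}^{\vee}$ and the Lagrangian lines $\P\braket{e_{1},e_{3}}$, $\P\braket{e_{2},e_{4}}$) shows that the only skew form whose class in $H^{0}(\mc N^{\a_{t}}(1))$ vanishes on two transverse Lagrangian lines is a multiple of $\a_{t}$, i.e.\ the zero section. Your inference ``the lines are $\a_{t}$-isotropic, hence the pair is cut out by a section of $\mc N^{\a_{t}}_{\P(V_{t})}(1)$'' runs exactly backwards.

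Note also that no Chern-class bookkeeping can detect this failure: one checks that $[T_{i}]=c_{2}(\mc E|_{D_{i}}(H_{i}))$ does hold in $H^{4}(D_{i})$, so the ``routine'' numerical verification you defer to would go through; the obstruction is genuinely geometric. (By the Serre correspondence, $T_{i,t}$ \emph{is} the zero locus of a section of a null-correlation bundle, but of one attached to a symplectic form for which these two lines are the non-isotropic eigenspace pair --- a different bundle from $\mc E|_{D_{i,t}}$.) This is precisely why the paper's proof does not present $\mc I_{T_{i}/D_{i}}$ via a rank $2$ bundle: it works with the rank $3$ bundle $\Omega_{\s_{i}}$ directly, producing an essentially unique map by computing ${\s_{i}}_{\ast}(T_{\s_{i}}\otimes\mc I_{T_{i}/D_{i}})\simeq\mc O_{\P^{1}}(-2)$, and then proves surjectivity fiber by fiber via explicit syzygies of the ideals $(xz,xw,yz,yw)$ and $(x^{2},xy,y^{2},xz+yw)$, the latter handling the non-reduced fibers over the branch points of $C_{i}\to l_{i}$ --- a degenerate case your sketch also leaves untreated.
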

\begin{proof}
By Proposition~\ref{LG-prop-wtR}~(3), the Stein factorization of $T_{i} \to l_{i}$ factors through the $\P^{1}$-bundle $T_{i} \to C_{i}$ and the double covering $C_{i} \to l_{i}$, where $C_{i}$ is defined in Proposition~\ref{LG-prop-wtR}~(1). 
For a point $t \in l_{i}$, let $T_{i,t}$ and $D_{i,t}=\P^{3}$ denote the scheme theoretic fibers under the morphisms $\s|_{T_{i}} \colon T_{i} \to l_{i}$ and $\s|_{D_{i}} \colon D_{i} \to l_{i}$ respectively. 
To show the lemma, we need the following claim. 

\begin{claim}\label{LG-claim-surj}
For every $t \in l_{i}$, it holds that $\Hom(\Omega_{\P^{3}},\mc I_{T_{i,t}/\P^{3}})=\C$. 
Moreover, every non-zero element $\a \in \Hom(\Omega_{\P^{3}},\mc I_{T_{i,t}/\P^{3}})$ is a surjective. 
\end{claim}

\begin{proof}
Set $\mc I:=\mc I_{T_{i,t}/\P^{3}}$. 
When the covering $C_{i} \to l_{i}$ is unramified (resp. ramified) over $t$, 
$T_{i,t}$ is a union of disjoint two lines on $\P^{3}$ (resp. $T_{i,t} \simeq \P^{1} \times \Spec \C[\e]/(\e^{2})$ and the reduction of $T_{i,t}$ is a line on $\P^{3}$). 
By the exact sequence $0 \to \mc I(n) \to \mc O_{\P^{3}}(n) \to \mc O_{T_{i,t}}(n) \to 0$ for each $n \in \Z$, we have $h^{0}(\P^{3},\mc I)=0$ and 
\begin{align}\label{LG-eq-cohom1}
h^{i}(\P^{3},\mc I(1))=0 \text{ for any } i. 
\end{align}
Applying $\Hom(-,\mc I)$ to the Euler sequence $0 \to \Omega_{\P^{3}} \to \mc O(-1)^{4} \to \mc O \to 0$, we have $\dim \Hom(\Omega_{\P^{3}},\mc I)=1$. 

Therefore, it is enough to show that there is a surjection $\Omega_{\P^{3}} \to \mc I$. 
For this purpose, we consider the syzygy of $\mc I$. 
Let $[x:y:z:w]$ be a coordinate of $\P^{3}$. 
If $T_{i,t}$ is a disjoint union of two lines, then we may assume that 
$\mc I=(xz,xw,yz,yw)$. 
By taking the syzygy, we have the following presentation on $\P^{3}$
\[0 \to \mc O(-4) \mathop{\to}^{A_{2}} \mc O(-3)^{4} \mathop{\to}^{A_{1}} \mc O(-2)^{4} \mathop{\to}^{A_{0}} \mc I \to 0,\]
where 
\begin{align*}
A_{0}=(xz,xw,yz,yw),\  A_{1}=\begin{pmatrix}
-w&-y&0&0 \\
z&0&0&-y \\
0&x&-w&0 \\
0&0&z&x
\end{pmatrix} \text{ and } A_{2}=\begin{pmatrix}-y \\ w \\ x \\ -z\end{pmatrix}. 
\end{align*}
Let $B_{2}:=\!^{t}A_{2}$ be the transpose of $A_{2}$. 
Then we have a surjection 
$\ds \mc O(3)^{4} \mathop{\epm}^{B_{2}} \mc O(4)$. 
The syzygy of the morphism $B_{2}$ is given by the graded Koszul resolution of the quotient ring $\C[x,y,z,w]/(-y,w,x,-z)$ as follows. 
\[0 \to \mc O \mathop{\to}^{B_{-1}} \mc O(1)^{4} \mathop{\to}^{B_{0}} \mc O(2)^{6} \mathop{\to}^{B_{1}} \mc O(3)^{4} \mathop{\to}^{B_{2}} \mc O(4) \to 0.\]
Let $\wt{A_{i}}=\!^{t}B_{i}$ be the transpose. 
From the construction, we have $\wt{A_{2}}=A_{2}$ and a projection $p \colon \mc O(-2)^{6} \to \mc O(-2)^{4}$ such that $p \circ \wt{A_{1}}=A_{1}$: 
\[\xymatrix{
0 \ar[r] & \mc O(-4) \ar[r]^{\wt{A_{2}}} \ar@{=}[d] & \mc O(-3)^{4} \ar[r]^{\wt{A_{1}}} \ar@{=}[d]&  \mc O(-2)^{6} \ar[r]^{\wt{A_{0}}} \ar@{->>}[d]_{p} & \Omega_{\P^{3}} \ar[r] & 0 \\
0 \ar[r] & \mc O(-4) \ar[r]_{A_{2}} \ar[r] & \mc O(-3)^{4} \ar[r]_{A_{1}} & \mc O(-2)^{4} \ar[r]_{A_{0}} & \mc I \ar[r] & 0. 
}\]
Thus we have a surjection $\Omega_{\P^{3}} \to \mc I$. 

Finally, we consider the case when $T_{i,t}$ is non-reduced. 
Let $l$ be the reduction of $T_{i,t}$, which is a line. 
Then we may assume that $\mc I_{l/\P^{3}}=\sqrt{\mc I}=(x,y)$, 
where $[x:y:z:w]$ is a coordinate of $\P^{3}$. 
Since $T_{i,t} \simeq l \times \Spec \C[\e]/(\e^{2})$, 
it holds that $(\sqrt{\mc I})^{2} \subset \mc I$ as $\mc O_{\P^{3}}$-modules. 
Hence we have $x^{2},xy,y^{2} \in H^{0}(\P^{3},\mc I(2))$. 
Since $h^{0}(\P^{3},\mc I(2))=4$, we have a quadratic equation $q \neq 0$ such that $\{x^{2},xy,y^{2},q\}$ is a basis of $H^{0}(\P^{3},\mc I(2))$. 
Since $\{q=0\}$ contains the line $l=\{x=y=0\}$, 
there exist two linear terms $m_{1},m_{2}$ such that $q=m_{1}x+m_{2}y$.
We may assume that $m_{i}=a_{i}z+b_{i}w$ for some $a_{i},b_{i} \in \C$. 
Set $M=(m_{1}=m_{2}=0)$. 
If $\dim M=2$, then the scheme theoretic intersection $M \cap T_{i,t}$ is $0$-dimensional and of length $3$, which contradicts that $T_{i,t} \simeq \P^{1} \times \Spec \C[\e]/(\e^{2})$. 
Therefore, $m_{1}$ and $m_{2}$ are linearly independent. 
Then we may assume that $m_{1}=z$ and $m_{2}=w$. 
Thus we have $\mc I=(x^{2},xy,y^{2},xz+yw)$. 
By taking syzygy, we have the following presentation of $\mc I$: 
\[0 \to \mc O(-4) \mathop{\to}^{A_{2}} \mc O(-3)^{4} \mathop{\to}^{A_{1}} \mc O(-2)^{4} \mathop{\to}^{A_{0}} \mc I \to 0,\]
where 
\begin{align*}
A_{0}=(x^{2},xy,y^{2},xz+yw),\  A_{1}=\begin{pmatrix}
-y&0&0&-z \\
x&-y&-z&-w \\
0&x&-w&0 \\
0&0&y&x
\end{pmatrix} \text{ and } A_{2}=\begin{pmatrix}z \\ w \\ x \\ -y\end{pmatrix}. 
\end{align*}
By the same arguments as in the case when $T_{i,t}$ is reduced, 
we have the following morphism between the following exact sequences: 
\[\xymatrix{
0 \ar[r] & \mc O(-4) \ar[r]^{\wt{A_{2}}} \ar@{=}[d] & \mc O(-3)^{4} \ar[r]^{\wt{A_{1}}} \ar@{=}[d]&  \mc O(-2)^{6} \ar[r]^{\wt{A_{0}}} \ar@{->>}[d]_{p} & \Omega_{\P^{3}} \ar[r] & 0 \\
0 \ar[r] & \mc O(-4) \ar[r]_{A_{2}} \ar[r] & \mc O(-3)^{4} \ar[r]_{A_{1}} & \mc O(-2)^{4} \ar[r]_{A_{0}} & \mc I \ar[r] & 0, 
}\]
where $p$ is a projection. 
Thus we have a surjection $\Omega_{\P^{3}} \to \mc I$. 
We complete the proof of Claim~\ref{LG-claim-surj}. 
\end{proof}

We go back to the proof of Lemma~\ref{LG-lem-surj}. 
We consider the Euler sequence $0 \to \mc O \to \mc O(\xi_{i})^{4} \to T_{\s_{i}} \to 0$. 
Tensoring the above sequence with $\mc I_{T_{i}}$ and taking the cohomology of the direct image ${\s_{i}}_{\ast}$, we have 
$\displaystyle {\s_{i}}_{\ast}(\mc O(\xi_{i})^{4} \otimes \mc I_{T_{i}/D_{i}}) \to {\s_{i}}_{\ast}(T_{\s_{i}} \otimes \mc I_{T_{i}/D_{i}}) \mathop{\to}^{a} \wt{R}^{1}{\s_{i}}_{\ast}\mc I_{T_{i}/D_{i}} \to 0$. 
Since ${\s_{i}}_{\ast}(\mc O(\xi_{i})^{4} \otimes \mc I_{T_{i}/D_{i}})=0$ by (\ref{LG-eq-cohom1}), 
the map $a$ is an isomorphism. Considering the exact sequence 
$0 \to \mc I_{T_{i}/D_{i}} \to \mc O_{D_{i}} \to \mc O_{T_{i}} \to 0$, 
we have the isomorphism $\wt{R}^{1}{\s_{i}}_{\ast}\mc I_{T_{i}/D_{i}} \simeq \mc O_{\P^{1}}(-2)$ on $l_{i} \simeq \P^{1}$ since $T_{i} \to l_{i}$ factors the elliptic curve $C$ as the Stein factorization. 
Thus we have ${\s_{i}}_{\ast}(T_{\s_{i}} \otimes \mc I_{T_{i}/D_{i}}) \simeq \mc O(-2)$. 
We take a non-zero element $\a \in H^{0}(D_{i},T_{\s_{i}} \otimes \mc I_{T_{i}/D_{i}} \otimes \s_{i}^{\ast}\mc O(2))=H^{0}(l_{i},{\s_{i}}_{\ast}(T_{\s_{i}} \otimes \mc I_{T_{i}/D_{i}}) \otimes \mc O(2))=\C$. 
For each $t \in l_{i}$, the restriction map $\C=H^{0}(D_{i},T_{\s_{i}} \otimes \mc I_{T_{i}} \otimes \s_{i}^{\ast}\mc O(2)) \to H^{0}(D_{i,t},T_{D_{i,t}} \otimes \mc I_{T_{i,t}})=\C$ is isomorphic. 
Therefore, the restriction $\a|_{D_{i,t}}$ gives a non-zero map $\a_{t} \colon \Omega_{D_{i,t}} \to \mc I_{T_{i},t}$, which is a surjection by Claim~\ref{LG-claim-surj}. 
Therefore, we have a surjection $\Omega_{\s_{i}} \otimes \s_{i}^{\ast}\mc O(-2) \to \mc I_{T_{i}}$. 
We complete the proof of Lemma~\ref{LG-lem-surj}. 
\end{proof}

\begin{proof}[Proof of Proposition~\ref{LG-prop-quadspi}]
By Lemma~\ref{LG-lem-surj}, we have an embedding 
\begin{align}\label{LG-eq-emb}
\Bl_{T_{i}}D_{i} \hra \P_{D_{i}}(\Omega_{\s_{i}} \otimes \s_{i}^{\ast}\mc O(-2))
\end{align}
over $D_{i}$. 
Since $D_{i} \simeq \P^{3} \times \P^{1}$, we have $\P_{D_{i}}(\Omega_{\s_{i}}) \simeq \Fl(1,2;4) \times \P^{1}$, where $\Fl(1,2;4)$ denotes the flag variety. 
Let $p \colon \P_{D_{i}}(\Omega_{\s_{i}}) \to \Gr(2,4) \times \P^{1}$ be the projection, 
$p_{i}:=p|_{\Bl_{T_{i}}D_{i}}$ and $Z_{i}$ the image of $p_{i}$. 
Then every fiber of $p_{i}$ is at most $1$-dimensional.  
For a general point $t \in l_{i}$, $\Bl_{T_{i,t}}D_{i,t}$ is the blowing-up of $\P^{3}$ along a disjoint union of two lines. 
This is the Fano threefold of No.25 in Table 3 of \cite{MM81} and $p_{i,t} \colon \Bl_{T_{i,t}}D_{i,t} \to Z_{i,t}$ coincides with the $\P^{1}$-bundle over a smooth quadric surface. 
By the upper semi-continuity of the dimension of the fiber, we conclude that $p_{i}$ is a $\P^{1}$-bundle. 
Thus $Z_{i}$ is non-singular and it holds that $\rho(Z_{i})=\rho(D_{i})-1=2$, which implies $Z_{i} \to l_{i}$ is an extremal contraction. 
Therefore, $Z_{i} \to l_{i}$ is a quadric fibration. 
By the above argument, $p_{i}$ is a base change of the $\P^{1}$-bundle structure $\Fl(1,2;4) \times \P^{1} \to \Gr(2,4) \times \P^{1}$. 
Letting $\mc G_{i}$ denote the pull-back of $\mc Q_{\Gr(2,4)}$ by $Z_{i} \hra \Gr(2,4)$, we obtain (1) and (2). 

(3) The pull-back of $\mc O_{\P_{D_{i}}(\Omega_{\s_{i}} \otimes \s_{i}^{\ast}\mc O(-2))}(1)$ under the embedding (\ref{LG-eq-emb}) is linearly equivalent to $\mc O(-G_{i})$ by Lemma~\ref{LG-lem-surj}. 
Since the pull-back of $\det \mc Q_{\Gr(2,4)}$ under the morphism $\P_{D_{i}}(\Omega_{\s_{i}} \otimes \s_{i}^{\ast}\mc O(-2)) \to \Gr(2,4)$ is $\mc O_{\P_{D_{i}}(\Omega_{\s_{i}} \otimes \s_{i}^{\ast}\mc O(-2))}(1) \otimes \s_{i}^{\ast}\mc O(2) \otimes \mc O(2\xi_{i})$, we have (\ref{LG-rel-AZ}). 
We complete the proof of Proposition~\ref{LG-prop-quadspi}. 
\end{proof}


\subsubsection{Construction of the extraction along $R$. }\label{LG-subsubsec-extraction}

Next, we construct the divisorial extraction $Z \to \P^{5}$ along $R$ and show that $Z$ is smooth. 
Set 
\begin{align*}
\t \colon Y:=\Bl_{\wt{R}}\wt{\P(V)} \to \wt{\P(V)} \text{ and } G:=\Exc(\t). 
\end{align*}
We use the notation of $\S$~\ref{LG-subsubsec-li}. 
Set $D_{i,Y} = \t^{\ast}D_{i}$ be the proper transform of $D_{i}$ on $Y$, which is nothing but $\Bl_{T_{i}}D_{i}$. 
Set $D=\bigsqcup_{i=1}^{3}D_{i}$ and $D_{Y}=\bigsqcup_{i=1}^{3}D_{i,Y}$. 
Let $H_{Y}$ be the pull-back of a hyperplane on $\P(V)=\P^{5}$. 

\begin{prop}\label{LG-prop-Z}
There exist a smooth projective fivefold $Z$ of Picard rank $2$, a birational morphism $g \colon Z \to \P^{5}$, and a birational morphism $p_{Y} \colon Y \to Z$ over $\P^{5}$ as in the diagram (\ref{LG-dia-Z}). 
Moreover, 
\begin{itemize}
\item the birational morphism $p_{Y} \colon Y \to Z$ contracts the proper transforms of the three $\s$-exceptional divisor $D_{1,Y}$, $D_{2,Y}$, and $D_{3,Y}$, and 
\item $g$ is the divisorial contraction whose center is $R$. 
\end{itemize}
\end{prop}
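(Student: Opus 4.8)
The plan is to construct $Z$ as the simultaneous contraction of the three pairwise disjoint divisors $D_{1,Y},D_{2,Y},D_{3,Y}$ along the $\P^{1}$-bundle structures $p_{i}\colon D_{i,Y}=\P_{Z_{i}}(\mc G_{i})\to Z_{i}$ furnished by Proposition~\ref{LG-prop-quadspi}, and then to recover $g$ and the smoothness of $Z$ from this contraction. Because the lines $l_{1},l_{2},l_{3}$ are disjoint, the divisors $D_{i,Y}$ are disjoint as well, so the three contractions do not interfere and may be analysed one at a time.

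First I would write down the divisor defining the contraction. Using $\t^{\ast}D_{i}=D_{i,Y}$, $G|_{D_{i,Y}}=G_{i}$ and the identity $\xi_{i}=(-D_{i}+H_{i})|_{D_{i}}$, put
\[
L:=4H_{Y}-G-2(D_{1,Y}+D_{2,Y}+D_{3,Y}).
\]
Restricting to $D_{i,Y}$ and invoking the relation $p_{i}^{\ast}A_{Z_{i}}\sim-G_{i}+2H_{i}+2\t_{i}^{\ast}\xi_{i}$ of Proposition~\ref{LG-prop-quadspi}~(3) yields $L|_{D_{i,Y}}\sim p_{i}^{\ast}A_{Z_{i}}$; since $A_{Z_{i}}=\det\mc G_{i}$ is ample on $Z_{i}$, this restriction is the pull-back of an ample divisor under $p_{i}$, hence is trivial exactly along the $p_{i}$-fibres $\ell_{i}$ and positive on all other curves of $D_{i,Y}$. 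A short computation gives $L\cdot\ell_{i}=0$ for each $i$.

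Next I would produce the contraction morphism. For a $p_{i}$-fibre one computes $\mc N_{\ell_{i}/D_{i,Y}}\simeq\mc O_{\P^{1}}^{\oplus3}$ and $\mc N_{D_{i,Y}/Y}|_{\ell_{i}}\simeq\mc O_{\P^{1}}(-1)$, so $-K_{Y}\cdot\ell_{i}=1>0$; thus the classes $[\ell_{1}],[\ell_{2}],[\ell_{3}]$ span a $K_{Y}$-negative face of $\ol{\NE}(Y)$ which is supported by the nef divisor $L$, and the contraction theorem provides the birational contraction $p_{Y}\colon Y\to Z$ of this face. By the previous paragraph $\Exc(p_{Y})=D_{1,Y}\sqcup D_{2,Y}\sqcup D_{3,Y}$ and $p_{Y}|_{D_{i,Y}}=p_{i}$, so each $D_{i,Y}$ is contracted to the smooth threefold $Z_{i}$ as a $\P^{1}$-bundle whose fibres have normal bundle of degree $-1$. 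Applying \cite[Theorem~2.3]{Ando85} to each of the three disjoint contractions shows that $Z$ is smooth and that $p_{Y}$ is the blowing-up of $Z$ along $Z_{1}\sqcup Z_{2}\sqcup Z_{3}$; counting Picard numbers gives $\rho(Z)=\rho(Y)-3=2$.

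Finally I would identify $g$. The composite $\s\circ\t\colon Y\to\P(V)$ sends each $p_{Y}$-fibre $\ell_{i}$ to a point, because $\t(\ell_{i})$ lies in a $\P^{3}$-fibre of $D_{i}\to l_{i}$ and $\s$ collapses that $\P^{3}$ to a point of $l_{i}$; hence $\s\circ\t$ is constant on the fibres of $p_{Y}$ and factors through $p_{Y}$, giving the birational morphism $g\colon Z\to\P(V)$ over $\P^{5}$. The divisors $D_{i,Y}$ contracted by $p_{Y}$ are the proper transforms of the $\s$-exceptional divisors, while the remaining $\t$-exceptional divisor $G$ survives: its image $G_{Z}:=p_{Y}(G)$ is the unique $g$-exceptional prime divisor, and $g$ maps $G_{Z}$ onto $R=\s\circ\t(G)$, so $g$ is the divisorial contraction with centre $R$. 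The main obstacle is the middle step: verifying that $L$ is nef on all of $Y$ and that its trivial locus is exactly the three fibre families $\{\ell_{i}\}$, i.e.\ that $p_{Y}$ contracts nothing beyond $\bigsqcup_{i}D_{i,Y}$; away from the exceptional divisors this requires using the geometry of $\wt{R}$ and of the blow-ups $\s$ and $\t$ to exclude further $L$-trivial curves.
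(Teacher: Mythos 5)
Your divisor $L$ is exactly the paper's auxiliary divisor $M=4H_{Y}-(2D_{Y}+G)$, and your restriction computation $L|_{D_{i,Y}}=p_{i}^{\ast}A_{Z_{i}}$, the appeal to Ando's theorem, the Picard-rank count, and the identification of $g$ all agree with the paper. The problem is the middle step, and it is not merely the unfinished verification you flag at the end: the claim you would need is false. Even if $L$ were nef on all of $Y$ (which at this stage you cannot know --- in the paper its nefness is only established much later, via $Z$ itself), the face $L^{\perp}\cap\ol{\NE}(Y)$ is strictly larger than the span of the three fibre classes $[\ell_{i}]$. Concretely, by Proposition~\ref{LG-prop-R}~(2) the surface $R\cap P_{ij}$ is a quartic in $P_{ij}\simeq\P^{3}$, so a general line $\lambda\subset P_{ij}$ meets $\wt{R}$ in four reduced points away from $l_{1}\sqcup l_{2}\sqcup l_{3}$; its proper transform $\wt{\lambda}\subset Y$ then satisfies $H_{Y}\cdot\wt{\lambda}=1$, $D_{Y}\cdot\wt{\lambda}=0$, $G\cdot\wt{\lambda}=4$, hence $L\cdot\wt{\lambda}=0$ while $K_{Y}\cdot\wt{\lambda}=-2$. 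Since $H_{Y}\cdot\ell_{i}=0$, the class $[\wt{\lambda}]$ is not in the span of $[\ell_{1}],[\ell_{2}],[\ell_{3}]$. Indeed $L$ descends to $4H_{Z}-G_{Z}$, which is trivial on the entire planes $P_{ij,Z}$; these are exactly the flipping curves of Lemma~\ref{LG-lem-stflip} and Proposition~\ref{LG-prop-stflip}. So the contraction of the face supported by $L$ would collapse the three $\P^{3}$'s $P_{ij,Y}$ as well, producing the singular variety $\ol{Z}$ rather than $Z$, and your construction of $p_{Y}$ breaks down. (A secondary gap of the same nature: your assertion that $A_{Z_{i}}=\det\mc G_{i}$ is ample on $Z_{i}$ amounts to finiteness of $Z_{i}\to\Gr(2,4)$, which is also not proved.)

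The missing idea --- and the paper's route --- is to run everything relatively over $\P^{5}$. The desired contraction is a morphism over $\P^{5}$, so one only needs $M$ to be nef over $\P^{5}$, i.e.\ nonnegative on curves contracted by $\s\circ\t$, and such curves are very constrained: either $\gamma$ is a $\t$-fibre, where $M\cdot\gamma=1$, or $\gamma\subset D_{Y}$, because a curve of $G$ contracted over $\P^{5}$ but not by $\t$ maps into $T_{i}=\wt{R}\cap D_{i}$ and $\t^{-1}(T_{i})\subset D_{i,Y}$; on $D_{Y}$ your computation $M|_{D_{i,Y}}=p_{i}^{\ast}A_{Z_{i}}$ applies. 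Relative semiampleness then follows from the relative Kawamata--Shokurov base-point-free theorem: relative bigness is automatic since $\s\circ\t$ is birational, and the required positivity of $-K_{Y}$ is only needed along $D_{Y}$, where $-K_{Y}|_{D_{i,Y}}=p_{i}^{\ast}A_{Z_{i}}+\t_{i}^{\ast}(\xi_{i}+H_{i})$ is ample over $l_{i}$. Moreover, a curve of $D_{i,Y}$ contracted over $\P^{5}$ lies in a single fibre $\Bl_{T_{i,t}}\P^{3}$ of $D_{i,Y}\to l_{i}$, where $M$ restricts to the pull-back of the Pl\"{u}cker polarization on the quadric $Z_{i,t}$, which is honestly ample; hence the $M$-trivial contracted curves are exactly the $p_{i}$-fibres, with no global ampleness of $A_{Z_{i}}$ needed, and the troublesome classes $[\wt{\lambda}]$ never enter because those curves are not contracted over $\P^{5}$. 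This produces $p_{Y}\colon Y\to Z$ over $\P^{5}$ directly (so your closing rigidity argument is unnecessary), after which your use of Ando's theorem and the description of $g$ with $G_{Z}={p_{Y}}_{\ast}G$ and $g(G_{Z})=R$ go through as in the paper.
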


\begin{proof}
Set $M=4H_{Y}-(2D_{Y}+G)$. 
Since $D_{Y}|_{D_{i,Y}}=\t_{i}^{\ast}(D_{i}|_{D_{i}})=\t_{i}^{\ast}(H_{i}-\xi_{i})$, we have 
\[M|_{D_{i,Y}}=p_{i}^{\ast}A_{Z_{i}}\]
from (\ref{LG-def-xi}) and (\ref{LG-rel-AZ}). 
In particular, $M|_{D_{Y}}$ is nef and hence $M$ is nef over $\P^{5}$. 
From the construction and (\ref{LG-def-xi}), 
$-K_{Y}|_{D_{i,Y}} \sim (6H_{Y}-3D_{Y}-G)|_{D_{i,Y}} = M|_{D_{i,Y}}+(2H_{Y}-D_{Y})|_{D_{i,Y}}=p_{i}^{\ast}A_{Z_{i}}+\t_{i}^{\ast}(\xi+H_{i})$ is relatively ample over $\P^{5}$. 
By the relative Kawamata-Shokurov base point free theorem, $M$ is relatively semi-ample over $\P^{5}$. 
Let $p_{Y} \colon Y \to Z$ be the contraction over $\P^{5}$ given by $M$. 
Then $p_{Y}$ contracts $D_{i,Y}$ to $Z_{i}$. 
On the other hand, if $\g$ denotes a curve such that $M.\g=0$ and $(\s \circ \t)(\g)$ is a point,  
then $M \sim_{\P^{5}} -K_{Y}+D_{Y}$ implies that $D_{Y}.\g<0$ and hence $\g \subset D_{Y}$. 
Therefore, $\g$ is a fiber of the $\P^{1}$-bundle $D_{i,Y} \to Z_{i}$. 
Using Ando's theorem \cite[Theorem~2.3]{Ando85}, we conclude that $\Exc(p_{Y})=D_{Y}$, that $Z$ is smooth, and that $p_{Y}$ is the blowing-up along $Z_{1} \sqcup Z_{2} \sqcup Z_{3}$. 
Since $M=4H_{Y}-(2D_{Y}+G)$ is the pull-back of some $p_{Y}$-ample divisor on $Z$, we have ${p_{Y}}_{\ast}p_{Y}^{\ast}M=M$, which implies ${p_{Y}}^{\ast}G_{Y}=G+2D_{Y}$. 
Since $\rho(Z)=2$ by the construction, the morphism $g \colon Z \to \P^{5}$ is an extremal divisorial contraction with exceptional divisor $G_{Z}={p_{Y}}_{\ast}G$ and $g(G_{Z})=R \subset \P^{5}$. 
\end{proof}
\begin{rem}
Over the open subset $\P^{5} \setminus (l_{1}\sqcup l_{2} \sqcup l_{3})$, $g$ is the blowing-up along $R \setminus (l_{1}\sqcup l_{2} \sqcup l_{3})$. 
Note that $Z_{i}:=g^{-1}(l_{i}) \to l_{i}$ is the quadric fibration as in Proposition~\ref{LG-prop-quadspi}. 
This contraction $g \colon Z \to \P^{5}$ is a family of the fourfold contractions that are given by Andreatta-Wi{\'s}niewski in \cite[(3.2)]{AW98}. 
\end{rem}

\subsection{The flip of $Z$}\label{LG-subsec-flip}

In this subsection, we will prove that the extraction $Z$ is flipped into a $(\P^{2})^{2}$-fibration with smooth total space.  

\begin{sett}\label{LG-sett-stflip}
We fix a ladder $C \subset S \subset (\P^{1})^{3}$ by members of $|\mc O(1,1,1)|$ such that $S$ and $C$ are smooth. 
Let $W,\ol{W}$ be as in Lemma~\ref{LG-lem-W}, $R$ as in Proposition~\ref{LG-prop-R}, and $\wt{R}$ as in Proposition~\ref{LG-prop-wtR}. 
Let $P_{12},P_{13},P_{23}$ be the $3$-planes as in Definition~\ref{LG-def-Pij}.  

Let $g \colon Z \to \P(V)$ be the divisorial extraction of $\P(V)$ along $R$ as in Proposition~\ref{LG-prop-Z}. 
Let $H_{Z}$ be a pull-back of a hyperplane of $\P(V)$ by $g$ and $G_{Z}=\Exc(g)$. 
Let $W_{Z}$ and $P_{ij,Z}$ be the proper transform of $\ol{W}$ and $P_{ij}$ on $Z$ for each $1 \leq i<j \leq 3$ respectively. 
Note that $W_{Z} \in |3H_{Z}-G_{Z}|$ by Lemma~\ref{LG-lem-W}.
\end{sett}

The conclusion is the following proposition. 
\begin{prop}\label{LG-prop-stflip}
In Setting~\ref{LG-sett-stflip}, the following assertions hold. 
\begin{enumerate}
\item The proper transforms $P_{12,Z}$, $P_{13,Z}$, and $P_{23,Z}$ are disjoint and isomorphic to $\P^{3}$. 
\item There exists a flip $\Psi \colon Z \dra Z^{+}$ of the $P_{12,Z}$, $P_{13,Z}$, and $P_{23,Z}$. 
Moreover, $Z^{+}$ is smooth and has an extremal contraction $\vp_{Z^{+}} \colon Z^{+} \to \P^{1}$. 
\item Let $W_{Z^{+}}$ be the proper transform of $W_{Z}$ on $Z^{+}$. 
Then the birational map $\Psi|_{W_{Z}} \colon W_{Z} \dra W_{Z^{+}}$ is the blowing-down of $P_{12,Z}$, $P_{13,Z}$, and $P_{23,Z}$ to smooth three points. 
Moreover, $W_{Z^{+}}$ is a $\vp_{Z^{+}}$-fiber and $W_{Z^{+}} \simeq (\P^{2})^{2}$. 
\item Every smooth fiber of $\vp_{Z^{+}}$ is isomorphic to $(\P^{2})^{2}$. 
\end{enumerate}

In summary, we obtain the following diagram. 
\begin{align*}
\xymatrix{
&\LF_{\a,\b}(1,3;V) \ar@{}[d]|{\rotatebox{90}{$=$}}&&Y\ar[ld]_{\t} \ar[d]_{p_{Y}}&&&\\
\LG_{\a,\b}(3,V)\ar@{}[d]|{\rotatebox{90}{$=$}}&\ar[ld]_{\mu|_{E}}E\ar[rd]^{f|_{E}} &\wt{\P(V)}\ar[d]_{\s}&Z \ar@{-->}[r]^{\Psi} \ar[ld]_{g} & Z^{+} \ar[rd]^{\vp_{Z^{+}}} & \\
(\P^{1})^{3}\ar@{}[r]|{\Box}  &\ar[ld]W\ar[rd]^{f|_{W}\quad } \ar@{}[u]|{\rotatebox{90}{$\subset$}}&\P(V)&W_{Z}\ar[r]^{ \Psi|_{W_{Z}}} \ar[ld]_{\qquad g|_{W_{Z}}}\ar@{}[u]|{\rotatebox{90}{$\subset$}}&W_{Z^{+}} \ar@{}[r]|{\Box}\ar[rd]\ar@{}[u]|{\rotatebox{90}{$\subset$}}&\P^{1}  \\
S \ar@{}[r]|{\Box} \ar@{}[u]|{\rotatebox{90}{$\subset$}}&\ar[ld] \mu^{-1}(C)\ar[rd]^{f|_{\mu^{-1}(C)}} \ar@{}[u]|{\rotatebox{90}{$\subset$}}&\ol{W} \ar@{}[u]|{\rotatebox{90}{$\subset$}}&&(\P^{2})^{2} \ar@{}[u]|{\rotatebox{90}{$\simeq$}}&pt. \ar@{}[u]|{\rotatebox{90}{$\in$}} \\
C \ar@{}[u]|{\rotatebox{90}{$\subset$}}&&R. \ar@{}[u]|{\rotatebox{90}{$\subset$}}&&&
}
\end{align*}

\end{prop}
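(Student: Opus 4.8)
The plan is to follow the proof of Proposition~\ref{KG2-prop-stflop} almost verbatim, with $(Z,G_{Z},H_{Z},W_{Z})$ and the three planes $P_{12,Z},P_{13,Z},P_{23,Z}$ playing the roles of $(Y,G,L_{Y},E_{S,Y})$ and the two planes $P_{1,Y},P_{2,Y}$; the essential differences are that the flipping locus now consists of three copies of $\P^{3}$ rather than two copies of $\P^{2}$, that the relevant normal bundle is $\mc O(-1)^{\oplus 2}$ (producing a flip, not a flop), and that the governing linear system is the pencil $|3H_{Z}-G_{Z}|$ of proper transforms of cubics rather than a net of quadrics. As a first step I would prove the analogue of Lemma~\ref{KG2-lem-stflop}~(1): the proper transform $W_{Z}$ of $\ol{W}$ is the flop of $f|_{W}\colon W\to\ol{W}$. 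Indeed $g|_{W_{Z}}\colon W_{Z}\to\ol{W}$ is small since its exceptional locus lies over the three lines $l_{i}$, along which $\ol{W}$ is singular in codimension one; moreover $-G_{Z}|_{W_{Z}}$ is $g|_{W_{Z}}$-ample while the proper transform of $-G_{Z}|_{W_{Z}}$ on $W$ is $f|_{W}$-ample, so $W_{Z}=W^{+}$. As Lemma~\ref{LG-lem-W} asserts that $W^{+}$ is smooth, $W_{Z}$ is smooth.

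Next I would establish part (1) together with the normal bundles. Since $\ol{W}$ is a cubic fourfold containing each $P_{ij}\simeq\P^{3}$ (Lemma~\ref{LG-lem-W}) and $W_{Z}\to\ol{W}$ is crepant, adjunction on the smooth fourfold $W_{Z}$ yields $\omega_{W_{Z}}|_{P_{ij,Z}}=\mc O_{\P^{3}}(-3)$ and hence $\mc N_{P_{ij,Z}/W_{Z}}=\mc O_{\P^{3}}(-1)$. Because $R\cap P_{ij}$ is a quartic surface (Proposition~\ref{LG-prop-R}~(2)), one computes $H_{Z}|_{P_{ij,Z}}=\mc O_{\P^{3}}(1)$ and $G_{Z}|_{P_{ij,Z}}=\mc O_{\P^{3}}(4)$, so from $W_{Z}\in|3H_{Z}-G_{Z}|$ we get $\mc N_{W_{Z}/Z}|_{P_{ij,Z}}=\mc O_{\P^{3}}(-1)$ and, the extension splitting as $H^{1}(\P^{3},\mc O)=0$, $\mc N_{P_{ij,Z}/Z}=\mc O_{\P^{3}}(-1)^{\oplus 2}$. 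For the pencil statement I would use Proposition~\ref{LG-prop-R}~(3): since $P\cup R=\ol{W}\cap\ol{W'}$ is a complete intersection of two cubics with $\ol{W}+\ol{W'}$ simply normal crossing at the generic points of $R$ and the $P_{ij}$, on $Z$ one has $W_{Z}\cap W'_{Z}=P_{12,Z}\sqcup P_{13,Z}\sqcup P_{23,Z}$ and $\dim|3H_{Z}-G_{Z}|=1$, so $|3H_{Z}-G_{Z}|$ is a pencil whose base locus is the disjoint union of the three planes.

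With $\mc N_{P_{ij,Z}/Z}=\mc O_{\P^{3}}(-1)^{\oplus 2}$ in hand, parts (2)--(4) go exactly as in Proposition~\ref{KG2-prop-stflop}. A line $\ell\subset P_{ij,Z}$ satisfies $K_{Z}\cdot\ell=-2<0$ (using $-K_{Z}=6H_{Z}-G_{Z}$, which follows from Proposition~\ref{LG-prop-Z}), so the three planes span a $K_{Z}$-negative small extremal ray whose flip is produced, as in Lemma~\ref{KG2-lem-stflop}~(6), by blowing up $\bigsqcup_{ij}P_{ij,Z}$ — with exceptional divisors $\P^{3}\times\P^{1}$ — and blowing down in the other direction to three lines $\simeq\P^{1}$ with normal bundle $\mc O_{\P^{1}}(-1)^{\oplus 4}$; this shows $Z^{+}$ is smooth. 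The pencil $|3H_{Z^{+}}-G_{Z^{+}}|$ now has empty base locus and defines $\vp_{Z^{+}}\colon Z^{+}\to\P^{1}$, which is extremal because $\rho(Z^{+})=\rho(Z)=2$. Its member $W_{Z^{+}}$, the proper transform of $W_{Z}$, is a fibre; since each $P_{ij,Z}\subset W_{Z}$ is a $\P^{3}$ with $\mc N_{P_{ij,Z}/W_{Z}}=\mc O(-1)$, the map $\Psi|_{W_{Z}}$ contracts them to three smooth points, so $W_{Z^{+}}$ is smooth. Because flops preserve $(-K)^{4}$, each smooth contraction of such a $\P^{3}$ raises $(-K)^{4}$ by $81$, and $(-K_{W})^{4}=(-K_{\ol{W}})^{4}=(3H)^{4}=243$, I obtain $(-K_{W_{Z^{+}}})^{4}=243+3\cdot 81=486$ and $\rho(W_{Z^{+}})=\rho(W)-3=(\rho(S)+1)-3=2$; as $-K_{W_{Z^{+}}}$ is divisible by $3$, $W_{Z^{+}}$ is a degree-$6$ del Pezzo fourfold of Picard rank $2$, hence $\simeq(\P^{2})^{2}$ by \cite{Fujita80}. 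Since $W$ comes from a general $S\in|\mc O(1,1,1)|$, $W_{Z^{+}}$ is a general fibre, and the remaining smooth fibres, being deformation equivalent to it, are also $\simeq(\P^{2})^{2}$ by \cite{Fujita80}.

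The genuinely new difficulty, absent in the $K(G_{2})$ case, is the disjointness asserted in part (1): the three planes $P_{ij}=\langle l_{i},l_{j}\rangle$ pairwise meet along the lines $l_{i}$ in $\P(V)$, yet their proper transforms $P_{ij,Z}$ must be disjoint in $Z$ and isomorphic to $\P^{3}$ rather than to some blow-up. Establishing this requires a local analysis of the extraction $g$ over each $l_{i}$, where $g^{-1}(l_{i})=Z_{i}\to l_{i}$ is the quadric fibration of Proposition~\ref{LG-prop-quadspi}, checking that the two planes through $l_{i}$ are separated by distinct rulings of the quadric fibres and that the proper transform of a $\P^{3}$ through $l_{i}$ remains a $\P^{3}$; the simple-normal-crossing statement of Proposition~\ref{LG-prop-R}~(3) is the input that makes both assertions work. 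I expect this verification, rather than the formal flip construction, to be the main technical obstacle.
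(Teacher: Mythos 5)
Your skeleton matches the paper's proof: first show that $W_{Z}$ is the flop of $f|_{W}\colon W\to\ol{W}$ (hence smooth by Lemma~\ref{LG-lem-W}), then compute $\mc N_{P_{ij,Z}/W_{Z}}=\mc O_{\P^{3}}(-1)$ and $\mc N_{P_{ij,Z}/Z}=\mc O_{\P^{3}}(-1)^{\oplus 2}$, identify $|3H_{Z}-G_{Z}|$ as a pencil with base locus $\bigsqcup_{i<j}P_{ij,Z}$ via Proposition~\ref{LG-prop-R}~(3), and run the standard-flip argument of Proposition~\ref{KG2-prop-stflop}; your numerics ($K_{Z}\cdot\ell=-2$, $(-K_{W_{Z^{+}}})^{4}=486$, $\rho(W_{Z^{+}})=2$, Fujita's classification) all agree with the paper's Lemma~\ref{LG-lem-stflip} and the final proof. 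But there is a genuine gap, and you name it yourself: assertion (1) --- that $P_{12,Z}$, $P_{13,Z}$, $P_{23,Z}$ are pairwise disjoint and each \emph{isomorphic} to $\P^{3}$ --- is never proved; you defer it as ``the main technical obstacle.'' This is not a peripheral verification: it is one of the four assertions of the proposition, and every later step of your argument quotes it. The adjunction computation giving $\mc N_{P_{ij,Z}/W_{Z}}=\mc O_{\P^{3}}(-1)$ presupposes $P_{ij,Z}\simeq\P^{3}$ with $H_{Z}|_{P_{ij,Z}}=\mc O(1)$ and $G_{Z}|_{P_{ij,Z}}=\mc O(4)$; the pencil statement $W_{Z}\cap W'_{Z}=\bigsqcup P_{ij,Z}$ and the decomposition of the flip into three disjoint standard flips presuppose disjointness. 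So the proposal is a plan whose load-bearing step is missing.

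For comparison, the paper settles (1) not by an ad hoc local analysis but by exploiting the explicit construction of $Z$ through $p_{Y}\colon Y\to Z$ (Proposition~\ref{LG-prop-Z}): since $\wt{R}\cap\wt{P_{ij}}$ is a Cartier divisor in $\wt{P_{ij}}$, the proper transform satisfies $P_{ij,Y}\simeq\wt{P_{ij}}=\Bl_{l_{i},l_{j}}P_{ij}$, and because the quartic $R\cap P_{ij}$ is double along $l_{i}$ and $l_{j}$ (Proposition~\ref{LG-prop-R}~(2)) one gets $G|_{P_{ij,Y}}\sim 4H_{ij}-2D_{Y}|_{P_{ij,Y}}$; hence the supporting divisor $4H_{Y}-(2D_{Y}+G)$ of $p_{Y}$ is relatively trivial on $P_{ij,Y}$ over $P_{ij}$, so $p_{Y}$ contracts exactly the two exceptional divisors of $P_{ij,Y}\to P_{ij}$ and $P_{ij,Z}\simeq\P^{3}$. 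Disjointness then follows by intersecting with $Z_{i}=g^{-1}(l_{i})$: one has $P_{ij,Z}\cap P_{ik,Z}\subset Z_{i}$, while $P_{ij,Z}\cap Z_{i}$ and $P_{ik,Z}\cap Z_{i}$ are the $p_{Y}$-images of the two \emph{disjoint} surfaces $P_{ij,Y}\cap D_{i,Y}$ and $P_{ik,Y}\cap D_{i,Y}$, and these images remain disjoint because each surface is the full $p_{Y}$-preimage of its image. Two further soft spots in your write-up: your one-line justification that $g|_{W_{Z}}$ is small (``$\ol{W}$ is singular in codimension one along the $l_{i}$'') is garbled --- $\ol{W}$ is normal and the $l_{i}$ have codimension $3$ in it; the paper needs two separate claims here, namely that $W_{Z}$ does not contain any $Z_{i}$ (proved by the fibre geometry over general $t\in l_{i}$) and that any contracted divisor would have to be some $Z_{i}$. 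And you never construct the flipping \emph{contraction} $Z\to\ol{Z}$: the blow-up/blow-down surgery alone yields $Z^{+}$ only as a proper algebraic space, whereas the paper obtains the contraction, its extremality, and projectivity of the flip by proving $|4H_{Z}-G_{Z}|$ is base point free and taking its Stein factorization.
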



\begin{proof}[Proof of Proposition~\ref{LG-prop-stflip}~(1)]
Let $\wt{P_{ij}}$ and $P_{ij,Y}$ be the proper transforms of $P_{ij}$ on $\wt{\P}$, $Y$ respectively. 
Let $H_{ij}$ be the pull-back of a hyperplane of $P_{ij}=\P^{3}$ on $P_{ij,Y}$. 
Note that $\wt{P_{ij}}=\Bl_{l_{i},l_{j}}P_{ij}$.  
By Proposition~\ref{LG-prop-R}~(2), $\wt{R} \cap \wt{P_{ij}}$ is a smooth surface on $\wt{P_{ij}}$, which implies that $P_{ij,Y} \to \wt{P_{ij}}$ is isomorphic and $G|_{P_{ij,Y}} \sim 4H_{ij}-2D_{Y}|_{P_{ij,Y}}$. 
In particular, we have $-(2D_{Y}+G)|_{P_{ij,Z}}=-4H_{ij} \sim_{P_{ij}} 0$, which implies that the morphism $P_{ij,Z} \to P_{ij}$ is isomorphic: 
\[\xymatrix{
P_{ij,Y} \ar[r]^{\t|_{P_{ij,Y}}}_{\simeq} \ar[d]_{p_{Y}|_{P_{ij,Y}}}& \wt{P_{ij}} \ar[d]^{\s|_{\wt{P_{ij}}}} \\
P_{ij,Z} \ar[r]_{g|_{P_{ij,Z}}}^{\simeq}&P_{ij}. 
}\]
The remaining part is to prove that $P_{ij,Z} \cap P_{ik,Z} = \emp$ for $j \neq k$. 
Note that $P_{ij,Y} \cap P_{ik,Y}=\emp$. 
Set 
$T_{ij}:=P_{ij,Y} \cap D_{i,Y} \text{ and } T_{ik}:=P_{ik,Y} \cap D_{i,Y}$. 
Then $T_{ij}$ (resp. $T_{ik}$) is the exceptional divisor of $P_{ij,Y} \simeq \wt{P_{ij}} \to P_{ij}$ (resp. $P_{ik,Y} \simeq \wt{P_{ik}} \to P_{ik}$) which dominates $l_{i}$. 
In particular, we have $T_{ij} \cap T_{ik}=\emp$. 
If we set $t_{ij}=p_{Y}(T_{ij})$ and $t_{ik}=p_{Y}(T_{ik})$, then $t_{ij}$ (resp. $t_{ik}$) is a line on $P_{ij,Z}$ (resp. $P_{ik,Z}$) and it holds that $T_{ij}=p_{Y}^{-1}(t_{ij})$ and $T_{ik}=p_{Y}^{-1}(t_{ik})$. 
Since $T_{ij} \cap T_{ik}=\emp$, we have $t_{ij} \cap t_{ik}=\emp$. 
For $Z_{i}=g^{-1}(l_{i})$, we have $P_{ij,Z} \cap P_{ik,Z} \subset Z_{i}$ and $P_{ij,Z} \cap P_{ik,Z} \cap Z_{i}=t_{ij} \cap t_{ik}$. 
Thus we conclude that $P_{ij,Z} \cap P_{ik,Z}=\emp$. 
\end{proof}

To prove Proposition~\ref{LG-prop-stflip}~(2) and (3), we need two lemmas. 
Recall the flopping contraction $f|_{W} \colon W \to \ol{W}$ as in Lemma~\ref{LG-lem-W}. 
We also recall that $W_{Z}$ denotes the proper transform of $W$ on $Z$. 
Then we have the following first lemma. 

\begin{lem}\label{LG-lem-Wflop}
$W_{Z}$ is the flop of $W \to \ol{W}$. 
In particular, $W_{Z}$ is smooth by Lemma~\ref{LG-lem-W}.  
\end{lem}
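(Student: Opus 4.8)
The plan is to follow the proof of Lemma~\ref{KG2-lem-stflop}~(1) as closely as the non-normality of $R$ permits. It suffices to establish that $g|_{W_Z}\colon W_Z\to\ol{W}$ is a small, crepant birational morphism from a normal variety, and that the proper transform of $R$ on $W_Z$ is $g|_{W_Z}$-anti-ample. Granting this, $W_Z$ is a small crepant modification of $\ol{W}$ on which the proper transform of $R$ is relatively anti-ample; hence $W_Z\simeq\Proj_{\ol{W}}\bigoplus_{m\ge0}\mc O(-m\ol{H_3})$, which is precisely the flop $W^{+}$ of $f|_W\colon W\to\ol{W}$ produced in Lemma~\ref{LG-lem-W}. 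In particular $W_Z$ is smooth.

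I would first dispose of crepancy, which is almost formal. Along its center $R$ the morphism $g$ is generically the blowing-up of the smooth codimension $2$ subvariety $R\subset\P^5$, so its discrepancy is $1$ and $K_Z=g^{\ast}K_{\P^5}+G_Z=-6H_Z+G_Z$. Since $W_Z\in|3H_Z-G_Z|$ (Setting~\ref{LG-sett-stflip}) and $Z$ is smooth (Proposition~\ref{LG-prop-Z}), adjunction gives $K_{W_Z}=(K_Z+W_Z)|_{W_Z}=-3H_Z|_{W_Z}$; as $\ol{W}$ is a cubic one has $K_{\ol{W}}=-3H|_{\ol{W}}$, whence $K_{W_Z}=(g|_{W_Z})^{\ast}K_{\ol{W}}$. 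Being a Cartier divisor in the smooth fivefold $Z$, $W_Z$ is Gorenstein, so normality will follow from Serre's criterion as soon as $W_Z$ is regular in codimension $1$, which in turn follows from smallness.

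The heart of the argument, and the step I expect to be the main obstacle, is smallness of $g|_{W_Z}$. Over $\P^5\setminus l$ the morphism $g$ is the blowing-up of the smooth threefold $R\setminus l$, and there $\ol{W}$ is smooth (it is isomorphic to $W$ off $l$ by Lemma~\ref{LG-lem-W}) and contains $R$ as a Cartier divisor; hence $g|_{W_Z}$ is an isomorphism over $\ol{W}\setminus l$ and $\Exc(g|_{W_Z})$ lies over $l=l_1\sqcup l_2\sqcup l_3$. For each $i$, $Z_i:=g^{-1}(l_i)$ is the quadric fibration $z_i\colon Z_i\to l_i$ of Proposition~\ref{LG-prop-quadspi}. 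The difficulty, which has no counterpart in the smooth-center situation of Section~\ref{KG2-section}, is to prove that $W_Z$ contains no quadric fibre $Z_{i,t}$, so that $W_Z\cap Z_{i,t}$ is a curve and $\Exc(g|_{W_Z})$ is purely $2$-dimensional, i.e. of codimension $2$ in $W_Z$. I would extract this from the class $W_Z|_{Z_i}=(3H_Z-G_Z)|_{Z_i}$ together with the explicit quadric-fibration description of $Z_i$ in Proposition~\ref{LG-prop-quadspi} and the description of $\ol{W}\cap P_{ij}$ as a quartic surface containing the lines $l_i$ in Proposition~\ref{LG-prop-R}~(2)--(3); these should force $W_Z|_{Z_{i,t}}$ to be a nonzero effective curve on $Z_{i,t}$.

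Once smallness is known, $W_Z$ is regular in codimension $1$ and hence normal, and $W_Z\cap G_Z$ has no divisorial component contracted by $g|_{W_Z}$; therefore $G_Z|_{W_Z}$ is the proper transform of $R$ on $W_Z$, and it is $g|_{W_Z}$-anti-ample because $-G_Z$ is $g$-ample. On the other side, the proper transform of $R$ on $W$ is the normalization $\mu^{-1}(C)\in|H_3|_W|$ (Proposition~\ref{LG-prop-R}~(1)), which is $f|_W$-ample by Lemma~\ref{LG-lem-W}. Thus the relative ampleness of the proper transform of $R$ is reversed in passing from $W$ to $W_Z$, so $W_Z$ is the $(H_3|_W)$-flop of $f|_W$, namely $W^{+}$; in particular $W_Z$ is smooth, completing the proof.
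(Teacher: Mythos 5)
Your overall plan matches the paper's own proof in architecture: show that $g|_{W_Z}\colon W_Z\to\ol{W}$ is small, deduce normality from the Gorenstein property of the hypersurface $W_Z\subset Z$ plus regularity in codimension one, and then recognize $W_Z$ as the flop because a divisor that is relatively ample on the $W$-side becomes relatively anti-ample on the $W_Z$-side. (You run the last step with the proper transform of $R$; the paper runs it with the proper transform of $P=P_{12}+P_{13}+P_{23}$, the two being interchangeable via $P+R\sim 3H|_{\ol{W}}$ from Proposition~\ref{LG-prop-R}~(3).) Those parts of your argument, including the crepancy computation and the Serre-criterion normality step, are sound.

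The gap is exactly at the step you single out as the main obstacle, and the argument you sketch for it does not work. You propose to rule out $Z_i\subset W_Z$ by restricting the class $W_Z\sim 3H_Z-G_Z$ to a quadric fibre $Z_{i,t}$ and observing that this restriction ($=-G_Z|_{Z_{i,t}}$, which is ample) is a nonzero effective curve class. That inference is circular: identifying the restricted line bundle class with the cycle $W_Z\cap Z_{i,t}$ already presupposes $Z_{i,t}\not\subset W_Z$. An effective divisor can perfectly well contain a subvariety on which its line bundle restricts amply; for instance $V+H\supset V$ for two planes $V,H\subset\P^3$, where $\mc O(V+H)|_V\simeq\mc O_{\P^2}(2)$. (Also, Proposition~\ref{LG-prop-R}~(2) describes $R\cap P_{ij}$, not $\ol{W}\cap P_{ij}$, as a quartic.) The input you are missing is the multiplicity statement $\mathrm{mult}_{l_i}\ol{W}=2$, equivalently $\wt{W}\in|\s^{\ast}\mc O_{\P(V)}(3)-2\Exc(\s)|$. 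The paper reads this off from the explicit equation $\ol{W}=\{x_0x_2x_4+x_1x_3x_5=0\}$ with the $l_i$ as coordinate lines, and then argues geometrically: for general $t\in l_i$ the fibre $\wt{W}_t\subset\s^{-1}(t)\simeq\P^3$ is a quadric surface containing the two disjoint lines $\wt{R}_t$, hence $\wt{W}_t\simeq\P^1\times\P^1$, and the contraction $D_{i,Y}\to Z_i$ maps it onto a curve, so $W_Z\cap Z_{i,t}$ is one-dimensional and $Z_i\not\subset W_Z$ (Claim~\ref{LG-claim1-Wflop}). With the multiplicity in hand, your numerical idea can in fact be repaired, but on $Y$ rather than on $Z_{i,t}$: one has $p_Y^{\ast}W_Z=p_Y^{\ast}(3H_Z-G_Z)=3H_Y-G-2D_Y$, while the proper transform satisfies $W_Y=\t^{\ast}\wt{W}-G\sim 3H_Y-2D_Y-G$; since $p_Y^{\ast}W_Z-W_Y=\sum_i a_iD_{i,Y}$ with $a_i\ge 0$, and $a_i\ge 1$ whenever $Z_i\subset W_Z$, the independence of the classes $[D_{i,Y}]$ in $\Pic(Y)$ forces all $a_i=0$. (If you want to avoid coordinates, $\mathrm{mult}_{l_i}\ol{W}\ge 2$ also follows because $\ol{W}$ contains the two $3$-planes $P_{ij},P_{ik}$ through $l_i$, whose span is all of $\P(V)$, and equality holds because an irreducible cubic cannot have multiplicity $3$ along a line.) As written, however, your proof of smallness — and with it the lemma — is incomplete.
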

\begin{proof}
Let $W_{Y}$ and $\wt{W}$ be the proper transforms of $W$ on $Y$ and $\wt{\P(V)}$ respectively. 
\[\xymatrix{
W_{Y} \ar[r]^{\t|_{W_{Y}}} \ar[d]_{p_{Y}|_{W_{Y}}} & \wt{W} \ar[d]^{\s|_{\wt{W}}} \\
W_{Z} \ar[r]_{g|_{W_{Z}}}&\ol{W}
}\]
Recall that $\ol{W}$ is defined by $x_{0}x_{2}x_{4}+x_{1}x_{3}x_{5}$ in $\P(V)=\Proj \C[x_{0},x_{1},x_{2},x_{3},x_{4},x_{5}]$ as in (\ref{LG-defeq-olW}). 
In this setting, we may assume that the lines $l_{1},l_{2},l_{3}$ are defined as $\{x_{0}=x_{1}=x_{2}=x_{3}=0\}$, $\{x_{0}=x_{1}=x_{4}=x_{5}=0\}$ and $\{x_{2}=x_{3}=x_{4}=x_{5}=0\}$ respectively. 
Then we can easily check that $\wt{W}$ is normal by computing the defining equation of $\wt{W}$ in $\wt{\P(V)}$.

The restriction $\t|_{W_{Y}} \colon W_{Y} \to \wt{W}$ is the blowing-up along a Weil divisor $\wt{R}$ and hence it is a small contraction since the fiber of $\t$ is at most $1$-dimensional. 
Thus $W_{Y}$ is also normal since $W_{Y}$ is Gorenstein. 
\begin{claim}\label{LG-claim1-Wflop}
$W_{Z}$ does not contain $g^{-1}(l_{i})=Z_{i}$ for every $i$. 
\end{claim}
\begin{proof}
Note that $\wt{W} \in |\s^{\ast}\mc O_{\P(V)}(3)-2\Exc(\s)|$. 
For general point $t \in l_{i}$, $\wt{W}_{t}:=\s^{-1}(t) \cap \wt{W}$ is a quadric surface on $\s^{-1}(t)=\P^{3}$ contains the disjoint union of two lines $\wt{R}_{t}:=\s^{-1}(t) \cap \wt{R}$. 
Hence $\wt{W}_{t} \simeq \P^{1} \times \P^{1}$ and $\wt{W}_{t}=W_{Y,t} \to W_{Z,t}$ is a $\P^{1}$-bundle. 
Hence $W_{Z}$ does not contain $Z_{i}$. 
\end{proof}
\begin{claim}\label{LG-claim2-Wflop}
$g|_{W_{Z}} \colon W_{Z} \to \ol{W}$ is a small contraction. 
\end{claim}
\begin{proof} 
Assume the contrary. 
Over the open subset $\P^{5} \setminus (l_{12} \cup l_{13} \cup l_{23})$, $g$ is the blowing-up along $R \setminus ( l_{12} \cup l_{13} \cup l_{23})$. 
Therefore, if $g|_{W_{Z}}$ contracts a divisor, then this must be contained in $g^{-1}(l_{i})$ for some $i$. 
Since $g^{-1}(l_{i})=Z_{i}$ is a $3$-dimensional, $W_{Z}$ contains $Z_{i}$, which contradicts Claim~\ref{LG-claim1-Wflop}. 
\end{proof}
By Claim~\ref{LG-claim2-Wflop}, $W_{Z}$ is regular in codimension $1$. 
Since $W_{Z}$ is a divisor on a smooth fivefold $Z$, $W_{Z}$ is Gorenstein. 
Hence $W_{Z}$ is normal. 

Now $P=P_{23}+P_{13}+P_{12}$ is a Weil divisor on $\ol{W}$. 
Let $P_{W}$ and $P_{W_{Z}}$ be the proper transforms on $W$ and $W_{Z}$ respectively. 
Then we have $P_{W}=\Exc(f|_{E})|_{W}$ and hence $-P_{W}$ is $f|_{W}$-ample. 
Hence it suffices to show that $P_{Z_{W}}$ is $g|_{W_{Z}}$-ample. 
By Proposition~\ref{LG-prop-R}~(3), we have $P+R \sim 3H|_{\ol{W}}$, where $H$ is a hyperplane section of $\P^{5}$. 
Thus we have $\Exc(g)|_{W_{Z}}+P_{Z_{W}} \sim_{g|_{W_{Z}}} 0$. 
The $g$-ampleness of $-\Exc(g)$ implies that $P_{Z_{W}}$ is $g|_{W_{Z}}$-ample, which completes the proof. 
\end{proof}

The following next lemma is essentially the same as Lemma~\ref{KG2-lem-stflop}. 

\begin{lem}\label{LG-lem-stflip}
\begin{enumerate}
\item It holds that $\mc N_{P_{ij,Z}/W_{Z}} \simeq \mc O_{\P^{3}}(-1)$ and $\mc N_{P_{ij,Z}/Z} \simeq \mc O_{\P^{3}}(-1)^{2}$ for any $i,j$. 
\item The complete linear system $|3H_{Z}-G_{Z}|$ is a pencil with $\Bs|3H_{Z}-G_{Z}|=\bigsqcup_{1 \leq i<j \leq 3}P_{ij,Z}$. 
\item The complete linear system $|4H_{Z}-G_{Z}|$ is base point free. 
\item Let $\psi_{Z} \colon Z \to \ol{Z}$ be the Stein factorization of the morphism given by $|4H_{Z}-G_{Z}|$. 
Then $\psi_{Z}$ is a flipping contraction with $\Exc(\psi)=P_{23,Z} \sqcup P_{13,Z} \sqcup P_{12,Z}$. 
Moreover, the flip $Z^{+}$ of $\psi_{Z} \colon Z \to \ol{Z}$ is smooth. 
\item The induced birational map $W_{Z} \dra W_{Z^{+}}$ is the blowing down contracting $P_{12,Z},P_{13,Z},P_{23,Z}$ to smooth three points. 
\end{enumerate}
\end{lem}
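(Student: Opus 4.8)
The plan is to run the proof of Lemma~\ref{KG2-lem-stflop} essentially verbatim, keeping track of three changes from the $K(G_{2})$ situation: the contracted planes are three copies of $\P^{3}$ (rather than two copies of $\P^{2}$), the relevant divisor $W_{Z}$ is the \emph{flop} of a cubic fourfold rather than the blowing-up of a quadric, and the resulting contraction $\psi_{Z}$ will turn out to be a flip rather than a flop. The geometric inputs replacing Lemma~\ref{KG2-lem-EC} are Proposition~\ref{LG-prop-R}: that $R\cap P_{ij}$ is a \emph{quartic} surface in $P_{ij}=\P^{3}$, that $P\cup R$ is the complete intersection of the two cubics $\ol{W},\ol{W'}$, and that $\ol{W}+\ol{W'}$ is simple normal crossing at the generic points of the $P_{ij}$ and of $R$; together with Lemma~\ref{LG-lem-Wflop} ($W_{Z}$ is the smooth flop of $W\to\ol{W}$) and Proposition~\ref{LG-prop-stflip}~(1) ($P_{ij,Z}\simeq\P^{3}$ are disjoint and $g|_{P_{ij,Z}}\colon P_{ij,Z}\to P_{ij}$ is an isomorphism).

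For (1) I would first record $H_{Z}|_{P_{ij,Z}}\simeq\mc O_{\P^{3}}(1)$ and $G_{Z}|_{P_{ij,Z}}\simeq\mc O_{\P^{3}}(4)$; the second is the decisive numerical change from the quadric case and comes from $R\cap P_{ij}$ being a quartic (Proposition~\ref{LG-prop-R}~(2)), equivalently from $p_{Y}^{\ast}G_{Z}=G+2D_{Y}$ restricted to $P_{ij,Y}$, where $(G+2D_{Y})|_{P_{ij,Y}}=4H_{ij}$. Since $W_{Z}$ is the flop of $W\to\ol{W}$, adjunction gives $\omega_{W_{Z}}=(g|_{W_{Z}})^{\ast}\omega_{\ol{W}}=\mc O_{W_{Z}}(-3H_{Z}|_{W_{Z}})$, and restricting to $P_{ij,Z}$ with $\omega_{P_{ij,Z}}=\mc O_{\P^{3}}(-4)$ yields $\mc N_{P_{ij,Z}/W_{Z}}\simeq\mc O_{\P^{3}}(-1)$. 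Feeding $\mc N_{W_{Z}/Z}|_{P_{ij,Z}}=(3H_{Z}-G_{Z})|_{P_{ij,Z}}=\mc O_{\P^{3}}(-1)$ into the normal-bundle sequence of $P_{ij,Z}\subset W_{Z}\subset Z$ and using $\Ext^{1}_{\P^{3}}(\mc O(-1),\mc O(-1))=H^{1}(\mc O_{\P^{3}})=0$ splits it, giving $\mc N_{P_{ij,Z}/Z}\simeq\mc O_{\P^{3}}(-1)^{2}$. For (2), any cubic containing $R$ meets $P_{ij}=\P^{3}$ in a cubic surface containing the quartic $R\cap P_{ij}$, hence contains $P_{ij}$; thus $|3H_{Z}-G_{Z}|$ is identified with $|\mc I_{R}(3)|=|\mc I_{R\cup P}(3)|$, a pencil by the Koszul resolution of the complete intersection $R\cup P$. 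Its base locus is $W_{Z}\cap W'_{Z}$, which the simple-normal-crossing statement of Proposition~\ref{LG-prop-R}~(3) forces to separate over $R$ after the extraction, leaving exactly the disjoint transforms $\bigsqcup_{i<j}P_{ij,Z}$.

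For (3) and (4) I would argue cohomologically. First I compute $-K_{Z}=6H_{Z}-G_{Z}$: from $-K_{W_{Z}}=3H_{Z}|_{W_{Z}}$ and $K_{W_{Z}}=(K_{Z}+W_{Z})|_{W_{Z}}$ one gets $(K_{Z}+6H_{Z}-G_{Z})|_{W_{Z}}=0$, and since $G_{Z}|_{W_{Z}}$ (the transform of the divisor $R\subset\ol{W}$) is nonzero the discrepancy is forced to be $1$. The base scheme $\bigsqcup P_{ij,Z}$ is a codimension-$2$ regular intersection of the two pencil members, so it has the two-term Koszul resolution $0\to\mc O(-2H_{Z}+G_{Z})\to\mc O(H_{Z})^{2}\to\mc I_{\bigsqcup P_{ij,Z}}(4H_{Z}-G_{Z})\to 0$ (two generators, not three as in the $\P^{2}$-case). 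Since $Z$ and $\P^{5}$ are smooth, $R^{i}g_{\ast}\mc O_{Z}=0$ for $i>0$, giving $H^{1}(\mc O(H_{Z}))=H^{1}(\P^{5},\mc O(1))=0$ and, by Serre duality, $H^{2}(\mc O(-2H_{Z}+G_{Z}))=H^{3}(Z,\mc O(-4H_{Z}))^{\vee}=H^{3}(\P^{5},\mc O(-4))^{\vee}=0$; hence $H^{1}(\mc I_{\bigsqcup P_{ij,Z}}(4H_{Z}-G_{Z}))=0$. As $(4H_{Z}-G_{Z})|_{P_{ij,Z}}=\mc O_{\P^{3}}$ is trivially generated, restriction then gives $\Bs|4H_{Z}-G_{Z}|=\emp$, proving (3). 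For (4), a curve $\g$ with $(4H_{Z}-G_{Z}).\g=0$ has $(3H_{Z}-G_{Z}).\g=-H_{Z}.\g\le 0$; the value $0$ is excluded because a $g$-contracted curve has $G_{Z}.\g<0$ ($-G_{Z}$ being $g$-ample), so $(3H_{Z}-G_{Z}).\g<0$ and $\g$ lies in the stable base locus $\bigsqcup P_{ij,Z}$ of (2); conversely every curve there is $(4H_{Z}-G_{Z})$-trivial, so $\Exc(\psi_{Z})=\bigsqcup P_{ij,Z}$ and $\psi_{Z}$ is small. Since $-K_{Z}|_{P_{ij,Z}}=(6H_{Z}-G_{Z})|_{P_{ij,Z}}=\mc O_{\P^{3}}(2)$ is ample, $\psi_{Z}$ is a flipping contraction, and the flip is the standard one for a $\P^{3}$ with normal bundle $\mc O(-1)^{2}$: blow up $\bigsqcup P_{ij,Z}$, whose exceptional divisors are $\P^{3}\times\P^{1}=\P(\mc O(-1)^{2})$, then blow these down in the $\P^{1}$-direction to a smooth $Z^{+}$ (exactly as in Lemma~\ref{KG2-lem-stflop}~(6)).

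Finally (5) is the divisorial analogue inside the smooth fourfold $W_{Z}$: by (1), $P_{ij,Z}\subset W_{Z}$ is a $\P^{3}$ with normal bundle $\mc O_{\P^{3}}(-1)$, hence the exceptional divisor of a smooth blow-down, and under $W_{Z}\dra W_{Z^{+}}$ each $P_{ij,Z}$ collapses to a smooth point (the section $\P^{3}\times\{\mathrm{pt}\}\subset\P^{3}\times\P^{1}$ is crushed by the $\P^{1}$-fibration). The hard part will not be conceptual — the whole skeleton is Lemma~\ref{KG2-lem-stflop} — but the bookkeeping: pinning down $G_{Z}|_{P_{ij,Z}}=\mc O_{\P^{3}}(4)$ (the quartic, versus $\mc O(3)$ in the $K(G_{2})$ case) and the matching discrepancy $-K_{Z}=6H_{Z}-G_{Z}$, because every subsequent vanishing, the base-point-freeness, and the positivity $-K_{Z}\cdot\g>0$ that distinguishes the flip from a flop all hinge on these two numbers.
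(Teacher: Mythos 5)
Your proposal is correct and takes essentially the same approach as the paper: the paper's own proof likewise transplants Lemma~\ref{KG2-lem-stflop}, using Proposition~\ref{LG-prop-R} (the quartic $R \cap P_{ij}$ and the complete-intersection/simple-normal-crossing statement for $\ol{W},\ol{W'}$), Lemma~\ref{LG-lem-Wflop}, and exactly your two key numbers $G_{Z}|_{P_{ij,Z}} \simeq \mc O_{\P^{3}}(4)$ and $-K_{Z}=6H_{Z}-G_{Z}$. You merely make explicit some steps the paper compresses by citation (the two-term Koszul resolution, the Serre-duality vanishing, and the splitting of the normal bundle sequence), so there is no substantive difference.
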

\begin{proof}
(1) Consider the exact sequence 
$0 \to \mc N_{P_{ij,Z}/W_{Z}} \to \mc N_{P_{ij,Z}/Z} \to \mc N_{W_{Z}/Z}|_{P_{ij,Z}} \to 0$. 
Since $-K_{W_{Z}}=3H_{Z}|_{W_{Z}}$ and $-K_{P_{ij,Z}}=4H_{Z}|_{P_{ij,Z}}$, we have $\mc N_{P_{ij,Z}/W_{Z}}=\mc O_{\P^{3}}(-1)$ by the adjunction formula. 
Since $W_{Z} \sim 3H_{Z}-G_{Z}$ and $G_{Z} \cap P_{ij,Z}$ is a quartic surface (see Proposition~\ref{LG-prop-wtR}~(1)), 
we have $\mc N_{W_{Z}/Z}|_{P_{ij,Z}}=\mc O_{\P^{3}}(-1)$. 
Therefore, we have $\mc N_{P_{ij,Z}/Z} \simeq \mc O_{\P^{3}}(-1)^{2}$. 

(2) By Proposition~\ref{LG-prop-R}~(2) and (3), we obtain $h^{0}(\P(V),\mc I_{R} \otimes \mc O_{\P^{5}}(3))=2$. 
Let $\ol{W}$ and $\ol{W'}$ be the two members as in Proposition~\ref{LG-prop-R}~(3). 
Let $W_{Z}$ and $W'_{Z}$ denote the proper transforms of $\ol{W}$ and $\ol{W'}$ on $Z$ respectively. 
Then $W_{Z} \cap W'_{Z}=\bigsqcup_{i,j}P_{ij,Z}$ by Proposition~\ref{LG-prop-R}~(3), which implies the last assertion of (2). 

(3) From (2), we can deduce that $\Bs|4H_{Z}-G_{Z}| \subset \bigsqcup_{i,j}\Bs \left| (4H_{Z}-G_{Z})|_{P_{ij,Z}} \right|$ by the same arguments as in the proof of Lemma~\ref{KG2-lem-stflop}~(4). 
For each $i,j$, we have $(4H_{Z}-G_{Z})|_{P_{ij,Z}} \sim 0$ and hence $\Bs |4H_{Z}-G_{Z}|=\emp$. 

(4) By the similar argument as in the proof of (2), we have $\Exc(\psi)=P_{23,Z} \sqcup P_{13,Z} \sqcup P_{12,Z}$. 
Since $-K_{Z}=6H_{Z}-G_{Z}$, $\psi_{Z}$ is a flipping contraction. 
The smoothness of $Z^{+}$ follows from (1). 
(5) follows from (1) and (4). 
We complete the proof. 
\end{proof}

\begin{proof}[Proof of Proposition~\ref{LG-prop-stflip}~(2), (3), and (4)]
Let $H_{Z^{+}}$, $G_{Z^{+}}$ and $W_{Z^{+}}$ be the proper transforms of $H_{Z}$, $G_{Z}$, and $W_{Z}$ on $Z^{+}$ respectively. 
By Lemma~\ref{LG-lem-stflip}, $Z \dra Z^{+}$ is the standard flip of $P_{1,Z}$ and $P_{2,Z}$. 
Then from the same arguments as in the proof of Proposition~\ref{KG2-prop-stflop}, it follows that $\Bs |3H_{Z^{+}}-G_{Z^{+}}|=\Bs |W_{Z^{+}}|=\emp$. 
Let $\vp_{Z^{+}} \colon Z^{+} \to \P^{1}$ be the morphism induced by $|W_{Z^{+}}|$. 
Since $W_{Z^{+}}$ is a fiber of $\vp_{Z^{+}}$, every fiber of $\vp_{Z^{+}}$ is connected. 
Since $\rho(Z^{+})=2$, $\vp_{Z^{+}}$ is an extremal contraction. 
Since $-K_{W_{Z}}=3H_{Z}|_{W_{Z}}$ and $(H_{Z}|_{W_{Z}})^{4}=3$, we have $(H^{+}|_{W_{Z^{+}}})^{4}=6$ and $-K_{W_{Z^{+}}}=3H^{+}|_{W_{Z^{+}}}$. 
By the classification of del Pezzo manifold \cite{Fujita80}, we conclude that $W_{Z^{+}} \simeq (\P^{2})^{2}$. 
Then every smooth fiber of $\vp_{Z^{+}}$ is also a del Pezzo fourfold of degree $6$ and hence isomorphic to $(\P^{2})^{2}$. 
\end{proof}


\subsection{Construction of an example of No.14}\label{LG-subsec-conclu}
Finally, we construct a weak Fano threefold with a sextic del Pezzo fibration of No.14. 

Let $\wh{\mathfrak{H}} \in |H_{3}|_{M}|$ be a general element. 
By using the diagram (\ref{LG-dia-M}), we set 
\[\mathfrak{H}:=\nu(\wh{\mathfrak{H}}) \text{ and } V^{5}_{9}:=\mu(\mathfrak{H}) \subset \LG_{\a}(3,V).\]
We may assume that $\mathfrak{H}$, $V^{5}_{9}$, and $S:=V^{5}_{9} \cap \LG_{\a,\b}(3,V)$ are smooth. 

Let $\wh{A} \in |H_{3}|_{\wh{\mathfrak{H}}}|$ be a general element, 
\[A:=\nu(\wh{A}),\ V^{4}_{9}:=\mu(A),\ \wt{A}:=\pi(\wh{A}), \text{ and } \ol{A}:=\s(\wt{A}).\]
We may assume that $A$, $V^{4}_{9}$, and $C:=\LG_{\a,\b}(3,V) \cap V^{4}_{9}$ are smooth. 

Then $C \subset S \subset \LG_{\a,\b}(3,V)=(\P^{1})^{3}$ satisfies Setting~\ref{LG-sett-stflip}. 
Combining the diagram (\ref{LG-dia-M}) and the diagram (\ref{LG-dia-Z}), we obtain the following diagram. 

\[\xymatrix{
\P_{\wt{\P(V)}}(\mc E|_{\wt{\P(V)}}) \ar@{}[r]|{\hspace{20pt}=}&M \ar[rd]^{\pi} \ar[d]_{\nu}&&\ar[ld]_{\t}Y \ar[d]_{h}&\ar@{}[l]|{=\hspace{20pt}} \Bl_{\wt{R}}\wt{\P(V)}& \\
&\wt{\LG_{\a}(3,V)}\ar[ld]_{\mu} \ar[rd]^{f}&\wt{\P(V)}\ar[d]_{\s}&Z\ar[ld]_{g} \ar@{-->}[r]&Z^{+}\ar[rd]& \\
\LG_{\a}(3,V)&&\P(V)&&&\P^{1}. 
}\]

Let us consider the proper transforms of $\wh{\mathfrak{H}}$. 
Since $\wh{\mathfrak{H}} \in |\mc O_{\P(\mc E|_{\wt{\P(V)}})}(1)|$ is general, 
we may assume that $\pi|_{\wh{\mathfrak{H}}} \colon \wh{\mathfrak{H}} \to \wt{\P(V)}$ is the blowing-up along a smooth threefold $\Theta$, where $\Theta$ is the zero scheme of the corresponding global section $t \in H^{0}(\wt{\P(V)},\mc E|_{\wt{\P(V)}})$. 
Then we have the following diagram: 

\[\xymatrix{
&\wh{\mathfrak{H}}=\Bl_{\Theta}\wt{\P(V)} \ar[rd]^{\pi|_{\wh{\mathfrak{H}}}} \ar[d]_{\nu|_{\wh{\mathfrak{H}}}}&&\ar[ld]_{\t} Y \ar[d]_{h}&& \\
&\mathfrak{H}\ar[ld]_{\mu|_{\mathfrak{H}}} \ar[rd]_{f|_{\mathfrak{H}}}&\wt{\P(V)}\ar[d]^{\s}&Z\ar[ld]^{g} \ar@{-->}[r]&Z^{+}\ar[rd]& \\
V^{5}_{9}&&\P(V)&&&\P^{1}. 
}\]

\begin{lem}\label{LG-lem-key}
It holds that 
\begin{align*}
H_{3}&\sim 4H_{1}-2\Exc(\nu|_{\mathfrak{H}})-\Exc(\pi|_{\wh{\mathfrak{H}}}) \text{ on } \wh{\mathfrak{H}}, \\
3H_{1} &\sim \Exc(\pi|_{\wh{\mathfrak{H}}})+\nu^{-1}_{\ast}\Exc(\mu|_{\mathfrak{H}})+2\Exc(\nu|_{\wh{\mathfrak{H}}}) \text{ on } \wh{\mathfrak{H}}, \text{ and } \\
3H_{1} &\sim \Exc(f|_{\mathfrak{H}})+\Exc(\mu|_{\mathfrak{H}}) \text{ on } \mathfrak{H}. 
\end{align*}
\end{lem}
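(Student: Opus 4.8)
The plan is to carry out all three computations inside the Picard groups of $M$ and of $\mathfrak{H}$, reducing everything to the relations recorded in Lemma~\ref{LG-lem-relonwtLG} and to the Chern class formula (\ref{LG-eq-chernG1}). Throughout I use that $M=\P_{\wt{\P(V)}}(\mc E|_{\wt{\P(V)}})$ has tautological class $H_3$, that $\nu\colon M\to\wt{\LG_{\a}(3,V)}$ is the blow-up along $Q_1\sqcup Q_2\sqcup Q_3$ with $\Exc(\nu)=\pi^{\ast}\Exc(\s)$ (Lemma~\ref{LG-lem-Qi}), and the equality $H_3\sim H_1+E$ from Lemma~\ref{LG-lem-relonwtLG}; pulling the latter back gives $\nu^{\ast}E\sim H_3-H_1$ on $M$. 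I also record $-K_{\wt{\LG_{\a}(3,V)}}\sim 2H_1+2H_3$, read off from $-K_M\sim 2H_1+2H_3-\Exc(\nu)$ in Lemma~\ref{LG-lem-relonwtLG}.

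For the first two identities I express every exceptional divisor on $\wh{\mathfrak{H}}$ as the restriction of a class on $M$. The key step is the class of $\Exc(\pi|_{\wh{\mathfrak{H}}})$: since $\wh{\mathfrak{H}}\in|H_3|_M|$ is the zero locus of a section $t$ of $\mc E|_{\wt{\P(V)}}=\pi_{\ast}\mc O(H_3)$, the tautological sequence $0\to\pi^{\ast}\det\mc E\otimes\mc O(-H_3)\to\pi^{\ast}\mc E\to\mc O(H_3)\to 0$ shows that on $\wh{\mathfrak{H}}$ the section $t$ factors through the kernel line bundle and cuts out exactly $\Exc(\pi|_{\wh{\mathfrak{H}}})$; hence $\Exc(\pi|_{\wh{\mathfrak{H}}})\sim(\pi^{\ast}\det\mc E-H_3)|_{\wh{\mathfrak{H}}}$. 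Combining this with $\det\mc E|_{\wt{\P(V)}}=4H_1-2\Exc(\s)$ from (\ref{LG-eq-chernG1}) and $\pi^{\ast}\Exc(\s)=\Exc(\nu)$, and noting $\Exc(\nu)|_{\wh{\mathfrak{H}}}=\Exc(\nu|_{\wh{\mathfrak{H}}})$ for general $\wh{\mathfrak{H}}$, gives the first identity. For the second identity I identify $\nu^{-1}_{\ast}\Exc(\mu|_{\mathfrak{H}})$ with $E'|_{\wh{\mathfrak{H}}}$, where $E'=\nu^{-1}_{\ast}E$. The point is that $E$ does not contain the centres $Q_i$ — a generic flag in $Q_i$ is $\a$-isotropic but not $\b$-isotropic, so $Q_i\not\subset E$ and in fact $E\cap Q_i$ has dimension $3<4=\dim Q_i$ — whence $\nu^{\ast}E=E'$ with no exceptional contribution, so $E'\sim H_3-H_1$ on $M$. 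Substituting the three classes into the right-hand side of the second identity and using $\Exc(\mu|_{\mathfrak{H}})=E|_{\mathfrak{H}}$ collapses the sum to $3H_1|_{\wh{\mathfrak{H}}}$.

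The third identity lives on $\mathfrak{H}$ and is the delicate one, since $\Exc(f|_{\mathfrak{H}})$ is not the restriction of any divisor on $\wt{\LG_{\a}(3,V)}$. Here I use adjunction together with the birational morphism $f|_{\mathfrak{H}}\colon\mathfrak{H}\to\P(V)$. As $\mathfrak{H}\sim H_3$, adjunction gives $K_{\mathfrak{H}}\sim(-2H_1-H_3)|_{\mathfrak{H}}$, while $(f|_{\mathfrak{H}})^{\ast}K_{\P(V)}=-6H_1|_{\mathfrak{H}}$. Since $\rho(\mathfrak{H})=2$ and $f|_{\mathfrak{H}}$ is a non-trivial birational morphism onto the smooth $\P(V)=\P^5$, purity forces it to be an extremal divisorial contraction with a single exceptional prime divisor $\mc D=\Exc(f|_{\mathfrak{H}})$ of discrepancy $a\ge 1$; comparing canonical classes yields $a\,\mc D\sim(4H_1-H_3)|_{\mathfrak{H}}$. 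Because $4H_1-H_3=3H_1-E$ is primitive in $\Pic(\wt{\LG_{\a}(3,V)})=\Z H_1\oplus\Z E$ and this lattice injects into $\Pic(\mathfrak{H})$, we get $a=1$, so $\Exc(f|_{\mathfrak{H}})\sim(4H_1-H_3)|_{\mathfrak{H}}$. Adding $\Exc(\mu|_{\mathfrak{H}})=E|_{\mathfrak{H}}\sim(H_3-H_1)|_{\mathfrak{H}}$ gives $3H_1|_{\mathfrak{H}}$, as required.

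The main obstacle is exactly this last identity: one must know that $f|_{\mathfrak{H}}$ is an extremal divisorial contraction with discrepancy precisely $1$, even though $f$ has jumping fibres $\Q^3$ over the three lines $l_1,l_2,l_3$, so that the exceptional divisor $\mc D$ is badly behaved (and contracts more) over the $l_i$. The adjunction-plus-primitivity argument circumvents any explicit description of $\mc D$ or of its image in $\P^5$, but it does rest on $\rho(\mathfrak{H})=2$ and on the restriction being an isomorphism on Picard groups; these I would supply by a Grothendieck--Lefschetz argument for the general member $\mathfrak{H}$ of the base-point-free system $|H_3|$. (Alternatively, the same class $(4H_1-H_3)|_{\mathfrak{H}}$ can be obtained by pushing the first identity forward along $\nu|_{\wh{\mathfrak{H}}}$, once one checks that $\nu$ carries $\Exc(\pi|_{\wh{\mathfrak{H}}})$ onto $\Exc(f|_{\mathfrak{H}})$.)
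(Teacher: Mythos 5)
Your treatment of the first two relations coincides with the paper's own proof: writing $\Exc(\pi|_{\wh{\mathfrak{H}}})\sim(\pi^{\ast}\det(\mc E|_{\wt{\P(V)}})-H_{3})|_{\wh{\mathfrak{H}}}$ via the tautological/Koszul sequence is the same computation as the paper's use of the surjection $\mc E|_{\wt{\P(V)}}\epm\mc I_{\Theta}(4H_{1}-2\Exc(\s))$, and the second relation is in both cases the substitution of $H_{3}\sim H_{1}+E$ from Lemma~\ref{LG-lem-relonwtLG} together with $\nu^{\ast}\Exc(\mu)=\nu^{-1}_{\ast}\Exc(\mu)$ (your dimension count $\dim(E\cap Q_{i})=3<\dim Q_{i}$ is a welcome justification of what the paper dismisses as ``from the construction''). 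The genuine divergence is the third relation. The paper gets it in one line by applying $(\nu|_{\wh{\mathfrak{H}}})_{\ast}$ to the second relation, using $(\nu|_{\wh{\mathfrak{H}}})_{\ast}\Exc(\pi|_{\wh{\mathfrak{H}}})=\Exc(f|_{\mathfrak{H}})$ and the fact that $\Exc(\nu|_{\wh{\mathfrak{H}}})$ pushes forward to zero; this is exactly the alternative you mention in parentheses, and it is strictly cheaper because it needs no knowledge of $\Pic(\mathfrak{H})$. Your main route---adjunction, the ramification formula for the birational morphism $f|_{\mathfrak{H}}\colon\mathfrak{H}\to\P(V)$, and killing the discrepancy $a$ by primitivity---does work, but as written it has two soft spots. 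First, primitivity is not preserved under a mere injection of lattices, so ``$\Z H_{1}\oplus\Z E$ injects into $\Pic(\mathfrak{H})$'' is not enough; you need the restriction map on Picard groups to be an isomorphism (or at least to have saturated image), as you yourself say later. Second, Grothendieck--Lefschetz does not apply verbatim to a general member of $|H_{3}|$, since $H_{3}$ is only nef and big on $\wt{\LG_{\a}(3,V)}$ (it is $\mu^{\ast}$ of the Pl\"{u}cker polarization), not ample. Both gaps close at once by observing that $\mathfrak{H}=\mu^{-1}(V^{5}_{9})=\Bl_{S}V^{5}_{9}$, where $S=V^{5}_{9}\cap\LG_{\a,\b}(3,V)$ is a smooth irreducible surface, and that $\rho(V^{5}_{9})=1$ by the honest Lefschetz theorem for the ample section $V^{5}_{9}\subset\LG_{\a}(3,V)$: then $\Pic(\mathfrak{H})=\Z[H_{3}|_{\mathfrak{H}}]\oplus\Z[E|_{\mathfrak{H}}]$, the class $(4H_{1}-H_{3})|_{\mathfrak{H}}=(3H_{3}-4E)|_{\mathfrak{H}}$ is primitive, and $a=1$ follows; you should also record why $f|_{\mathfrak{H}}$ is birational in the first place ($H_{3}$ has degree one on the generic $f$-fibre by the description in Lemma~\ref{LG-lem-Qi}, so a general member of $|H_{3}|$ maps birationally to $\P(V)$). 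With these repairs your argument is complete; what it buys over the paper's push-forward is not brevity but the explicit extra statement that $f|_{\mathfrak{H}}$ is a divisorial contraction with a single exceptional prime divisor of discrepancy exactly $1$.
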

\begin{proof}
Note that $\det \mc E|_{\wt{\P(V)}} \simeq \mc O(4H_{1}-2\Exc(\s))$ by (\ref{LG-eq-chernG1}). 
Since the embedding $\wh{\mathfrak{H}} \hra \P_{\wt{\P(V)}}(\mc E|_{\wt{\P(V)}})$ is corresponding to the surjection 
$\mc E|_{\wt{\P(V)}} \to \mc I_{\Theta}(4H_{1}-2\Exc(\s))$, 
we obtain 
\[H_{3} \sim -\Exc(\pi|_{\wh{\mathfrak{H}}})+4H_{1}-2(\pi|_{\wh{\mathfrak{H}}})^{\ast}\Exc(\s).\] 
It follows from the construction that 
\begin{align}\label{LG-eq-relexp}
(\pi|_{\wh{\mathfrak{H}}})^{\ast}\Exc(\s)=\Exc(\nu|_{\wh{\mathfrak{H}}}). \end{align}
Thus the first equality holds.  
Note that $H_{3}=H_{1}+(\nu|_{\wh{\mathfrak{H}}})^{\ast}\Exc(\mu|_{\mathfrak{H}})$ by Lemma~\ref{LG-lem-relonwtLG}. Hence we have 
\[\Exc(\pi|_{\wh{\mathfrak{H}}}) \sim 3H_{1}-(\nu|_{\wh{\mathfrak{H}}})^{\ast}\Exc(\mu|_{\mathfrak{H}})-2\Exc(\nu|_{\wh{\mathfrak{H}}}).\]
Since $\nu^{\ast}\Exc(\mu)=\nu^{-1}_{\ast}\Exc(\mu)$ holds from the construction, 
we have the second equality. 
By taking the push-forward and noting that $(\nu|_{\wh{\mathfrak{H}}})_{\ast}\Exc(\pi|_{\wh{\mathfrak{H}}})=\Exc(f|_{\mathfrak{H}})$, 
we obtain the last equality. 
\end{proof}

Let us consider the proper transform of $\wh{A}$.  
By the first equality of Lemma~\ref{LG-lem-key}, $\ol{A}$ is a quartic hypersurface of $\P^{5}$ containing $R$ and $\ol{\Theta}:=\s(\Theta)$. 
Moreover, $\ol{A}$ is smooth at the generic point of each of $R$ and $\ol{\Theta}$. 

Let $\wh{A}'$ and $A'$ be the proper transforms on $Y$ and $Z$. 
Noting that $\ol{A}$ is smooth at the generic point of $R$, we conclude that $A'$ is a member of $|g^{\ast}4H_{1}-\Exc(g)|$ as a divisor on $Z$. 
By Lemma~\ref{LG-lem-stflip}~(3), we may assume that $A'$ and $\wh{A'}$ are smooth and $A'$ is away from the flipping locus of $Z \dra Z^{+}$. 
As a divisor on $Z^{+}$, $A'$ is a member of $|H_{Z^{+}}+W_{Z^{+}}|$: 
\[\xymatrix{
&\wh{A} \ar[rd]^{\pi|_{\wh{A}}} \ar[d]_{\nu|_{\wh{A}}}&&\ar[ld]_{\t|_{\wh{A}'}} \wh{A}' \ar[d]^{p_{Y}|_{\wt{A'}}}&& \\
&A\ar[ld]_{\mu|_{A}} \ar[rd]_{f|_{A}}&\wt{A}\ar[d]^{\s|_{\wt{A}}}&A'\ar[ld]^{g|_{A'}} \ar@{=}[r]&A'\ar[rd]& \\
V^{4}_{9}&&\ol{A}&&&\P^{1}.
}\]
\begin{lem}\label{LG-lem-A}
\begin{enumerate}
\item The restriction $\pi|_{\wh{A}} \colon \wh{A} \to \wt{A}$ is a family of Atiyah's flopping contractions along a smooth curve $B$, which is numerically equivalent to $c_{2}(\mc E|_{\wt{\P(V)}})^{2}$. 
Moreover, $\wh{A'}$ is the flop of $\pi|_{\wh{A}}$. 
\item The restriction $f|_{A} \colon A \to \ol{A}$ is the flopping contraction and $A'$ is the flop. 
\end{enumerate}
\end{lem}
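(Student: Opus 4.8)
The plan is to regard both assertions as the $\LG(3,V)$-analogues of the results of Section~\ref{KG2-section}: part~(1) will be obtained from the local normal form of an Atiyah flop, and part~(2) by the relative-ampleness comparison used in Lemma~\ref{KG2-lem-stflop}~(1) and Proposition~\ref{KG2-prop-conclu}~(2). Throughout I would use the class relations of Lemmas~\ref{LG-lem-relonwtLG} and \ref{LG-lem-key}, together with the fact that $\pi \colon M \to \wt{\P(V)}$ is the projectivization of the rank $2$ bundle $\mc E|_{\wt{\P(V)}}$ with tautological class $H_{3}$, so that $\pi_{\ast}\mc O_{M}(H_{3}) \simeq \mc E|_{\wt{\P(V)}}$.

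For part~(1), since $\wh{A} \in |H_{3}|_{\wh{\mathfrak{H}}}|$ and $\wh{\mathfrak{H}} \in |H_{3}|_{M}|$, the fourfold $\wh{A}$ is cut out on $M$ by two general members of $|H_{3}|_{M}|$, equivalently by two general sections $s,t \in H^{0}(\wt{\P(V)},\mc E|_{\wt{\P(V)}})$. On each $\pi$-fibre $\P(\mc E_{x}) \simeq \P^{1}$ these sections define two points, so $\pi|_{\wh{A}} \colon \wh{A} \to \wt{A} := \pi(\wh{A})$ is birational onto the first degeneracy locus $\{s \wedge t = 0\}$, and its nontrivial fibres are exactly the $\P^{1}$'s lying over the common zero locus $B := (s=t=0)$. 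Once $\mc E|_{\wt{\P(V)}}$ is seen to be globally generated (which I would extract from the sequences defining $\mc E$ and the global generation of the null-correlation bundle), a general choice of $s,t$ makes $B$ a smooth curve with numerical class $c_{2}(\mc E|_{\wt{\P(V)}})^{2}$. Trivializing $\mc E|_{\wt{\P(V)}}$ near $B$ and writing $s=(x_{1},x_{2})$, $t=(x_{3},x_{4})$, the variety $\wt{A}$ takes the local form $\{x_{1}x_{4}-x_{2}x_{3}=0\}$ along the smooth curve $B$, and $\pi|_{\wh{A}}$ becomes the standard small resolution; hence $\pi|_{\wh{A}}$ is a family of Atiyah flops along $B$, each contracted line having normal bundle $\mc O_{\P^{1}}(-1)^{\oplus 2}\oplus \mc O_{\P^{1}}$. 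Finally, exactly as in Section~\ref{KG2-section}, the opposite small resolution is the proper transform $\wh{A}'$ of $\wt{A}$ on $Y=\Bl_{\wt{R}}\wt{\P(V)}$: because $\wt{R}\subset \wt{A}$ is a Weil divisor passing through the conifold locus $B$, blowing up $\wt{R}$ yields precisely the other small modification, which is therefore the flop.

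For part~(2), I would first compute $-K_{A}$. As $\mu|_{A}\colon A \to V^{4}_{9}$ is the blow-up of the index $2$ Mukai fourfold $V^{4}_{9}$ along the smooth curve $C$, adjunction together with $H_{1}=H_{3}-E$ (Lemma~\ref{LG-lem-relonwtLG}) gives $-K_{A}=2H_{1}|_{A}=2\,(f|_{A})^{\ast}(\mc O_{\P(V)}(1)|_{\ol{A}})$, so $f|_{A}$ is crepant. Smallness of $f|_{A}$ follows from the fibre description in Lemma~\ref{LG-lem-Qi}: the general fibre of $f$ is a $\P^{1}$ meeting the general $A$ in one point, while the locus over which $A\cap f^{-1}(x)$ is positive-dimensional has codimension $\geq 2$ in $\ol{A}$; alternatively one restricts the flopping contraction $f|_{W}$ already established in Lemma~\ref{LG-lem-W}. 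Thus $f|_{A}\colon A \to \ol{A}$ is a flopping contraction. To see that $A'$ is its flop, I would argue as in Lemma~\ref{KG2-lem-stflop}~(1) and Proposition~\ref{KG2-prop-conclu}~(2): the proper transform $A'\subset Z$ maps to $\ol{A}$ through the restriction of the divisorial extraction $g\colon Z\to \P(V)$ along $R$ (Proposition~\ref{LG-prop-Z}), and because $R\subset \ol{A}$ is a Weil divisor through the flopping locus this restriction is small; moreover $-G_{Z}|_{A'}$ is $g|_{A'}$-ample, whereas its proper transform on $A$ is the $\mu$-exceptional divisor $E_{C}=\Exc(\mu|_{A})$, which is $f|_{A}$-ample. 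For the last point I would use that the restriction to $A$ of the third identity of Lemma~\ref{LG-lem-key} reads $3H_{1}|_{A}\sim \Exc(f|_{\mathfrak{H}})|_{A}+E_{C}$, so that $H_{1}|_{A}=(f|_{A})^{\ast}(\mc O_{\P(V)}(1)|_{\ol{A}})$ forces $E_{C}$ to be positive on every $f|_{A}$-contracted curve. Since the relevant relative polarizations have opposite signs on the two sides, $A'$ is the flop of $f|_{A}$.

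The step I expect to be the main obstacle is the bookkeeping that matches the two small resolutions across the tower of birational maps $\nu,\pi,\tau,p_{Y}$: one must verify that the proper transforms named $\wh{A}'$ and $A'$ are genuinely the opposite small resolutions, i.e. that blowing up $\wt{R}$ (respectively extracting $R$) resolves the conifold singularities in the correct direction, and one must keep track of which relative polarization is ample on each side. This is the analogue of the normal-bundle and relative-ampleness computations in Lemma~\ref{KG2-lem-stflop}. The subsidiary technical inputs---global generation of $\mc E|_{\wt{\P(V)}}$ for the smoothness of $B$, and the codimension estimate yielding smallness of $f|_{A}$---should both follow from Bertini-type arguments once $A$, $V^{4}_{9}$ and $C$ are chosen general.
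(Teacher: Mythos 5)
Your skeleton is the same as the paper's: part (1) from the description of $\wh{A}$ as a complete intersection of two general members of $|H_{3}|_{M}|=|\mc O_{\P(\mc E|_{\wt{\P(V)}})}(1)|$ (your explicit conifold model along $B=(s=t=0)$ is a legitimate substitute for the paper's citation of the argument of Proposition~\ref{KG2-prop-conclu}~(1)), and parts (1b) and (2) from a comparison of relative polarizations on the two small models. The gap is that you never actually determine the \emph{sign} of that comparison, and this sign is the whole content of the lemma. In (1) you assert that blowing up the Weil divisor $\wt{R}$ ``yields precisely the other small modification''; this is not automatic: blowing up a Weil divisor through the nodes of $\wt{A}$ produces one of the two small models, and which one depends on the class of that divisor in the local class group at each node --- blowing up a divisor in the same local class as $\Theta$ reproduces $\wh{A}$ itself rather than its flop. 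The paper pins the sign down by pushing forward the second identity of Lemma~\ref{LG-lem-key} to get $\mc O_{\wt{A}}(\Theta+\wt{R})\sim(\s|_{\wt{A}})^{\ast}\mc O_{\ol{A}}(3)$: since $-\Theta_{\wh{A}}$ is relatively ample for $\wh{A}=\Bl_{\Theta}\wt{A}$ and $-\wt{R}_{\wh{A'}}$ is relatively ample for $\wh{A'}=\Bl_{\wt{R}}\wt{A}$, this Cartier relation forces the proper transform of $\Theta$ to become relatively \emph{ample} on $\wh{A'}$, i.e.\ $\Theta$ and $\wt{R}$ lie in opposite local classes and $\wh{A'}$ is the flop.

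The same issue recurs in your part (2), where the stated deduction is a non sequitur: from $3H_{1}|_{A}\sim\Theta_{A}+E_{C}$ and $H_{1}\cdot l=0$ for an $f|_{A}$-contracted curve $l$ you only get $E_{C}\cdot l=-\Theta_{A}\cdot l$; the fact that $H_{1}|_{A}$ is a pullback does not ``force'' $E_{C}\cdot l>0$, because the sign of $\Theta_{A}\cdot l$ is exactly what has to be proved. The paper closes this with the analogous relation $\ol{\Theta}+R\sim\mc O_{\ol{A}}(3)$ together with the relative anti-ampleness of $\Theta_{A}$ over $\ol{A}$ and of $\wt{R}_{A'}$ over $\ol{A}$. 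Alternatively, a direct repair inside your own setup: by Lemma~\ref{LG-lem-relonwtLG} one has $E_{C}=(H_{3}-H_{1})|_{A}$, and $f$ and $\mu$ contract no common curve (the flopping curves over $B$ are $\pi$-fibers with $H_{3}\cdot l=1$, and the contracted conics inside the $Q_{i}$ have $H_{3}\cdot\g=2$), so $E_{C}\cdot l=H_{3}\cdot l>0$ on every $f|_{A}$-contracted curve, which is the required $f|_{A}$-ampleness of $E_{C}$. One smaller point: your fallback for smallness of $f|_{A}$ --- ``restrict the flopping contraction $f|_{W}$'' --- does not apply, since $W\subset E$ is a threefold and $A\not\subset W$; keep your dimension count, or use the paper's observation that $\nu|_{\wh{A}}$ and $\s|_{\wt{A}}$ contract the same divisors.
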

\begin{proof}
(1) 
Note that $\wh{A}$ is the complete intersection of general two members of $|H_{3}|=|\mc O_{\P(\mc E|_{\wt{\P(V)}})}(1)|$. 
Then by the same argument as in the proof of Proposition~\ref{KG2-prop-conclu}~(1), 
we can see that $\pi|_{\wh{A}}$ is a family of Atiyah's flopping contractions and $\pi(\Exc(\pi|_{\wh{A}}))=B$ is a smooth curve, which is a complete intersection of the zero schemes of two general section of $\mc E|_{\wt{\P(V)}}$. 

Let us prove that $\wh{A'}$ is the flop of $\wh{A}$. 
Since $\pi|_{\wh{A}}$ is isomorphic over the generic point of $\wt{R} \subset \wt{A}$, we can see that $\wh{A'} \to \wt{A}$ is also a flopping contraction. 
By the second equality of Lemma~\ref{LG-lem-key} and (\ref{LG-eq-relexp}), we obtain 
\[\Exc(\pi|_{\wh{\mathfrak{H}}})|_{\wh{A}}+\nu^{-1}_{\ast}\Exc(\mu|_{\mathfrak{H}})|_{\wh{A}} \sim_{\wt{A}} 3H_{1}|_{\wh{A}} .\]
Now $\Exc(\pi|_{\wh{\mathfrak{H}}})|_{\wh{A}}$ (resp.$\nu^{-1}_{\ast}\Exc(\mu|_{\mathfrak{H}})|_{\wh{A}}$) is linearly equivalent to 
the proper transform $\Theta_{\wh{A}}$ (resp. $\wt{R}_{\wh{A}}$) of $\Theta \subset \wt{A}$ (resp. $\wt{R} \subset \wt{A}$). 
By taking the push-forward of the above equality, we have 
\begin{align}\label{LG-eq-keywtA}
\mc O_{\wt{A}}(\Theta+\wt{R}) \sim (\s|_{\wt{A}})^{\ast}\mc O_{\ol{A}}(3).
\end{align}
Let $\wt{R}_{\wh{A'}}$ be the proper transform of $\wt{R} \subset \wt{A}$ on $\wh{A'}$. 
Since $\wh{A}=\Bl_{\Theta}\wt{A}$ and $\wh{A'}=\Bl_{\wt{R}}\wt{A}$,  $-\Theta_{\wh{A}}$ is ample over $\wt{A}$ and $-\wt{R}_{\wh{A'}}$ is ample over $\wt{A}$. 
By (\ref{LG-eq-keywtA}), $\wt{A'}$ is the $\Theta_{\wh{A}}$-flop of $\wt{A}$. 

(2) Since $\nu|_{\wh{A}}$ and $\s|_{\wt{A}}$ contract same divisors, 
$f|_{A} \colon A \to \ol{A}$ is also a small contraction. 
Since $-K_{\wt{A}} \sim 2H_{1}|_{V}$ by the adjunction, $f|_{A} \colon A \to \ol{A}$ is flopping. 
It follows from the construction that $p_{Y}|_{\wh{A}'}$ and $\s|_{\wt{A}}$ also contract same divisors. 
Thus $g|_{A'} \colon A' \to A$ is small. 


Let $\ol{\Theta}=\s(\Theta) \subset \ol{A}$ and $\Theta_{A} \subset A$ be the proper transform of $\ol{\Theta}$ on $A$. 
Then $\Theta_{A}=\Exc(f|_{\wh{V}})|_{A}$ and hence $-\Theta_{A}$ is $f|_{A}$-ample. 
Let $\wt{R}_{A'} \subset A'$ be the proper transform of $R$ on $A'$. 
Then $-\wt{R}_{A'}$ is $g|_{A'}$-ample. 
By (\ref{LG-eq-keywtA}), we obtain 
\begin{align*}
\mc O_{\ol{A}}(\ol{\Theta}+R) \sim \mc O_{\ol{A}}(3). 
\end{align*}
The same argument shows that $A'$ is the flop of $A$. 
\end{proof}

Let $\wh{X} \in |H_{1}|_{\wh{A}}|$ be a general element and 
\[X:=\nu(\wh{X}), V^{3}_{9}:=\mu(X), \wt{X}:=\pi(\wh{X}) \text{ and } \ol{X}=\s(\wt{X}).\]
Let $X' \subset A'$ be the proper transform of $X$ on $A'$. 
We may assume that $X$ and $\wh{X}$ are smooth. 
Since $A \to \ol{A} \gets A'$ is the flop and both of $X$ and $X'$ are the pullback of a general hyperplane section $\ol{X}$ of $\ol{A} \subset \P^{5}$, $X \to \ol{X} \gets X'$ is also a flop. 
In particular, $X'$ is also smooth by \cite[Theorem~6.15]{KM98}. 
Since $\vp_{Z^{+}}|_{X'} \colon X' \to \P^{1}$ is a relative hyperplane section of $\vp_{Z^{+}}|_{A'} \colon A' \to \P^{1}$, $\vp_{Z^{+}}|_{X'}$ is a sextic del Pezzo fibration: 
\[\xymatrix{
&\wh{X} \ar[rd]^{\pi|_{\wh{X}}} \ar[d]_{\nu|_{\wh{X}}}&&\ar[ld]_{\t|_{\wh{X}'}} \wh{X}' \ar[d]^{p_{Y}|_{\wh{X'}}}&& \\
&X\ar[ld]_{\mu|_{X}} \ar[rd]_{f|_{X}}&\wt{X}\ar[d]_{\s|_{\wt{X}}}&X'\ar[ld]^{g|_{X'}} \ar@{=}[r]&X'\ar[rd]^{\vp|_{X'}}& \\
V^{3}_{9}&&\ol{X}&&&\P^{1}
}\]
The existence of an example of No.14 is established by the following proposition. 
\begin{prop}\label{LG-prop-conclu}
\begin{enumerate}
\item For a general $\wh{X} \in |H_{1}|_{\wh{A}}|$, $V^{3}_{9}$ is smooth and $\mu|_{X} \colon X \to V^{3}_{9}$ is the blowing-up along $C$. 
\item $X$ is a weak Fano threefold of Picard rank $2$. 
\item The flop $X \dra X'$ is an Atiyah flop. 
The flopped curves consists of the disjoint union of $24$ sections and $3$ bisections of $\vp_{X'}$. 
\end{enumerate}
\end{prop}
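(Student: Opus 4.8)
The plan is to follow the proof of Proposition~\ref{KG2-prop-conclu} as closely as possible, the essential new feature being the degenerate geometry of $f$ over the three lines $l_1,l_2,l_3$, which is what produces bisections rather than only sections.

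For (1), since $H_1=(f^{13}_1)^{\ast}\mc O_{\P(V)}(1)$ is the pull-back of a very ample bundle, $H_1|_{\wh A}$ is globally generated, so a general $\wh X\in|H_1|_{\wh A}|$, and hence $X=\nu(\wh X)$, is smooth by Bertini. The image $V^{3}_{9}=\mu(X)$ is then the hyperplane section of $V^{4}_{9}$ cut out by a general hyperplane through $C=V^{4}_{9}\cap(\P^1)^3$. I would read off the blowing-up structure from the restriction to $X$ of the $\P^2$-bundle $E=\LF_{\a,\b}(1,3;V)\to(\P^1)^3$: over $C$ this restricts to a $\P^1$-bundle, and the exceptional divisor $E_C:=X\cap E$ has fibres with normal bundle $\mc O_{\P^1}(-1)$ in $X$. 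Nakano's criterion (equivalently \cite[Theorem~2.3]{Ando85}) then shows $V^{3}_{9}$ is smooth and $\mu|_X\colon X\to V^{3}_{9}$ is the blowing-up along the smooth elliptic curve $C$.

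For (2), I regard $X$ as the complete intersection in $\wt{\LG_{\a}(3,V)}$ cut out by two members of $|H_3|$ and one member of $|H_1|$ (the proper transforms of $\wh{\mathfrak{H}}$, $\wh A$, $\wh X$). From $-K_{\wt{\LG_{\a}(3,V)}}=4H_3-2E$ together with $H_1=H_3-E$ (Lemma~\ref{LG-lem-relonwtLG}), adjunction gives $-K_X\sim H_1|_X\sim (f|_X)^{\ast}\mc O_{\P(V)}(1)|_{\ol X}$. Hence $-K_X$ is nef and big, $f|_X$ is crepant, so $X$ is weak Fano; and $\rho(X)=\rho(V^{3}_{9})+1=2$ because $V^{3}_{9}$ is a prime Fano threefold of genus $9$ (Picard rank $1$).

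For (3), the flopping contraction is $f|_X\colon X\to\ol X$ with flop $X'$ (recorded just before the statement), and $\vp_{X'}=\vp_{Z^{+}}|_{X'}$ is the sextic del Pezzo fibration. By Lemma~\ref{LG-lem-Qi}(1) the map $f$ is a $\P^1$-fibration over $\P(V)\setminus(l_1\sqcup l_2\sqcup l_3)$ and has three-dimensional fibres $\Q^3$ over each $l_i$, so the flopping locus has two kinds of components. Away from the lines I argue exactly as in Proposition~\ref{KG2-prop-conclu}(1): restricting the third identity of Lemma~\ref{LG-lem-key} to $X$ yields $\Theta_X+E_C\sim 3H_1\sim-3K_X$ with $\Theta_X:=\Exc(f|_{\mathfrak{H}})|_X$, and for each such flopping line $\ell$ one has $H_1\cdot\ell=0$, $E_C\cdot\ell=E\cdot\ell=1$, hence $\Theta_X\cdot\ell=-1$ and $\ell$ is a $(-1)$-curve of the smooth surface $\Theta_X$, giving $\mc N_{\ell/X}\simeq\mc O_{\P^1}(-1)^{2}$ (an Atiyah curve). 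Via the identification with the flops of $\pi|_{\wh A}$ in Lemma~\ref{LG-lem-A}(1), these are counted by $c_2(\mc E|_{\wt{\P(V)}})^2\cdot H_1$, which equals $24$ using \eqref{LG-eq-chernG1} and \eqref{LG-eq-intwtPV}. Taking proper transforms, $\Theta_X^{+}\sim W_{Z^{+}}|_{X'}$ is a fibre of $\vp_{X'}$ with $\Theta_X^{+}\cdot\ell^{+}=1$, so each of these $24$ flopped curves is a section.

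Finally, over the lines: since $\ol X$ is a general hyperplane section of the quartic $\ol A\supset R\supset l_i$, the set $\ol X\cap l_i$ is a single point $x_i$, and $X\cap f^{-1}(x_i)=\Q^3\cap H_3\cap H_3$ is a conic contracted by $f|_X$; because $H_3\cdot(\text{conic})=2$ one computes $E_C\cdot(\cdot)=2$ and $\Theta_X^{+}\cdot(\cdot)^{+}=2$, so the corresponding flopped curve is a bisection of $\vp_{X'}$, and there are exactly three of them. The hard part will be precisely this local analysis over $l_1,l_2,l_3$: I must reconcile the asserted Atiyah ($(-1,-1)$) property of the flop with the bisection property, which forces the conic $\Q^3\cap H_3\cap H_3$ to degenerate in step with the two sheets of the double covering $C_i\to l_i$ of Proposition~\ref{LG-prop-R}(1). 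Establishing that $\ol X$ acquires only ordinary nodes there—so that the flop remains Atiyah while the factor $2$ survives—will require combining the quadric fibration $z_i\colon Z_i\to l_i$ of Proposition~\ref{LG-prop-quadspi} with the smoothness of $X'$ from \cite[Theorem~6.15]{KM98}, and this is the main obstacle of the proof.
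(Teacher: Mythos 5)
Your handling of (1), (2), and the $24$ curves away from $l_{1}\sqcup l_{2}\sqcup l_{3}$ matches the paper's own proof: adjunction plus the Lefschetz theorem for (2), the count $c_{2}(\mc E|_{\wt{\P(V)}})^{2}.H_{1}=24$, and the identification of a $\vp_{X'}$-fiber with $3(-K_{X'})-{\chi_{X}}_{\ast}E_{X}$ (your $\Theta_{X}^{+}$) for the section property. The genuine gap is exactly the point you flag and then leave open: you never prove that the flop is Atiyah along the three conics $\G_{i}=X\cap Q_{x_{i}}$, i.e.\ that $\mc N_{\G_{i}/X}\simeq\mc O_{\P^{1}}(-1)^{2}$, and the route you sketch for closing it would not work. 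First, there is no tension to ``reconcile'' between the $(-1,-1)$ property and the bisection property: the fiber degree of a flopped curve is computed purely from intersection numbers on $X$ (here $E_{X}.\G_{i}=2$, $-K_{X}.\G_{i}=0$), independently of the normal bundle type, so no degeneration of the conic ``in step with'' the double cover $C_{i}\to l_{i}$ is involved. Second, smoothness of both $X$ and $X'$ (the latter via \cite[Theorem~6.15]{KM98}) does not imply that the flop is Atiyah --- a $(0,-2)$-curve (Reid's pagoda) also has smooth threefolds on both sides of its flop --- so combining Proposition~\ref{LG-prop-quadspi} with that smoothness statement cannot yield the claim; nor does $\ol{X}$ having ordinary nodes need to be established by any such local analysis.

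The missing step is closed in the paper by a direct normal-bundle computation that your setup already makes available. The member of $|H_{1}|$ cutting out $X$ contains the whole fiber $Q_{x_{i}}\simeq\Q^{3}$ and meets $l_{i}$ transversally at $x_{i}$, while the two members of $|H_{3}|$ cut the conic $\G_{i}$ out of $Q_{x_{i}}$ transversally; feeding Lemma~\ref{LG-lem-Qi}~(3), namely $\mc N_{Q_{x_{i}}/\wt{\LG_{\a}(3,V)}}\simeq\ms S_{\Q^{3}}\oplus\mc O_{\Q^{3}}$, into the normal bundle sequences for $\G_{i}\subset Q_{x_{i}}\subset\wt{\LG_{\a}(3,V)}$ and $\G_{i}\subset X\subset\wt{\LG_{\a}(3,V)}$ gives $\mc N_{\G_{i}/X}\simeq\ms S_{\Q^{3}}|_{\G_{i}}$, and the restriction of the spinor bundle to a general smooth conic (the conics are general because $\wh{X}$ is) is $\mc O_{\P^{1}}(-1)^{\oplus 2}$. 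With this one computation inserted, the rest of your argument --- the intersection numbers and the conclusion that the flopping locus consists of $24$ sections and $3$ bisections --- goes through as written.
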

\begin{proof}
(1) We may assume that $\Exc(\mu|_{A}) \cap X$ is a smooth surface. 
Since $\Exc(\mu|_{A}) \cap X$ is a tautological divisor of the $\P^{2}$-bundle $\Exc(\mu|_{A}) \to C$, $\Exc(\mu|_{A}) \cap X$ is a $\P^{1}$-bundle. 
Hence we have the assertion (1). 

(2) By the Lefschetz hyperplane section theorem, we have $\rho(V^{3}_{9})=1$ and hence $\rho(X)=2$. $-K_{X}=H_{1}|_{X}$ holds from the adjunction. 
Thus we obtain the assertion (2). 

(3) First we consider the flop $\chi_{\wh{X}} \colon \wh{X} \dra \wh{X'}$. 
By Lemma~\ref{LG-lem-A}~(1), $\chi_{\wh{X}}$ is an Atiyah flop and 
the number of flopping curves is equal to $n:=c_{2}(\mc E|_{\wt{\P(V)}})^{2}.H_{1}=c_{2}(\mc E|_{\wt{\P(V)}})^{2}.\mu^{\ast}\mc O_{\P(V)}(1)$ on $\wt{\P(V)}$. 
By using (\ref{LG-eq-chernG1}) and (\ref{LG-eq-intwtPV}), we have 
\[n=4H_{1}(3H_{1}^{2}-3H_{1}\Exc(\s)+\Exc(\s)^{2})^{2}=36H_{1}^{5}+4H_{1}\Exc(\s)^{4}=24.\] 
Next we consider the flop $\chi_{X} \colon X \dra X'$. 
Note that $\nu|_{\wh{X}} \colon \wh{X} \to X$ is the blowing-up along $\bigsqcup_{i=1}^{3} Q_{i} \cap \wh{X}$. 
Since $\G_{i}:=Q_{i} \cap \wh{X}$ is the complete intersection of two members of $|H_{3}|_{Q_{i}}|$ and one member of $|H_{1}|_{Q_{i}}|$, $\G_{i}$ is a smooth conic on $\Q^{3}$. 
By Lemma~\ref{LG-lem-Qi}~(3), we obtain $\mc N_{\G_{i}/X} \simeq \mc O(-1)^{2}$. 
Thus the flopping locus of $\chi_{X}$ is the disjoint union of these three smooth rational curves $\bigsqcup_{i=1}^{3}\G_{i}$ and the 24 smooth rational curves $\bigsqcup_{j=1}l_{j}$. 

By taking $X$ generically, we may assume that $\G_{i}$ and $l_{j}$ meet $E_{X}:=\Exc(\mu|_{X})$ transversally. 
Since $E_{X}.\G_{i}=2$ and $E_{X}.l_{j}=1$ for each $i,j$, we obtain ${\chi_{X}}_{\ast}E_{X}.\G_{i}'=-2$ and ${\chi_{X}}_{\ast}E_{X}.l_{j}'=-1$, where $\G_{i}'$ and $l_{j}'$ denote the flopped curves. 
Since $3(-K_{X'})-{\chi_{X}}_{\ast}E_{X}$ is linearly equivalent to a $\vp_{X'}$-fiber, 
we obtain the assertion (3). 
\end{proof}

We complete the proof of Theorem~\ref{mainthm-wf} for No.14. 
\hfill$\square$


\section{Existence of an example of No.16}\label{24-section}

In this section, we will prove the existence of weak Fano sextic del Pezzo fibration of No.16. 
It is enough to prove the existence of such threefolds with $(-K_{X})^{3}=2$ and $h^{1,2}(X)=4$. 
We basically use the same method as in the author's paper \cite{Fukuoka17}. 

Let us consider the following lattice: 
\[\Lambda=\left(\begin{array}{cccc} 
&L&F&T\\
L&2&4&5\\
F&4&0&3\\
T&5&3&2\\
\end{array}\right).\]
It is easy to see that $\Lambda$ is an even lattice with index $(1,2)$. 
By \cite[Corollary~2.9]{Morr84}, there is a K3 surface $S$ whose N\'eron-Severi lattice is isometry to $\Lambda$. 
Moreover, we may assume that $L$ is nef (cf. \cite[Proposition~3.10]{BHPV}). 

By straightforward computation, we have the following lemma. 
\begin{lem}\label{24-lem-(-2)}
For an element $D \in \Pic(S)$, if $D^{2}=-2$, then $L.D \geq 5$. 
\end{lem}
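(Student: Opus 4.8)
The plan is to convert the statement into a finite arithmetic problem inside the rank-two negative-definite lattice $L^{\perp}\subset\Lambda$. First I would reduce to the effective case: since $D^{2}=-2$, Riemann--Roch on the K3 surface $S$ gives $\chi(\mc O_{S}(D))=2+\frac{1}{2}D^{2}=1>0$, so one of $\pm D$ is effective; as $L$ is nef this effective representative has nonnegative intersection with $L$. Because the quantity controlled below depends only on $(L.D)^{2}$, it suffices to prove $|L.D|\ge 5$ for every $D$ with $D^{2}=-2$, i.e. to exclude $m:=L.D\in\{0,1,2,3,4\}$ (and their negatives).

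Second, I would pass to the orthogonal complement of $L$. Set $E:=2D-mL$. A direct check gives $L.E=2m-mL^{2}=0$, so $E\in L^{\perp}$, while
\[
E^{2}=4D^{2}-4m(L.D)+m^{2}L^{2}=-8-2m^{2}.
\]
I would then compute $L^{\perp}$ explicitly: solving $L.D=0$ shows it is freely generated by $a:=F-2L$ and $b:=2T-5L$, with Gram matrix $\left(\begin{smallmatrix}-8&-14\\-14&-42\end{smallmatrix}\right)$ (negative definite, discriminant $140$). Writing $E=\alpha a+\beta b$, the equation to analyse becomes $8\alpha^{2}+28\alpha\beta+42\beta^{2}=8+2m^{2}$.

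Third --- and this is the essential point --- I would impose the integrality of $D$. From $2D=E+mL=(m-2\alpha-5\beta)L+\alpha F+2\beta T$ one reads off that $2D\in 2\Lambda$ forces $\alpha$ even and $\beta\equiv m\pmod 2$. Substituting $\alpha=2\gamma$ turns the equation into $32\gamma^{2}+56\gamma\beta+42\beta^{2}=8+2m^{2}$; viewing this as a quadratic in $\gamma$, its discriminant $-2240\beta^{2}+256m^{2}+1024$ must be a nonnegative perfect square, which for $|m|\le 4$ forces $\beta\in\{-1,0,1\}$. This leaves only a handful of cases, each dispatched by the parity constraint and an elementary discriminant check: the even-$m$ cases force $\beta=0$ and $16\gamma^{2}=4+m^{2}$, impossible for $m\le 4$; for $m=1$, $\beta=\pm1$ gives $4\gamma^{2}\pm7\gamma+4=0$ with negative discriminant; for $m=3$, $\beta=\pm1$ gives $4\gamma^{2}\pm7\gamma+2=0$ with discriminant $17$, not a square. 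Hence no $(-2)$-class has $|L.D|\le 4$, and the reduction to the effective representative yields $L.D\ge 5$.

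I expect the main obstacle to be conceptual rather than computational: one must not drop the divisibility (parity) condition coming from the integrality of $D$. Without it the bound already fails at $m=0$, since $a\in L^{\perp}$ has $a^{2}=-8=E^{2}|_{m=0}$, and it is precisely the requirement that $\alpha$ be even that rules this out. As a sanity check the bound is sharp: $D=4L-2F+T$ satisfies $D^{2}=-2$ and $L.D=5$.
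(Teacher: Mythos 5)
Your proof is correct --- the Gram matrix of $L^{\perp}$, the parity constraints, the discriminant bound, and the residual case checks are all right --- but after unwinding your change of variables it is essentially the paper's computation in different clothing. The paper writes $D=xL+yF+zT$, solves the pair $D^{2}=-2$, $L.D=n$ for $x,y$ by the quadratic formula, and requires $16+4n^{2}-35z^{2}$ to be a non-negative perfect square; non-negativity forces $z^{2}\in\{0,1\}$, the square condition then forces $z=0$ and $n=0$, and $y=\mp\frac{1}{2}$ gives the contradiction. Reading off coefficients in $2D=E+mL$ gives the dictionary $\beta=z$, $\alpha=2y$, $\gamma=y$: your parity condition ``$\alpha$ even'' is precisely the integrality of $y$, and your discriminant $-2240\beta^{2}+256m^{2}+1024$ equals $64\left(16+4m^{2}-35z^{2}\right)$, so the two case analyses agree term by term; the lattice-theoretic packaging via $L^{\perp}$ buys conceptual clarity but not a new argument. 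What your write-up genuinely adds is the handling of signs: the paper's inequality $16+4n^{2}-35z^{2}\leq 80-35z^{2}$ tacitly uses $n^{2}\leq 16$, so its argument really excludes $|L.D|\leq 4$, and indeed, since your $D=4L-2F+T$ has $D^{2}=-2$ and $L.D=5$, the class $-D$ shows the lemma is only correct when read as a statement about $|L.D|$ (equivalently, about an effective representative); your Riemann--Roch reduction makes this precise, and the $m=0$ example $E=a$ showing the parity constraint cannot be dropped is a worthwhile sanity check that the paper does not record.
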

\begin{proof}
For each $D \in \Pic(S)$, there exist integers $x,y,z$ such that $D \sim xL+yF+zT$. 
Then we have $D^{2}=2x^{2}+8xy+10xz+6yz+2z^{2}$ and $L.D=2x+4y+5z$.  
If we assume that $D^{2}=-2$ and $n:=L.D \leq 4$, then we have 
\[x=\frac{2n-3z\pm \sqrt{16+4n^{2}-35z^{2}}}{4} \text{ and } y=\frac{-7z \mp \sqrt{16+4n^{2}-35z^{2}}}{8}.\]
If $n \leq 4$, then $16+4n^{2}-35z^{2} \leq 80-35z^{2}$ and hence $z^{2} \in \{0,1\}$. 
Since $16+4n^{2}-35z^{2}$ is a square number, we have $z=0$ and $n=0$. 
Then we have $y=\mp \frac{1}{2}$, which is a contradiction. 
Hence we obtain the assertion. 
\end{proof}

Let us study positivity of some divisors on $S$ by using \cite[Theorem~2.2]{Fukuoka17}, which is just a consequence of Saint-Donat's results \cite{Saint-Donat74} and Reider's method \cite{Rei88}. 

\begin{lem}\label{24-lem-divs}
\begin{enumerate}
\item $L$ is ample and $|L|$ is base point free. 
\item The complete linear systems $|T|$, $|F|$ and $|2L+F-T|$ are base point free. 
In particular, we may assume $T$ and $F$ are irreducible and smooth. 
\item $|L+F|$ is very ample. 
\end{enumerate}
\end{lem}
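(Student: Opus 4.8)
The plan is to run each of the three parts through \cite[Theorem~2.2]{Fukuoka17} (that is, through Saint-Donat's and Reider's criteria), reducing every positivity statement to a finite lattice computation controlled by Lemma~\ref{24-lem-(-2)}. First I would record the intersection numbers $L^2=2$, $F^2=0$, $T^2=2$, $L.F=4$, $L.T=5$, $F.T=3$, from which $(L+F)^2=10$, $(2L+F-T)^2=0$ and $L.(2L+F-T)=3$. Since $L$ is nef and $L.D>0$ for each $D\in\{F,T,2L+F-T\}$, Riemann-Roch ($\chi(\mc O(D))\ge 2$) forces each such $D$ to be effective, because if $-D$ were effective then $L.D\le 0$.

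For (1), I would prove $L$ ample by Nakai-Moishezon: $L^2=2>0$, and for an irreducible curve $C$ the Hodge index theorem gives $L.C>0$ whenever $C^2\ge 0$, while $C^2=-2$ forces $L.C\ge 5>0$ by Lemma~\ref{24-lem-(-2)}. Base-point-freeness of $|L|$ then follows: were $\Bs|L|\ne\emp$, Saint-Donat's structure theorem would write $L\sim 2E+\Gamma$ with $E^2=0$, $\Gamma$ a $(-2)$-curve, and $L.\Gamma=L^2/2-1=0$, contradicting ampleness. The same criterion shows that every irreducible curve $E$ with $E^2=0$ satisfies $L.E\ge 2$; I will reuse this in (3).

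For the nefness half of (2), the point is that an irreducible curve $\Gamma$ with $D.\Gamma<0$ (for $D$ effective) must be a $(-2)$-curve component of $D$, whence $5\le L.\Gamma\le L.D$ by Lemma~\ref{24-lem-(-2)}. For $F$ and $2L+F-T$ this is impossible because $L.F=4$ and $L.(2L+F-T)=3$ are both $<5$, so both are nef. The delicate case is $T$, where $L.T=5$ is exactly the boundary: such a $\Gamma$ would satisfy $L.\Gamma=5$, so I would enumerate all $(-2)$-classes of $L$-degree $5$ — a short Diophantine check in $\Lambda$ leaving only $4L-2F+T$ and $L+2F-T$, both of which meet $T$ positively ($T$-degrees $16$ and $9$) — and conclude $T$ is nef. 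Given nefness, $|F|$ and $|2L+F-T|$ are base-point-free pencils (nef primitive classes of square $0$), while $|T|$ is base-point-free since otherwise Saint-Donat gives $T\sim 2E+\Gamma$ with $L.T=2L.E+L.\Gamma\ge 2+5=7>5$, a contradiction; the general members are then smooth irreducible of genus $1$, $1$, and $2$ by Bertini.

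For (3), $L+F$ is ample (ample plus nef) with $(L+F)^2=10$, so Reider applies: $|L+F|$ is very ample unless there is an effective $E$ with $E^2=0$ and $(L+F).E\in\{1,2\}$. Since $L.E\ge 2$ by (1) and $F.E\ge 0$ by the nefness established in (2), one has $(L+F).E\ge 2$, so the value $1$ is excluded, and the value $2$ forces $L.E=2$, $F.E=0$; then $E$ and $F$ are orthogonal isotropic classes, hence proportional, giving $E=\tfrac12 F\notin\Lambda$, a contradiction. Thus $|L+F|$ is very ample. The main obstacle is the boundary case $L.T=5$ in (2): unlike $F$ and $2L+F-T$, nefness of $T$ cannot be read off from a degree bound alone and genuinely requires the explicit classification of the $(-2)$-classes of $L$-degree $5$.
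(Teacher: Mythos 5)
Your proposal is correct, and all of its lattice computations check out (the two $(-2)$-classes of $L$-degree $5$ are indeed $4L-2F+T$ and $L+2F-T$, with $T$-degrees $16$ and $9$), but it takes a genuinely different route from the paper in each part. For (2), the paper never discusses nefness: it shows each $D\in\{T,F,2L+F-T\}$ is \emph{movable} by combining $h^{0}(\mc O_{S}(D))\geq 2$ with the facts that the fixed part is a sum of $(-2)$-curves (each of $L$-degree $\geq 5$ by Lemma~\ref{24-lem-(-2)}, via \cite[Lemma~2.3]{Fukuoka17}) and that the nonzero moving part has $L$-degree $\geq 1$ since $L$ is ample, so a nonzero fixed part would force $L.D\geq 6>5$; then \cite[Theorem~2.2]{Fukuoka17} converts movability into base point freeness. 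This argument is uniform in $D$ and silently absorbs the boundary case $L.T=5$ that you flag as the main obstacle, because the moving part always consumes at least one unit of the degree budget; your nefness-first route pays for that case with the enumeration of $(-2)$-classes of degree $5$ (which, incidentally, you could avoid: $T.\Gamma<0$ forces $\Gamma\leq T$ and $L.\Gamma\geq 5=L.T$, hence $T=\Gamma$ by ampleness of $L$, absurd since $T^{2}=2\neq -2$). For (3), the paper excludes the class with $D^{2}=0$, $L.D=2$, $F.D=0$ by writing $D=xL+yF+zT$ and solving to get $y=\tfrac{1}{2}$, whereas your signature argument (orthogonal isotropic classes in a lattice of signature $(1,2)$ are proportional, so $E=\tfrac{1}{2}F\notin\Lambda$) is cleaner and coordinate-free; just state the input $L.E\geq 2$ for arbitrary effective isotropic $E$, not only irreducible ones, which is what the Saint-Donat criterion actually gives and is how the paper uses it. For (1), the paper simply cites Theorem~2.2 to get $3L$ very ample, hence $L$ ample, while your Nakai--Moishezon plus Saint-Donat argument is more self-contained and has the added benefit of making the base point freeness of $|L|$ explicit rather than leaving it implicit in the cited theorem.
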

\begin{proof}

(1) 
By Lemma~\ref{24-lem-(-2)}, there are no divisors $D$ satisfying $L.D=0$ and $D^{2}=-2$. 
Then it follows from \cite[Theorem~2.2]{Fukuoka17} that $3L$ is very ample.  Hence $L$ is ample. 


(2) By \cite[Theorem~2.2]{Fukuoka17}, it suffices to show that $T$ (resp. $F$, $2L+F-T$) is movable. 
Set $D=T$, $F$ or $2L+F-T$. 
Since $D^{2} \geq 0$ and $5 \geq L.D>0$ for each $D$, we have $h^{0}(\mc O_{S}(D)) \geq 2$. 
Let $N$ be the fixed part of $|T|$ and $M$ the movable part. 
Then $N$ is a union of $(-2)$-curves on $S$ by \cite[Lemma~2.3]{Fukuoka17}. 
Since $M \neq 0$ and $L.M>0$ by (1), we have a $(-2)$-curve $C$ with $L.C \leq 4$, which contradicts to Lemma~\ref{24-lem-(-2)}. 

(3) Note that $L+F$ is ample by (1) and (2) and $(L+F)^{2}=10$. 
By \cite[Theorem~2.2]{Fukuoka17}, it suffices to check that there is no $D \in \Pic(S)$ such that $D^{2}=0$ and $(L+F).D \in \{1,2\}$. 
If such a $D$ exists, then $D$ is effective since $L+F$ is ample. 
Since $|L|$ and $|F|$ are base point free, we have $L.D \neq 1$ and $F.D \neq 1$ by \cite[Theorem~2.2]{Fukuoka17}. 
Thus we have $L.D = 2$ and $F.D=0$. 
Take the integers $x,y,z$ such that $D=xL+yF+zT$. 
By solving $D^{2}=0$, $L.D=2$, and $F.D=0$, we have $x=z=0$ and $y=\frac{1}{2}$, which is impossible. We complete the proof. 
\end{proof}
Let $\pi \colon S \to \P^{2}$ be the morphism given by the complete linear system $|L|$. 
Since $L$ is ample by Lemma~\ref{24-lem-divs}~(1), $\pi$ is a finite flat morphism of degree $2$. 
Set 
\begin{align*}
\mc E:=\pi_{\ast}\mc O_{S}(L+F).
\end{align*}
Then $\mc E$ is a locally free sheaf of rank $2$. 
Using the Grothendieck-Riemann-Roch theorem for $\pi$, we obtain 
\begin{align}\label{24-eq-chern}
c_{1}(\mc E)=3 \text{ and } c_{2}(\mc E)=4.
\end{align}

Let $p \colon \P_{\P^{2}}(\mc E) \to \P^{2}$ be the projection and $\xi$ the tautological divisor.

\begin{lem}[{\cite[Theorem~1]{Ohno16v7}}]\label{24-lem-011}
The vector bundle $\mc E$ on $\P^{2}$ fits in the following exact sequence: 
\[0 \to \mc O(-1) \to \mc O(1)^{\oplus 2} \oplus \mc O \to \mc E \to 0.\]
\end{lem}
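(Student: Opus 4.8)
The plan is to read off the resolution from the cohomology of the twists $\mc E(t)$, which I can compute on the K3 surface $S$ because $\pi$ is finite. First I would note the projection-formula identity $\mc E(t)=\pi_{\ast}\mc O_{S}((t+1)L+F)$, so that $H^{i}(\P^{2},\mc E(t))=H^{i}(S,\mc O_{S}((t+1)L+F))$ for all $i,t$. Applying Riemann--Roch on the K3 surface, $\chi(\mc O_{S}(D))=2+\tfrac12 D^{2}$, together with the positivity statements of Lemma~\ref{24-lem-divs} and Kodaira vanishing, I would record three values: $h^{0}(\mc E)=h^{0}(\mc O_{S}(L+F))=2+\tfrac12(L+F)^{2}=7$ (using that $L+F$ is very ample, so $h^{1}=h^{2}=0$); $h^{0}(\mc E(-1))=h^{0}(\mc O_{S}(F))=2$ (the base-point-free elliptic pencil $|F|$, with $F^{2}=0$); and $h^{0}(\mc E(-2))=h^{0}(\mc O_{S}(F-L))=0$, the last because $F\cdot(F-L)=-4<0$ forces $F-L$ to be non-effective as $F$ is nef.

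Next I would build the asserted map. The pencil $|F|$ gives two sections spanning $H^{0}(\mc E(-1))$, hence a morphism $\mc O(1)^{\oplus 2}\to\mc E$; adjoining a general $\tau\in H^{0}(\mc E)$ produces $\beta\colon\mc O(1)^{\oplus 2}\oplus\mc O\to\mc E$. Granting that $\beta$ is surjective, its kernel is the kernel of a surjection of vector bundles, hence a line bundle $\mc O(a)$; comparing first Chern classes with (\ref{24-eq-chern}) gives $a=2-c_{1}(\mc E)=2-3=-1$, and then $c_{2}$ is automatically consistent with (\ref{24-eq-chern}), yielding exactly $0\to\mc O(-1)\to\mc O(1)^{\oplus 2}\oplus\mc O\to\mc E\to 0$.

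The hard part will be the surjectivity of $\beta$. Writing $\sigma_{1},\sigma_{2}$ for the two sections of $\mc E(-1)$, their wedge is a nonzero section $\sigma_{1}\wedge\sigma_{2}\in H^{0}(\det\mc E(-2))=H^{0}(\mc O(1))$ --- nonzero precisely because $|F|$ is base-point-free, so $\sigma_{1},\sigma_{2}$ are not everywhere proportional --- cutting out a line $\ell\subset\P^{2}$. Off $\ell$ the two sections already generate $\mc E$, so the whole issue is local along $\ell$: I must check that a general $\tau$ is nowhere contained in the line $\langle\sigma_{1}\rangle$ on $\ell$, i.e.\ that the restriction $H^{0}(\mc E)\to H^{0}(\ell,\mc E|_{\ell}/\langle\sigma_{1}\rangle|_{\ell})$ reaches a nowhere-vanishing section. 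I expect to carry this out by identifying $\ell$ via the double cover as the locus where the two sheets of $\pi$ lie in a common fibre of the elliptic pencil $|F|$, computing $\mc E|_{\ell}$ and the sub-line-bundle $\langle\sigma_{1}\rangle|_{\ell}$ explicitly, and invoking the very ampleness of $L+F$ from Lemma~\ref{24-lem-divs}~(3) to guarantee enough sections. This restriction computation along $\ell$ is the main obstacle; once it is settled the resolution follows. Alternatively one may simply invoke \cite[Theorem~1]{Ohno16v7}, as we do.
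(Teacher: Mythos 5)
Your cohomology computations ($h^{0}(\mc E)=7$, $h^{0}(\mc E(-1))=2$, $h^{0}(\mc E(-2))=0$ via the projection formula for the finite map $\pi$) and the Chern-class bookkeeping are correct, but the proposal has a genuine gap at exactly the point you flag as ``the main obstacle'': the surjectivity of $\beta$ is never proved, only planned, and the plan is not a routine verification. Along the degeneracy line $\ell$ the image of $\mc O(1)^{\oplus 2}\to\mc E|_{\ell}$ is a line subbundle (the two sections have no common zero, because a common zero at $x$ would force $\pi^{-1}(x)\subset F_{1}\cap F_{2}$, and distinct members of the base-point-free pencil $|F|$ with $F^{2}=0$ are disjoint), so $\mc E|_{\ell}$ sits in an extension $0\to I\to\mc E|_{\ell}\to Q\to 0$ of line bundles with $\deg I+\deg Q=3$. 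A section $\tau$ completes $\beta$ to a surjection only if its image in $Q$ is nowhere zero, which forces $Q\simeq\mc O_{\ell}$: if $\deg Q>0$, every section of $Q$ on $\ell\simeq\P^{1}$ vanishes somewhere, and then \emph{no} choice of $\tau$ works. So your ``restriction computation along $\ell$'' must in particular establish $\deg I=3$, i.e.\ $\mc E|_{\ell}\simeq\mc O(3)\oplus\mc O$, and nothing in the proposal does this; as it stands the argument could just as well terminate in an impossibility.

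Second, the fallback sentence ``alternatively one may simply invoke \cite[Theorem~1]{Ohno16v7}'' is not available as written: that theorem classifies \emph{nef} vector bundles on projective space with first Chern class three, and you never verify that $\mc E$ is nef. Supplying the nefness is precisely the content of the paper's short proof: the canonical surjection $\pi^{\ast}\mc E\to\mc O_{S}(L+F)$ embeds $S$ in $\P_{\P^{2}}(\mc E)$ as a divisor with $S\sim 2\xi$ (double cover of a K3), so for an irreducible curve $C\not\subset S$ one has $2\xi.C=S.C\geq 0$, while for $C\subset S$ one has $\xi.C=(L+F).C>0$ by Lemma~\ref{24-lem-divs}; hence $\xi$ is nef, i.e.\ $\mc E$ is nef, and then (\ref{24-eq-chern}) together with Ohno's classification yields the resolution. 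If you add this nefness argument, your citation route closes immediately; if instead you want the hands-on route, you must actually finish the splitting-type computation along $\ell$.
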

\begin{proof}
By the surjection $\pi^{\ast}\mc E=\pi^{\ast}\pi_{\ast}\mc O_{S}(L+F) \to \mc O_{S}(L+F)$, 
$S$ is embedded in $\P_{\P^{2}}(\mc E)$ as a divisor. 
Since $p|_{S}=\pi$ is a double covering and $S$ is a K3 surface, we have $S \sim 2\xi$. 
Since $\xi|_{S}=L+F$ is ample by Lemma~\ref{24-lem-divs}, $\xi$ is nef. 
Then (\ref{24-eq-chern}) and Ohno's classification \cite[Theorem~1]{Ohno16v7} implies the assertion. 
\end{proof}

Let $f \colon \F:=\P_{\P^{2}}(\mc O \oplus \mc O(1)^{2}) \to \P^{2}$ be the projection and $\xi_{\F}$ the tautological divisor. 
Let $l$ be a line of $\P^{2}$. 
Then by Lemma~\ref{24-lem-011}, $\P_{S}(\mc E)$ is embedded into $\F$ as a member of $|\xi_{\F}-f^{\ast}l|$ on $\F$. 
Hence $S$ is a complete intersection of a member of $|\xi_{\F}+f^{\ast}l|$ and $|2\xi_{\F}|$ in $\F$. 
Under this embedding, the restriction $\xi_{\F}|_{S}$ is linearly equivalent to $L+F$. 
The complete linear system $|\xi_{\F}|$ gives a morphism $g \colon \F \to V$, where $V \subset \P^{6}$ is the cone of $\P^{1} \times \P^{2} \subset \P^{5}$. 
The exceptional locus $P_{0}:=\Exc(g)$ is the section of $f$ corresponding to $\mc O \oplus \mc O_{\P^{2}}(1)^{2} \to \mc O$. 

\begin{lem}\label{24-lem-Q}
\begin{enumerate}
\item The morphism $g|_{S} \colon S \to g(S)$ is isomorphic. 
\item There is a smooth member $Q \in |2\xi_{\F}|$ which contains $S$ and is away from $\Exc(g)$. 
\end{enumerate}
\end{lem}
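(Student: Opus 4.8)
The plan is to exploit the two facts recorded just before the lemma, namely that $\xi_{\F}|_{S} \sim L+F$ and that $S$ is the complete intersection $\P_{\P^{2}}(\mc E)\cap Q'$ with $\P_{\P^{2}}(\mc E)\in|\xi_{\F}+f^{\ast}l|$ (the divisor cut by the composite $\mc O(-1)\to \mc O(1)^{\oplus 2}\oplus\mc O\to\mc O_{\F}(\xi_{\F})$ of Lemma~\ref{24-lem-011}) and $Q'\in|2\xi_{\F}|$, together with the very ampleness of $|L+F|$ from Lemma~\ref{24-lem-divs}~(3). For assertion~(1) I would show that $g|_{S}$ is given by the \emph{complete} system $|L+F|$, i.e.\ that the restriction $H^{0}(\F,\xi_{\F})\to H^{0}(S,\xi_{\F}|_{S})=H^{0}(S,L+F)$ is surjective. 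Factoring it as $H^{0}(\F,\xi_{\F})\to H^{0}(\P_{\P^{2}}(\mc E),\xi)\to H^{0}(S,L+F)$, the first map is surjective since $\xi_{\F}-\P_{\P^{2}}(\mc E)=-f^{\ast}l$ gives $H^{1}(\F,-f^{\ast}l)=H^{1}(\P^{2},\mc O(-1))=0$, and the second is surjective since $S\sim 2\xi$ on $\P_{\P^{2}}(\mc E)$ gives $H^{1}(\P_{\P^{2}}(\mc E),\xi-S)=H^{1}(\P_{\P^{2}}(\mc E),-\xi)=0$ (the relative $\mc O(-1)$ has vanishing pushforwards). Hence $g|_{S}=\Phi_{|L+F|}$ is a closed embedding, which proves~(1).

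In particular $g|_{S}$ is injective, whereas $g$ contracts $P_{0}=\Exc(g)$ to the vertex of the cone $V$; so at most one point of $S$ can lie on $P_{0}$. On the other hand $\xi_{\F}|_{P_{0}}\simeq\mc O_{\P^{2}}$ and $f^{\ast}l|_{P_{0}}\simeq\mc O_{\P^{2}}(1)$, so inside $P_{0}\simeq\P^{2}$ the surface $S=\P_{\P^{2}}(\mc E)\cap Q'$ cuts out the intersection of a line (the restriction of $\P_{\P^{2}}(\mc E)$) with the zero locus of a constant (the restriction of $Q'$). Such an intersection is either empty or at least a curve, so the injectivity bound forces $S\cap P_{0}=\emp$, and moreover that the constant is nonzero, i.e.\ $Q'\cap P_{0}=\emp$. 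This disjointness is what makes~(2) achievable, and it will be reused below.

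For assertion~(2) I would analyse $\Lambda:=|\mc I_{S}(2\xi_{\F})|$. Since $S=\P_{\P^{2}}(\mc E)\cap Q'$, the Koszul resolution shows every member has the form $Q=t\,s_{\P_{\P^{2}}(\mc E)}+\lambda f_{Q'}$ with $t\in H^{0}(\F,\xi_{\F}-f^{\ast}l)$ and $\lambda\in\C$, where $s_{\P_{\P^{2}}(\mc E)}$ and $f_{Q'}$ are the defining sections. I must arrange three conditions: $Q$ smooth along $S$, $Q$ smooth off $S$, and $Q\cap P_{0}=\emp$. Smoothness along $S$ holds whenever $\lambda\neq 0$, since at $p\in S$ one has $dQ|_{p}=t(p)\,ds_{\P_{\P^{2}}(\mc E)}|_{p}+\lambda\,df_{Q'}|_{p}$ and the transversality of $\P_{\P^{2}}(\mc E)$ and $Q'$ along the smooth surface $S$ makes $ds_{\P_{\P^{2}}(\mc E)}|_{p},df_{Q'}|_{p}$ linearly independent, whence $dQ|_{p}\neq 0$. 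Disjointness from $P_{0}$ holds whenever the $\mc O_{\P^{2}}$-component of $Q$ (its restriction to $P_{0}$) is nonzero, an open nonempty condition because that component is already nonzero for $f_{Q'}$. The crux — and the step I expect to be the main obstacle — is smoothness off $S$, for which I would invoke Bertini after verifying $\Bs\Lambda=S$. The key computation is $\Bs|\xi_{\F}-f^{\ast}l|=P_{0}$: the space $H^{0}(\F,\xi_{\F}-f^{\ast}l)$ is spanned by the two sections coming from the $\mc O(1)^{\oplus 2}$ summand, and these vanish precisely along $P_{0}=\P(\mc O)$. Combined with the shape of $\Lambda$ this gives $\Bs\Lambda=S\cup(Q'\cap\Bs|\xi_{\F}-f^{\ast}l|)=S\cup(Q'\cap P_{0})=S$, using $Q'\cap P_{0}=\emp$. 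Once $\Bs\Lambda=S$ is established, Bertini shows a general member of $\Lambda$ is smooth away from $S$, and imposing the open nonempty conditions $\lambda\neq 0$ and ``nonzero $\mc O_{\P^{2}}$-component'' produces a smooth $Q\in|2\xi_{\F}|$ with $S\subset Q$ and $Q\cap P_{0}=\emp$, proving~(2). The delicate part throughout is reconciling smoothness \emph{along} the base locus $S$ (which forces $\lambda\neq 0$) with the Bertini smoothness \emph{off} $S$, and it is exactly the identity $\Bs\Lambda=S$ that makes this possible.
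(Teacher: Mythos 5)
Your proposal is correct and follows essentially the same route as the paper's proof: for (1) surjectivity of $H^{0}(\F,\mc O(\xi_{\F})) \to H^{0}(S,\mc O_{S}(L+F))$ combined with the very ampleness of $|L+F|$, and for (2) the analysis of $\Lambda=|\mc I_{S/\F}(2\xi_{\F})|$ through the sub-system $|\xi_{\F}-f^{\ast}l|+\E$, the identity $\Bs\Lambda=S$, Bertini away from $S$, and the triviality of $\xi_{\F}|_{\Exc(g)}$ to force disjointness from $\Exc(g)$. The only differences are organizational: the paper obtains the needed $H^{1}$-vanishing and $h^{0}(\mc I_{S/\F}(2\xi_{\F}))=3$ from the Koszul resolution of $\mc I_{S/\F}$ where you factor the restriction through $\P_{\P^{2}}(\mc E)$, and it deduces smoothness of $Q$ along $S$ from the scheme-theoretic equality $Q\cap\E=S$ rather than your transversality computation of differentials, while your explicit trichotomy argument showing $S$ (and hence $Q'$) avoids $\Exc(g)$ spells out a step the paper states tersely as ``By (1), $S$ is away from $\Exc(g)$''.
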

\begin{proof}
(1) 
Since $|L+F|$ is very ample by Lemma~\ref{24-lem-divs}, it suffices to check that $H^{0}(\F,\mc O(\xi_{\F})) \to H^{0}(S,\mc O_{S}(L+F))$ is surjective, 
which is equivalent to say the vanishing $H^{1}(\F,\mc I_{S/\F} \otimes \mc O(\xi_{\F}))=0$, which follows from the exact sequence
\begin{align}\label{24-ex-CI}
0 \to \mc O(-3\xi_{\F}-f^{\ast}l) \to \mc O(-\xi_{\F}-f^{\ast}l) \oplus \mc O(-2\xi_{\F}) \to \mc I_{S/\F} \to 0.
\end{align} 

(2) 
We set $\Lambda:=|\mc I_{S/\F}(2\xi_{\F})|$. 
It follows from the exact sequence (\ref{24-ex-CI}) that $h^{0}(\F,\mc I_{S/\F}(2\xi_{\F}))=3$ and hence $\dim \Lambda=2$. 
From our construction, $S$ is contained in $\E:=\P_{\P^{2}}(\mc E) \in |\xi_{\F}+f^{\ast}l|$. 
Then we have $|\xi_{\F}-f^{\ast}l|+ \E \subset \Lambda$. 
Note that $\dim |\xi_{\F}-f^{\ast}l|=1$ and $\Bs|\xi_{\F}-f^{\ast}l|=\Exc(g)$. 
Then a general member $Q \in \Lambda$ does not contain $\E$ and we have $\Bs \Lambda \subset \E+\Exc(g)$. 
Now $Q \cap \E$ is purely dimension $2$ and contains $S$. 
Hence we have $Q \cap \E=S$ by (\ref{24-ex-CI}). 
Hence $Q$ is smooth along $S$. 

Let us show that $\Bs\Lambda=S$. 
Let $Q \in \Lambda$ be a member such that $Q \cap \E=S$. 
By (1), $S$ is away from $\Exc(g)$. 
Hence $Q$ does not contain $\Exc(g)$, which implies 
$Q$ is away from $\Exc(g)$ 
since $Q$ is a member of $|2\xi_{\F}|$. 
Thus we have $\Bs \Lambda \subset \E$. 
Since $Q \cap \E=S$, we have $\Bs \Lambda =S$. 

Since $\dim \Lambda=2$ and $\Bs\Lambda=S$, a general member $Q$ of $\Lambda$ is integral and smooth away from $S$. 
Since $Q$ is smooth along $S$, $Q$ is a smooth threefold. 
We complete the proof. 
\end{proof}

\begin{proof}[Proof of the existence of an example of No.16]
Let $Q \in |\mc O_{\F}(2\xi_{\F}) \otimes \mc I_{S}|$ be a general smooth member as in Lemma~\ref{24-lem-Q}. 
Under the flip $\F \dra \F'=\P_{\P^{1}}(\mc O_{\P^{1}} \oplus \mc O_{\P^{1}}(1)^{\oplus 3})$, the proper transform of $Q$ on $\F'$ is isomorphic to $Q$. 
Let $q \colon Q \to \P^{1}$ be the induced morphism. 
Then $q$ is a quadric fibration. 
The threefold $Q$ is actually a Fano threefold of type \cite[No.24 in Table~2]{MM81}. 
Now $Q$ contains $S$ as an anti-canonical member. 
When $F_{Q}$ denotes a fiber of $q \colon Q \to \P^{1}$ and $L_{Q}$ the pull-back of a line on $\P^{2}$ under $f|_{Q} \colon Q \to \P^{2}$, 
it follows from the construction that $-K_{Q}=2L_{Q}+F_{Q}$, $L_{Q}|_{S}=L$, and $F_{Q}|_{S}=F$. 
The curve $T \subset S$ is a trisection of this quadric fibration $q \colon Q \to \P^{1}$. 
By Lemma~\ref{24-lem-divs}~(2), $-K_{\Bl_{T}Q}$ is nef (but not big). 
Using \cite[Proposition~3.5]{Fukuoka17}, 
we can conclude that there exist an isomorphism or a flop $\Bl_{T}Q \dra \Bl_{C_{0}}X$ over $\P^{1}$, where $\vp \colon X \to \P^{1}$ is a sextic del Pezzo fibration with a section $C_{0}$ such that $-K_{X}.C_{0}=0$ and $(-K_{X})^{3}=2$. 
Since $-K_{X}.C_{0}=0$ and $-K_{\Bl_{C_{0}}X}$ is nef, so is $-K_{X}$. 
Thus $X$ is a weak Fano sextic del Pezzo fibration. 
Since $(-K_{Q})^{3}=24$, we have $h^{1,2}(Q)=2$ by \cite[Lemma~3.1]{Fukuoka18}. 
Since $T$ is a smooth curve in a K3 surface $S$ such that $(T^{2})_{S}=2$, we have $g(T)=2$. 
Then it follows from the construction that $h^{1,2}(X)=4$. 
We complete the proof. 
\end{proof}


\begin{thebibliography}{10}

\bibitem{AHTVA16}
N.~Addington, B.~Hassett, Y.~Tschinkel, and A.~V{\'a}rilly-Alvarado.
\newblock Cubic fourfolds fibered in sextic del pezzo surfaces.
\newblock {\em arXiv:1606.05321v3}, 2016.

\bibitem{Ando85}
T.~Ando.
\newblock On extremal rays of the higher-dimensional varieties.
\newblock {\em Invent. Math.}, 81(2):347--357, 1985.

\bibitem{AW98}
M.~Andreatta and J.~A. Wi{\'s}niewski.
\newblock On contractions of smooth varieties.
\newblock {\em J. Algebraic Geom.}, 7(2):253--312, 1998.

\bibitem{ACM17}
M.~Arap, J.~Cutrone, and N.~Marshburn.
\newblock On the existence of certain weak {F}ano threefolds of {P}icard number
  two.
\newblock {\em Math. Scand.}, 120(1):68--86, 2017.

\bibitem{BHPV}
W.~P. Barth, K.~Hulek, C.~A.~M. Peters, and A.~Van~de Ven.
\newblock {\em Compact complex surfaces}, volume~4 of {\em Ergebnisse der
  Mathematik und ihrer Grenzgebiete. 3. Folge. A Series of Modern Surveys in
  Mathematics [Results in Mathematics and Related Areas. 3rd Series. A Series
  of Modern Surveys in Mathematics]}.
\newblock Springer-Verlag, Berlin, second edition, 2004.

\bibitem{BeauvilleBook}
A.~Beauville.
\newblock {\em Complex algebraic surfaces}, volume~34 of {\em London
  Mathematical Society Student Texts}.
\newblock Cambridge University Press, Cambridge, second edition, 1996.
\newblock Translated from the 1978 French original by R. Barlow, with
  assistance from N. I. Shepherd-Barron and M. Reid.

\bibitem{CLM17}
J.~W. Cutrone, M.~A. Limarzi, and N.~A. Marshburn.
\newblock A weak fano threefold arising as a blowup of a curve of genus 5 and
  degree 8 on $\mathbb{P}^{3}$.
\newblock {\em arXiv preprint arXiv:1712.05295}, 2017.

\bibitem{CM13}
J.~W. Cutrone and N.~A. Marshburn.
\newblock Towards the classification of weak {F}ano threefolds with {$\rho=2$}.
\newblock {\em Cent. Eur. J. Math.}, 11(9):1552--1576, 2013.

\bibitem{Fujita80}
T.~Fujita.
\newblock On the structure of polarized manifolds with total deficiency one.
  {I}.
\newblock {\em J. Math. Soc. Japan}, 32(4):709--725, 1980.

\bibitem{Fukuoka17}
T.~Fukuoka.
\newblock On the existence of almost {F}ano threefolds with del {P}ezzo
  fibrations.
\newblock {\em Math. Nachr.}, 290(8-9):1281--1302, 2017.

\bibitem{Fukuoka18}
T.~Fukuoka.
\newblock Relative linear extensions of sextic del pezzo fibrations over
  curves.
\newblock {\em arXiv:1803.01264v2}, 2018.

\bibitem{HP13}
R.~Hartshorne and C.~Polini.
\newblock Divisor class groups of singular surfaces.
\newblock {\em Trans. Amer. Math. Soc.}, 367(9):6357--6385, 2015.

\bibitem{IlievRanestad05}
A.~Iliev and K.~Ranestad.
\newblock Geometry of the {L}agrangian {G}rassmannian {${\bf LG}(3,6)$} with
  applications to {B}rill-{N}oether loci.
\newblock {\em Michigan Math. J.}, 53(2):383--417, 2005.

\bibitem{JPR05}
P.~Jahnke, T.~Peternell, and I.~Radloff.
\newblock Threefolds with big and nef anticanonical bundles. {I}.
\newblock {\em Math. Ann.}, 333(3):569--631, 2005.

\bibitem{JPR11}
P.~Jahnke, T.~Peternell, and I.~Radloff.
\newblock Threefolds with big and nef anticanonical bundles {II}.
\newblock {\em Cent. Eur. J. Math.}, 9(3):449--488, 2011.

\bibitem{Kollar89}
J.~Koll{\'a}r.
\newblock Flops.
\newblock {\em Nagoya Math. J.}, 113:15--36, 1989.

\bibitem{KM98}
J.~Koll{\'a}r and S.~Mori.
\newblock {\em Birational geometry of algebraic varieties}, volume 134 of {\em
  Cambridge Tracts in Mathematics}.
\newblock Cambridge University Press, Cambridge, 1998.
\newblock With the collaboration of C. H. Clemens and A. Corti, Translated from
  the 1998 Japanese original.

\bibitem{MM81}
S.~Mori and S.~Mukai.
\newblock Classification of {F}ano {$3$}-folds with {$B_{2}\geq 2$}.
\newblock {\em Manuscripta Math.}, 36(2):147--162, 1981/82.

\bibitem{Morr84}
D.~R. Morrison.
\newblock On {$K3$} surfaces with large {P}icard number.
\newblock {\em Invent. Math.}, 75(1):105--121, 1984.

\bibitem{Muk95}
S.~Mukai.
\newblock New developments in the theory of {F}ano threefolds: vector bundle
  method and moduli problems [translation of {S}\=ugaku {\bf 47} (1995), no.\
  2, 125--144; {MR}1364825 (96m:14059)].
\newblock {\em Sugaku Expositions}, 15(2):125--150, 2002.
\newblock Sugaku expositions.

\bibitem{Ohno16v7}
M.~Ohno.
\newblock Nef vector bundles on a projective space with first chern class
  three.
\newblock {\em arXiv:1604.05847v7}, 2016.

\bibitem{OSS80}
C.~Okonek, M.~Schneider, and H.~Spindler.
\newblock {\em Vector bundles on complex projective spaces}, volume~3 of {\em
  Progress in Mathematics}.
\newblock Birkh\"{a}user, Boston, Mass., 1980.

\bibitem{Ottaviani88}
G.~Ottaviani.
\newblock Spinor bundles on quadrics.
\newblock {\em Trans. Amer. Math. Soc.}, 307(1):301--316, 1988.

\bibitem{Ott90}
G.~Ottaviani.
\newblock On {C}ayley bundles on the five-dimensional quadric.
\newblock {\em Boll. Un. Mat. Ital. A (7)}, 4(1):87--100, 1990.

\bibitem{Rei88}
I.~Reider.
\newblock Vector bundles of rank {$2$} and linear systems on algebraic
  surfaces.
\newblock {\em Ann. of Math. (2)}, 127(2):309--316, 1988.

\bibitem{Saint-Donat74}
B.~Saint-Donat.
\newblock Projective models of {$K\text{-}3$} surfaces.
\newblock {\em Amer. J. Math.}, 96:602--639, 1974.

\bibitem{Takeuchi89}
K.~Takeuchi.
\newblock Some birational maps of {F}ano {$3$}-folds.
\newblock {\em Compositio Math.}, 71(3):265--283, 1989.

\bibitem{Takeuchi09}
K.~Takeuchi.
\newblock Weak {F}ano threefolds with del {P}ezzo fibration.
\newblock {\em arXiv:0910.2188}, 2009.

\end{thebibliography}
\end{document}